\newtheorem{theorem}{Theorem}[section]
\newtheorem{definition}[theorem]{Definition}
\newtheorem{lemma}[theorem]{Lemma}
\newtheorem{corollary}[theorem]{Corollary}
\newtheorem{abbreviation}[theorem]{Abbreviation}
\newtheorem{observation}[theorem]{Observation}
\newtheorem{convention}[theorem]{Convention}
\newtheorem{lem}[theorem]{Lemma}
 \newtheorem{deff}[theorem]{Definition}
\newtheorem{rem}[theorem]{Remark}
\newtheorem{remark}[theorem]{Remark}
\newcommand*{\IRS}{\textbf{IRS}_\Omega}
\newcommand*{\IRSOP}{\textbf{IRS}_\Omega^\mathcal{P}}
\newcommand*{\IRSOE}{\textbf{IRS}_\Omega^\mathbb{E}}
\newcommand*{\IKP}{\textbf{IKP}}
\newcommand*{\IKPP}{\textbf{IKP}(\mathcal{P})}
\newcommand*{\IKPE}{\textbf{IKP}(\mathcal{E})}
\newcommand*{\dotin}{\mathrel{\dot{\in}}}
\newcommand*{\Ea}{\mathbb{E}_\alpha}
\newcommand*{\Eb}{\mathbb{E}_\beta}
\newcommand*{\Eg}{\mathbb{E}_\gamma}
\newcommand*{\Ed}{\mathbb{E}_\delta}
\newcommand*{\Pip}{\Pi^{\mathcal{P}}}
\newcommand*{\Deltaoe}{\Delta_0^\mathcal{E}}
\newcommand{\SRE}{(\Sigma^{\mathcal E}\mbox{-}Ref)}
\newcommand{\Sigmae}{\Sigma^{\mathcal E}}
\newcommand{\Pie}{\Pi^{\mathcal E}}
\newcommand{\DE}{\Delta}
\newcommand{\RI}{\Rightarrow}
\newcommand{\Ebb}{\mathbb{E}_{\bar\beta}}
\newcommand{\sbar}{\bar{s}}
\newcommand{\bbeta}{\bar{\beta}}
\newcommand{\goed}[1]{\ulcorner #1\urcorner}
\newcommand{\dbi}{\mbox{
${\mathbf\Delta}^{ 1}_{ 2}$--${\mathbf{CA}}+{\mathbf{BI}}$}}
\newcommand{\KP}{{\mathbf{KP}}}
\newcommand{\DI}{\displaystyle}
\newcommand{\OO}{\Omega}
\newcommand{\BI}{{\mathbf{BI}}}
\newcommand{\beq}{\begin{eqnarray}}
\newcommand{\eeq}{\end{eqnarray}}
\newcommand{\LL}{{\mathbf{L}}}
\newcommand{\inn}{\,{\in}\,}
\newcommand{\el}{\inn}
\newcommand{\Cut}{\text{(Cut)}}
\newcommand{\GA}{\Gamma}
\newcommand{\CH}{{\mathcal H}}
\newcommand{\DX}{{\mathfrak X}}
\newcommand{\In}{\in}
\newcommand{\lev}[1]{{\mid}\,{#1}\,{\mid}}
\newcommand{\om}{\omega}
\newcommand{\al}{\alpha}
\newcommand{\ga}{\gamma}
\newcommand{\SR}{(\Sigma\text{-Ref}_\Omega)}
\newcommand{\infone}[8]
{\rule[-0.5cm]{0cm}{1.5cm}
\begin{array}{c}
    \provx {{#1}} {{#2}} {{#3}} {{#4}}\rule{0cm}{5mm}\\
    \hline \rule{0cm}{5mm}
    \provx {{#5}} {{#6}} {{#7}} {{#8}}
 \end{array}}
 \newcommand{\infonel}[8]
{\rule[-0.5cm]{0cm}{1.5cm}
\begin{array}{l}
    \provx {{#1}} {{#2}} {{#3}} {{#4}}\rule{0cm}{5mm}\\
    \hline \rule{0cm}{5mm}
    \provx {{#5}} {{#6}} {{#7}} {{#8}}
 \end{array}}
 \newcommand{\infonetwo}[3]
{\rule[-0.5cm]{0cm}{1.5cm}
\begin{array}{l}
    {#1}\rule{0cm}{5mm}\\[0.15cm]
    {#2}\\
    \hline \rule{0cm}{5mm}
   {#3}
 \end{array}}
 \newcommand{\infonethree}[4]
{\rule[-0.5cm]{0cm}{1.5cm}
\begin{array}{l}
    {#1}\rule{0cm}{5mm}\\[0.15cm]
    {#2}\\[0.15cm]
    {#3}\\
    \hline \rule{0cm}{5mm}
   {#4}
 \end{array}}
  \newcommand{\infonefour}[5]
{\rule[-0.5cm]{0cm}{1.5cm}
\begin{array}{l}
    {#1}\rule{0cm}{5mm}\\[0.15cm]
    {#2}\\[0.15cm]
    {#3}\\[0.15cm]
    {#4}\\
    \hline \rule{0cm}{5mm}
   {#5}
 \end{array}}
\newcommand{\ifthree}[3]
{\rule[-0.5cm]{0cm}{1.5cm}
 \begin{array}{c}
     {{#1}} \qquad {{#2}}
 \rule{0cm}{5mm}
\\
    \hline  \rule{0cm}{5mm}
{{#3}}
 \end{array}}
\newcommand{\und}{\,\wedge\,}
\newcommand{\psio}[1]{\psi_{\Omega}(#1)}
\newcommand{\TI}{{\mathrm{TI}}}
\newcommand{\IZF}{{\mathbf{IZF}}}
\newcommand{\ZF}{{\mathbf{ZF}}}
\newcommand{\CZF}{{\mathbf{CZF}}}
\newcommand{\hoch}[2]{\phantom{}^{#1}{#2}}
\newcommand{\KPP}{{\mathbf{KP}}({\mathcal P})}
\newcommand{\Deltaop}{\Delta_0^{\mathcal P}}
\newcommand{\Sigmap}{\Sigma^{\mathcal P}}
\newcommand{\PC}{{\mathcal P}}
\newcommand{\subb}{\subseteq}
\newcommand{\Vb}[1]{{\mathbb V}_{#1}}
\newcommand{\Va}{{\mathbb V}_{\alpha}}
\newcommand{\SRP}{(\Sigma^{\mathcal P}\mbox{-}Ref)}
\newcommand{\EP}{{\mathbf{EP}}}
\mathchardef\str='1066
\def\negprov#1#2#3{
\setbox1=\hbox{\kern1.5pt$\scriptstyle#2$} \setbox4=\hbox{$\str$}
\def\zeichen{#1}
\ifx\zeichen\empty\setbox0=\hbox to 1em{}\else\setbox0=\hbox
{\kern1.5pt$\scriptstyle#1$}\fi \dimen1=\dp0 \ifdim \dimen1=0pt
\advance \dimen1 by 1.5ex \else \advance \dimen1 by 1.2ex
\fi\dimen3=2ex\dimen4=.5ex\ifdim \wd0<\wd1 \dimen2=\wd1 \else
\dimen2=\wd0
\fi\hbox{\hskip.5em$\kern-1.9pt\raise1pt\copy4\kern-\wd4\kern1.9pt\vrule
height\dimen3 depth\dimen4\raise\dimen1\copy0\hskip-1\wd0
\lower\ht1\copy1\hskip-1\wd1\vrule width\dimen2 height.7ex
depth-.6ex \hskip3pt minus1.5pt#3\hskip2pt plus2pt minus2pt$}}
\def\mod#1#2{
\def\zeichen{#1}
\hbox{\hskip 2pt plus3pt minus 2pt\vrule width.5pt height2ex
depth.5ex \vbox{\ifx\zeichen\empty\hbox to .75em{}\else
\hbox{\kern1.5pt $\scriptstyle#1$}\fi \kern2pt \hrule \kern1.7pt
\hrule\kern1.7pt} \hskip3pt minus 2pt$#2$}\hskip2pt plus3pt
minus2pt}
\def\notmod#1#2{\hbox{\hskip 2pt plus 3pt minus 3pt\vrule width.5pt
height2ex depth.5ex \vbox{\hbox{\kern1.5pt
$\scriptstyle#1$}\kern3pt \setbox0=\hbox{\kern2pt$\scriptstyle/$}
\hrule \kern-1.7pt \copy0 \kern-\ht0 \kern 1.7pt
\hrule\kern1.7pt}n \hskip3pt minus 2pt$#2$}\hskip2pt plus3pt
minus2pt}
\def\sq{\hbox{\rlap{$\sqcap$}$\sqcup$}}
\def\qed{\ifmmode\sq\else{\unskip\nobreak\hfil\penalty50\hskip1em\null
\nobreak\hfil\sq\parfillskip=0pt\finalhyphendemerits=0\endgraf}\fi\medskip}
\def\lleq{\hbox{\hskip3pt minus3pt\kern1pt\lower4pt
\vbox{\hbox{$\scriptstyle\ll$}
\kern-7pt\hbox{\kern1pt$\scriptstyle=$}}\hskip3pt minus 3pt}}
\mathchardef\res='1152 \mathchardef\qin='1062
\mathchardef\qprec='1036 \mathchardef\qless='474
\mathchardef\dpkt='72
\def\EnableBpAbbreviations{%
    \let\AX\Axiom
    \let\AXC\AxiomC
    \let\UI\UnaryInf
    \let\UIC\UnaryInfC
    \let\BI\BinaryInf
    \let\BIC\BinaryInfC
    \let\TI\TrinaryInf
    \let\TIC\TrinaryInfC
    \let\QI\QuaternaryInf
    \let\QIC\QuaternaryInfC
    \let\QuI\QuinaryInf
    \let\QuIC\QuinaryInfC
    \let\LL\LeftLabel
    \let\RL\RightLabel
    \let\DP\DisplayProof
}
\def\ScoreOverhang{4pt}         
\def\ScoreOverhangLeft{\ScoreOverhang}
\def\ScoreOverhangRight{\ScoreOverhang}
\def\extraVskip{2pt}            
\def\ruleScoreFiller{\hrule}        
\def\defaultScoreFiller{\ruleScoreFiller}  
\def\defaultBuildScore{\buildSingleScore}  
\def\defaultHypSeparation{\hskip.2in}   
\def\labelSpacing{3pt}      
\def\proofSkipAmount{\vskip.8ex plus.8ex minus.4ex}
\def\fCenter{\relax}
\def\theHypSeparation{\defaultHypSeparation}
\def\alwaysScoreFiller{\defaultScoreFiller} 
\def\alwaysBuildScore{\defaultBuildScore}
\def\theScoreFiller{\alwaysScoreFiller} 
\def\buildScore{\alwaysBuildScore}   
\def\hypKernAmt{0pt}    
\def\defaultLeftLabel{}
\def\defaultRightLabel{}
\def\myTrue{Y}
\def\bottomAlignFlag{N}
\def\centerAlignFlag{N}
\def\defaultRootAtBottomFlag{Y}
\def\rootAtBottomFlag{Y}
\def\makeatletter{\catcode`\@=11\relax}
\def\makeatother{\catcode`\@=12\relax}
\def\newcount{\alloc@0\count\countdef\insc@unt}
\def\newdimen{\alloc@1\dimen\dimendef\insc@unt}
\def\newskip{\alloc@2\skip\skipdef\insc@unt}
\def\newbox{\alloc@4\box\chardef\insc@unt}
\newenvironment{prooftree}%
{\begin{center}\proofSkipAmount \leavevmode}%
{\DisplayProof \proofSkipAmount \end{center} }
\def\thecur#1{\csname#1\number\theLevel\endcsname}
\newbox\myBoxA      
\newbox\myBoxB
\newbox\myBoxC
\newbox\myBoxD
\newbox\myBoxLL     
\newbox\myBoxRL
\newdimen\thisAboveSkip     
\newdimen\thisBelowSkip     
\newdimen\newScoreStart     
\newdimen\newScoreEnd
\newdimen\newCenter
\newdimen\displace
\newdimen\leftLowerAmt
\newdimen\rightLowerAmt
\newdimen\scoreHeight
\newdimen\scoreDepth
\newdimen\htLbox%
\newdimen\htRbox%
\newdimen\htRRbox%
\newdimen\htRRRbox%
\newdimen\htAbox%
\newdimen\htCbox%
\def\allocatemore{%
    \ifnum\theLevel>\myMaxLevel%
        \expandafter\newbox\curBox%
        \expandafter\newdimen\curScoreStart%
        \expandafter\newdimen\curCenter%
        \expandafter\newdimen\curScoreEnd%
        \global\advance\myMaxLevel by1%
    \fi%
}
\def\prepAxiom{%
    \advance\theLevel by1%
    \edef\curBox{\thecur{myBox}}%
    \edef\curScoreStart{\thecur{myScoreStart}}%
    \edef\curCenter{\thecur{myCenter}}%
    \edef\curScoreEnd{\thecur{myScoreEnd}}%
    \allocatemore%
}
\def\Axiom$#1\fCenter#2${%
    \prepAxiom%
    \setbox\myBoxA=\hbox{$\mathord{#1}\fCenter\mathord{\relax}$}%
    \setbox\myBoxB=\hbox{$#2$}%
    \global\setbox\curBox=%
         \hbox{\hskip\ScoreOverhangLeft\relax%
        \unhcopy\myBoxA\unhcopy\myBoxB\hskip\ScoreOverhangRight\relax}%
    \global\curScoreStart=0pt \relax
    \global\curScoreEnd=\wd\curBox \relax
    \global\curCenter=\wd\myBoxA \relax
    \global\advance \curCenter by \ScoreOverhangLeft%
    \ignorespaces
}
\def\AxiomC#1{      
    \prepAxiom%
    \setbox\myBoxA=\hbox{#1}%
    \global\setbox\curBox =%
        \hbox{\hskip\ScoreOverhangLeft\relax%
                        \unhcopy\myBoxA\hskip\ScoreOverhangRight\relax}%
        \global\curScoreStart=0pt \relax
        \global\curScoreEnd=\wd\curBox \relax
        \global\curCenter=.5\wd\curBox \relax
        \global\advance \curCenter by \ScoreOverhangLeft%
    \ignorespaces
}
\def\prepUnary{%
    \ifnum \theLevel<1
        \errmessage{Hypotheses missing!}
    \fi%
    \edef\curBox{\thecur{myBox}}%
    \edef\curScoreStart{\thecur{myScoreStart}}%
    \edef\curCenter{\thecur{myCenter}}%
    \edef\curScoreEnd{\thecur{myScoreEnd}}%
}
\def\UnaryInf$#1\fCenter#2${%
    \prepUnary%
    \buildConclusion{#1}{#2}%
    \joinUnary%
    \resetInferenceDefaults%
    \ignorespaces%
}
\def\UnaryInfC#1{
    \prepUnary%
    \buildConclusionC{#1}%
    \joinUnary%
    \resetInferenceDefaults%
    \ignorespaces%
}
\def\prepBinary{%
    \ifnum\theLevel<2
        \errmessage{Hypotheses missing!}
    \fi%
    \edef\rcurBox{\thecur{myBox}}
    \edef\rcurScoreStart{\thecur{myScoreStart}}%
    \edef\rcurCenter{\thecur{myCenter}}%
    \edef\rcurScoreEnd{\thecur{myScoreEnd}}%
    \advance\theLevel by-1%
    \edef\lcurBox{\thecur{myBox}}
    \edef\lcurScoreStart{\thecur{myScoreStart}}%
    \edef\lcurCenter{\thecur{myCenter}}%
    \edef\lcurScoreEnd{\thecur{myScoreEnd}}%
}
\def\BinaryInf$#1\fCenter#2${%
    \prepBinary%
    \buildConclusion{#1}{#2}%
    \joinBinary%
    \resetInferenceDefaults%
    \ignorespaces%
}
\def\BinaryInfC#1{%
    \prepBinary%
    \buildConclusionC{#1}%
    \joinBinary%
    \resetInferenceDefaults%
    \ignorespaces%
}
\def\prepTrinary{%
    \ifnum\theLevel<3
        \errmessage{Hypotheses missing!}
    \fi%
    \edef\rcurBox{\thecur{myBox}}
    \edef\rcurScoreStart{\thecur{myScoreStart}}%
    \edef\rcurCenter{\thecur{myCenter}}%
    \edef\rcurScoreEnd{\thecur{myScoreEnd}}%
    \advance\theLevel by-1%
    \edef\ccurBox{\thecur{myBox}}
    \edef\ccurScoreStart{\thecur{myScoreStart}}%
    \edef\ccurCenter{\thecur{myCenter}}%
    \edef\ccurScoreEnd{\thecur{myScoreEnd}}%
    \advance\theLevel by-1%
    \edef\lcurBox{\thecur{myBox}}
    \edef\lcurScoreStart{\thecur{myScoreStart}}%
    \edef\lcurCenter{\thecur{myCenter}}%
    \edef\lcurScoreEnd{\thecur{myScoreEnd}}%
}
\def\TrinaryInf$#1\fCenter#2${%
    \prepTrinary%
    \buildConclusion{#1}{#2}%
    \joinTrinary%
    \resetInferenceDefaults%
    \ignorespaces%
}
\def\TrinaryInfC#1{%
    \prepTrinary%
    \buildConclusionC{#1}%
    \joinTrinary%
    \resetInferenceDefaults%
    \ignorespaces%
}
\def\prepQuaternary{%
    \ifnum\theLevel<4
        \errmessage{Hypotheses missing!}
    \fi%
    \edef\rrcurBox{\thecur{myBox}}
    \edef\rrcurScoreStart{\thecur{myScoreStart}}%
    \edef\rrcurCenter{\thecur{myCenter}}%
    \edef\rrcurScoreEnd{\thecur{myScoreEnd}}%
    \advance\theLevel by-1%
    \edef\rcurBox{\thecur{myBox}}
    \edef\rcurScoreStart{\thecur{myScoreStart}}%
    \edef\rcurCenter{\thecur{myCenter}}%
    \edef\rcurScoreEnd{\thecur{myScoreEnd}}%
    \advance\theLevel by-1%
    \edef\ccurBox{\thecur{myBox}}
    \edef\ccurScoreStart{\thecur{myScoreStart}}%
    \edef\ccurCenter{\thecur{myCenter}}%
    \edef\ccurScoreEnd{\thecur{myScoreEnd}}%
    \advance\theLevel by-1%
    \edef\lcurBox{\thecur{myBox}}
    \edef\lcurScoreStart{\thecur{myScoreStart}}%
    \edef\lcurCenter{\thecur{myCenter}}%
    \edef\lcurScoreEnd{\thecur{myScoreEnd}}%
}
\def\QuaternaryInf$#1\fCenter#2${%
    \prepQuaternary%
    \buildConclusion{#1}{#2}%
    \joinQuaternary%
    \resetInferenceDefaults%
    \ignorespaces%
}
\def\QuaternaryInfC#1{%
    \prepQuaternary%
    \buildConclusionC{#1}%
    \joinQuaternary%
    \resetInferenceDefaults%
    \ignorespaces%
}
\def\joinQuaternary{
    \setbox\myBoxA=\hbox{\theHypSeparation}%
    \lcurScoreEnd=\rrcurScoreEnd%
    \advance\lcurScoreEnd by\wd\rcurBox%
    \advance\lcurScoreEnd by\wd\lcurBox%
    \advance\lcurScoreEnd by\wd\ccurBox%
    \advance\lcurScoreEnd by3\wd\myBoxA%
    \displace=\lcurScoreEnd%
    \advance\displace by -\lcurScoreStart%
    \lcurCenter=.5\displace%
    \advance\lcurCenter by\lcurScoreStart%
    \ifx\rootAtBottomFlag\myTrue%
        \setbox\lcurBox=%
            \hbox{\box\lcurBox\unhcopy\myBoxA\box\ccurBox%
                      \unhcopy\myBoxA\box\rcurBox
                      \unhcopy\myBoxA\box\rrcurBox}%
    \else%
        \htLbox = \ht\lcurBox%
        \htAbox = \ht\myBoxA%
        \htCbox = \ht\ccurBox%
        \htRbox = \ht\rcurBox%
        \htRRbox = \ht\rrcurBox%
        \setbox\lcurBox=%
            \hbox{\lower\htLbox\box\lcurBox%
                  \lower\htAbox\copy\myBoxA\lower\htCbox\box\ccurBox%
                  \lower\htAbox\copy\myBoxA\lower\htRbox\box\rcurBox%
                  \lower\htAbox\copy\myBoxA\lower\htRRbox\box\rrcurBox}%
    \fi%
    \displace=\newCenter%
    \advance\displace by -.5\newScoreStart%
    \advance\displace by -.5\newScoreEnd%
    \advance\lcurCenter by \displace%
    \edef\curBox{\lcurBox}%
    \edef\curScoreStart{\lcurScoreStart}%
    \edef\curScoreEnd{\lcurScoreEnd}%
    \edef\curCenter{\lcurCenter}%
    \joinUnary%
}
\def\prepQuinary{%
    \ifnum\theLevel<5
        \errmessage{Hypotheses missing!}
    \fi%
    \edef\rrrcurBox{\thecur{myBox}}
    \edef\rrrcurScoreStart{\thecur{myScoreStart}}%
    \edef\rrrcurCenter{\thecur{myCenter}}%
    \edef\rrrcurScoreEnd{\thecur{myScoreEnd}}%
    \advance\theLevel by-1%
    \edef\rrcurBox{\thecur{myBox}}
    \edef\rrcurScoreStart{\thecur{myScoreStart}}%
    \edef\rrcurCenter{\thecur{myCenter}}%
    \edef\rrcurScoreEnd{\thecur{myScoreEnd}}%
    \advance\theLevel by-1%
    \edef\rcurBox{\thecur{myBox}}
    \edef\rcurScoreStart{\thecur{myScoreStart}}%
    \edef\rcurCenter{\thecur{myCenter}}%
    \edef\rcurScoreEnd{\thecur{myScoreEnd}}%
    \advance\theLevel by-1%
    \edef\ccurBox{\thecur{myBox}}
    \edef\ccurScoreStart{\thecur{myScoreStart}}%
    \edef\ccurCenter{\thecur{myCenter}}%
    \edef\ccurScoreEnd{\thecur{myScoreEnd}}%
    \advance\theLevel by-1%
    \edef\lcurBox{\thecur{myBox}}
    \edef\lcurScoreStart{\thecur{myScoreStart}}%
    \edef\lcurCenter{\thecur{myCenter}}%
    \edef\lcurScoreEnd{\thecur{myScoreEnd}}%
}
\def\QuinaryInf$#1\fCenter#2${%
    \prepQuinary%
    \buildConclusion{#1}{#2}%
    \joinQuinary%
    \resetInferenceDefaults%
    \ignorespaces%
}
\def\QuinaryInfC#1{%
    \prepQuinary%
    \buildConclusionC{#1}%
    \joinQuinary%
    \resetInferenceDefaults%
    \ignorespaces%
}
\def\joinQuinary{
    \setbox\myBoxA=\hbox{\theHypSeparation}%
    \lcurScoreEnd=\rrrcurScoreEnd%
    \advance\lcurScoreEnd by\wd\rrcurBox%
    \advance\lcurScoreEnd by\wd\rcurBox%
    \advance\lcurScoreEnd by\wd\lcurBox%
    \advance\lcurScoreEnd by\wd\ccurBox%
    \advance\lcurScoreEnd by4\wd\myBoxA%
    \displace=\lcurScoreEnd%
    \advance\displace by -\lcurScoreStart%
    \lcurCenter=.5\displace%
    \advance\lcurCenter by\lcurScoreStart%
    \ifx\rootAtBottomFlag\myTrue%
        \setbox\lcurBox=%
            \hbox{\box\lcurBox\unhcopy\myBoxA\box\ccurBox%
                      \unhcopy\myBoxA\box\rcurBox
                      \unhcopy\myBoxA\box\rrcurBox
                      \unhcopy\myBoxA\box\rrrcurBox}%
    \else%
        \htLbox = \ht\lcurBox%
        \htAbox = \ht\myBoxA%
        \htCbox = \ht\ccurBox%
        \htRbox = \ht\rcurBox%
        \htRRbox = \ht\rrcurBox%
        \htRRRbox = \ht\rrrcurBox%
        \setbox\lcurBox=%
            \hbox{\lower\htLbox\box\lcurBox%
                  \lower\htAbox\copy\myBoxA\lower\htCbox\box\ccurBox%
                  \lower\htAbox\copy\myBoxA\lower\htRbox\box\rcurBox%
                  \lower\htAbox\copy\myBoxA\lower\htRRbox\box\rrcurBox%
                  \lower\htAbox\copy\myBoxA\lower\htRRRbox\box\rrrcurBox}%
    \fi%
    \displace=\newCenter%
    \advance\displace by -.5\newScoreStart%
    \advance\displace by -.5\newScoreEnd%
    \advance\lcurCenter by \displace%
    \edef\curBox{\lcurBox}%
    \edef\curScoreStart{\lcurScoreStart}%
    \edef\curScoreEnd{\lcurScoreEnd}%
    \edef\curCenter{\lcurCenter}%
    \joinUnary%
}
\def\buildConclusion#1#2{
        \setbox\myBoxA=\hbox{$\mathord{#1}\fCenter\mathord{\relax}$}%
        \setbox\myBoxB=\hbox{$#2$}%
    \setbox\myBoxC =%
          \hbox{\hskip\ScoreOverhangLeft\relax%
        \unhcopy\myBoxA\unhcopy\myBoxB\hskip\ScoreOverhangRight\relax}%
    \newScoreStart=0pt \relax%
    \newCenter=\wd\myBoxA \relax%
    \advance \newCenter by \ScoreOverhangLeft%
    \newScoreEnd=\wd\myBoxC%
}
\def\buildConclusionC#1{
    \setbox\myBoxA=\hbox{#1}%
    \setbox\myBoxC =%
        \hbox{\hbox{\hskip\ScoreOverhangLeft\relax%
                        \unhcopy\myBoxA\hskip\ScoreOverhangRight\relax}}%
    \newScoreStart=0pt \relax%
    \newCenter=.5\wd\myBoxC \relax%
    \newScoreEnd=\wd\myBoxC%
        \advance \newCenter by \ScoreOverhangLeft%
}
\def\joinUnary{
    \global\advance\curCenter by -\hypKernAmt%
    \ifnum\curCenter<\newCenter%
        \displace=\newCenter%
        \advance \displace by -\curCenter%
        \kernUpperBox%
    \else%
        \displace=\curCenter%
        \advance \displace by -\newCenter%
        \kernLowerBox%
    \fi%
        \ifnum \newScoreStart < \curScoreStart %
        \global \curScoreStart = \newScoreStart \fi%
    \ifnum \curScoreEnd < \newScoreEnd %
        \global \curScoreEnd = \newScoreEnd \fi%
    \ifnum \curScoreStart<\wd\myBoxLL%
        \global\displace = \wd\myBoxLL%
        \global\advance\displace by -\curScoreStart%
        \kernUpperBox%
        \kernLowerBox%
    \fi%
    \buildScore%
    \buildScoreLabels%
    \ifx\rootAtBottomFlag\myTrue%
        \buildRootBottom%
    \else%
        \buildRootTop%
    \fi%
    \global \curScoreStart=\newScoreStart%
    \global \curScoreEnd=\newScoreEnd%
    \global \curCenter=\newCenter%
}
\def\buildRootBottom{%
    \global \setbox \curBox =%
        \vbox{\box\curBox%
            \vskip\thisAboveSkip \relax%
            \nointerlineskip\box\myBoxD%
            \vskip\thisBelowSkip \relax%
            \nointerlineskip\box\myBoxC}%
}
\def\buildRootTop{%
    \global \setbox \curBox =%
        \vbox{\box\myBoxC%
            \vskip\thisAboveSkip \relax%
            \nointerlineskip\box\myBoxD%
            \vskip\thisBelowSkip \relax%
            \nointerlineskip\box\curBox}%
}
\def\kernUpperBox{%
        \global\setbox\curBox =%
            \hbox{\hskip\displace\box\curBox}%
        \global\advance \curScoreStart by \displace%
        \global\advance \curScoreEnd by \displace%
        \global\advance\curCenter by \displace%
}
\def\kernLowerBox{%
        \global\setbox\myBoxC =%
            \hbox{\hskip\displace\unhbox\myBoxC}%
        \global\advance \newScoreStart by \displace%
        \global\advance \newScoreEnd by \displace%
        \global\advance\newCenter by \displace%
}
\def\joinBinary{
    \setbox\myBoxA=\hbox{\theHypSeparation}%
    \lcurScoreEnd=\rcurScoreEnd%
    \advance\lcurScoreEnd by\wd\lcurBox%
    \advance\lcurScoreEnd by\wd\myBoxA%
    \displace=\lcurScoreEnd%
    \advance\displace by -\lcurScoreStart%
    \lcurCenter=.5\displace%
    \advance\lcurCenter by\lcurScoreStart%
    \ifx\rootAtBottomFlag\myTrue%
        \setbox\lcurBox=%
            \hbox{\box\lcurBox\unhcopy\myBoxA\box\rcurBox}%
    \else%
        \htLbox = \ht\lcurBox%
        \htAbox = \ht\myBoxA%
        \htRbox = \ht\rcurBox%
        \setbox\lcurBox=%
            \hbox{\lower\htLbox\box\lcurBox%
                  \lower\htAbox\box\myBoxA\lower\htRbox\box\rcurBox}%
    \fi%
    \displace=\newCenter%
    \advance\displace by -.5\newScoreStart%
    \advance\displace by -.5\newScoreEnd%
    \advance\lcurCenter by \displace%
    \edef\curBox{\lcurBox}%
    \edef\curScoreStart{\lcurScoreStart}%
    \edef\curScoreEnd{\lcurScoreEnd}%
    \edef\curCenter{\lcurCenter}%
    \joinUnary%
}
\def\joinTrinary{
    \setbox\myBoxA=\hbox{\theHypSeparation}%
    \lcurScoreEnd=\rcurScoreEnd%
    \advance\lcurScoreEnd by\wd\lcurBox%
    \advance\lcurScoreEnd by\wd\ccurBox%
    \advance\lcurScoreEnd by2\wd\myBoxA%
    \displace=\lcurScoreEnd%
    \advance\displace by -\lcurScoreStart%
    \lcurCenter=.5\displace%
    \advance\lcurCenter by\lcurScoreStart%
    \ifx\rootAtBottomFlag\myTrue%
        \setbox\lcurBox=%
            \hbox{\box\lcurBox\unhcopy\myBoxA\box\ccurBox%
                      \unhcopy\myBoxA\box\rcurBox}%
    \else%
        \htLbox = \ht\lcurBox%
        \htAbox = \ht\myBoxA%
        \htCbox = \ht\ccurBox%
        \htRbox = \ht\rcurBox%
        \setbox\lcurBox=%
            \hbox{\lower\htLbox\box\lcurBox%
                  \lower\htAbox\copy\myBoxA\lower\htCbox\box\ccurBox%
                  \lower\htAbox\copy\myBoxA\lower\htRbox\box\rcurBox}%
    \fi%
    \displace=\newCenter%
    \advance\displace by -.5\newScoreStart%
    \advance\displace by -.5\newScoreEnd%
    \advance\lcurCenter by \displace%
    \edef\curBox{\lcurBox}%
    \edef\curScoreStart{\lcurScoreStart}%
    \edef\curScoreEnd{\lcurScoreEnd}%
    \edef\curCenter{\lcurCenter}%
    \joinUnary%
}
\def\DisplayProof{%
    \ifnum \theLevel=1 \relax \else
        \errmessage{Proof tree badly specified.}%
    \fi%
    \edef\curBox{\thecur{myBox}}%
    \ifx\bottomAlignFlag\myTrue%
        \displace=0pt%
    \else%
        \displace=.5\ht\curBox%
        \ifx\centerAlignFlag\myTrue\relax
        \else%
                \advance\displace by -3pt%
        \fi%
    \fi%
    \leavevmode%
    \lower\displace\hbox{\copy\curBox}%
    \global\theLevel=0%
    \global\def\alwaysBuildScore{\defaultBuildScore}
    \global\def\alwaysScoreFiller{\defaultScoreFiller}
    \global\def\bottomAlignFlag{N}%
    \global\def\centerAlignFlag{N}%
    \resetRootPosition
    \resetInferenceDefaults%
    \ignorespaces
}
\def\buildSingleScore{
    \displace=\curScoreEnd%
    \advance \displace by -\curScoreStart%
    \global\setbox \myBoxD =%
        \hbox to \displace{\expandafter\xleaders\theScoreFiller\hfill}%
}
\def\buildDoubleScore{
    \buildSingleScore%
    \global\setbox\myBoxD=%
        \hbox{\hbox to0pt{\copy\myBoxD\hss}\raise2pt\copy\myBoxD}%
}
\def\buildNoScore{
    \global\setbox\myBoxD=\hbox{\vbox{\vskip1pt}}%
}
\def\noLine{%
    \gdef\buildScore{\buildNoScore}
    \ignorespaces
}
\def\LeftLabel#1{%
    \global\setbox\myBoxLL=\hbox{{#1}\hskip\labelSpacing}%
    \ignorespaces
}
\def\RightLabel#1{%
    \global\setbox\myBoxRL=\hbox{\hskip\labelSpacing #1}%
    \ignorespaces
}
\def\buildScoreLabels{%
    \scoreHeight = \ht\myBoxD%
    \scoreDepth = \dp\myBoxD%
    \leftLowerAmt=\ht\myBoxLL%
    \advance \leftLowerAmt by -\dp\myBoxLL%
    \advance \leftLowerAmt by -\scoreHeight%
    \advance \leftLowerAmt by \scoreDepth%
    \leftLowerAmt=.5\leftLowerAmt%
    \rightLowerAmt=\ht\myBoxRL%
    \advance \rightLowerAmt by -\dp\myBoxRL%
    \advance \rightLowerAmt by -\scoreHeight%
    \advance \rightLowerAmt by \scoreDepth%
    \rightLowerAmt=.5\rightLowerAmt%
    \displace = \curScoreStart%
    \advance\displace by -\wd\myBoxLL%
    \global\setbox\myBoxD =%
        \hbox{\hskip\displace%
            \lower\leftLowerAmt\copy\myBoxLL%
            \box\myBoxD%
            \lower\rightLowerAmt\copy\myBoxRL}%
    \global\thisAboveSkip = \ht\myBoxLL%
    \global\advance \thisAboveSkip by -\leftLowerAmt%
    \global\advance \thisAboveSkip by -\scoreHeight%
    \ifnum \thisAboveSkip<0 %
        \global\thisAboveSkip=0pt%
    \fi%
    \displace = \ht\myBoxRL%
    \advance \displace by -\rightLowerAmt%
    \advance \displace by -\scoreHeight%
    \ifnum \displace<0 %
        \displace=0pt%
    \fi%
    \ifnum \displace>\thisAboveSkip %
        \global\thisAboveSkip=\displace%
    \fi%
    \global\thisBelowSkip = \dp\myBoxLL%
    \global\advance\thisBelowSkip by \leftLowerAmt%
    \global\advance\thisBelowSkip by -\scoreDepth%
    \ifnum\thisBelowSkip<0 %
        \global\thisBelowSkip = 0pt%
    \fi%
    \displace = \dp\myBoxRL%
    \advance\displace by \rightLowerAmt%
    \advance\displace by -\scoreDepth%
    \ifnum\displace<0 %
        \displace = 0pt%
    \fi%
    \ifnum\displace>\thisBelowSkip%
        \global\thisBelowSkip = \displace%
    \fi%
    \global\thisAboveSkip = -\thisAboveSkip%
    \global\thisBelowSkip = -\thisBelowSkip%
    \global\advance\thisAboveSkip by\extraVskip
    \global\advance\thisBelowSkip by\extraVskip
}
\def\resetInferenceDefaults{%
    \global\def\theHypSeparation{\defaultHypSeparation}%
    \global\setbox\myBoxLL=\hbox{\defaultLeftLabel}%
    \global\setbox\myBoxRL=\hbox{\defaultRightLabel}%
    \global\def\buildScore{\alwaysBuildScore}%
    \global\def\theScoreFiller{\alwaysScoreFiller}%
    \gdef\hypKernAmt{0pt}
}
\def\resetRootPosition{%
    \global\edef\rootAtBottomFlag{\defaultRootAtBottomFlag}
}
\def\provx#1#2#3#4{
\setbox1=\hbox{\kern1.5pt$\scriptstyle#3$}
\def\zeichen{#2}
\ifx\zeichen\empty\setbox0=\hbox to .75em{}\else\setbox0=\hbox
{\kern1.5pt$\scriptstyle#2$}\fi
\dimen1=\dp0 \ifdim \dimen1=0pt
\advance \dimen1 by 1.5ex \else \advance \dimen1 by 1.2ex
\fi\dimen3=2ex\dimen4=.5ex\ifdim \wd0<\wd1 \dimen2=\wd1 \else \dimen2=\wd0
\fi\hbox{$#1\hskip 5pt minus5pt\vrule height\dimen3
depth\dimen4\raise\dimen1\copy0\hskip-1\wd0 \lower\ht1
\copy1\hskip-1\wd1\vrule width\dimen2 height.7ex depth-.6ex\hskip3pt
minus1.5pt#4\hskip2pt plus2pt minus2pt$}}
\def\prov#1#2#3{
\setbox1=\hbox{\kern1.5pt$\scriptstyle#2$}
\def\zeichen{#1}
\ifx\zeichen\empty\setbox0=\hbox to .75em{}\else\setbox0=\hbox
{\kern1.5pt$\scriptstyle#1$}\fi
\dimen1=\dp0
\ifdim \dimen1=0pt
\advance \dimen1 by 1.5ex \else \advance \dimen1 by 1.2ex
\fi\dimen3=2ex\dimen4=.5ex\ifdim \wd0<\wd1 \dimen2=\wd1 \else \dimen2=\wd0
\fi\hbox{\hskip0pt plus 4pt
$\vrule height\dimen3
depth\dimen4\raise\dimen1\copy0\hskip-1\wd0
\lower\ht1\copy1\hskip-1\wd1\vrule width\dimen2 height.7ex depth-.6ex
\hskip3pt minus1.5pt#3\hskip2pt plus2pt minus2pt$}}
\def\prv#1#2{
\setbox1=\hbox{\kern1.5pt$\scriptstyle#2$}
\ifx\zeichen\empty\setbox0=\hbox to .75em{}\else\setbox0=\hbox
{\kern1.5pt$\scriptstyle#1$}\fi
\dimen1=\dp0 \ifdim \dimen1=0pt
\advance \dimen1 by 1.5ex \else \advance \dimen1 by 1.2ex
\fi\dimen3=2ex\dimen4=.5ex\ifdim \wd0<\wd1 \dimen2=\wd1 \else \dimen2=\wd0
\fi\hbox{\hskip.5em$\vrule height\dimen3
depth\dimen4\raise\dimen1\copy0\hskip-1\wd0
\lower\ht1\copy1\hskip-1\wd1\vrule width\dimen2 height.7ex depth-.6ex
\hskip3pt minus1.5pt$}}
\mathchardef\str='1066
\def\negprov#1#2#3{
\setbox1=\hbox{\kern1.5pt$\scriptstyle#2$}
\setbox4=\hbox{$\str$}
\def\zeichen{#1}
\ifx\zeichen\empty\setbox0=\hbox to 1em{}\else\setbox0=\hbox
{\kern1.5pt$\scriptstyle#1$}\fi
\dimen1=\dp0
\ifdim \dimen1=0pt
\advance \dimen1 by 1.5ex \else \advance \dimen1 by 1.2ex
\fi\dimen3=2ex\dimen4=.5ex\ifdim \wd0<\wd1 \dimen2=\wd1 \else \dimen2=\wd0
\fi\hbox{\hskip.5em$\kern-1.9pt\raise1pt\copy4\kern-\wd4\kern1.9pt\vrule height\dimen3
depth\dimen4\raise\dimen1\copy0\hskip-1\wd0
\lower\ht1\copy1\hskip-1\wd1\vrule width\dimen2 height.7ex depth-.6ex
\hskip3pt minus1.5pt#3\hskip2pt plus2pt minus2pt$}}
\def\goed#1{\setbox5=\hbox{$#1$}\dimen1=.25em \dimen2=\dimen1 \advance \dimen2
by -1pt\hbox{\raise.65\ht5 \hbox{\vrule height.5\ht5 depth0pt width.4pt\vrule
height.5\ht5 width\dimen1 depth-.48\ht5}\kern-\dimen2\copy5\kern-\dimen2
\raise.65\ht5 \hbox{\vrule height .5\ht5 width\dimen1 depth-.48\ht5\vrule
height.5\ht5 depth 0pt width.4pt}\hskip4pt plus2pt minus2pt}}
\def\mod#1#2{
\def\zeichen{#1}
\hbox{\hskip 2pt plus3pt minus 2pt\vrule width.5pt height2ex depth.5ex
\vbox{\ifx\zeichen\empty\hbox to .75em{}\else
\hbox{\kern1.5pt $\scriptstyle#1$}\fi
\kern2pt
\hrule
\kern1.7pt
\hrule\kern1.7pt}
\hskip3pt minus 2pt$#2$}\hskip2pt
plus3pt minus2pt}
\def\notmod#1#2{\hbox{\hskip 2pt plus 3pt minus 3pt\vrule width.5pt
height2ex depth.5ex
\vbox{\hbox{\kern1.5pt $\scriptstyle#1$}\kern3pt
\setbox0=\hbox{\kern2pt$\scriptstyle/$}
\hrule
\kern-1.7pt
\copy0
\kern-\ht0
\kern 1.7pt
\hrule\kern1.7pt}n
\hskip3pt minus 2pt$#2$}\hskip2pt
plus3pt minus2pt}
\def\sq{\hbox{\rlap{$\sqcap$}$\sqcup$}}
\def\qed{\ifmmode\sq\else{\unskip\nobreak\hfil\penalty50\hskip1em\null
\nobreak\hfil\sq\parfillskip=0pt\finalhyphendemerits=0\endgraf}\fi\medskip}
\def\lleq{\hbox{\hskip3pt minus3pt\kern1pt\lower4pt
\vbox{\hbox{$\scriptstyle\ll$}
\kern-7pt\hbox{\kern1pt$\scriptstyle=$}}\hskip3pt minus 3pt}}
\mathchardef\res='1152
\mathchardef\qin='1062
\mathchardef\qprec='1036
\mathchardef\qless='474
\mathchardef\dpkt='72
 \newcommand{\Avarphi}{A}
 \newcommand{\Apsi}{B}
 \newcommand{\Avartheta}{C}
 \newcommand{\funci}[3]{\mathrm{Fun}({#1},{#2},{#3})}
\newcommand{\strictSigma}{\mbox{strict-}\Sigma}
\begin{document}
\title{Ordinal analysis of Intuitionistic Power and Exponentiation Kripke Platek Set Theory\footnote{Dedicated to Gerhard J\"ager on the occasion of his 60th birthday.}}
\author{Jacob Cook and Michael Rathjen\\
{\small Department of Pure Mathematics, University of Leeds}\\
{\small Leeds LS2 9JT, United Kingdom}\\
 {\tt \scriptsize mmjcoo@leeds.ac.uk $\;\;$
rathjen@maths.leeds.ac.uk}
}

\maketitle
\begin{abstract} Until the 1970s, proof theoretic investigations  were mainly concerned with theories of inductive definitions, subsystems of analysis and finite type systems.   With the pioneering work of Gerhard J\"ager in the late 1970s and early 1980s, the focus switched to set theories, furnishing ordinal-theoretic proof theory  with a uniform and elegant framework. More recently it was shown that these tools can even sometimes be adapted to the context of strong axioms such as the powerset axiom, where one does not attain complete cut elimination but can nevertheless
extract witnessing information and characterize the strength of the theory in terms of provable heights of the cumulative hierarchy. Here this technology is applied to intuitionistic Kripke-Platek set theories $\IKPP$ and $\IKPE$, where
the operation of  powerset and exponentiation, respectively, is allowed as a primitive in the separation and collection schemata. In particular, $\IKPP$ proves the powerset axiom whereas $\IKPE$ proves the  exponentiation axiom. The latter expresses that given any sets $A$ and $B$, the collection of all
functions from $A$ to $B$ is a set, too. While $\IKPP$ can be dealt with in a similar vein as its classical cousin, the treatment of $\IKPE$ posed considerable obstacles. One of them was that in the infinitary system the levels of terms become a moving target as they
cannot be assigned a fixed level in the formal cumulative hierarchy solely based on their syntactic structure.

 As adumbrated in an earlier paper, the results of this paper are an important tool in showing that several intuitionistic set theories with the collection axiom possess the existence property, i.e., if they prove an existential theorem then a witness can be provably described in the theory, one example being intuitionistic Zermelo-Fraenkel set theory with bounded separation.
\end{abstract}
\section{Introduction}
In his early work, Gerhard J\"ager laid the foundations for a direct proof-theoretic
treatment of set theories
(cf. \cite{j80,j82}) which then began to a large extent to supplant earlier work on theories of inductive definitions
and subsystems of analysis.
By and large, ordinal analyses for set theories are more uniform
and transparent than for the latter theories.
The primordial example of a set theory amenable to ordinal analysis is
Kripke-Platek set theory, $\KP$. It is an important theory for various reasons, one being that a great deal of set theory requires only
the axioms of $\KP$. Another reason is that
admissible sets, the transitive models of $\KP$, have been a major source of interaction between
model theory of infinitary languages, recursion theory and set theory (cf. \cite{ba}).
 $\KP$ arises from $\ZF$ by completely omitting the power set axiom and
restricting separation and collection to bounded formulae.
Many of the familiar subsystems of second order arithmetic can be viewed as reduced versions
of set theories based on the notion of admissible set. This applies for example to a fairly strong theory like $\Delta^1_2\mbox{-}\mathbf{CA}$ plus bar induction which is of the same strength as $\KP$ augmented
by an axiom saying that every set is contained in an admissible set, whose ordinal analysis is due to J\"ager and Pohlers \cite{jp}. By restricting or completely omitting induction principles in theories
of admissible sets, J\"ager was also able to give a unified proof-theoretic treatment
of many predicative theories in \cite{j86}.
Systematic accounts and surveys of admissible proof theory can be found in \cite{j82a,j86,bu93,po98,mi99realm} and other places.

Ordinal analyses of ever stronger theories have been obtained over the last 20 years. The strongest systems for which proof-theoretic ordinals have been determined are extensions of $\KP$ augmented by $\Sigma_1$-separation that correspond to subsystems of
second-order arithmetic with comprehension restricted to $\Pi^1_2$ comprehension or iterations thereof (cf. \cite{arai3,r94b,r2005}).
Thus it appears that it is currently impossible to furnish an ordinal analysis of any set theory which has the
power set axiom among its axioms as such a theory would dwarf the strength of second-order arithmetic.
It is, however, possible to relativize the techniques of ordinal analysis developed for
Kripke-Platek set theory to obtain useful information about Power Kripke-Platek set theory as shown in \cite{powerKP}.
The kind of information one can extract concerns  bounds for the transfinite iterations of the power set operation that are provable in the latter
theory. In this paper the method is applied to intuitionistic
 Kripke-Platek set theories $\IKPP$ and $\IKPE$, where
the operation of  powerset,  respectively, exponentiation, is allowed as a primitive in the separation and collection schemata. In particular, $\IKPP$ proves the powerset axiom whereas $\IKPE$ proves the  exponentiation axiom. The latter expresses that given any sets $A$ and $B$, the collection of all
functions from $A$ to $B$ is a set, too. While $\IKPP$ can be dealt with in a similar vein as its classical cousin in \cite{powerKP}, the treatment of $\IKPE$ posed considerable obstacles, one of them being that in the infinitary system terms
cannot be assigned a fixed level in the formal cumulative hierarchy.

  It was outlined in \cite{weak} that the results of this paper are an important tool for showing that several intuitionistic set theories with the collection axiom possess the existence property, i.e., if they prove an existential theorem then a witness can be provably described in the theory. One example for such a theory is intuitionistic Zermelo-Fraenkel set theory with bounded separation. Details will be presented in \cite{rathjen-EP}.

 \subsection{Intuitionistic set theories and the existence property}
 Intuitionistic theories are known to often possess very pleasing metamathematical properties
such as the disjunction property and the numerical existence property. While it is fairly easy to establish these properties
for arithmetical theories and theories with quantification over sets of natural numbers  or Baire space (e.g. second order arithmetic and function arithmetic), set theories with their transfinite hierarchies of sets of sets and the extensionality axiom can pose considerable technical challenges.

\begin{deff}\label{deff1}{\em
 Let $T$ be a theory whose language, $L(T)$,
 encompasses the language of set theory.
  $T$ has the {\em
existence property}, {\bf EP}, if whenever $T\vdash \exists x
A(x)$ holds for a formula $A(x)$ having at most the free
variable $x$, then  there is a formula $C(x)$ with exactly
$x$ free, so that $$T\vdash
\exists!x\,[C(x)\wedge A(x)].$$
}\end{deff}
A theory that does not have the existence property is  intuitionistic Zermelo-Fraenkel set theory, $\IZF$,
formulated with Collection, as was shown in \cite{fs}. Since the version of $\IZF$ with replacement in lieu of collection has the existence property, collection is clearly implicated in the failure of {\bf EP}. This prompted Beeson
 in \cite[IX.1]{beeson} to ask the following question:
\begin{quote}{\em  Does any reasonable set theory with collection have the existence property?}\end{quote}
An important theory that is closely related to Martin-L\"of type theory is Constructive Zermelo-Fraenkel set theory, $\CZF$ (cf. \cite{mar,book}). It has been shown in \cite{swan} that $\CZF$ lacks the $\EP$. While the proof is quite difficult, the failure of $\EP$ is perhaps not that surprising since $\CZF$ features an axiom, Subset Collection, that is a consequence of exponentiation and a choice principle called the presentation axiom.\footnote{Although sometimes even set theories with strong choice principles can have the $\EP$ (see \cite{rt}).} However, in \cite{weak} it was shown that three perhaps more natural versions of $\CZF$ possess the weak existence property, which requires a provably definable inhabited set of witnesses for every existential theorem. Tellingly, neither $\IZF$ nor $\CZF$ has the weak existence property (see \cite{swan} and \cite[Proposition 1.3]{weak}). The three versions of $\CZF$ shown to have the weak existence property   are $\CZF$ without subset collection ($\CZF^-$), $\CZF$ with exponentiation instead of subset collection ($\CZF_{\mathcal E}$), and $\CZF$ augmented by the powerset axiom
($\CZF_{\mathcal P}$). \cite{weak} also provided reductions of these three theories
to  pertaining versions of intuitionistic Kripke-Platek set theories 
in such a  way that if the latter theories possessed the existence property for pertaining syntactically restricted classes
of existential theorems, then the former would possess the full {\bf EP}.
This gave rise to the strategy of first embedding these extended theories of intuitionistic Kripke-Platek set theory into infinitary proof systems and use techniques of ordinal analysis
to remove  those inferences which embody collection. The second step, then,  consists in showing that the infinitary systems have the term existence property, i.e., for each provable existential theorem there is a witnessing term. It will then ensue from the fact that the numerical existence property holds for $\CZF^-$,  $\CZF_{\mathcal E}$, and $\CZF_{\mathcal P}$ that the existence property holds for these theories, too. The numerical existence property
was verified in \cite{tklr} and also holds for $\CZF$, even when augmented by various choice principles \cite{tklrac}

\subsection{Intuitionistic power and exponentiation Kripke-Platek set theories}
We call a formula {\em bounded} or $\Delta_0$ if all its quantifiers are of the form $\forall x\in a$ and $\exists y\in b$.
 The axioms of classical $\KP$ consist of
{\em Extensionality, Pair, Union, Infinity, Bounded Separation}
 $$\exists x\,\forall u\left[u\in x\leftrightarrow(u\in
 a\,\wedge\,\Avarphi(u))\right]$$
 for all bounded formulae $\Avarphi(u)$, {\em Bounded Collection}
 $$\forall x\in a\,\exists y\,\Apsi(x,y)\,\to\,\exists z\,\forall x\in
 a\,\exists y\in z\,\Apsi(x,y)$$
 for all bounded formulae $\Apsi(x,y)$, and
 {\em Set Induction}
 $$\forall x\,\left[(\forall y\in x\,\Avartheta(y))\to
 \Avartheta(x)\right]\,\to\,\forall x\,\Avartheta(x)$$
 for all formulae $\Avartheta(x)$.

 We denote by $\IKP$ the version of $\KP$ where the underlying logic is intuitionistic logic.

We use subset bounded quantifiers $\exists x\subseteq y \;\ldots$ and $\forall x\subseteq y\;\ldots$ as abbreviations
for $\exists x(x\subseteq y\und \ldots)$ and $\forall x(x\subseteq y\to \ldots)$, respectively.

  We call a formula of ${\mathcal L}_\in$ $\Deltaop$ if all its quantifiers
are of the form $Q\,x\subseteq y$ or $Q\,x{\in}y$ where $Q$ is $\forall$ or $\exists$ and $x$ and $y$
are distinct variables.

Let $\funci fxy$ be a acronym for the bounded formula expressing that $f$ is a function with domain $x$ and co-domain $y$.
We use exponentiation bounded quantifiers $\exists f\in \hoch{x}{y}  \;\ldots$ and $\forall f\in \hoch{x}{y}\;\ldots$ as abbreviations
for $\exists f(\funci fxy\und \ldots)$ and $\forall x(\funci fxy\to \ldots)$, respectively.

\begin{deff}\label{kp1} {\em
The $\Delta_0^{\mathcal P}$-formulae are the smallest class of formulae containing the atomic formulae closed under $\wedge,\vee,\to,\neg$ and the quantifiers
  $$\forall x\in a,\;\exists x\in a,\; \forall x\subseteq a,\; \exists x\subseteq a.$$
  The $\Delta_0^{\mathcal E}$-formulae are the smallest class of formulae containing the atomic formulae closed under $\wedge,\vee,\to,\neg$ and the quantifiers
  $$\forall x\in a,\;\exists x\in a,\; \forall f\in \hoch{a}{b},\; \exists f\in \hoch{a}{b}.$$
  }\end{deff}

\begin{deff}\label{kp2}{\em $\IKPE$ has the same language and logic as $\IKP$. Its axioms are the following: Extensionality, Pairing, Union, Infinity, Exponentiation, Set Induction,
       $\Delta_0^{\mathcal E}$-Separation and $\Delta_0^{\mathcal E}$-Collection.

 $\IKPP$ has the same language and logic as $\IKP$. Its axioms are the following: Extensionality, Pairing, Union, Infinity, Powerset, Set Induction,
       $\Delta_0^{\mathcal P}$-Separation and $\Delta_0^{\mathcal P}$-Collection.
       \\[1ex]
  The  transitive classical models of $\IKPP$ have been termed {\bf power admissible} sets in \cite{friedaCount}.
  There is also a close connection between $\IKPP$ and versions of Martin-L\"of type theory with an impredicative
  type of propositions and the calculus of constructions (see \cite{ML}).
}\end{deff}

\begin{rem}\label{kp3}{\em
 Alternatively, $\IKPP$ can be obtained from $\IKP$ by adding  a function symbol $\PC$ for the powerset function as a primitive symbols to the language and the axiom
$$\forall y\,[y\in \PC(x)\leftrightarrow y\subseteq x]$$
and extending the schemes of $\Delta_0$-Separation and Collection to the $\Delta_0$-formulae of this new language.

Likewise, $\IKPE$ can be obtained from $\IKP$ by adding a primitive function symbol $\mathcal{E}$ for the exponentiation and
the pertaining axioms.}
\end{rem}

\begin{deff}\label{kp5} {\em The class of $\Sigma$-formulae in the strict sense, denoted by $\strictSigma$, is the smallest class of formulae containing the $\Delta_0$-formulae closed under $\wedge,\vee$ and the quantifiers
  $$\forall x\in a,\;\exists x\in a,\;\exists x.$$
The $\strictSigma^{\mathcal P}$-formulae are the smallest class of formulae containing the $\Delta_0^{\mathcal P}$-formulae closed under $\wedge,\vee$ and the quantifiers
  $$\forall x\in a,\;\exists x\in a,\; \forall x\subseteq a,\; \exists x\subseteq a,\;\exists x.$$
  The $\strictSigma^{\mathcal E}$-formulae are the smallest class of formulae containing the $\Delta_0^{\mathcal E}$-formulae closed under $\wedge,\vee$ and the quantifiers
  $$\forall x\in a,\;\exists x\in a,\;\forall f\in \hoch{a}{b},\; \exists f\in \hoch{a}{b},\;\exists x.$$
  }\end{deff}
  \noindent
  Later on we shall have occasion to introduce the wider class of $\Sigma$-formulae.
  A crucial property that singles out the $\strictSigma$-formulae is that every such formula is equivalent
  to a $\Sigma_1$-formula provably in $\IKP$, where a $\Sigma_1$-formula is a formula that starts with a single existential quantifier and thereafter continues with a $\Delta_0$-formula. A similar characterization holds for the
  $\strictSigma^{\mathcal P}$ and
  $\strictSigma^{\mathcal E}$-formulae.
  It is standard to show that $\IKPP$ proves $\strictSigma^{\mathcal P}$-Collection and $\IKPE$ proves
  $\strictSigma^{\mathcal E}$-Collection (for details in the case of $\IKP$ see \cite{mar,book}).

\subsection{Outline of the paper} The main objective of the paper is an in-depth presentation of the ordinal analyses
of the three theories $\IKP$,  $\IKPP$, and $\IKPE$. In the case of $\IKP$ this means characterising the {\em proof theoretic ordinal} in the sense of \cite{mi99realm}, this is done in such a way that we can also extract witness terms from cut-free derivations of existential statements in the infinitary system. In the cases of $\IKPP$ and $\IKPE$ we present a type of relativised ordinal analysis similar to that given in \cite{powerKP}, we characterise the number of iterations of the relevant hierarchy of sets (Von Neumann and Exponentiation respectively) that can be proven to exist within the theory. These details cannot be found in the existing research literature.
They are needed for term extraction and, moreover, have to be shown to be formalizable in $\CZF^-$, i.e., constructive Zermelo-Fraenkel
set theory without subset collection. Naturally, the first system to be analyzed is the simplest. Section 2 treats $\IKP$, so is basically a detailed rendering of the intuitionistic version of the classical analysis of $\KP$.
 However, this section is also used later since several parts of it are modular and thus transfer to the stronger systems without any changes.  Section 3 deals with $\IKPP$ and adapts the machinery
of classical \cite{powerKP} to the intuitionistic case. The system $\IKPE$ is the most difficult to handle and
is addressed in Section 4. A particular challenge is provided by the problem of assigning a rank to the formal
terms of the infinitary system. Ultimately this turned out to be impossible and we had to deal with it in a new way
by allowing for rank declarations as extra hypothesis.

A further reason to document these ordinal analyses in the literature is that the second author fosters hopes that this framework can be used to analyze much stronger theories such as ones with full negative separation. It appears that in striking difference to classical theories, negative separation does not block the
the methods of ordinal analysis that bring about the elimination of collection inferences in intuitionistic
derivations.

\subsection*{Acknowledgement} The work of the second author was supported by a Leverhulme Research Fellowship and the Engineering and Physical Sciences Research Council under grant number EP/K023128/1.

\section{The case of $\IKP$}
This section provides a detailed rendering of the ordinal analysis of Kripke-Platek set theory formulated with intuitionistic logic, $\IKP$. This is done in such a way that we are able to extract witness terms from the resulting cut-free derivations of $\Sigma$ sentences in the infinitary system. This results in a proof that $\IKP$ has the existence property for $\Sigma$ sentences, which in conjunction with results in \cite{weak} verifies that $\CZF^-$ has the full existence property. Many of the arguments in this section are modular and transfer over to the stronger systems analysed in subsequent sections with minimal changes.

\subsection{A sequent calculus formulation of \textbf{IKP}}
\begin{definition}{\em The language of \textbf{IKP} consists of free variables $a_0,a_1,...$, bound variables $x_0,x_1,...$, the binary predicate symbol $\in$ and the logical symbols $\neg,\vee, \wedge, \rightarrow, \forall, \exists$ as well as parentheses $),($.\\

\noindent The atomic formulas are those of the form $a\in b$. \\

\noindent The formulas of \textbf{IKP} are defined inductively by:
\begin{description}
\item[i)] All atomic formulas are formulas.
\item[ii)] If $A$ and $B$ are formulas then so are $\neg A$, $A\vee B$, $A\wedge B$ and $A\rightarrow B$.
\item[iii)] If $A(b)$ is a formula in which the bound variable $x$ does not occur, then $\forall x A(x)$, $\exists x A(x)$, $(\forall x\in a)A(x)$ and $(\exists x\in a)A(x)$ are also formulas.
\end{description}
\noindent Quantifiers of the form $\exists x$ and $\forall x$ will be called unbounded and those of the form $(\exists x\in a)$ and $(\forall x\in a)$ will be referred to as bounded quantifiers.\\

\noindent A $\Delta_0$-formula is one in which no unbounded quantifiers appear.\\

\noindent The expression $a=b$ is to be treated as an abbreviation for $(\forall x\in a)(x\in b)\wedge (\forall x\in b)(x\in a)$.\\

\noindent The derivations of \textbf{IKP} take place in a two-sided sequent calculus. The sequents derived are \textit{intuitionistic sequents} of the form $\Gamma\Rightarrow\Delta$ where $\Gamma$ and $\Delta$ are finite sets of formulas and $\Delta$ contains at most one formula. The intended meaning of $\Gamma\Rightarrow\Delta$ is that the conjunction of formulas in $\Gamma$ implies the formula in $\Delta$, or if $\Delta$ is empty, a contradiction. The expressions $\RI\DE$ and $\GA\RI$ are shorthand for $\emptyset\RI\DE$ and $\GA\RI\emptyset$ respectively. $\Gamma,A$ stands for $\Gamma\cup\{A\}$, $\Gamma\Rightarrow A$ stands for $\Gamma\Rightarrow \{A\}$, etc.

\noindent The axioms of \textbf{IKP} are:\\

\begin{tabular} {ll}
{\em {Logical axioms:}}  & $\Gamma, A \Rightarrow A$ \ for every
$\Delta_0$ formula $A$.\\
{\em {Extensionality:}}  & $\Gamma\Rightarrow a=b\wedge B(a)\rightarrow B(b)$ \ for every $\Delta_0$ formula $B(a)$.\\
{\em {Pair:}} & $\Gamma\Rightarrow\exists z(a\in z\wedge b\in z)$.\\
{\em {Union:}} & $\Gamma\Rightarrow\exists z(\forall y\in z)(\forall x\in y)(x\in z)$.\\
{\em {$\Delta_0$-Separation:}} & $\Gamma\Rightarrow\exists y[(\forall x\in y)(x\in a\wedge B(x))\wedge (\forall x\in a)(B(x)\rightarrow x\in y)]$\\
& for every $\Delta_0$-formula $B(a)$.\\
{\em {Set Induction:}} & $\Gamma\Rightarrow\forall x[(\forall y\in x F(y)\rightarrow F(x)]\rightarrow\forall x F(x)$ for any formula $F(a)$.\\
{\em {Infinity:}} & $\Gamma\Rightarrow\exists x[(\exists z\in x)(z\in x)\wedge(\forall y\in x)(\exists z\in x)(y\in z)]$.\\
{\em {$\Delta_0$-Collection:}} & $\Gamma\Rightarrow(\forall x\in a)\exists y G(x,y)\rightarrow\exists z(\forall x\in a)(\exists y\in z)G(x,y)$ for any $\Delta_0$-formula $G$.
\end{tabular}\\[0.4cm]
\noindent The rules of inference are
\begin{prooftree}
\AxiomC{$\Gamma, C\Rightarrow\Delta$}
\LeftLabel{$(\wedge L)$}
\RightLabel{for $C\in\{A,B\}$.}
\UnaryInfC{$\Gamma,A\wedge B\Rightarrow\Delta$}
\AxiomC{$\Gamma\Rightarrow A$}
\AxiomC{$\Gamma\Rightarrow B$}
\LeftLabel{$(\wedge R)$}
\BinaryInfC{$\Gamma\Rightarrow A\wedge B$}
\noLine
\BinaryInfC{}
\end{prooftree}
\begin{prooftree}
\AxiomC{$\Gamma, A\Rightarrow \Delta$}
\AxiomC{$\Gamma, B\Rightarrow \Delta$}
\LeftLabel{$(\vee L)$}
\BinaryInfC{$\Gamma,A\vee B\Rightarrow\Delta$}
\AxiomC{$\Gamma\Rightarrow C$}
\LeftLabel{$(\vee R)$}
\RightLabel{for $C\in\{A,B\}$.}
\UnaryInfC{$\Gamma\Rightarrow A\vee B$}
\noLine
\BinaryInfC{}
\end{prooftree}
\begin{prooftree}
\AxiomC{$\Gamma\Rightarrow A$}
\LeftLabel{$(\neg L)$}
\UnaryInfC{$\Gamma,\neg A\Rightarrow$}
\AxiomC{$\Gamma,A\Rightarrow $}
\LeftLabel{$(\neg R)$}
\UnaryInfC{$\Gamma\Rightarrow \neg A$}
\AxiomC{$\Gamma\Rightarrow $}
\LeftLabel{$(\perp)$}
\UnaryInfC{$\Gamma\Rightarrow A$}
\noLine
 \TrinaryInfC{} 
\end{prooftree}
\begin{prooftree}
\AxiomC{$\Gamma, B\Rightarrow \Delta$}
\AxiomC{$\Gamma\Rightarrow A$}
\LeftLabel{$(\rightarrow L)$}
\BinaryInfC{$\Gamma,A\rightarrow B\Rightarrow\Delta$}
\AxiomC{$\Gamma, A\Rightarrow B$}
\LeftLabel{$(\rightarrow R)$}
\UnaryInfC{$\Gamma\Rightarrow A\rightarrow B$}
\noLine
\BinaryInfC{}
\end{prooftree}
\begin{prooftree}
\Axiom$\fCenter\Gamma, a\in b\wedge F(a)\Rightarrow \Delta$
\LeftLabel{$(b\exists L)$}
\UnaryInf$\fCenter\Gamma,(\exists x\in b)F(x)\Rightarrow\Delta$
\Axiom$\fCenter\Gamma\Rightarrow a\in b\wedge F(a)$
\LeftLabel{$(b\exists R)$}
\UnaryInf$\fCenter\Gamma\Rightarrow(\exists x\in b)F(x)$
\noLine
\BinaryInf$\fCenter$
\end{prooftree}
\begin{prooftree}
\Axiom$\fCenter\Gamma, a\in b\rightarrow F(a)\Rightarrow \Delta$
\LeftLabel{$(b\forall L)$}
\UnaryInf$\fCenter\Gamma,(\forall x\in b)F(x)\Rightarrow\Delta$
\Axiom$\fCenter\Gamma\Rightarrow a\in b\rightarrow F(a)$
\LeftLabel{$(b\forall R)$}
\UnaryInf$\fCenter\Gamma\Rightarrow(\forall x\in b)F(x)$
\noLine
\BinaryInf$\fCenter$
\end{prooftree}
\begin{prooftree}
\Axiom$\fCenter\Gamma, F(a)\Rightarrow \Delta$
\LeftLabel{$(\exists L)$}
\UnaryInf$\fCenter\Gamma,\exists xF(x)\Rightarrow\Delta$
\Axiom$\fCenter\Gamma\Rightarrow F(a)$
\LeftLabel{$(\exists R)$}
\UnaryInf$\fCenter\Gamma\Rightarrow\exists xF(x)$
\noLine
\BinaryInf$\fCenter$
\end{prooftree}
\begin{prooftree}
\Axiom$\fCenter\Gamma, F(a)\Rightarrow \Delta$
\LeftLabel{$(\forall L)$}
\UnaryInf$\fCenter\Gamma,\forall xF(x)\Rightarrow\Delta$
\Axiom$\fCenter\Gamma\Rightarrow F(a)$
\LeftLabel{$(\forall R)$}
\UnaryInf$\fCenter\Gamma\Rightarrow\forall xF(x)$
\noLine
\BinaryInf$\fCenter$
\end{prooftree}
\begin{prooftree}
\AxiomC{$\Gamma\Rightarrow A$}
\AxiomC{$\Gamma, A\Rightarrow \Delta$}
\LeftLabel{(Cut)}
\BinaryInfC{$\Gamma\Rightarrow\Delta$}
\end{prooftree}
In each of the inferences $(b \exists L)$, $(\exists L)$ $(b\forall R)$ and $(\forall R)$ the variable $a$ is forbidden from occurring in the conclusion. Such a variable is known as the \textit{eigenvariable} of the inference.\\

\noindent The \textit{minor formulae} of an inference are those rendered prominently in its premises, the other formulae in the premises will be referred to as \textit{side formulae}. The \textit{principal} formula of an inference is the one rendered prominently in the conclusion. Note that in inferences where the principal formula is on the left, the principal formula can also be a side formula of that inference, when this happens we say that there has been a \textit{contraction}.
}\end{definition}

\subsection{An ordinal notation system}
Given below is a very brief description of how to carry out the construction of a primitive recursive ordinal notation system for the Bachmann-Howard ordinal.
\begin{definition}{\em Let $\Omega$ be a `big' ordinal, eg. $\aleph_1$. (In fact we could have chosen $\omega_1^{CK}$, see \cite{mi93}.) We define the sets $B^\Omega(\alpha)$ and ordinals $\psi_\Omega(\al)$ by transfinite recursion on $\al$ as follows
\begin{eqnarray} B^\Omega (\alpha)& =&\left\{\begin{array}{l}
\mbox{ \it closure of }
\{0,\OO\} \mbox{ \it under: }\\
+, (\xi,\eta\mapsto \varphi\xi\eta)\\
(\xi\longmapsto\psio{\xi})_{\xi<\alpha}
\end{array}\right. \\ \nonumber
&\phantom{AA}& \\
  \psio{\alpha}&\simeq
&\min\{\rho<\Omega:\;\rho\notin B(\alpha)\}.\end{eqnarray}
}
\end{definition}
\noindent It can be shown that $\psio\alpha$ is always defined and thus $\psio\alpha<\Omega.$ Moreover, it can also be shown that $B_\Omega(\al)\cap\OO=\psio\al.$\\

\noindent Let $\varepsilon_{\OO+1}$ be the least orinal $\eta>\OO$ such that $\omega^\eta=\eta$. The set $B^\Omega(\varepsilon_{\OO+1})$ gives rise to a primitive recursive ordinal notation system \cite{bu86} \cite{mi99}. The ordinal $\psio{\varepsilon_{\OO+1}}$ is known as the Bachmann-Howard ordinal. There are many slight variants in the specific ordinal functions used to build up a notation system for this ordinal, for example rather than `closing off' under the $\varphi$ function at each stage, we could have chosen $\omega$-exponentiation, all the systems turn out to be equivalent, in that they eventually `catch-up' with one another and the specific ordinal functions used can be defined in terms of one another. Here the functions $\varphi$ and $\psi$ are chosen as primitive since they correspond to the ordinal operations arising from the two main cut elimination theorems of the next section.

\subsection{The infinitary system \textbf{IRS}$_\Omega$}
The purpose of this section is to define an intuitionistic style infinitary system \textbf{IRS}$_\Omega$ within which we will be able to embed \textbf{IKP} and then extract useful information about \textbf{IKP} derivations.\\

\noindent Henceforth all ordinals will be assumed to belong to the primitive recursive ordinal representation system arising from $B^\Omega(\varepsilon_{\Omega+1})$.\\

\noindent The system is based around the constructible hierarchy up to level $\Omega$.
\begin{align*}
L_0&:=\emptyset\\
L_{\alpha+1}&=\{X\subseteq L_\alpha\:|\:\text{$X$ is definable over $L_\alpha$ in the language of \textbf{IKP} with parameters}\}\\
L_\lambda&:=\bigcup_{\xi<\lambda}L_\xi\quad\text{if $\lambda$ is a limit ordinal}
\end{align*}
\begin{definition}{\em
We inductively define the terms of \textbf{IRS}$_\Omega$. To each term $t$ we also assign an ordinal level $|t|$.
\begin{description}
\item[i)] For each $\alpha<\Omega$, $\mathbb{L}_\alpha$ is a term with $|\mathbb{L}_\alpha|:=\alpha$.
\item[ii)] If $F(a,b_1,...,b_n)$ is a formula of \textbf{IKP} with all free variables indicated and $s_1,...,s_n$ are \textbf{IRS}$_\Omega$ terms with levels less than $\alpha$, then
\begin{equation*}
[x\in\mathbb{L}_\alpha\:|\:F(x,s_1,...,s_n)^{\mathbb{L}_\alpha}]
\end{equation*}
is a term of level $\alpha$. Here $F^{\mathbb{L}_\alpha}$ indicates that all unbounded quantifiers in $F$ are restricted to $\mathbb{L}_\alpha$.
\end{description}
}\end{definition}
\noindent The formulae of \textbf{IRS}$_\Omega$ are of the form $F(s_1,...,s_n)$ where $F(a_1,...,a_n)$ is a formula of \textbf{IKP} with all free variables displayed and $s_1,...,s_n$ are \textbf{IRS}$_\Omega$-terms.\\

\noindent Note that the system \textbf{IRS}$_\Omega$ does not contain free variables. We can think of the universe made up of \textbf{IRS}$_\Omega$-terms as a formal, syntactical version of $L_\Omega$, unbounded quantifiers in \textbf{IRS}$_\Omega$-formulas can be thought of as ranging over $L_\Omega$.\\

\noindent For the remainder of this section \textbf{IRS}$_\Omega$-terms and $\IRS$-formulae will simply be referred to as terms and formulae.\\
\noindent A formula is said to be $\Delta_0$ if it contains no unbounded quantifiers.\\
\noindent We inductively (and simultaneously) define the class of $\Sigma$-formulae and the class of $\Pi$-formulae by the following clauses:\\
(i) Every $\Delta_0$-formula is a $\Sigma$ and a $\Pi$-formula.\\
(ii) If $A$ and $B$ are $\Sigma$-formulae ($\Pi$-formulae) then so are $A\vee B$, $A\wedge B$, $(\forall x\in s)A$, and $(\exists x\in s)A$.\\
(iii) If $A$ is a $\Sigma$-formula ($\Pi$-formula) then so is $\exists x A$ ($\forall x A$).\\
(iv) If $A$ is $\Pi$-formula and $B$ is a $\Sigma$-formula, then $A\to B$ and $\neg A$ are $\Sigma$-formulae
while $B\to A$ and $\neg B$ are $\Pi$-formulae.

 The strict $\Sigma$-formulae of Definition \ref{kp5} are $\Sigma$-formulae but the latter form a larger collection.
It's perhaps worth noting that in classical $\KP$ every $\Sigma$-formula is equivalent to a $\Sigma_1$-formula and every $\Pi$-formula is equivalent to a $\Pi_1$-formula, and therefore both are equivalent to strict versions. This, however, does not extend to $\IKP$. These formulae, though, satisfy well-known persistence properties.

\begin{lem}{\em For a formula $C$ and free variable $a$, let $C^{a}$ be the result of replacing each {\em unbounded} quantifier $\forall x$ and $\exists y$ in $C$ by $\forall x\in a$ and $\exists y\in a$, respectively. Suppose $A$ is a $\Sigma$-formula and $B$ is a $\Pi$-formula. Then the following are provable in $\IKP$:
\begin{itemize}
\item[(i)] $a\subseteq b\,\wedge\, A^{a}\to A^{b}$,
\item[(ii)] $a\subseteq b\,\wedge\, B^{b}\to B^{a}$.
\end{itemize}}
\end{lem}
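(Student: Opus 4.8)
The plan is to argue by a \emph{simultaneous} induction on the build-up of the $\Sigma$-formula $A$ in (i) and the $\Pi$-formula $B$ in (ii), following the clauses (i)--(iv) of the inductive definition of these two classes. The reason the two statements must be established together, rather than separately, is clause (iv): when $A$ has the form $B_1\to A_1$ or $\neg B_1$ with $B_1$ a $\Pi$-formula and $A_1$ a $\Sigma$-formula, persistence of $A$ in the direction $a\subseteq b$ will require persistence of its $\Pi$-subformula $B_1$ in the \emph{opposite} direction, and dually for (ii). Throughout I will use the evident facts that relativisation $(\cdot)^a$ commutes with $\wedge,\vee,\to,\neg$ and with the bounded quantifiers, while on the unbounded quantifiers it acts by $(\exists x\,C)^a\equiv(\exists x\in a)C^a$ and $(\forall x\,C)^a\equiv(\forall x\in a)C^a$.

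The base case is immediate: a $\Delta_0$-formula $C$ contains no unbounded quantifiers, so $C^a$ and $C^b$ are literally the same formula $C$, and each of $a\subseteq b\wedge C\to C$ holds trivially. For the cases in clause (ii) --- the binary connectives $\wedge,\vee$ and the bounded quantifiers $(\forall x\in s)$, $(\exists x\in s)$ --- the argument is routine and uses only the corresponding induction hypothesis: for a disjunction or conjunction one distributes the hypothesis across the connective by an (intuitionistically legitimate) case split, and for a bounded quantifier one fixes the relevant $x\in s$ and applies the induction hypothesis to the instance. Since none of these steps disturbs the bound $s$, the inclusion $a\subseteq b$ is not even needed here.

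The hypothesis $a\subseteq b$ first does real work in clause (iii), for the genuinely unbounded quantifier. In (i), when $A=\exists x\,A_1$ we have $A^a\equiv(\exists x\in a)A_1^a$; from a witness $x\in a$ I pass to $x\in b$ using $a\subseteq b$, lift $A_1^a$ to $A_1^b$ by the induction hypothesis for $A_1$, and conclude $(\exists x\in b)A_1^b\equiv A^b$. Dually, in (ii) with $B=\forall x\,B_1$ one has $B^b\equiv(\forall x\in b)B_1^b$ and $B^a\equiv(\forall x\in a)B_1^a$; given an arbitrary $x\in a$, the inclusion places $x$ in $b$, $(\forall x\in b)B_1^b$ yields $B_1^b$, and the induction hypothesis for $B_1$ brings this down to $B_1^a$.

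The crucial and only genuinely delicate cases are those of clause (iv), where the two halves of the induction interlock. For (i) with $A=B_1\to A_1$ (here $B_1$ a $\Pi$-formula, $A_1$ a $\Sigma$-formula), I assume $a\subseteq b$, the hypothesis $B_1^a\to A_1^a$, and $B_1^b$, and must derive $A_1^b$: from $B_1^b$ and part (ii) applied to $B_1$ I obtain $B_1^a$, then $A_1^a$ by the assumed implication, and finally $A_1^b$ by part (i) applied to $A_1$. The case $A=\neg B_1$ is the same deduction with $A_1$ read as a contradiction. The $\Pi$-cases $B=A_1\to B_1$ and $B=\neg A_1$ are handled symmetrically, now invoking part (i) for the $\Sigma$-antecedent $A_1$ and part (ii) for the $\Pi$-consequent $B_1$. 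The main point to get right, then, is merely to frame the induction so that both halves are available at once when clause (iv) is reached; once that is done, each individual case reduces to a short intuitionistically valid inference.
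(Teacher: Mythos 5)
Your proof is correct and follows exactly the route the paper takes: the paper disposes of this lemma with the single line ``straightforward by simultaneous induction on the buildup of $A$ and $B$,'' and your argument is precisely that simultaneous induction, with the clause-(iv) cases correctly identified as the point where the two halves must interlock (persistence of a $\Pi$-antecedent in the reverse direction). No gaps; your write-up simply makes explicit what the paper leaves to the reader.
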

\begin{proof} Straightforward by simultaneously induction on the buildup of $A$ and $B$. \end{proof}

\begin{abbreviation} {\em For $\diamond$ a binary propositional connective, $A$ a formula and $s,t$ terms with $\lev s<\lev t$ we define the following abbreviation:
\begin{align*}
s\dotin t\diamond A:=&A\quad\quad\quad\quad\text{if $t$ is of the form $\mathbb{L}_\alpha$}\\
:=&B(s)\diamond A\quad\text{if $t$ is of the form $[x\in\mathbb{L}_\alpha\:|\:B(x)]$}
\end{align*}
}\end{abbreviation}

\noindent Like in \textbf{IKP}, derivations in \textbf{IRS}$_\Omega$ take place in a two sided sequent calculus. Intuitionistic sequents of the form $\Gamma\Rightarrow\Delta$ are derived, where $\GA$ and $\DE$ are finite sets of formulae and at most one formula occurs in $\Delta$. $\Gamma,\Delta,\Lambda,...$ will be used as meta variables ranging over finite sets of formulae.\\

\noindent \textbf{IRS}$_\Omega$ has no axioms, although note that some of the rules can have an empty set of premises. The inference rules are as follows:

$$\begin{array}{ll}
(\!\in\!L)_\infty & \frac{\DI\Gamma, \;p\dotin t\wedge r=p\Rightarrow\Delta\;\mbox{ for all $\lev p<\lev t$}}
{\DI\Gamma, r\in t^{\phantom{I}}\Rightarrow\Delta\phantom{AAAAAAAAAAAA} }
\\[0.6cm]

(\!\in\!R)& \frac{\DI\Gamma\Rightarrow s\dotin t \;\wedge\; r =
s_{\phantom{A}}}{\DI\Gamma\Rightarrow s \!\in\! t^{\phantom{I}}}\;\; \mbox{ if } \lev{s}<\lev t\\[0.6cm]

(b\forall L) & \frac{\DI\Gamma,\, s\dotin t\rightarrow A(s)\Rightarrow\Delta}{\DI\Gamma,
(\forall x\in t)A(x)^{\phantom{I}}\Rightarrow\Delta }\;\; \mbox{ if } \lev{s}<\lev t\\[0.6cm]

(b\forall R)_{\infty}& \frac{\DI\Gamma\Rightarrow p\dotin t\rightarrow A(p)\;\mbox{ for all $\lev{p}<\lev t$}} {\DI\Gamma\Rightarrow (\forall x \in
t)A(x)^{\phantom{I}} \phantom{AAAAAAAAA} }\\[0.6cm]

(b\exists L)_{\infty}& \frac{\DI\Gamma,\, p\dotin t\wedge A(p)\Rightarrow\Delta\;\mbox{ for all $\lev{p}<\lev t$}} {\DI\Gamma,\, (\exists x \in
t) A(x)^{\phantom{I}}\Rightarrow\Delta \phantom{AAAAAAAAA} }\\[0.6cm]
\end{array}$$

$$\begin{array}{ll}
(b\exists R) & \frac{\DI\Gamma\Rightarrow s\dotin t\wedge A(s)}{\DI\Gamma\Rightarrow
(\exists x \in t)A(x)^{\phantom{I}} }\;\; \mbox{ if } \lev{s}<\lev t\\[0.6cm]

(\forall L)& {\frac{\DI\Gamma,\; A(s)\Rightarrow\Delta}{\DI\Gamma, \forall x
\,A(x)^{\phantom{I}}\Rightarrow\Delta}}\;\;
\\[0.6cm]
\end{array}$$

$$\begin{array}{ll}
(\forall R)_{\infty}& \frac{\DI\Gamma\Rightarrow A(p)\;\mbox{ for all $p$}}
{\DI\Gamma\Rightarrow \forall xA(x)^{\phantom{I}} \phantom{AAA}  }\\[0.6cm]

(\exists L)_{\infty}& \frac{\DI\Gamma,\; A(p)\Rightarrow\Delta\;\mbox{ for all $p$}}
{\DI\Gamma,\;\exists xA(x)^{\phantom{I}}\Rightarrow\Delta \phantom{AAA}  }\\[0.6cm]

(\exists R)& {\frac{\DI\Gamma\Rightarrow A(s)}{\DI\Gamma\Rightarrow \exists x
A(x)^{\phantom{I}}}}\;\;\\[0.6cm]

\SR &{\frac{\DI\Gamma\;\Rightarrow\; A\quad}{\DI\Gamma\Rightarrow\exists z A^{z}{\phantom{I}}^{\phantom{I}}}}\;\;
\mbox{ if }A\mbox{ is a }\Sigma\mbox{-formula,}\\[0.6cm]
\end{array}$$

\noindent as well as the rules $(\wedge L)$, $(\wedge R)$, ($\vee L)$, $(\vee R)$, $(\neg L)$, $(\neg R)$, $(\perp)$, $(\rightarrow L)$, $(\rightarrow R)$ and (Cut) which are defined identically to the rules of the same name in \textbf{IKP}.\\

\noindent In general we are unable to remove cuts from $\IRS$ derivations, one of the main obstacles to full cut elimination comes from $\SR$ since it breaks the symmetry of the other rules. However we can still perform cut elimination on certain derivations, provided they are of a very uniform kind. Luckily, certain embedded proofs from \textbf{IKP} will be of this form. In order to express uniformity in infinite proofs we draw on \cite{bu93}, where Buchholz developed a powerful method of describing such uniformity, called {\em operator control}.

\begin{deff}\label{operatorr}{\em Let  $$P(ON) = \{ X : X \mbox{ is a set of
ordinals}\}.$$ A class function { $$ {\mathcal H} : P(ON)
\to P(ON)$$} will be called an {\bf operator} if ${\mathcal H}$ satisfies the following conditions for all $X\el P(ON)$:
\begin{description}
\item[1.] $X\subseteq Y\RI\CH(X)\subseteq\CH(Y)$ (monotone)
\item[2.] $X\subseteq\CH(X)$ (inclusive)
\item[3.] $\CH(\CH(X))=\CH(X)$ (idempotent)
\item[4.]
  $0 \in {\mathcal H}(X)$ and $\Omega \in {\mathcal H}(X)$.
  \item[5.] If  $\alpha$ has Cantor normal form
$\omega^{\alpha_1}+\cdots +\omega^{\alpha_n}$, then
$$\alpha\!\in\!{\mathcal H}(X)\quad\text{iff}\quad
\alpha_1,...,\alpha_n\!\in\!{\mathcal H}(X).$$
\end{description}
The latter ensures that $\CH(X)$ will be closed under $+$ and
$\sigma\mapsto\om^{\sigma}$, and decomposition of its members
 into additive and multiplicative
components.\\

\noindent From now on $\al\in\CH$ and $\{\al_1,...,\al_n\}\subseteq\CH$ will be considered shorthand for $\al\in\CH(\emptyset)$ and $\{\al_1,...,\al_n\}\subseteq\CH(\emptyset)$ respectively.
}\end{deff}
\begin{definition}{\em
If $A$ is a formula let
\begin{equation*}
k(A):=\{\alpha\in ON\;:\;\text{the symbol $\mathbb{L}_\alpha$ occurs in $A$, subterms included}\}.
\end{equation*}
Likewise we define
\begin{equation*}
k(\{A_1,...,A_n\}):=k(A_1)\cup...\cup k(A_n)\quad\text{and}\quad k(\Gamma\Rightarrow\Delta):=k(\Gamma)\cup k(\Delta).
\end{equation*}
Now for $\CH$ an arbitrary operator, $s$ a term and $\mathfrak{X}$ a formula, set of formulae or a sequent we define
\begin{align*}
\CH[s](X):=&\CH(X\cup\{|s|\})\\
\CH[\mathfrak{X}](X):=&\CH(X\cup k(\mathfrak{X})).
\end{align*}
}\end{definition}
\begin{lem} {\em Let $\CH$ be an operator, $s$ a term and $\DX$ a formula, set of formulae or sequent.
\begin{itemize}
\item[(i)] For any $X,X'\In P(ON)$, if $X'\subseteq X$ then $\CH(X')\subseteq\CH(X)]$.
\item[(ii)]$\CH[s]$ and  $\CH[\DX]$ are operators.
\item[(iii)] If $k(\DX)\subseteq\CH(\emptyset)$ then $\CH[\DX]=\CH.$
\item[(iv)] If $\lev{s}\in \CH$ then $\CH[s]=\CH.$
\end{itemize}
}\end{lem}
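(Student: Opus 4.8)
The plan is to read off all four items directly from the five operator axioms of Definition~\ref{operatorr}, reducing (ii)--(iv) to a single general observation about maps of the shape $X\mapsto\CH(X\cup S)$ for a fixed set $S$ of ordinals. Item (i) needs no argument at all: it is literally axiom~1 (monotonicity) in the definition of an operator, so I would simply record it as such and move on.

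For (ii), I would first isolate the fact that underlies both cases: \emph{for any fixed $S\subseteq ON$, the map $\mathcal{G}(X):=\CH(X\cup S)$ is again an operator}. Both $\CH[s]$ and $\CH[\DX]$ are instances of this, with $S=\{\lev s\}$ and $S=k(\DX)$ respectively, so proving the general fact once settles both. To verify that $\mathcal{G}$ is an operator I would check the five axioms in turn. Monotonicity is immediate, since $X\subseteq Y$ gives $X\cup S\subseteq Y\cup S$; inclusiveness follows from $X\subseteq X\cup S\subseteq\CH(X\cup S)$; and axioms~4 and~5 for $\mathcal{G}$ are just axioms~4 and~5 for $\CH$ applied to the input $X\cup S$, so they transfer verbatim. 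The only axiom requiring a genuine (if tiny) observation is idempotency, which I expect to be the crux of the whole lemma, everything else being bookkeeping. The point is that $S$ is absorbed once it has been fed into $\CH$: since $S\subseteq X\cup S\subseteq\CH(X\cup S)$ by inclusiveness of $\CH$, we have $\CH(X\cup S)\cup S=\CH(X\cup S)$, whence
\[
\mathcal{G}(\mathcal{G}(X))=\CH\big(\CH(X\cup S)\cup S\big)=\CH\big(\CH(X\cup S)\big)=\CH(X\cup S)=\mathcal{G}(X),
\]
the penultimate equality being idempotency of $\CH$.

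For (iii) and (iv) I would again prove one statement and read off both cases: \emph{if $S\subseteq\CH(\emptyset)$ then $\CH(X\cup S)=\CH(X)$ for every $X$}, applied with $S=k(\DX)$ for (iii) and $S=\{\lev s\}$ for (iv), recalling the convention that $\lev s\in\CH$ abbreviates $\lev s\in\CH(\emptyset)$. One inclusion, $\CH(X)\subseteq\CH(X\cup S)$, is again monotonicity. For the reverse inclusion, monotonicity gives $\CH(\emptyset)\subseteq\CH(X)$, so the hypothesis yields $S\subseteq\CH(\emptyset)\subseteq\CH(X)$; combined with inclusiveness $X\subseteq\CH(X)$ this gives $X\cup S\subseteq\CH(X)$, and applying $\CH$ together with monotonicity and idempotency produces $\CH(X\cup S)\subseteq\CH(\CH(X))=\CH(X)$, as required. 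No step here poses a real obstacle; the entire lemma is a routine consequence of the operator axioms, with the absorption step in the idempotency verification being the only place where one must actually use inclusiveness to cancel the added parameters.
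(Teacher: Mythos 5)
Your proof is correct and complete, and there is nothing to compare it against within the paper: for this lemma the paper gives no argument at all, deferring entirely to \cite{mi99}. Your verification is precisely the routine check from the operator axioms that the citation points to, with the unified treatment of (ii)--(iv) via a fixed parameter set $S$ (and the absorption step $\CH(X\cup S)\cup S=\CH(X\cup S)$ carrying the idempotency case) being a clean economy over checking $\CH[s]$ and $\CH[\DX]$ separately.
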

\begin{proof}
This result is demonstrated in full in \cite{mi99}.
\end{proof}
\noindent We also need to keep track of the complexity of cuts appearing in derivations.
\begin{definition} {\em The \textit{rank} of a term or formula is determined by
\begin{description}
\item[1.] $rk(\mathbb{L}_\alpha):=\omega\cdot\alpha$
\item[2.] $rk([x\in\mathbb{L}_\alpha\;|\;F(x)]):=\text{max}\{\omega\cdot\alpha+1,rk(F(\mathbb{L}_0))+2\}$
\item[3.] $rk(s\in t):=\text{max}\{rk(s)+6,rk(t)+1\}$
\item[4.] $rk(A\wedge B)=rk(A\vee B)=rk(A\rightarrow B):=\text{max}\{rk(A)+1,rk(B)+1\}$
\item[5.] $rk(\neg A):=rk(A)+1$
\item[6.] $rk((\exists x\in t)A(x))=rk((\forall x\in t)A(x)):=\text{max}\{rk(t), rk(F(\mathbb{L}_0))+2\}$
\item[7.] $rk(\exists x A(x))=rk(\forall x A(x)):=\text{max}\{\Omega,rk(F(\mathbb{L}_0))+1\}$
\end{description}
}\end{definition}
\begin{observation}\label{rkobs}{\em \begin{description}
\item[i)]$rk(s)=\omega\cdot |s|+n$ for some $n<\omega$.
\item[ii)] If $A$ is $\Delta_0$, $rk(A)=\omega\cdot\text{max}(k(A))+m$ for some $m<\omega$.
\item[iii)] If $A$ contains unbounded quantifiers $rk(A)=\Omega+m$ for some $m<\omega$.
\item[iv)] $rk(A)<\Omega$ if and only if $A$ is $\Delta_0$.
\end{description}
}\end{observation}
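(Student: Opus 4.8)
The plan is to prove (i) and (ii) by a single simultaneous induction on the complexity of terms and $\Delta_0$-formulae, then to deduce (iii) by induction on formulae containing unbounded quantifiers, and finally to read off (iv) from (ii) and (iii). Throughout I would lean on two elementary facts. First, for any term $s$ one has $|s|=\max(k(s))$: the only $\mathbb{L}$-symbol of maximal level occurring in a term of level $\alpha$ is $\mathbb{L}_\alpha$ itself, since every parameter in a term $[x\in\mathbb{L}_\alpha\,|\,F(x,s_1,\dots,s_n)^{\mathbb{L}_\alpha}]$ has level $<\alpha$. Second, ordinals of the shape $\omega\cdot\gamma+n$ with $n<\omega$ are compared lexicographically, so a finite maximum of such ordinals is again of the form $\omega\cdot(\max_i\gamma_i)+n'$ for some $n'<\omega$; and since $\Omega$ is a cardinal, $\omega\cdot\beta+n<\Omega$ whenever $\beta<\Omega$ and $n<\omega$.

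For the base of the simultaneous induction, $rk(\mathbb{L}_\alpha)=\omega\cdot\alpha=\omega\cdot|\mathbb{L}_\alpha|+0$ gives (i), and an atomic formula $s\in t$ has $rk(s\in t)=\max\{rk(s)+6,rk(t)+1\}$, which by the term case of (i) and the first elementary fact equals $\omega\cdot\max\{|s|,|t|\}+m=\omega\cdot\max(k(s\in t))+m$, giving (ii). For the inductive step of (ii), the propositional cases (clauses 4 and 5) and the bounded-quantifier case (clause 6) are handled uniformly: each defining clause forms a finite maximum of expressions $rk(\cdot)+c$, and by the induction hypothesis every summand has the form $\omega\cdot\gamma+n$ with $\gamma$ one of the relevant $\max(k(\cdot))$ or $|t|=\max(k(t))$ values; the max-fact then collapses this to $\omega\cdot\max(k(A))+m$, using that the $k$ of a compound formula is the union of the $k$'s of its immediate constituents (the spurious level $0$ introduced by substituting $\mathbb{L}_0$ never dominates, since $\max(k(t))\ge 0$). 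The one delicate clause is clause 2, which defines the rank of $[x\in\mathbb{L}_\alpha\,|\,B(x)]$ through the rank of the $\Delta_0$-formula $B(\mathbb{L}_0)$; here one invokes the formula-half of the induction hypothesis to obtain $rk(B(\mathbb{L}_0))=\omega\cdot\alpha+m'$, using $\max(k(B(\mathbb{L}_0)))=\alpha$, whence $rk([x\in\mathbb{L}_\alpha\,|\,B(x)])=\max\{\omega\cdot\alpha+1,\omega\cdot\alpha+m'+2\}=\omega\cdot\alpha+m''$, completing (i).

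With (i) and (ii) in hand, (iii) follows by induction on the build-up of a formula $A$ carrying at least one unbounded quantifier. For the genuinely unbounded clause 7, $rk(\exists xA(x))=\max\{\Omega,rk(A(\mathbb{L}_0))+1\}$: if $A(\mathbb{L}_0)$ is $\Delta_0$ then $rk(A(\mathbb{L}_0))<\Omega$ by (ii) and the second elementary fact, so the maximum is $\Omega=\Omega+0$; otherwise $rk(A(\mathbb{L}_0))=\Omega+m'$ by the induction hypothesis and the maximum is $\Omega+m'+1$. For the remaining clauses (4, 5, 6) at least one constituent carries an unbounded quantifier and hence, by the induction hypothesis, a rank $\Omega+(\text{finite})$, while any $\Delta_0$ constituent has rank $<\Omega$; the maximum is therefore again $\Omega+(\text{finite})$. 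Finally (iv) is immediate: if $A$ is $\Delta_0$ then $rk(A)<\Omega$ by (ii), and if $A$ is not $\Delta_0$ it contains an unbounded quantifier, so $rk(A)=\Omega+m\ge\Omega$ by (iii).

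I expect the one point demanding genuine care, rather than routine bookkeeping, to be the mutual dependence between (i) and (ii) made explicit in clause 2: the rank of a term is defined via the rank of a formula and the rank of a formula via the ranks of terms, so the induction must be run on a single well-founded measure (combined syntactic complexity) under which the matrix $B(\mathbb{L}_0)$ counts as strictly below the term $[x\in\mathbb{L}_\alpha\,|\,B(x)]$, the substitution of the minimal term $\mathbb{L}_0$ for the abstraction variable not increasing complexity. Everything else reduces to the identity $|s|=\max(k(s))$ together with the lexicographic behaviour of ordinals $\omega\cdot\gamma+n$.
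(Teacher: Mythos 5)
Your proof is correct and takes essentially the route the paper intends: the statement appears there as an Observation with no written proof, the implicit argument being exactly your simultaneous induction on the mutually recursive syntactic complexity of terms and formulae, with the identity $|s|=\max(k(s))$ and the lexicographic behaviour of ordinals of the form $\omega\cdot\gamma+n$ (together with $\omega\cdot\beta+n<\Omega$ for $\beta<\Omega$) doing all the bookkeeping. One harmless inaccuracy: in your treatment of clause 2 the claim $\max(k(B(\mathbb{L}_0)))=\alpha$ can fail when $F$ contains no unbounded quantifiers (so the relativization to $\mathbb{L}_\alpha$ introduces no symbol $\mathbb{L}_\alpha$) and all parameters have level $<\alpha$; in that subcase $\max(k(B(\mathbb{L}_0)))<\alpha$, whence $rk(B(\mathbb{L}_0))+2<\omega\cdot\alpha+1$ and your own max-fact still yields $rk([x\in\mathbb{L}_\alpha\,|\,B(x)])=\omega\cdot\alpha+1$, so part (i) survives unchanged.
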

\noindent There is plenty of leeway in defining the actual rank of a formula, basically we need to make sure the following lemma holds.
\begin{lemma}\label{rank}{\em
In every rule of $\IRS$ other than $\SR$ and (Cut), the rank of the minor formulae is strictly less than the rank of the principal formula.
}\end{lemma}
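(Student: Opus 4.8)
The plan is to reduce the statement to a rule-by-rule verification, with one substitution estimate doing all the real work. The propositional rules are immediate from the rank definition: for $(\wedge L),(\wedge R),(\vee L),(\vee R),(\to L),(\to R)$ the principal formula $A\diamond B$ has rank $\max\{rk(A)+1,rk(B)+1\}$, which strictly exceeds each minor formula $A$ or $B$; and for $(\neg L),(\neg R)$ the clause $rk(\neg A)=rk(A)+1$ does the same. The rule $(\perp)$ has no minor formula, so nothing is to be checked there.

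The heart of the matter is the following substitution estimate, which I would prove first by induction on the logical structure of $A$ (treating terms as atomic, so that substituting a closed term for a free variable never increases logical complexity): for every formula $A(x)$ and every term $s$ there is a finite $c$ with
$$rk(A(s))\le\max\{rk(A(\mathbb{L}_0)),\,rk(s)+c\}.$$
In the atomic cases ($x\in v$, $v\in x$, $x\in x$, or $x$ absent) this is read off the clause for $s\in t$ with $c=6$; for the connectives one uses the clause for $\wedge,\vee,\to,\neg$ and passes to $c+1$; for a bounded quantifier $(Qy\in v)A_1$ one applies the clause for bounded quantifiers to $A(s)$, invokes the induction hypothesis on the matrix with the bound variable set to $\mathbb{L}_0$, and absorbs the resulting $rk(s)+c_1+2$ (the subcase $v=x$ is the same, using $rk(\mathbb{L}_0)=0$); for an unbounded quantifier one argues identically with the clause for $\exists x,\forall x$. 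The point of the estimate is its consequence: since $rk(s)+c=\omega\cdot\lev s+n+c<\omega(\lev s+1)$, the summand $rk(s)+c$ lies strictly below $\omega\alpha$ whenever $\lev s<\alpha$, and strictly below $\Omega$ in all cases; hence whenever the principal rank has the shape $\max\{\lambda,\,rk(A(\mathbb{L}_0))+k\}$ with $k\ge1$ and $rk(s)+c<\lambda$, one gets $rk(A(s))<\max\{\lambda,\,rk(A(\mathbb{L}_0))+k\}$, because each argument of the left maximum is strictly below the corresponding argument on the right.

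With this in hand the quantifier rules fall out. For the unbounded rules $(\forall L),(\exists R),(\forall R)_\infty,(\exists L)_\infty$ the principal formula is $\exists xA(x)$ or $\forall xA(x)$, of rank $\max\{\Omega,rk(A(\mathbb{L}_0))+1\}$, and the minor formula is $A(s)$ or $A(p)$; the consequence with $\lambda=\Omega$ (always valid, as $rk(s)+c<\Omega$) gives the inequality. For the bounded rules $(b\forall L),(b\forall R)_\infty,(b\exists L)_\infty,(b\exists R)$ the principal formula is $(Qx\in t)A(x)$ of rank $\max\{rk(t),rk(A(\mathbb{L}_0))+2\}$ and the minor formula is the abbreviation $s\dotin t\diamond A(s)$ with $\lev s<\lev t=:\alpha$. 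Here I would unfold the abbreviation: if $t=\mathbb{L}_\alpha$ the minor is simply $A(s)$ and the consequence with $\lambda=\omega\alpha=rk(t)$ applies; if $t=[x\in\mathbb{L}_\alpha\mid C(x)]$ the minor is $C(s)\diamond A(s)$ of rank $\max\{rk(C(s))+1,rk(A(s))+1\}$, and I bound $rk(C(s)),rk(A(s))$ by the estimate. Then $rk(C(\mathbb{L}_0))+1$ and $rk(A(\mathbb{L}_0))+1$ sit strictly below $rk(C(\mathbb{L}_0))+2\le rk(t)$ and $rk(A(\mathbb{L}_0))+2$, while each $rk(s)+c+1$ sits below $\omega\alpha+1\le rk(t)$; as $\alpha\ge1$ these bound the minor rank strictly below the principal rank.

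Finally, $(\in L)_\infty$ and $(\in R)$ share the minor-formula shape $p\dotin t\wedge r=p$ with $\lev p<\lev t=:\alpha$ and principal formula $r\in t$ (I read the succedent of $(\in R)$ as $r\in t$, matching $(\in L)_\infty$). Expanding the equality through the clauses for $s\in t$ and for bounded quantifiers yields $rk(r=p)=\max\{rk(r)+4,rk(p)+4,9\}$, so after unfolding $\dotin$ the minor rank is a maximum of $rk(r=p)+1$ and, when $t=[x\in\mathbb{L}_\alpha\mid C(x)]$, of $rk(C(p))+1$. Comparing against $rk(r\in t)=\max\{rk(r)+6,rk(t)+1\}$: the $rk(r)$-contributions lose by the gap between $+5$ and $+6$; the $rk(p)$-contributions and the constant $9$ fall below $rk(t)+1\ge\omega\alpha+1$ because $\lev p<\alpha$ and $\alpha\ge1$; and $rk(C(p))+1$ falls below $rk(t)+1\ge rk(C(\mathbb{L}_0))+3$ by the substitution estimate. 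I expect the main obstacle to be exactly this estimate together with the bookkeeping of the finite offsets $c$: one must guarantee that, however deeply $x$ is nested inside $A$, the contribution of the substituted term stays below $\omega(\lev s+1)$, which is precisely what keeps it beneath the $\omega\alpha$ (and $\Omega$) appearing in the principal ranks.
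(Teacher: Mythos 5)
Your proof is correct, and there is a sense in which it cannot take ``the paper's approach'': the paper proves Lemma \ref{rank} only by deferring to Rathjen's unpublished lecture notes \cite{mi99} (``the adapted proof \ldots is similar''), so yours is a genuine filling-in of details the paper omits. Your route is the standard one for RS-style rank lemmas, and the details check out. The substitution estimate $rk(A(s))\le\max\{rk(A(\mathbb{L}_0)),\,rk(s)+c\}$, with $c<\omega$ depending only on the shape of $A$, is exactly the device that makes the seemingly ad hoc offsets $+6,+2,+1$ in the rank definition cohere; your induction correctly exploits the fact that in $\IRS$ a substituted term can only occupy argument slots of the $\IKP$-formula skeleton (never sit nested inside another term), so substitution leaves the logical structure untouched, and the consequence $rk(s)+c<\omega(\lev{s}+1)\le\omega\alpha$ (and $<\Omega$) is what lands each minor-formula contribution strictly under the corresponding component of the principal rank. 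I verified the two computations carrying the load: unfolding $a=b$ as $(\forall x\in a)(x\in b)\wedge(\forall x\in b)(x\in a)$ indeed gives $rk(r=p)=\max\{rk(r)+4,rk(p)+4,9\}$ via $rk((\forall x\in r)(x\in p))=\max\{rk(r),8,rk(p)+3\}$, so against $rk(r\in t)=\max\{rk(r)+6,rk(t)+1\}$ the element side wins by the $+6$ versus $+5$ cushion while the witness side and the constant fall under $rk(t)+1$ since $\lev{p}<\alpha$; and the two-shape unfolding of $s\dotin t\diamond A(s)$ for the bounded rules works as you say, using $rk(C(\mathbb{L}_0))+2\le rk(t)$ and $\omega\alpha+1\le rk(t)$ when $t=[x\in\mathbb{L}_\alpha\mid C(x)]$. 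One pedantic point worth a sentence in a write-up: your appeals to $\alpha\ge1$ are automatically justified, since the existence of a minor formula presupposes a term of level $<\alpha$, forcing $\alpha\ge1$; when $\alpha=0$ the infinitary rules have empty premise sets and the finitary side conditions are unsatisfiable, so the claim is vacuous there.
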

\begin{proof}
This result is demonstrated for a different set of propositional connectives in \cite{mi99}, the adapted proof to the intuitionistic system is similar.
\end{proof}
\begin{deff}[Operator controlled derivability for $\IRS$]{\em
Let $\CH$ be an operator and $\Gamma\Rightarrow\Delta$ an intuitionistic sequent of $\IRS$, we define the relation $\provx{\CH}{\alpha}{\rho}{\Gamma\Rightarrow\Delta}$ by recursion on $\alpha$.

\noindent We require always that $k(\Gamma\Rightarrow\Delta)\cup\{\alpha\}\subseteq\mathcal{H}$, this condition will not be repeated in the inductive clauses for each of the inference rules of $\IRS$ below. The column on the right gives the ordinal requirements for each of the inference rules.
$$ \begin{array}{lcr}

(\in\! L)_{\infty} & \infone{ {\mathcal H}[r]} {\alpha_{r}}
{\rho} {\Gamma, r\dotin t\wedge r= s\Rightarrow\Delta\mbox{ for all }\lev r<\lev t}
 {\mathcal H} {\alpha} {\rho} {\Gamma,s\in t\Rightarrow\Delta}
&\lev{r}\leq\alpha_r < \alpha \\[0.6cm]

(\in R) & \infone{\mathcal H} {\alpha_0} {\rho}
{\Gamma\;\Rightarrow r\dotin t\wedge r=s} {\mathcal H} {\alpha} {\rho} {\GA\Rightarrow s\in t}
&\begin{array}{r}\alpha_{0} < \alpha\\ \lev r<\lev t \\ \lev{r}<\al
\end{array}
\end{array}$$
$$\begin{array}{lcr}
(b\forall L) & \infone{\mathcal H} {\alpha_0} {\rho}
{\GA,s\dotin t\rightarrow A(s)\Rightarrow\Delta} {\mathcal H} {\alpha} {\rho} {\GA,(\forall x\in t)A(x)\Rightarrow\Delta}
&\begin{array}{r}\alpha_{0} < \alpha\\ \lev s<\lev t \\ \lev{s}<\al
\end{array} \\[0.6cm]

(b\forall R)_{\infty} & \infone{ {\mathcal H}[s]} {\alpha_{s}}
{\rho} {\GA\Rightarrow s\dotin t\rightarrow F(s)\mbox{ for all }\lev s<\lev t}
 {\mathcal H} {\alpha} {\rho} {\Gamma\Rightarrow(\forall x\In t)F(x)}
&\lev{s}\leq\alpha_{s} < \alpha
\\[0.6cm]

(b\exists L)_{\infty} & \infone{ {\mathcal H}[s]} {\alpha_{s}}
{\rho} {\GA,s\dotin t\wedge F(s)\Rightarrow\Delta\mbox{ for all }\lev s<\lev t}
 {\mathcal H} {\alpha} {\rho} {\Gamma,(\exists x\In t)F(x)\Rightarrow\Delta}
&\lev{s}\leq\alpha_{s} < \alpha
\\[0.6cm]

(b\exists R) & \infone{\mathcal H} {\alpha_0} {\rho}
{\GA\Rightarrow s\dotin t\wedge A(s)} {\mathcal H} {\alpha} {\rho} {\GA\Rightarrow(\exists x\in t)A(x)}
&\begin{array}{r}\alpha_{0} < \alpha\\ \lev s<\lev t \\ \lev{s}<\al
\end{array} \\[0.6cm]

\end{array}$$
$$\begin{array}{lcr}

(\forall L) & \infone{\mathcal H} {\alpha_0} {\rho}
{\GA, F(s)\Rightarrow\Delta} {\mathcal H} {\alpha} {\rho} {\GA,\forall x F(x)\Rightarrow\Delta}
&\begin{array}{r}\alpha_{0}+1 < \alpha\\  \lev{s}<\al
\end{array} \\[0.6cm]

(\forall R)_{\infty} & \infone{ {\mathcal H}[s]} {\alpha_{s}}
{\rho} {\GA\Rightarrow  F(s)\mbox{ for all } s}
 {\mathcal H} {\alpha} {\rho} {\Gamma\Rightarrow\forall x F(x)}
&\lev{s}<\alpha_{s}+1 < \alpha \\[0.6cm]

(\exists L)_{\infty} & \infone{ {\mathcal H}[s]} {\alpha_{s}}
{\rho} {\GA,  F(s)\Rightarrow\Delta\mbox{ for all } s}
 {\mathcal H} {\alpha} {\rho} {\Gamma,\exists x F(x)\Rightarrow\Delta}
&\lev{s}<\alpha_{s}+1 < \alpha \\[0.6cm]

(\exists R) & \infone{\mathcal H} {\alpha_0} {\rho}
{\GA\Rightarrow F(s)} {\mathcal H} {\alpha} {\rho} {\GA,\Rightarrow\exists x F(x)}
&\begin{array}{r}\alpha_{0}+1 < \alpha\\  \lev{s}<\al
\end{array}
\end{array}$$

$$\begin{array}{lcr}

\Cut & \ifthree{\provx{\mathcal H} {\alpha_0}{\rho}  {\Gamma,
B\Rightarrow\Delta}}{\provx{\mathcal H} {\alpha_1} {\rho}{\Gamma\Rightarrow B}}{\provx
{\mathcal H} {\alpha}{\rho} {\Gamma\Rightarrow\Delta}}
&\begin{array}{r}\alpha_{0} ,\al_1< \alpha\\
rk(B)<\rho\end{array} \\[0.6cm]

\SR &
\infone{\CH}{\al_0}{\rho}{\Gamma\Rightarrow A}{\CH}{\al}{\rho}{\Gamma\Rightarrow
\exists z\,A^z}
&\begin{array}{r} \al_0+1,\Omega<\al\\
 A\text{ is a $\Sigma$-formula}\end{array}
\end{array}$$
Lastly if $\Gamma\Rightarrow\Delta$ is the result of a propositional inference of the form $(\wedge L)$, $(\wedge R)$, ($\vee L)$, $(\vee R)$, $(\neg L)$, $(\neg R)$, $(\perp)$, $(\rightarrow L)$ or $(\rightarrow R)$, with premise(s) $\Gamma_i\Rightarrow\Delta_i$ then from $\provx{\CH}{\alpha_0}{\rho}{\Gamma_i\Rightarrow\Delta_i}$ (for each $i$) we may conclude $\provx{\CH}{\alpha}{\rho}{\Gamma\Rightarrow\Delta}$, provided $\alpha_0<\alpha$.
}\end{deff}
\begin{lemma}[Weakening and Persistence for $\IRS$] \label{weakpers} $\phantom{JJ}$
{\em \begin{description}\item[i)] If  $\Gamma_0\subseteq\GA$, $k(\Gamma)\subseteq\CH$, $\alpha_0\leq\alpha\in\CH$, $\rho_0\leq\rho$ and $\provx{\CH}{\alpha_0}{\rho_0}{\GA_0\Rightarrow\Delta}$ then
\begin{equation*}
\provx{\CH}{\al}{\rho}{\GA\Rightarrow\Delta}.
\end{equation*}
\item[ii)] If $\beta\geq\gamma\in\CH$ and $\provx{\CH}{\al}{\rho}{\GA,(\exists x\in\mathbb{L}_\beta)A(x)\Rightarrow\Delta}$ then $\provx{\CH}{\al}{\rho}{\GA,(\exists x\in\mathbb{L}_\gamma)A(x)\Rightarrow\Delta}.$
\item[iii)] If $\beta\geq\gamma\in\CH$ and $\provx{\CH}{\al}{\rho}{\GA\Rightarrow(\forall x\in\mathbb{L}_\beta)A(x)}$ then $\provx{\CH}{\al}{\rho}{\GA\Rightarrow(\forall x\in\mathbb{L}_\gamma)A(x)}$
\item[iv)] If $\gamma\in\CH$ and  $\provx{\CH}{\al}{\rho}{\GA,\exists xA(x)\Rightarrow\Delta}$ then $\provx{\CH}{\al}{\rho}{\GA,(\exists x\in\mathbb{L}_\gamma)A(x)\Rightarrow\Delta}.$
\item[v)] If $\gamma\in\CH$ and $\provx{\CH}{\al}{\rho}{\GA\RI\forall xA(x)}$ then $\provx{\CH}{\al}{\rho}{\GA\RI(\forall x\in\mathbb{L}_\gamma)A(x)}.$
\end{description}
}\end{lemma}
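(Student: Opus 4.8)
The plan is to prove all five clauses by transfinite induction, clause (i) by induction on $\alpha_0$ and each of (ii)--(v) by a separate induction on $\alpha$, in every case inspecting the last inference of the given derivation. For (i) I would first strengthen the statement to its natural universally quantified form: for \emph{every} $\GA\supseteq\GA_0$ with $k(\GA)\subseteq\CH$, every $\rho\geq\rho_0$, and every $\alpha\geq\alpha_0$ with $\alpha\in\CH$, one has $\provx{\CH}{\al}{\rho}{\GA\RI\Delta}$. Suppose the last rule $R$ of $\provx{\CH}{\alpha_0}{\rho_0}{\GA_0\RI\Delta}$ has premises with operators $\CH$ or $\CH[s]$, bounds $\alpha_{0,i}$, and antecedents $\GA_0\cup M_i$ (with $M_i$ the minor formulae). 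I add $\GA\setminus\GA_0$ to every antecedent throughout, apply the induction hypothesis to each premise keeping its own bound $\alpha_{0,i}$ and raising its rank to $\rho$ (legitimate since $\alpha_{0,i}<\alpha_0$ and $\{\alpha_{0,i}\}\subseteq\CH[s]$ by the premise's own side condition), and then reapply $R$ with conclusion bound $\alpha$ and rank $\rho$. Every ordinal proviso survives from $\alpha_{0,i}<\alpha_0\leq\alpha$ (respectively $\alpha_{0,i}+1<\alpha_0\leq\alpha$) and, for $\Cut$, from $rk(B)<\rho_0\leq\rho$; the operator-control side conditions survive from $k(\GA)\subseteq\CH\subseteq\CH[s]$ together with $\alpha\in\CH$. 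The rules with a possibly empty premise set, such as $(b\forall R)_{\infty}$ at $\mathbb{L}_0$, furnish the base cases and are immediate once the side condition is checked.

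For the persistence clauses (ii)--(v) I would argue by induction on $\alpha$, splitting on whether the distinguished quantified formula is principal in the last inference. If it is merely a side formula it occurs in all premises, so I apply the induction hypothesis to each premise (the hypothesis $\gamma\in\CH$ transfers since $\gamma\in\CH\subseteq\CH[s]$) and reapply the same rule unchanged. The essential cases are those in which the distinguished formula is principal, and here the decisive simplification is the abbreviation convention: the bound is always a pure level term $\mathbb{L}_\beta$ or $\mathbb{L}_\gamma$, so $p\dotin\mathbb{L}_\beta\wedge A(p)=A(p)$ and $p\dotin\mathbb{L}_\beta\to A(p)=A(p)$, and consequently no trace of $\beta$ leaks into the premises.

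Concretely, in (ii) a final $(b\exists L)_\infty$ with principal formula $(\exists x\in\mathbb{L}_\beta)A(x)$ has premises $\provx{\CH[p]}{\alpha_p}{\rho}{\GA,A(p)\RI\Delta}$ with $\lev p\leq\alpha_p<\alpha$ for all $\lev p<\beta$. Since $\gamma\leq\beta$, the terms $p$ with $\lev p<\gamma$ form a subcollection, so precisely the premises needed for an application of $(b\exists L)_\infty$ with principal formula $(\exists x\in\mathbb{L}_\gamma)A(x)$ are already present with identical ordinal data; invoking $\gamma\in\CH$ to meet the side condition yields $\provx{\CH}{\al}{\rho}{\GA,(\exists x\in\mathbb{L}_\gamma)A(x)\RI\Delta}$. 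Clause (iii) is dual via $(b\forall R)_\infty$. For (iv) and (v) the principal case converts an unbounded rule into a bounded one: a final $(\exists L)_\infty$ (respectively $(\forall R)_\infty$) supplies premises for all terms $s$ subject to $\lev s<\alpha_s+1<\alpha$, and selecting those with $\lev s<\gamma$ gives exactly the premises of $(b\exists L)_\infty$ (respectively $(b\forall R)_\infty$), whose proviso $\lev s\leq\alpha_s<\alpha$ is literally the same inequality as $\lev s<\alpha_s+1<\alpha$.

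I expect the only real friction to be bookkeeping rather than any new idea: verifying rule by rule that the constraints $k(\cdot)\cup\{\cdot\}\subseteq\CH$ and the ordinal inequalities are preserved, and treating the contraction subcase of (ii) and (iv)—where the distinguished formula reappears among the side formulae of the premises—by first applying the induction hypothesis to those premises before reapplying the rule. Beyond the collapse $p\dotin\mathbb{L}_\beta\diamond A(p)=A(p)$ for level terms and the observation that shrinking or bounding the quantifier merely restricts to a subcollection of the available premises, no further machinery is required, and the argument follows the pattern of the corresponding result in \cite{mi99}.
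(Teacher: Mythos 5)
Your proposal is correct and follows essentially the same route as the paper: induction on the derivation bound, applying the induction hypothesis and reapplying the last rule when the distinguished formula is not principal, and in the principal cases exploiting the collapse $p\dotin\mathbb{L}_\beta\diamond A(p)\equiv A(p)$ for pure level terms to restrict the premises to the subcollection with $\lev p<\gamma$ (handling contraction by applying the induction hypothesis to the premises first), exactly as in the paper's treatment of (ii) and (v). One cosmetic point: the provisos $\lev s<\alpha_s+1<\alpha$ and $\lev s\leq\alpha_s<\alpha$ are not literally identical (the former is strictly stronger in the second conjunct), but the implication runs in the direction you need, so nothing breaks.
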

\begin{proof}
We show i), ii) and v).\\

\noindent i) is proved by an easy induction on $\al$.\\

\noindent ii) Is also proved using induction on $\al$, suppose $\beta\geq\gamma\in\CH(\emptyset)$ and $\provx{\CH}{\al}{\rho}{\GA,(\exists x\in\mathbb{L}_\beta)A(x)\Rightarrow\Delta}$. If $(\exists x\in\mathbb{L}_\beta)A(x)$ was not the principal formula of the last inference or the last inference was not $(b\exists L)_\infty$ then we may apply the induction hypotheses to it's premises followed by the same inference again. So suppose $(\exists x\in\mathbb{L}_\beta)A(x)$ was the principal formula of the last inference which was $(b\exists L)_\infty$, so we have
\begin{equation*}
\provx{\CH[s]}{\al_s}{\rho}{\GA,(\exists x\in\mathbb{L}_\beta)A(x),A(s)\Rightarrow\Delta}\quad\text{for all $|s|<\beta$, with $\alpha_s<\alpha$.}
\end{equation*}
From the induction hypothesis we obtain
\begin{equation*}
\provx{\CH[s]}{\al_s}{\rho}{\GA,(\exists x\in\mathbb{L}_\gamma)A(x),A(s)\Rightarrow\Delta}\quad\text{for all $|s|<\beta$, with $\alpha_s<\alpha$}
\end{equation*}
but since $\beta\geq\gamma$ this also holds for all $|s|<\gamma$. So by another application of $(b\exists L)_\infty$ we get
\begin{equation*}
\provx{\CH}{\al}{\rho}{\GA,(\exists x\in\mathbb{L}_\gamma)A(x)\Rightarrow\Delta}
\end{equation*}
as required.\\

\noindent For v) suppose $\provx{\CH}{\al}{\rho}{\Gamma\Rightarrow\forall xA(x)}$. The interesting case is where $\forall xA(x)$ was the principal formula of the last inference, which was $(\forall R)_\infty$, in this case we have
\begin{equation*}
\provx{\CH[s]}{\alpha_s}{\rho}{\Gamma\Rightarrow A(s)}\quad\text{for all $s$, with $\lev s<\alpha_s+1<\alpha$}.
\end{equation*}
So taking just the cases where $\lev s<\gamma$ and noting that in these cases $A(s)\equiv s\dotin\mathbb{L}_\gamma\rightarrow A(s)$, we may apply $(b\forall R)$ to obtain
\begin{equation*}
\provx{\CH}{\al}{\rho}{\GA\Rightarrow(\forall x\in\mathbb{L}_\gamma)A(x)}
\end{equation*}
as required.\\

\noindent The proofs of iii) and iv) may be carried out in a similar manner to those above.
\end{proof}
\subsection{Cut elimination for $\IRS$}
\begin{lemma}[Inversions of $\IRS$]\label{inversion}
\begin{description}
\item[i)] If $\provx{\mathcal{H}}{\alpha}{\rho}{\Gamma,A\wedge B\Rightarrow\Delta}$ then $\provx{\mathcal{H}}{\alpha}{\rho}{\Gamma,A, B\Rightarrow\Delta}$.
\item[ii)] If $\provx{\mathcal{H}}{\alpha}{\rho}{\Gamma\Rightarrow A\wedge B}$ then $\provx{\mathcal{H}}{\alpha}{\rho}{\Gamma\Rightarrow A}$ and $\provx{\mathcal{H}}{\alpha}{\rho}{\Gamma\Rightarrow B}$.
\item[iii)] If $\provx{\mathcal{H}}{\alpha}{\rho}{\Gamma,A\vee B\Rightarrow\Delta}$ then $\provx{\mathcal{H}}{\alpha}{\rho}{\Gamma,A\Rightarrow\Delta}$ and $\provx{\mathcal{H}}{\alpha}{\rho}{\Gamma,B\Rightarrow\Delta}$.
\item[iv)]  If $\provx{\mathcal{H}}{\alpha}{\rho}{\Gamma,A\rightarrow B\Rightarrow\Delta}$ then $\provx{\mathcal{H}}{\alpha}{\rho}{\Gamma,B\Rightarrow\Delta}$.
\item[v)] If $\provx{\mathcal{H}}{\alpha}{\rho}{\Gamma\Rightarrow A\rightarrow B}$ then $\provx{\mathcal{H}}{\alpha}{\rho}{\Gamma,A\Rightarrow B}$.
\item[vi)] If $\provx{\mathcal{H}}{\alpha}{\rho}{\Gamma\Rightarrow\neg A}$ then $\provx{\mathcal{H}}{\alpha}{\rho}{\Gamma, A\Rightarrow}$.
\item[vii)] If $\provx{\mathcal{H}}{\alpha}{\rho}{\Gamma,r\in t\Rightarrow\Delta}$ then $\provx{\mathcal{H}[s]}{\alpha}{\rho}{\Gamma,s\dotin t\wedge r=s\Rightarrow\Delta}$ for all $|s|<|t|$.
\item[viii)] If $\provx{\mathcal{H}}{\alpha}{\rho}{\Gamma,(\exists x\in t)A(x)\Rightarrow\Delta}$ then $\provx{\mathcal{H}[s]}{\alpha}{\rho}{\Gamma,s\dotin t\wedge A(s)\Rightarrow\Delta}$ for all $|s|<|t|$.
\item[ix)] If $\provx{\mathcal{H}}{\alpha}{\rho}{\Gamma\Rightarrow(\forall x\in t)A(x)}$ then $\provx{\mathcal{H}[s]}{\alpha}{\rho}{\Gamma\Rightarrow s\dotin t\rightarrow A(s)}$ for all $|s|<|t|$.
\item[x)] If $\provx{\mathcal{H}}{\alpha}{\rho}{\Gamma,\exists x A(x)\Rightarrow\Delta}$ then $\provx{\mathcal{H}[s]}{\alpha}{\rho}{\Gamma, A(s)\Rightarrow\Delta}$ for all $s$.
\item[xi)] If $\provx{\mathcal{H}}{\alpha}{\rho}{\Gamma,\Rightarrow\forall x A(x)}$ then $\provx{\mathcal{H}[s]}{\alpha}{\rho}{\Gamma\Rightarrow A(s)}$ for all $s$.
\end{description}
\end{lemma}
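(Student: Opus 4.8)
The plan is to prove each of the clauses (i)--(xi) by a separate transfinite induction on the derivation height $\alpha$, following the standard inversion pattern for operator-controlled derivations. In every case one inspects the last inference $I$ of the hypothesised derivation $\provx{\CH}{\alpha}{\rho}{\ldots}$ and splits according to whether the formula to be inverted is the \emph{principal} formula of $I$ or only a \emph{side} formula. Since in the non-principal case of a given clause one only ever invokes the induction hypothesis of that same clause, the eleven statements may be handled independently rather than by a single simultaneous induction.

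If the inverted formula is not principal in $I$, it survives unchanged in every premise of $I$, so I would apply the induction hypothesis of the relevant clause to each premise and then reapply $I$. The only things to verify are the side-conditions: inversion preserves the height, so the ordinal bounds attached to $I$ are untouched; the cut-rank $\rho$ is unchanged; and the $k$-condition for the new conclusion is inherited because, e.g., replacing $A\wedge B$ by the pair $A,B$ leaves the set $k(\cdot)$ of occurring ordinals unchanged. For the infinitary rules $(\in L)_\infty$, $(b\forall R)_\infty$, $(b\exists L)_\infty$, $(\forall R)_\infty$, $(\exists L)_\infty$ one additionally uses that the operators commute, $\CH[p][s]=\CH[s][p]$ (both equal $\CH(\,\cdot\cup\{|p|,|s|\})$), so that after the induction hypothesis has turned the controlling operator of each premise into $\CH'[s]$, the premises remain correctly controlled for a fresh application of $I$. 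A surviving contracted copy of the principal formula among the side formulae is absorbed by one further appeal to the same induction hypothesis.

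If the inverted formula is principal in $I$, then $I$ is exactly the introduction rule matching the connective or quantifier, and the desired sequent can be read off a premise. For (i)--(vi) this is immediate: in (i) the last inference is $(\wedge L)$ with premise $\Gamma, C\Rightarrow\Delta$ for some $C\in\{A,B\}$ at height $\alpha_0<\alpha$, and Lemma \ref{weakpers}(i) lets me weaken in the missing conjunct and raise the height to $\alpha$ (legitimate since $k(A\wedge B)\subseteq\CH$ yields $k(A),k(B)\subseteq\CH$, and $\alpha\in\CH$); clause (iv) simply discards the right premise $\Gamma\Rightarrow A$ of $(\to L)$ and retains $\Gamma,B\Rightarrow\Delta$; and (ii),(iii),(v),(vi) return a premise verbatim after a height weakening. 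For (vii)--(xi), principality means $I$ is the corresponding infinitary rule, and the premise corresponding to the very term $s$ named in the conclusion is the sequent we want: in (vii), $I=(\in L)_\infty$ supplies $\provx{\CH[s]}{\alpha_s}{\rho}{\Gamma, s\dotin t\wedge r=s\Rightarrow\Delta}$ with $\alpha_s<\alpha$ for each $|s|<|t|$, and weakening the height from $\alpha_s$ to $\alpha$ closes the case; (viii)--(xi) are identical, reading off the $s$-indexed premise of $(b\exists L)_\infty$, $(b\forall R)_\infty$, $(\exists L)_\infty$, $(\forall R)_\infty$ respectively.

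The step needing the most care is conceptual rather than computational: the interaction of the term-indexing with operator control in (vii)--(xi). The conclusions there carry the enlarged operator $\CH[s]$ precisely because the inversion exposes a subformula mentioning the new term $s$, and one must check that this enlargement is exactly what the infinitary rules of $\IRS$ already license for their own premises. This is the crux, and it is clean once observed: the $s$-indexed premises of those rules are $\CH[s]$-controlled by the very definition of the calculus, so both the principal cases (which reuse such a premise directly) and the non-principal cases (whose final $k$-condition again reduces to $k(s)\subseteq\CH[s]$) inherit exactly the control already presupposed by the system. Granting this matching, every clause collapses to the two-case template above, and the residual verifications -- the strict ordinal inequalities, the invariance of $\rho$, and the commuting of operators -- are routine.
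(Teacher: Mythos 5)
Your strategy is the same as the paper's: induction on $\alpha$ with a principal/side-formula case split, weakening (Lemma \ref{weakpers} i)) to restore the height bound $\alpha$, an extra appeal to the induction hypothesis to absorb a contracted copy of the principal formula, and, for clauses (vii)--(xi), the observation that the $\CH[s]$-control demanded in the conclusion is exactly the control the infinitary rules already impose on their own $s$-indexed premises; the paper's worked cases iv), vi) and x) follow precisely this pattern. There is, however, one concrete hole in your case analysis: the rule $(\perp)$. For the succedent-side clauses (ii), (v), (vi), (ix) and (xi), the last inference may be $(\perp)$, with premise $\Gamma\Rightarrow$ and conclusion $\Gamma\Rightarrow C$, where $C$ is the formula being inverted. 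Here $C$ is neither a side formula persisting into the premise (it does not occur there at all) nor the principal formula of its matching right-introduction rule, so neither branch of your two-case template applies and the induction, as you have set it up, stalls. This is not hypothetical: the paper's own proof of clause vi) devotes a separate subcase to $(\perp)$. The repair is immediate --- from $\provx{\CH}{\alpha_0}{\rho}{\Gamma\Rightarrow}$ with $\alpha_0<\alpha$ one weakens (adding $A$ to the antecedent for (v) and (vi), and passing from $\CH$ to $\CH[s]$ for (ix) and (xi), which is harmless since derivability is monotone in the controlling operator) and then, where the target succedent is nonempty, re-applies $(\perp)$ --- but the case must be stated for the induction to go through.

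A smaller imprecision of the same kind: your claim that a non-principal formula ``survives unchanged in every premise'' of the last inference fails for the minor premise $\Gamma\Rightarrow A$ of $(\rightarrow L)$ and the right premise $\Gamma\Rightarrow B$ of (Cut) whenever the inverted formula sits in the succedent $\Delta$. There one applies the induction hypothesis only to the premise carrying $\Delta$, adjusts the other premise by weakening (e.g.\ adding the new antecedent formula in clause (v)), and re-applies the rule. Your toolkit of weakening plus reapplication handles this without difficulty, but the blanket formulation as written is not literally correct, and a careful execution of your plan would have to split these two-premise rules accordingly.
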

\begin{proof}
All proofs are by induction on $\alpha$, we treat three of the most interesting cases, iv), vi) and x).\\

\noindent iv) Suppose $\provx{\CH}{\al}{\rho}{\GA,A\rightarrow B\RI\DE}$, If the last inference was not $(\rightarrow L)$ or the principal formula of that inference was not $A\rightarrow B$ we may apply the induction hypothesis to the premises of that inference, followed by the same inference again. Now suppose $A\rightarrow B$ was the principal formula of the last inference, which was $(\rightarrow L)$. Thus, with the possible use of weakening, we have
\begin{align*}
\tag{1}&\provx{\CH}{\al_0}{\rho}{\GA,B,A\rightarrow B\RI\DE}&\text{for some $\al_0<\al$.}\\
\tag{2}&\provx{\CH}{\al_1}{\rho}{\GA,A\rightarrow B\RI A}&\text{for some $\al_1<\al$.}
\end{align*}
Applying the induction hypothesis to (1) yields $\provx{\CH}{\al_0}{\rho}{\GA,B\RI\DE}$ from which we may obtain the desired result by weakening.\\

\noindent vi) Now suppose $\provx{\CH}{\al}{\rho}{\GA\RI\neg A}$. If $\neg A$ was the principal formula of the last inference which was $(\neg R)$ then we have $\provx{\CH}{\al_0}{\rho}{\GA,A\RI}$ for some $\al_0<\al$, from which we may obtain the desired result by weakening.
If the last inference was $(\perp)$ then  $\provx{\CH}{\al_0}{\rho}{\GA\RI}$ for some $\al_0<\al$, from which we also obtain the desired result by weakening.
If the last inference was different to $(\neg R)$ and $(\perp)$ we may apply the induction hypothesis to the premises of that inference followed by the same inference again.\\

\noindent x) Finally suppose $\provx{\CH}{\al}{\rho}{\GA,\exists x A(x)\RI\DE}$. If $\exists xA(x)$ was the principal formula of the last inference which was $(\exists L)_\infty$ then we have
\begin{equation*}
\CH[s]\;\prov{\al_s}{\rho}{\GA,\exists xA(x),A(s)\RI\DE}\quad\text{with $\al_s<\al$ for each $s$.}
\end{equation*}
Applying the induction hypothesis yields
\begin{equation*}
\CH[s]\;\prov{\al_s}{\rho}{\GA,A(s)\RI\DE}
\end{equation*}
from which we get the desired result by weakening. If $\exists xA(x)$ was not the principal formula of the last inference or the last inference was not $(\exists L)_\infty$ then we may apply the induction hypothesis to the premises of that inference followed by the same inference again.
\end{proof}
\begin{lemma}[Reduction for $\IRS$]\label{reduction}{\em
Let $\rho:=rk(C)\neq\Omega$
\begin{equation*}
\text{If}\quad\provx{\mathcal{H}}{\alpha}{\rho}{\Gamma,C\Rightarrow\Delta}\quad\text{and}\quad\provx{\mathcal{H}}{\beta}{\rho}{\Xi\Rightarrow C}\quad\text{then}\quad\provx{\mathcal{H}}{\alpha\#\alpha\#\beta\#\beta}{\rho}{\Gamma,\Xi\Rightarrow\Delta}
\end{equation*}
}\end{lemma}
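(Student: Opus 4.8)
The plan is to argue by induction on $\beta$, the height of the derivation of $\Xi\Rightarrow C$, branching on its last inference; the derivation of $\Gamma,C\Rightarrow\Delta$ is kept fixed and fed in as needed. The organising observation is that an intuitionistic succedent carries at most one formula, so the unique formula on the right of $\Xi\Rightarrow C$ is exactly $C$, and hence any right inference closing this derivation must have $C$ as its principal formula. In particular, if the last inference were $\SR$ its principal formula $\exists z\,A^{z}$ would be $C$; but $A^{\mathbb{L}_0}$ is $\Delta_0$, so by the rank clauses $rk(\exists z\,A^{z})=\max\{\Omega,rk(A^{\mathbb{L}_0})+1\}=\Omega$. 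The hypothesis $\rho=rk(C)\neq\Omega$ therefore guarantees that $\SR$ is never the last inference with $C$ principal, so every principal right inference we must reduce is one of the ordinary rules, each of which has an associated clause in the Inversion Lemma \ref{inversion}. This is precisely the point at which $rk(C)\neq\Omega$ is used.

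I would first dispatch the case where $C$ is not genuinely analysed by the last inference of $\Xi\Rightarrow C$: either $C$ is introduced by $(\perp)$, or $C$ is a side formula of a left rule or a $\Cut$. In the $(\perp)$ case the premise is $\Xi\Rightarrow$, so $\Xi$ is already refutable and $\Gamma,\Xi\Rightarrow\Delta$ follows by Weakening (Lemma \ref{weakpers}) and $(\perp)$. Otherwise the last inference leaves $C$ on the right, and I apply the induction hypothesis to each premise $\provx{\CH}{\beta_0}{\rho}{\Xi_i\Rightarrow C}$ (with $\beta_0<\beta$) against $\provx{\CH}{\alpha}{\rho}{\Gamma,C\Rightarrow\Delta}$, obtaining $\provx{\CH}{\alpha\#\alpha\#\beta_0\#\beta_0}{\rho}{\Gamma,\Xi_i\Rightarrow\Delta}$, and then reinstate the very same inference; any auxiliary premise of $(\rightarrow L)$ or of a $\Cut$ whose succedent is not $C$ is simply carried along after weakening, and the re-applied $\Cut$ is still admissible since its cut formula had rank $<\rho$. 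The resulting bound $\alpha\#\alpha\#\beta_0\#\beta_0+1\leq\alpha\#\alpha\#\beta\#\beta$ uses only strict monotonicity of the natural sum.

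The core of the argument is the case where $C$ is principal. When the introducing right rule has an \emph{invertible} left counterpart, i.e.\ $C$ is $A\wedge B$, $A\vee B$, $r\in t$, $(\exists x\in t)A(x)$ or $\exists x\,A(x)$, I apply the matching clause of Lemma \ref{inversion} to $\Gamma,C\Rightarrow\Delta$ to expose the immediate subformula(e) of $C$ on the left, and cut these against the premises delivered by the last inference of $\Xi\Rightarrow C$. By Lemma \ref{rank} each such subformula has rank strictly below $rk(C)=\rho$, so every new cut is admissible at cut-rank $\rho$. For example, if $C=A\wedge B$ arose by $(\wedge R)$ from $\Xi\Rightarrow A$ and $\Xi\Rightarrow B$, then inversion gives $\Gamma,A,B\Rightarrow\Delta$ and two successive cuts, on $B$ and then on $A$, yield $\Gamma,\Xi\Rightarrow\Delta$; the disjunction, bounded-existential, membership and existential cases are analogous but use a single cut, since the right rule there fixed a disjunct, a witness, or a term. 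One must keep the operator control honest: the relevant inversions produce $\CH[s]$, but the witness $s$ already occurs in the corresponding premise of the right inference, so $\lev s\in\CH$, whence $\CH[s]=\CH$ and the cut takes place over a common operator.

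The main obstacle is the complementary principal case, where the right rule introducing $C$ has a \emph{non-invertible} left counterpart: $C$ is $A\rightarrow B$, $\neg A$, $\forall x\,A(x)$ or $(\forall x\in t)A(x)$. Here inversion alone fails — for $C=A\rightarrow B$, Lemma \ref{inversion}(iv) recovers $\Gamma,B\Rightarrow\Delta$ but discards the premise $\Gamma\Rightarrow A$ that a genuine $(\rightarrow L)$ would supply, and there is no inversion for $\forall$, $\neg$ or $\rightarrow$ on the left. I would therefore run a subsidiary induction on $\alpha$ down the derivation of $\Gamma,C\Rightarrow\Delta$, commuting past every inference in which $C$ is a side formula (re-applying it, including any $\SR$), until I reach the $(\rightarrow L)$, $(\neg L)$, $(\forall L)$ or $(b\forall L)$ step at which $C$ is principal. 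There the two premises of the left rule are combined with the premise(s) of the right rule by cuts on the immediate subformulae of $C$: for $A\rightarrow B$ one cuts $\Gamma\Rightarrow A$ against $\Xi,A\Rightarrow B$ to get $\Gamma,\Xi\Rightarrow B$ and then against $\Gamma,B\Rightarrow\Delta$; for $\forall x\,A(x)$ one cuts the chosen instance $\Gamma,A(s)\Rightarrow\Delta$ against the matching premise $\Xi\Rightarrow A(s)$ of $(\forall R)_\infty$, all cuts kept below $\rho$ by Lemma \ref{rank}. The delicate part throughout is the ordinal bookkeeping: it is exactly the two nested cuts together with the two interleaved recursions (on $\beta$ in the non-principal case, on $\alpha$ in these non-invertible principal cases) that force the symmetric bound $\alpha\#\alpha\#\beta\#\beta$, and checking that every step stays within it relies on commutativity, associativity and strict monotonicity of $\#$ together with $\beta_0<\beta\Rightarrow\beta_0\#\beta_0<\beta\#\beta$.
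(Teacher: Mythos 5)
Your proposal is correct and takes essentially the same approach as the paper: the paper likewise uses $rk(C)\neq\Omega$ to rule out $\SR$ as the last inference of $\Xi\Rightarrow C$, splits on whether $C$ is principal, and when it is, combines the matching premises of the left and right rules via cuts on the immediate subformulae of $C$ (admissible below $\rho$ by Lemma \ref{rank}), invoking Lemma \ref{inversion} exactly where you do in the $A\rightarrow B$ case. The only difference is organizational: the paper runs a single simultaneous induction on $\alpha\#\alpha\#\beta\#\beta$ where you nest an outer induction on $\beta$ with a subsidiary one on $\alpha$ (plus somewhat heavier use of inversion in the invertible principal cases), which is equivalent bookkeeping yielding the same bound.
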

\begin{proof}
The proof is by induction on $\alpha\#\alpha\#\beta\#\beta$. Assume that
\begin{align*}
&\rho:=rk(C)\neq\Omega\tag{1}\\
&\provx{\mathcal{H}}{\alpha}{\rho}{\Gamma,C\Rightarrow\Delta}\tag{2}\\
&\provx{\mathcal{H}}{\beta}{\rho}{\Xi\Rightarrow C}\tag{3}
\end{align*}
If $C$ was not the principal formula of the last inference in both derivations then we may simply use the induction hypothesis on the premises and then the final inference again.\\

\noindent So suppose $C$ was the principal formula of the last inference in both (2) and (3). Note also that (1) gives us immediately that the last inference in (3) was \emph{not} $\SR$.\\

\noindent We treat three of the most interesting cases.\\

\noindent Case 1. Suppose $C\equiv r\in t$, thus we have
\begin{equation*}\tag{4}
\provx{\mathcal{H}[p]}{\alpha_p}{\rho}{\Gamma,C,p\dotin t\wedge r=p\Rightarrow\Delta}\quad\text{for all $\lev p<\lev t$ with $\alpha_p<\alpha$}
\end{equation*}
and
\begin{equation*}\tag{5}
\provx{\mathcal{H}}{\beta_0}{\rho}{\Xi\Rightarrow s\dotin t\wedge r=s}\quad\text{for some $\lev s<\lev t$ with $\beta_0<\beta$.}
\end{equation*}
Now from (5) we know that $\lev s\in\mathcal{H}$ and thus from (4) we have
\begin{equation*}\tag{6}
\provx{\mathcal{H}}{\alpha_s}{\rho}{\Gamma,C,s\dotin t\wedge r=s\Rightarrow\Delta}.
\end{equation*}
Applying the induction hypothesis to (6) and (3) yields
\begin{equation*}\tag{7}
\provx{\mathcal{H}}{\alpha_s\#\alpha_s\#\beta\#\beta}{\rho}{\Xi,\Gamma, s\dotin t\wedge r=s\Rightarrow\Delta}.
\end{equation*}
Finally a (Cut) applied to (5) and (7) yields
\begin{equation*}
\provx{\mathcal{H}}{\alpha\#\alpha\#\beta\#\beta}{\rho}{\Xi,\Gamma\Rightarrow\Delta}
\end{equation*}
as required.\\

\noindent Case 2. Now suppose $C\equiv(\forall x\in t)F(x)$ so we have
\begin{equation*}\tag{8}
\provx{\mathcal{H}}{\alpha_0}{\rho}{\Gamma,C,s\dotin t\rightarrow F(s)\Rightarrow\Delta}\quad\text{for some $\lev s<\lev t$ with $\alpha_0,\lev s<\alpha$}
\end{equation*}
and
\begin{equation*}\tag{9}
\provx{\mathcal{H}[p]}{\beta_p}{\rho}{\Xi\Rightarrow p\dotin t\rightarrow F(p)}\quad\text{for all $\lev p<\lev t$ with $\beta_p<\beta$.}
\end{equation*}
Now (8) gives $s\in\mathcal{H}$ and thus from (9) we have
\begin{equation*}\tag{10}
\provx{\mathcal{H}}{\beta_s}{\rho}{\Xi\Rightarrow s\dotin t\rightarrow F(s)}.
\end{equation*}
Applying the induction hypothesis to (3) and (8) gives
\begin{equation*}\tag{11}
\provx{\mathcal{H}}{\alpha_0\#\alpha_0\#\beta\#\beta}{\rho}{\Gamma,\Xi,s\dotin t\rightarrow F(s)\Rightarrow\Delta}.
\end{equation*}
Finally (Cut) applied to (10) and (11) yields the desired result.\\

\noindent Case 3. Now suppose $C\equiv A\rightarrow B$ so we have
\begin{align*}
&\provx{\mathcal{H}}{\alpha_0}{\rho}{\Gamma,C\Rightarrow A}\quad\text{with $\alpha_0<\alpha$}\tag{12}\\
&\provx{\mathcal{H}}{\alpha_1}{\rho}{\Gamma,C,B\Rightarrow \Delta}\quad\text{with $\alpha_1<\alpha$}\tag{13}\\
&\provx{\mathcal{H}}{\beta_0}{\rho}{\Xi,A\Rightarrow B}\quad\text{with $\beta_0<\beta$}\tag{14}
\end{align*}
The induction hypothesis applied to (12) and (3) gives
\begin{equation*}\tag{15}
\provx{\mathcal{H}}{\alpha_0\#\alpha_0\#\beta\#\beta}{\rho}{\Gamma,\Xi\Rightarrow A}.
\end{equation*}
Now an application of (Cut) to (15) and (14) gives
\begin{equation*}\tag{16}
\provx{\mathcal{H}}{\alpha_0\#\alpha\#\beta\#\beta}{\rho}{\Gamma,\Xi\Rightarrow B}.
\end{equation*}
Inversion (Lemma \ref{inversion} iv)) applied to (13) gives
\begin{equation*}\tag{17}
\provx{\mathcal{H}}{\alpha_1}{\rho}{\Gamma,B\Rightarrow\Delta}.
\end{equation*}
Finally a single application of (Cut) to (16) and (17) yields the desired result.
\end{proof}
\begin{theorem}[Predicative Cut Elimination for $\IRS$]\label{predce}{\em
Suppose $\provx{\mathcal{H}}{\alpha}{\rho+\omega^\beta}{\Gamma\Rightarrow\Delta}$, where $\Omega\notin[\rho,\rho+\omega^\beta)$ and $\beta\in\mathcal{H}$, then
\begin{equation*}
\provx{\mathcal{H}}{\varphi\beta\alpha}{\rho}{\Gamma\Rightarrow\Delta}.
\end{equation*}
Provided $\mathcal{H}$ is an operator closed under $\varphi$.
}\end{theorem}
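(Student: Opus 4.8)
The plan is to reduce the statement to a single-step elimination lemma and then run a main induction on $\beta$. First I would establish the auxiliary claim that if $\provx{\CH}{\al}{\rho+1}{\GA\Rightarrow\DE}$ with $\rho\neq\Omega$ and $\CH$ closed under $\sigma\mapsto\omega^\sigma$, then $\provx{\CH}{\omega^\al}{\rho}{\GA\Rightarrow\DE}$, proved by induction on $\al$. One inspects the last inference: if it is not a $\Cut$ whose cut formula $C$ satisfies $rk(C)=\rho$, one applies the induction hypothesis to the premisses and reapplies the same inference, the bound $\omega^{\al_i}<\omega^\al$ and the operator closure from Definition \ref{operatorr} taking care of the ordinal side conditions (including $\Omega<\omega^\al$ for a possible $\SR$, since $\Omega<\al$ forces $\Omega<\omega^\al$). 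In the critical case the premisses give $\provx{\CH}{\omega^{\al_0}}{\rho}{\GA,C\Rightarrow\DE}$ and $\provx{\CH}{\omega^{\al_1}}{\rho}{\GA\Rightarrow C}$, and the Reduction Lemma \ref{reduction} (applicable since $rk(C)=\rho\neq\Omega$) yields $\GA\Rightarrow\DE$ at cut-rank $\rho$ and height $\omega^{\al_0}\#\omega^{\al_0}\#\omega^{\al_1}\#\omega^{\al_1}<\omega^\al$.

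With this lemma I would prove the theorem by induction on $\beta$. For $\beta=0$ we have $\omega^\beta=1$, so the auxiliary claim gives $\provx{\CH}{\omega^\al}{\rho}{\GA\Rightarrow\DE}$, and $\omega^\al=\varphi 0\al$. For $\beta>0$ I would argue by a subsidiary induction on $\al$. If the last inference is not a $\Cut$ whose cut formula has rank $\geq\rho$, one applies the subsidiary hypothesis to the premisses and reapplies the inference; as $\beta>0$ the ordinal $\varphi\beta\al$ is an epsilon number, hence closed under natural sum $\#$, so $\varphi\beta\al_i<\varphi\beta\al$ and every ordinal side condition is preserved.

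The heart of the argument is the critical case, where the last inference is a $\Cut$ whose cut formula $C$ has $rk(C)=\de$ with $\rho\leq\de<\rho+\omega^\beta$; the hypothesis $\Omega\notin[\rho,\rho+\omega^\beta)$ guarantees $\de\neq\Omega$. Applying the subsidiary hypothesis to the two premisses produces derivations of $\GA,C\Rightarrow\DE$ and $\GA\Rightarrow C$ at cut-rank $\rho$ with heights $<\varphi\beta\al$; weakening the cut-rank bound up to $\de$ and invoking Reduction \ref{reduction} at rank $\de$ gives $\provx{\CH}{\eta}{\de}{\GA\Rightarrow\DE}$ with $\eta<\varphi\beta\al$. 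It then remains to push the cut-rank back down from $\de$ to $\rho$. Writing $\de=\rho+\ga$ with $\ga<\omega^\beta$ and peeling off the least term $\omega^{\ga_1}$ of the Cantor normal form of $\ga$ (so $\ga_1<\beta$), the main induction hypothesis with exponent $\ga_1$ lowers the cut-rank by $\omega^{\ga_1}$ at the cost of one application of $\varphi\ga_1$; iterating over the finitely many normal-form terms reaches cut-rank $\rho$. Since $\ga_1<\beta$ and $\varphi\beta\al$ is a common fixed point of all $\varphi\ga_1$ with $\ga_1<\beta$, each such application keeps the height below $\varphi\beta\al$, so the final height is $\leq\varphi\beta\al$.

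I expect this final descent to be the main obstacle. Reduction returns a derivation whose cut-rank equals $rk(C)=\de$, which generally exceeds $\rho$, so one cannot simply conclude by a single cut at rank $\rho$; the real work is reducing $\de$ back to $\rho$ through repeated use of the main induction hypothesis on the normal-form exponents of $\ga$, while checking that the nested Veblen terms never escape $\varphi\beta\al$. This step relies on closure of $\CH$ under $\varphi$ together with $\beta\in\CH$ (so that all intermediate bases and exponents remain in $\CH$ and the displayed bounds are well formed), and on $\Omega\notin[\rho,\rho+\omega^\beta)$ to keep every eliminated rank distinct from $\Omega$, ensuring that Reduction \ref{reduction} stays applicable throughout.
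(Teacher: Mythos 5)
Your proposal is correct and takes essentially the paper's route: a main induction on $\beta$ with a subsidiary induction on $\alpha$, the Reduction Lemma \ref{reduction} handling the critical cut (applicable because $\Omega\notin[\rho,\rho+\omega^\beta)$ keeps every eliminated rank distinct from $\Omega$), and repeated appeals to the main induction hypothesis that stay below $\varphi\beta\alpha$ precisely because $\varphi\beta\alpha$ is a common fixed point of the functions $\varphi\gamma(\cdot)$ for $\gamma<\beta$. Your only deviations are cosmetic — you isolate $\beta=0$ as a separate one-step lemma (which is how the paper itself organizes the analogous results for $\IRSOP$ and $\IRSOE$) and you lower the cut-rank along the exact Cantor normal form of $rk(C)-\rho$ after applying Reduction at rank $rk(C)$, whereas the paper re-applies the cut below an inflated bound $\rho+n\cdot\omega^{\beta_0}$ for a single $\beta_0<\beta$ and peels off the $n$ copies of $\omega^{\beta_0}$ via $\varphi\beta_0(\varphi\beta\alpha)=\varphi\beta\alpha$ — and the membership in $\mathcal{H}$ of the auxiliary exponents, which you at least flag, is left implicit in the paper's argument as well.
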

\begin{proof}
The proof is by main induction on $\beta$ and subsidiary induction on $\alpha$.\\

\noindent If the last inference was anything other than (Cut) or was a cut of rank $<\rho$ then we may apply the subsidiary induction hypothesis to the premises and then re-apply the final inference. So suppose the last inference was (Cut) with cut-formula $C$ and $rk(C)\in[\rho,\rho+\omega^\beta)$. So we have
\begin{align*}
&\tag{1}\provx{\mathcal{H}}{\alpha_0}{\rho+\omega^\beta}{\Gamma,C\Rightarrow\Delta}\quad\text{with $\al_0<\al$,}\\
&\tag{2}\provx{\mathcal{H}}{\alpha_1}{\rho+\omega^\beta}{\Gamma\Rightarrow C}\quad\text{with $\al_1<\al$.}
\end{align*}
First  applying the subsidiary induction hypothesis to (1) and (2) gives
\begin{align*}
&\provx{\mathcal{H}}{\varphi\beta\alpha_0}{\rho}{\Gamma,C\Rightarrow\Delta}\tag{3}\\
&\provx{\mathcal{H}}{\varphi\beta\alpha_1}{\rho}{\Gamma,\Rightarrow C}.\tag{4}
\end{align*}
Now if $rk(C)=\rho$ then one application of the Reduction Lemma \ref{reduction} gives the desired result (once it is noted that $\varphi\beta\alpha_0\#\varphi\beta\alpha_0\#\varphi\beta\alpha_1\#\varphi\beta\alpha_1<\varphi\beta\alpha$ since $\varphi\beta\alpha$ is additive principal.)\\

\noindent Now let us suppose that $\beta>0$ and $rk(C)\in(\rho,\rho+\omega^\beta)$. Since $rk(C)<\rho+\omega^\beta$ we can find some $\beta_0<\beta$ and some $n<\omega$ such that
\begin{equation*}
rk(C)<\rho+n\cdot\omega^{\beta_0}.
\end{equation*}
Thus applying (Cut) to (3) and (4) gives
\begin{equation*}
\provx{\mathcal{H}}{\varphi\beta\alpha}{\rho+n\cdot\omega^{\beta_0}}{\Gamma\Rightarrow\Delta\,.}
\end{equation*}
Now by the main induction hypothesis we obtain
\begin{equation*}
\provx{\mathcal{H}}{\varphi\beta_0 (\varphi\beta\alpha)}{\rho+(n-1)\cdot\omega^{\beta_0}}{\Gamma\Rightarrow\Delta}
\end{equation*}
But by definition $\varphi\beta\alpha$ is a fixed point of the function $\varphi\beta_0 (\cdot)$ ie. $\varphi\beta_0 (\varphi\beta\alpha)=\varphi\beta\alpha$, so we have
\begin{equation*}
\provx{\mathcal{H}}{\varphi\beta\alpha}{\rho+(n-1)\cdot\omega^{\beta_0}}{\Gamma\Rightarrow\Delta\,.}
\end{equation*}
From here a further $(n - 1)$ applications of the main induction hypothesis yields the desired result.
\end{proof}
\begin{lemma}[Boundedness for $\IRS$]\label{boundedness}{\em
If $A$ is a $\Sigma$-formula, $B$ is a $\Pi$-formula, $\alpha\leq\beta<\Omega$ and $\beta\in\mathcal{H}$ then
\begin{description}
\item[i)] If $\provx{\mathcal{H}}{\alpha}{\rho}{\Gamma\Rightarrow A}$ then $\provx{\mathcal{H}}{\alpha}{\rho}{\Gamma\Rightarrow A^{\mathbb{L}_\beta}}$.
\item[ii)] If $\provx{\mathcal{H}}{\alpha}{\rho}{\Gamma,B\Rightarrow\Delta}$ then $\provx{\mathcal{H}}{\alpha}{\rho}{\Gamma,B^{\mathbb{L}_\beta}\Rightarrow\Delta\,.}$
\end{description}
}\end{lemma}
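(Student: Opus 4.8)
The plan is to prove (i) and (ii) by a single simultaneous induction on $\alpha$, holding $\beta$, $\mathcal{H}$ and $\rho$ fixed and examining the last inference of the given derivation. Before the case analysis I would record three structural facts that make the whole thing go through. First, the hypothesis $\alpha\leq\beta<\Omega$ forces $\alpha<\Omega$, and since the reflection rule $\SR$ carries the side condition $\Omega<\alpha$, no inference of height $<\Omega$ can be an instance of $\SR$; as heights strictly decrease when passing to premises, $\SR$ simply never occurs anywhere in the induction, so the one genuinely asymmetric rule is eliminated for free. Second, cuts must be handled (we are not assuming cut-freeness), but they cause no trouble: the distinguished $\Sigma$- (resp.\ $\Pi$-) formula sits in $\Delta$ (resp.\ in $\Gamma$) and the cut formula is merely carried along in the context, so one applies the induction hypothesis to the relevant premise(s) and reapplies $\Cut$ with the cut formula, hence its rank, untouched. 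Third, the base case is when the distinguished formula is $\Delta_0$, for then $A^{\mathbb{L}_\beta}\equiv A$ and $B^{\mathbb{L}_\beta}\equiv B$ and there is nothing to do.

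For the inductive step I would separate the non-principal from the principal cases. In the non-principal case the distinguished formula is a side formula of every premise, so I apply the induction hypothesis to each premise and reapply the same rule; this is legitimate because each premise has height $\alpha_0<\alpha\leq\beta$ and, for the infinitary rules, $\beta$ still lies in the controlling operator since $\beta\in\mathcal{H}=\mathcal{H}(\emptyset)\subseteq\mathcal{H}[s](\emptyset)$ by monotonicity. The purely propositional principal cases $(\wedge R),(\vee R),(\vee L),(\wedge L),(\neg R),(\perp)$ are routine. The $\rightarrow$ and $\neg$ cases are the ones that cross between (i) and (ii) — for instance, proving (i) for $C\rightarrow D$ (with $C$ a $\Pi$- and $D$ a $\Sigma$-formula) means restricting the antecedent $C$ via (ii) and the consequent $D$ via (i) on the single premise $\Gamma,C\Rightarrow D$, chaining the two induction hypotheses — and this mutual dependence is exactly why the two statements must be proved together. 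For the bounded-quantifier principal cases $(b\exists R),(b\forall R)_\infty,(b\forall L),(b\exists L)_\infty$ I only restrict the matrix $F$: the prefix $s\dotin t$ is $\Delta_0$ and hence invariant under $(\cdot)^{\mathbb{L}_\beta}$, so $(s\dotin t\diamond F(s))^{\mathbb{L}_\beta}=s\dotin t\diamond F(s)^{\mathbb{L}_\beta}$ and the identical inference with the identical ordinal bound reconstructs the conclusion.

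The heart of the lemma is the pair of principal cases in which an unbounded quantifier is genuinely bounded: $(\exists R)$ for (i) and, dually, $(\forall L)$ for (ii). For (i), suppose the derivation ends in $(\exists R)$ from a premise $\provx{\mathcal{H}}{\alpha_0}{\rho}{\Gamma\Rightarrow F(s)}$ with witness term $s$ satisfying $\lev s<\alpha$; since $\exists x\,F(x)$ is a $\Sigma$-formula so is $F(s)$, and the induction hypothesis delivers $\provx{\mathcal{H}}{\alpha_0}{\rho}{\Gamma\Rightarrow F(s)^{\mathbb{L}_\beta}}$. Because $t=\mathbb{L}_\beta$ is of the form $\mathbb{L}_\gamma$, the abbreviation $s\dotin t\wedge C$ collapses to $C$, and because $\lev s<\alpha\leq\beta=\lev{\mathbb{L}_\beta}$ a single application of $(b\exists R)$ gives $\provx{\mathcal{H}}{\alpha}{\rho}{\Gamma\Rightarrow(\exists x\in\mathbb{L}_\beta)F(x)^{\mathbb{L}_\beta}}$, which is precisely $\Gamma\Rightarrow(\exists x\,F(x))^{\mathbb{L}_\beta}$ with the bound preserved exactly at $\alpha$. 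The $(\forall L)$ case for (ii) is the verbatim dual via $(b\forall L)$ on the instance term $s$. I expect the only real difficulty to be bookkeeping rather than ideas: one must check at every step that the side conditions ($\lev s<\beta$, $\alpha_0<\alpha$) and the operator-control requirements $k(\Gamma\Rightarrow\Delta)\cup\{\alpha\}\subseteq\mathcal{H}$ survive each transformation, the restricted sequent satisfying them because $\beta\in\mathcal{H}$. The sole place the hypothesis $\alpha\leq\beta$ is used is in these two key cases, to upgrade $\lev s<\alpha$ to $\lev s<\beta$ so that $s$ is an admissible bounded witness (resp.\ instance) over $\mathbb{L}_\beta$.
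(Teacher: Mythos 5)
Your proposal is correct and takes essentially the same route as the paper's own proof: a simultaneous induction on $\alpha$ in which $\SR$ (and the unbounded $(\forall R)_\infty$/$(\exists L)_\infty$ rules) are ruled out by $\alpha<\Omega$, side-formula and propositional cases are routine with the $\to$/$\neg$ cases crossing between (i) and (ii) exactly as in the paper's $(\to L)$ case, and the crucial $(\exists R)$ step (dually $(\forall L)$) uses the collapse of $s\dotin\mathbb{L}_\beta\wedge F(s)^{\mathbb{L}_\beta}$ to $F(s)^{\mathbb{L}_\beta}$ together with $\lev s<\alpha\leq\beta$ to reapply $(b\exists R)$ at the same height $\alpha$. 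Your additional bookkeeping remarks (cuts left untouched, $\beta\in\mathcal{H}\subseteq\mathcal{H}[s](\emptyset)$ for operator control) are accurate refinements of points the paper leaves implicit.
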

\begin{proof}
Suppose that $\provx{\mathcal{H}}{\alpha}{\rho}{\Gamma\Rightarrow A}$. We prove i) and ii) simultaneously by induction on $\alpha$.\\

\noindent First we look at i). If $A$ was not the principal formula of the last inference then we can simply use the induction hypothesis. If $A$ was the principal formula of the last inference and is of the form $\neg C$, $C\wedge D$, $C\vee D$, $C\rightarrow D$, $(\exists x\in t)C(x)$ or $(\forall x\in t)C(x)$, then again the result follows immediately from the induction hypothesis.\\

\noindent Note that the last inference cannot have been $(\forall R)_\infty$ or $\SR$ since $A$ is a $\Sigma$ formula and $\alpha<\Omega$. \\

\noindent So suppose $A\equiv \exists xC(x)$ and
\begin{equation*}
\provx{\mathcal{H}}{\alpha_0}{\rho}{\Gamma\Rightarrow C(s)}
\end{equation*}
for some $\alpha_0, \lev s<\alpha$. By induction hypothesis we obtain
\begin{equation*}
\provx{\mathcal{H}}{\alpha_0}{\rho}{\Gamma\Rightarrow C(s)^{\mathbb{L}_\beta}}.
\end{equation*}
Which may be written as
\begin{equation*}
\provx{\mathcal{H}}{\alpha_0}{\rho}{\Gamma\Rightarrow s\dotin\mathbb{L}_\beta\wedge C(s)^{\mathbb{L}_\beta}}.
\end{equation*}
Now an application of $(b\exists R)$ yields the desired result.\\

\noindent As part ii) is proved in a similar manner, we shall confine ourselves to the  case when the last inference was $(\to L)$ with principal formula $B$.
 So suppose $B\equiv C\to D$ and
\begin{equation*}
\provx{\mathcal{H}}{\alpha_0}{\rho}{\Gamma\Rightarrow C}\;\;\mbox{ and }\;\; \provx{\mathcal{H}}{\alpha_0}{\rho}{\Gamma,D\Rightarrow \Delta}
\end{equation*}
for some $\alpha_0, \lev s<\alpha$. By induction hypothesis we obtain
\begin{equation*}
\provx{\mathcal{H}}{\alpha_0}{\rho}{\Gamma\Rightarrow C^{\mathbb{L}_\beta}}\mbox{ and }
\provx{\mathcal{H}}{\alpha_0}{\rho}{\Gamma,D^{\mathbb{L}_\beta}\Rightarrow\Delta\,.}\end{equation*}
Now an application of $(\to L)$ yields the desired result.
\end{proof}

\begin{definition}\label{opdef}{\em
For each $\eta$ we define
\begin{align*}
\CH_\eta:&\:\mathcal{P}(B^\OO(\varepsilon_{\OO+1}))\longrightarrow\mathcal{P}(B^\OO(\varepsilon_{\OO+1}))\\[0.1cm]
\CH_\eta (X):&=\bigcap\{B^\OO(\alpha)\;:\;X\subseteq B^\OO(\alpha)\:\text{and}\:\eta<\alpha\}.
\end{align*}
}\end{definition}
\begin{lemma}\label{op1}{\em \begin{description}\item[i)] $\CH_\eta$ is an operator for each $\eta$.
\item[ii)] $\eta<\eta^\prime\:\Longrightarrow\CH_\eta(X)\subseteq\CH_{\eta^\prime}(X)$.
\item[iii)] If $\xi\in\CH_\eta (X)$ and $\xi<\eta+1$ then $\psio\xi\in\CH_\eta (X)$.
\end{description}
}\end{lemma}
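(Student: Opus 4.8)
The plan is to reduce everything to standard closure properties of the Skolem hulls $B^\OO(\alpha)$ together with elementary manipulations of intersections; no genuinely new combinatorics is required. Throughout I would write $\mathcal{F}_\eta(X):=\{B^\OO(\alpha):X\subseteq B^\OO(\alpha)\text{ and }\eta<\alpha\}$, so that $\CH_\eta(X)=\bigcap\mathcal{F}_\eta(X)$. First I would record that this family is nonempty for the arguments in play, since $X\subseteq B^\OO(\varepsilon_{\OO+1})$ and $\eta<\varepsilon_{\OO+1}$ give $B^\OO(\varepsilon_{\OO+1})\in\mathcal{F}_\eta(X)$; this simultaneously makes $\CH_\eta(X)$ well typed as a subset of $B^\OO(\varepsilon_{\OO+1})$ and justifies all the intersection arguments below.

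For i) I would verify the five operator axioms. Monotonicity (axiom 1) rests on the single observation that $X\subseteq Y$ forces $\mathcal{F}_\eta(Y)\subseteq\mathcal{F}_\eta(X)$ (a hull containing $Y$ contains $X$), whence intersecting over the smaller family gives $\CH_\eta(X)=\bigcap\mathcal{F}_\eta(X)\subseteq\bigcap\mathcal{F}_\eta(Y)=\CH_\eta(Y)$. Inclusiveness (axiom 2) is immediate since every member of $\mathcal{F}_\eta(X)$ contains $X$. Axiom 4 holds because $\{0,\OO\}\subseteq B^\OO(\alpha)$ for every $\alpha$, so $0,\OO\in\CH_\eta(X)$. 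For idempotence (axiom 3) the key step is $\mathcal{F}_\eta(\CH_\eta(X))=\mathcal{F}_\eta(X)$: the inclusion $\subseteq$ comes from $X\subseteq\CH_\eta(X)$, while $\supseteq$ holds because each $B^\OO(\alpha)\in\mathcal{F}_\eta(X)$ already satisfies $\CH_\eta(X)=\bigcap\mathcal{F}_\eta(X)\subseteq B^\OO(\alpha)$; the two families being equal, the intersections agree and $\CH_\eta(\CH_\eta(X))=\CH_\eta(X)$.

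The one axiom carrying real content is 5, and here I would lift the corresponding property of the individual hulls. For $\alpha$ with Cantor normal form $\omega^{\alpha_1}+\dots+\omega^{\alpha_n}$ one has $\alpha\in B^\OO(\beta)$ iff $\alpha_1,\dots,\alpha_n\in B^\OO(\beta)$: the direction from the components back to $\alpha$ uses closure under $+$ and $\sigma\mapsto\omega^\sigma=\varphi 0\sigma$, while the forward (decomposition) direction is the standard fact about these hulls, proved by induction on their generation or quoted from \cite{mi99}. Since this equivalence is uniform in $\beta$, intersecting over $\mathcal{F}_\eta(X)$ yields $\alpha\in\CH_\eta(X)$ iff all $\alpha_i\in\CH_\eta(X)$, which is axiom 5. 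I expect this to be the only nontrivial ingredient of the whole lemma.

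Part ii) is purely formal: if $\eta<\eta'$ then $\eta'<\alpha$ is a stronger constraint than $\eta<\alpha$, so $\mathcal{F}_{\eta'}(X)\subseteq\mathcal{F}_\eta(X)$ and hence $\CH_\eta(X)=\bigcap\mathcal{F}_\eta(X)\subseteq\bigcap\mathcal{F}_{\eta'}(X)=\CH_{\eta'}(X)$. For part iii) I would argue pointwise over the family. Fix $B^\OO(\alpha)\in\mathcal{F}_\eta(X)$, so $\eta<\alpha$; from $\xi<\eta+1$ we get $\xi\leq\eta<\alpha$, hence $\xi<\alpha$, and also $\xi\in\CH_\eta(X)\subseteq B^\OO(\alpha)$. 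Thus $\xi$ is an element of $B^\OO(\alpha)$ with $\xi<\alpha$, so the closure clause $(\zeta\mapsto\psio\zeta)_{\zeta<\alpha}$ in the definition of $B^\OO(\alpha)$ gives $\psio\xi\in B^\OO(\alpha)$. As $\alpha$ was arbitrary, $\psio\xi\in\bigcap\mathcal{F}_\eta(X)=\CH_\eta(X)$. The only subtlety to watch here is that the $\psi_\Omega$-closure applies precisely to arguments that both belong to the hull and are $<\alpha$, and I have verified both conditions; this is exactly where the hypothesis $\xi<\eta+1$ is consumed.
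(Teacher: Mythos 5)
Your proof is correct. The paper gives no argument for this lemma at all --- it simply cites Buchholz's \emph{A simplified version of local predicativity} --- and your verification is precisely the standard one that citation points to: the reduction of everything to the single family identity $\CH_\eta(X)=\bigcap\mathcal{F}_\eta(X)$, the correct isolation of the decomposition direction of closure axiom 5 (which does require induction on the generation of the hulls $B^\OO(\beta)$, or the quoted fact) as the only nontrivial ingredient, and the pointwise argument for iii), where you rightly note that $\xi<\eta+1$ together with $\eta<\alpha$ is exactly what licenses the clause $(\zeta\mapsto\psio\zeta)_{\zeta<\alpha}$ in each hull of the family.
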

\begin{proof}
This is proved in \cite{bu93}.
\end{proof}
\begin{lemma}\label{op2}{\em
Suppose $\eta\in\CH_\eta$ and let $\hat\beta:=\eta+\omega^{\Omega+\beta}$.
\begin{description}
\item[i)] If $\alpha\in\CH_\eta$ then $\hat\alpha,\psio{\hat\al}\in\CH_{\hat\al}$.
\item[ii)] If $\alpha_0\in\CH_\eta$ and $\al_0<\al$ then $\psio{\hat{\al_0}}<\psio{\hat\al}$.
\end{description}
}\end{lemma}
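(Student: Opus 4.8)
The plan is to handle part (i) using only the abstract operator axioms of Definition \ref{operatorr} together with Lemma \ref{op1}, and to handle part (ii) by descending to the explicit definitions of $B^\Omega$ and $\psi_\Omega$. Throughout I read $\gamma\in\CH_\gamma$ as $\gamma\in\CH_\gamma(\emptyset)$ per the standing convention. I would record two inequalities at the outset: $\eta<\hat\alpha$, because $\hat\alpha=\eta+\omega^{\Omega+\alpha}>\eta$; and $\hat{\alpha_0}<\hat\alpha$, because $\alpha_0<\alpha$ forces $\omega^{\Omega+\alpha_0}<\omega^{\Omega+\alpha}$ and hence $\hat{\alpha_0}=\eta+\omega^{\Omega+\alpha_0}<\eta+\omega^{\Omega+\alpha}=\hat\alpha$.

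For (i), first I would push the two hypotheses $\alpha\in\CH_\eta$ and $\eta\in\CH_\eta$ through the monotonicity clause Lemma \ref{op1}(ii): since $\eta<\hat\alpha$, this gives $\alpha,\eta\in\CH_{\hat\alpha}$. As $\CH_{\hat\alpha}$ is an operator (Lemma \ref{op1}(i)) it contains $\Omega$ and is closed under $+$ and $\sigma\mapsto\omega^{\sigma}$ (clauses 4 and 5 of Definition \ref{operatorr}). Building up in steps, $\Omega+\alpha\in\CH_{\hat\alpha}$, then $\omega^{\Omega+\alpha}\in\CH_{\hat\alpha}$, and finally $\hat\alpha=\eta+\omega^{\Omega+\alpha}\in\CH_{\hat\alpha}$. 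For the second assertion I apply Lemma \ref{op1}(iii) with its parameter instantiated to $\hat\alpha$: since $\hat\alpha\in\CH_{\hat\alpha}$ and trivially $\hat\alpha<\hat\alpha+1$, it yields $\psi_\Omega(\hat\alpha)\in\CH_{\hat\alpha}$, completing (i).

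For (ii), I would rephrase the target $\psi_\Omega(\hat{\alpha_0})<\psi_\Omega(\hat\alpha)$, using $\psi_\Omega(\hat\alpha)=B^\Omega(\hat\alpha)\cap\Omega$ and $\psi_\Omega(\hat{\alpha_0})<\Omega$, as the single membership claim $\psi_\Omega(\hat{\alpha_0})\in B^\Omega(\hat\alpha)$. This in turn follows from the closure of $B^\Omega(\hat\alpha)$ under the family $(\xi\mapsto\psi_\Omega(\xi))_{\xi<\hat\alpha}$, once I know $\hat{\alpha_0}\in B^\Omega(\hat\alpha)$ together with $\hat{\alpha_0}<\hat\alpha$ (already noted). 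To obtain $\hat{\alpha_0}\in B^\Omega(\hat\alpha)$ I apply part (i) to $\alpha_0$ — legitimate since $\alpha_0\in\CH_\eta$ — which gives $\hat{\alpha_0}\in\CH_{\hat{\alpha_0}}(\emptyset)$; and because $\hat{\alpha_0}<\hat\alpha$, the set $B^\Omega(\hat\alpha)$ is one of the terms in the intersection defining $\CH_{\hat{\alpha_0}}(\emptyset)$, whence $\CH_{\hat{\alpha_0}}(\emptyset)\subseteq B^\Omega(\hat\alpha)$ and thus $\hat{\alpha_0}\in B^\Omega(\hat\alpha)$.

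The one point that needs care — and the reason part (ii) cannot be read off the operator axioms in the style of part (i) — is that $\CH_{\hat\alpha}(\emptyset)$ unwinds (by monotonicity of $B^\Omega$) to $B^\Omega(\hat\alpha+1)$ rather than $B^\Omega(\hat\alpha)$, so its intersection with $\Omega$ is $\psi_\Omega(\hat\alpha+1)$, one notch too large. The remedy is to exploit the \emph{strict} inequality $\hat{\alpha_0}<\hat\alpha$ so as to land inside the smaller set $B^\Omega(\hat\alpha)$ directly from the definition of $\CH_{\hat{\alpha_0}}$, and then use that $B^\Omega(\hat\alpha)$ is closed under $\psi_\Omega$ precisely on arguments below $\hat\alpha$. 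No transfinite induction is needed; the whole argument is bookkeeping of the closure conditions.
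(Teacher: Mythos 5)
Your proof is correct and takes essentially the same route as the paper's: part (i) is the same closure bookkeeping (the paper unfolds $\CH_\eta(\emptyset)=B^\Omega(\eta+1)$ directly where you invoke Lemma \ref{op1}(ii) and (iii), which encapsulate exactly those facts), and part (ii) rests, just as in the paper, on the strict inequality $\hat{\alpha_0}<\hat\alpha$ to place $\psio{\hat{\alpha_0}}$ inside $B^\Omega(\hat\alpha)$ --- the paper via $\psio{\hat{\alpha_0}}\in B^\Omega(\hat{\alpha_0}+1)\subseteq B^\Omega(\hat\alpha)$, you via $\hat{\alpha_0}\in\CH_{\hat{\alpha_0}}(\emptyset)\subseteq B^\Omega(\hat\alpha)$ followed by the $\psi$-closure of $B^\Omega(\hat\alpha)$ on arguments below $\hat\alpha$. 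Your closing remark that $\CH_{\hat\alpha}(\emptyset)=B^\Omega(\hat\alpha+1)$ is ``one notch too large'' correctly identifies the only delicate point, and your accounting for it is sound.
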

\begin{proof}
i) From $\al,\eta\in\CH_\eta=B^\OO(\eta+1)$ we get $\hat\al\in B^\OO(\eta+1)$ and hence $\hat\al\in B^\OO(\hat\al)$ by \ref{op1}ii). Thus $\psio{\hat{\alpha}}\in B^\OO(\hat\al +1)=\CH_{\hat\al}(\emptyset)$.\\

\noindent ii) Suppose that $\al>\al_0\in\CH_\eta$. By the argument above we get $\psio{\hat{\al_o}}\in B^\OO(\hat{\al_0}+1)\subseteq B^\OO(\hat\al)$, thus $\psio{\hat{\al_0}}<\psio{\hat{\al}}$.
\end{proof}
\begin{theorem}[Collapsing for $\IRS$]\label{collapsing}{\em Suppose that $\eta\in\CH_\eta$, $\Delta$ is a set of at most one $\Sigma$-formula and $\Gamma$ a finite set of $\Pi$-formulae. Then
\begin{equation*}
\provx{\mathcal{H}_\eta}{\alpha}{\Omega+1}{\Gamma\Rightarrow\Delta}\quad\text{implies}\quad\provx{\mathcal{H}_{\hat{\alpha}}}{\psi_\Omega (\hat{\alpha})}{\psi_\Omega (\hat{\alpha})}{\Gamma\Rightarrow\Delta\,.}
\end{equation*}
}\end{theorem}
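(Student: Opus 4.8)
The plan is to induct on $\alpha$, inspecting the last inference of the given derivation $\provx{\CH_\eta}{\alpha}{\Omega+1}{\Gamma\Rightarrow\Delta}$. Throughout I would lean on Lemma \ref{op2}(ii), so that $\alpha_0<\alpha$ gives $\psio{\hat{\alpha_0}}<\psio{\hat\alpha}$, and on Lemma \ref{op1}(ii), so that $\hat{\alpha_0}<\hat\alpha$ gives $\CH_{\hat{\alpha_0}}\subseteq\CH_{\hat\alpha}$; since $\psio{\hat\alpha}$ is additively principal these two facts control all the ordinal bookkeeping. The key structural observation is that the hypothesis constrains only the endsequent: $\Gamma$ is all $\Pi$ and $\Delta$ is at most one $\Sigma$-formula. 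This rules out $(\forall R)_\infty$ and $(\exists L)_\infty$ as the last inference (their principal formulae are an unbounded $\forall$ in the succedent, resp.\ an unbounded $\exists$ in the antecedent, which are not admissible in the endsequent), and it guarantees that for every propositional rule, every bounded-quantifier rule, and for $(\exists R)$ and $(\forall L)$, each premise again satisfies the restriction. For all of these the argument is uniform: apply the induction hypothesis to the premises and re-apply the same inference. Any witness term $s$ has $|s|\in\CH_\eta\cap\Omega=\psio{\eta+1}\subseteq\psio{\hat\alpha}$, so the level side-conditions are met below the new bound; and for the infinitary bounded rules, where the premises range over $|p|<|t|$ with $|t|\in\CH_\eta\cap\Omega$, one has $|p|\in\CH_\eta$, whence $\CH_\eta[p]=\CH_\eta$ by the basic operator properties and the induction hypothesis applies cleanly.

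The reflection rule $\SR$ is where the characteristic collapse occurs. If the last inference is $\SR$, deriving $\Gamma\Rightarrow\exists z\,A^z$ from $\provx{\CH_\eta}{\alpha_0}{\Omega+1}{\Gamma\Rightarrow A}$ with $A$ a $\Sigma$-formula, then the premise satisfies the restriction, so the induction hypothesis yields $\provx{\CH_{\hat{\alpha_0}}}{\psio{\hat{\alpha_0}}}{\psio{\hat{\alpha_0}}}{\Gamma\Rightarrow A}$. Setting $\beta:=\psio{\hat{\alpha_0}}$ and applying Boundedness (Lemma \ref{boundedness}(i)) gives $\provx{\CH_{\hat{\alpha_0}}}{\beta}{\beta}{\Gamma\Rightarrow A^{\mathbb{L}_\beta}}$, and since $A^{\mathbb{L}_\beta}$ is precisely the instance of $A^z$ at the term $\mathbb{L}_\beta$ with $|\mathbb{L}_\beta|=\beta<\psio{\hat\alpha}$, one application of $(\exists R)$ produces $\provx{\CH_{\hat\alpha}}{\psio{\hat\alpha}}{\psio{\hat\alpha}}{\Gamma\Rightarrow\exists z\,A^z}$. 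Thus reflection is traded for the explicit witness $\mathbb{L}_\beta$ sitting below the collapse $\psio{\hat\alpha}$.

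The remaining, and principal, difficulty is a $\Cut$ of rank $\Omega$. By Observation \ref{rkobs} and the rank clauses this forces the cut-formula $C$ to be $\Sigma_1$ or $\Pi_1$, i.e.\ $C\equiv\exists x\,F(x)$ or $C\equiv\forall x\,F(x)$ with $F$ a $\Delta_0$-formula. The obstruction is that $C$ sits with the wrong polarity in one premise: in $\Gamma\Rightarrow C$ the succedent must be $\Sigma$, while in $\Gamma,C\Rightarrow\Delta$ the antecedent must be $\Pi$, and $C$ can be both only if it is $\Delta_0$. The strategy is to repair the offending premise by Inversion and the compliant one by the induction hypothesis together with Boundedness, and then to close with a cut on a $\Delta_0$-formula of rank $<\psio{\hat\alpha}$. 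Concretely, for $C\equiv\exists x\,F(x)$: the right premise $\Gamma\Rightarrow C$ is admissible, so the induction hypothesis and Boundedness yield $\Gamma\Rightarrow(\exists x\in\mathbb{L}_\beta)F(x)$ with $\beta:=\psio{\hat{\alpha_1}}$; the left premise $\Gamma,C\Rightarrow\Delta$ is repaired by Inversion (Lemma \ref{inversion}(x)) to $\Gamma,F(s)\Rightarrow\Delta$ for all $s$, to which the induction hypothesis applies since $F(s)$ is $\Delta_0$, and reassembling by $(b\exists L)_\infty$ over $|s|<\beta$ gives $\Gamma,(\exists x\in\mathbb{L}_\beta)F(x)\Rightarrow\Delta$. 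A final $\Cut$ on the $\Delta_0$-formula $(\exists x\in\mathbb{L}_\beta)F(x)$, whose rank is of the form $\omega\cdot\beta+c<\psio{\hat\alpha}$, delivers $\Gamma\Rightarrow\Delta$.

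The case $C\equiv\forall x\,F(x)$ is dual, and I expect it to be the genuinely delicate point. Now the left premise $\Gamma,C\Rightarrow\Delta$ is admissible and is collapsed directly, while the right premise $\Gamma\Rightarrow\forall x\,F(x)$ is inverted by Lemma \ref{inversion}(xi) to $\Gamma\Rightarrow F(s)$, collapsed instance-wise, and reassembled into $\Gamma\Rightarrow(\forall x\in\mathbb{L}_\beta)F(x)$ via $(b\forall R)_\infty$. The intuitionistic twist is that, in contrast to $\exists$ on the left (Lemma \ref{weakpers}(iv)), there is \emph{no} lemma lowering an unbounded $\forall$ in the antecedent to a bounded one; instead one exploits that the collapsed left derivation has height $<\Omega$, so that every $(\forall L)$ applied to the hypothesis uses a witness of level below $\beta$, which lets that derivation be read as one from $(\forall x\in\mathbb{L}_\beta)F(x)$ before the closing $\Delta_0$-cut. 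Verifying this level confinement, and checking that all the reassembled heights and the concluding cut stay below $\psio{\hat\alpha}$ by additive principality, is the part I would treat most carefully; the rest is routine transfer of the classical collapsing argument to the intuitionistic sequents.
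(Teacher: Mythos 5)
Your overall skeleton is the paper's: induction on $\alpha$, the observation that $(\forall R)_\infty$ and $(\exists L)_\infty$ are excluded by the shape of the endsequent, the routine transfer for the propositional, bounded-quantifier, $(\exists R)$ and $(\forall L)$ cases (including the point that $\lev p<\lev t\in\CH_\eta$ forces $\CH_\eta[p]=\CH_\eta$), and the treatment of $\SR$ by IH, Boundedness~\ref{boundedness}(i) and one $(\exists R)$ with witness $\mathbb{L}_{\psio{\hat{\alpha_0}}}$ — all of this matches Cases 1--6 of the paper exactly. The deviation, and the gap, is at the rank-$\Omega$ cut. You invert the non-compliant premise to $\Gamma,F(s)\Rightarrow\Delta$ for all $s$ and assert that the induction hypothesis applies instance-wise because $F(s)$ is $\Delta_0$. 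But Inversion (Lemma~\ref{inversion}(x), resp.\ (xi)) outputs derivations controlled by $\CH_\eta[s]$, whereas the theorem — and hence your IH — is stated only for operators of the form $\CH_{\eta'}$ with $\eta'\in\CH_{\eta'}$. For the reassembly you need every $\lev s<\beta=\psio{\hat{\alpha_1}}$, while $\CH_\eta$ contains only ordinals below $\psio{\eta+1}$, which is in general far smaller than $\beta$; so for most of the required terms $\lev s\notin\CH_\eta$, $\CH_\eta[s]\neq\CH_\eta$, and the IH does not apply as claimed. The step is repairable by a base shift: for $\lev s<\beta$ one has $\lev s\in\CH_{\hat{\alpha_1}}$, so $\CH_\eta[s]$ may be weakened to $\CH_{\hat{\alpha_1}}$ (Lemma~\ref{op1}(ii)) and the IH invoked with base $\hat{\alpha_1}$, giving bounds $\psio{\hat{\alpha_1}+\omega^{\Omega+\alpha_0}}<\psio{\hat\alpha}$ — but you must say this, since it is precisely the bookkeeping the collapsing argument exists to control. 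The paper sidesteps per-instance inversion entirely: it applies the persistence lemma to the \emph{uncollapsed} premise, replacing $\exists xF(x)$ on the left (Lemma~\ref{weakpers}(iv), dually (v) for $\forall$ on the right) by its $\mathbb{L}_{\psio{\hat{\alpha_0}}}$-bounded version at unchanged height $\alpha_0$ and cut rank $\Omega+1$, and then collapses \emph{once} with the shifted base $\hat{\alpha_0}$ — whence the $\alpha_1:=\hat{\alpha_0}+\omega^{\Omega+\alpha_0}$ in the paper's (6).

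A second, smaller slip occurs in your dual case: the claim that there is \emph{no} lemma lowering an unbounded $\forall$ in the antecedent overlooks Boundedness~\ref{boundedness}(ii), which does exactly this once the left premise has been collapsed (so that its height is $\le\beta<\Omega$): it converts $\Gamma,\forall xF(x)\Rightarrow\Delta$ into $\Gamma,(\forall x\in\mathbb{L}_\beta)F(x)\Rightarrow\Delta$ at unchanged height. Your ad hoc argument — that every $(\forall L)$ in a derivation of height $<\beta$ uses a witness of level $<\beta$ — is essentially a re-proof of that lemma's $(\forall L)$ case, so nothing is mathematically wrong there, but you should cite \ref{boundedness}(ii) rather than re-derive it informally; with that citation and the base-shift repair above, your variant goes through, and the remaining ordinal arithmetic (final cut on a $\Delta_0$ formula of rank $\omega\cdot\beta+c<\psio{\hat\alpha}$, additive principality of $\psio{\hat\alpha}$) is as you state.
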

\begin{proof}
We proceed by induction on $\alpha$. If the last inference was propositional then the assertion follows easily from the induction hypothesis.\\

\noindent Case 1. Suppose the last inference was $(b\forall R)_\infty$, then $\Delta=\{(\forall x\in t)F(x)\}$ and
\begin{equation*}
\provx{\mathcal{H}_\eta[p]}{\alpha_p}{\Omega+1}{\Gamma\Rightarrow p\dotin t\rightarrow F(p)}\quad\text{for all $\lev p<\lev t$ with $\alpha_p<\alpha$.}
\end{equation*}
Since $k(t)\subseteq\mathcal{H}_\eta$, we know that $\lev t\in B(\eta+1)$ and thus $\lev t<\psi_\Omega (\eta+1)$. Thus $k(p)\subseteq\mathcal{H}_\eta$ for all $\lev p<\lev t$, so $\mathcal{H}_\eta[p]=\mathcal{H}_\eta$ for all such $p$. Since $p\dotin t\rightarrow F(p)$ is also a
$\Sigma$-formula we can invoke the induction hypothesis to give
\begin{equation*}
\provx{\mathcal{H}_{\hat{\alpha_p}}}{\psi_\Omega (\hat{\alpha_p})}{\psi_\Omega (\hat{\alpha_p})}{\Gamma,p\dotin t\Rightarrow F(p)}.
\end{equation*}
Since $\psi_\Omega (\hat{\alpha_p})+1<\psi_\Omega(\hat{\alpha})$ for all $p$, we may apply $(\rightarrow R)$ and then $(b\forall R)_\infty$ to obtain the desired result.\\

\noindent Case 2. Suppose the last inference was $(b\forall L)$ so $(\forall x\in t)F(x)\in\Gamma$ and
\begin{equation*}
\provx{\mathcal{H}_\eta}{\alpha_0}{\Omega+1}{\Gamma,s\dotin t\rightarrow F(s)\Rightarrow\Delta}\quad\text{for some $\lev s<\lev t$ with $\alpha_0<\alpha$.}
\end{equation*}
Noting that  $s\dotin t\rightarrow F(s)$ is itself a $\Pi$-formula, we may apply the induction hypothesis to give
\begin{equation*}
\provx{\mathcal{H}_{\hat{\alpha_0}}}{\psi_\Omega\hat{\alpha_0}}{\psi_\Omega\hat{\alpha_0}}{\Gamma,s\dotin t\rightarrow F(s)\Rightarrow\Delta}
\end{equation*}
from which we obtain the desired result using one application of $(b\forall L)$.\\

\noindent Case 3. $(b\exists L)_\infty$ and $(b\exists R)$ are similar to cases 1 and 2.
 \\

\noindent Case 4. Suppose the last inference was $(\exists R)$, so $\Delta=\{\exists xF(x)\}$ and
\begin{equation*}
\provx{\CH_\eta}{\alpha_0}{\Omega+1}{\Gamma\Rightarrow F(s)}\quad\text{for some $\lev s<\alpha$ and $\alpha_0<\alpha$}.
\end{equation*}
Since $F(s)$ is $\Sigma$ we may immediately apply the induction hypothesis to obtain
\begin{equation*}
\provx{\CH_{\hat{\alpha_0}}}{\psi_\Omega\hat{\alpha_0}}{\psi_\Omega\hat{\alpha_0}}{\Gamma\Rightarrow F(s)}.
\end{equation*}
Now since $\lev s\in\CH_\eta=B(\eta+1)$ we know that $\lev s<\psi_\Omega(\eta+1)<\psi_\Omega\hat\alpha$, so we may apply $(\exists R)$ to obtain the desired result.\\

\noindent Case 5. If the last inference was $(\forall L)$ we may argue in a similar fashion to case 4.\\

\noindent It cannot be the case that the last inference was $(\exists L)$ or $(\forall R)$ since $\Gamma$ contains only $\Pi$ formulae and $\Delta$ only $\Sigma$ formulae.\\

\noindent Case 6. Suppose the last inference was $(\Sigma$-Ref$_\Omega)$, so $\Delta=\{\exists zF^z\}$ for some $\Sigma$ formula $F$ and
\begin{equation*}
\provx{\mathcal{H}_\eta}{\alpha_0}{\Omega+1}{\Gamma\Rightarrow F}.
\end{equation*}
The induction hypothesis yields
\begin{equation*}
\provx{\mathcal{H}_{\hat{\alpha_0}}}{\psi_\Omega\hat{\alpha_0}}{\psi_\Omega\hat{\alpha_0}}{\Gamma\Rightarrow F\,.}
\end{equation*}
Now applying Boundedness \ref{boundedness} yields
\begin{equation*}
\provx{\mathcal{H}_{\hat{\alpha_0}}}{\psi_\Omega\hat{\alpha_0}}{\psi_\Omega\hat{\alpha_0}}{\Gamma\Rightarrow F^{\mathbb{L}_{\psi_\Omega(\hat{\alpha_0})}}\,.}
\end{equation*}
From which one application of $(\exists R)$ yields the desired result.\\

\noindent Case 7. Finally suppose the last inference was (Cut), then there is a formula $C$ with $rk(C)\leq\Omega$ and $\alpha_0<\alpha$ such that
\begin{align*}
&\provx{\mathcal{H}_\eta}{\alpha_0}{\Omega+1}{\Gamma,C\Rightarrow\Delta}\tag{1}\\
&\provx{\mathcal{H}_\eta}{\alpha_0}{\Omega+1}{\Gamma\Rightarrow C\,.}\tag{2}
\end{align*}
7.1 If $rk(C)<\Omega$ then $C$ contains only bounded quantification and as such is both $\Sigma$ and $\Pi$, thus we may apply the induction hypothesis to both (1) and (2) to give
\begin{align*}
&\provx{\mathcal{H}_{\hat{\alpha_0}}}{\psi_\Omega(\hat{\alpha_0})}{\psi_\Omega(\hat{\alpha_0})}{\Gamma,C\Rightarrow\Delta}\tag{3}\\
&\provx{\mathcal{H}_{\hat{\alpha_0}}}{\psi_\Omega(\hat{\alpha_0})}{\psi_\Omega(\hat{\alpha_0})}{\Gamma\Rightarrow C}.\tag{4}
\end{align*}
Since $k(C)\subseteq\mathcal{H}_\eta$ and $rk(C)<\Omega$, we have $rk(C)<\psi_\Omega(\eta+1)$, so we may apply (Cut) to (3) and (4) to obtain the desired result.\\

\noindent 7.2 If $rk(C)=\Omega$ then $C\equiv\exists xF(x)$ or $C\equiv\forall xF(x)$ with $F(\mathbb{L}_0)$ a $\Delta_0$ formula. The two cases are similar so for simplicity just the case where $C\equiv\exists xF(x)$ is considered.\\

\noindent We can begin by immediately applying the induction hypothesis to (2) since $C$ is a $\Sigma$ formula, giving
\begin{equation*}
\provx{\mathcal{H}_{\hat{\alpha_0}}}{\psi_\Omega(\hat{\alpha_0})}{\psi_\Omega(\hat{\alpha_0})}{\Gamma\Rightarrow C}.
\end{equation*}
Now applying Boundedness \ref{boundedness} yields
\begin{equation*}\tag{5}
\provx{\mathcal{H}_{\hat{\alpha_0}}}{\psi_\Omega(\hat{\alpha_0})}{\psi_\Omega(\hat{\alpha_0})}{\Gamma\Rightarrow C^{\mathbb{L}_{\psi_\Omega(\hat{\alpha_0})}}}.
\end{equation*}
Since $\psi_\Omega(\hat{\alpha_0})\in\mathcal{H}_{\hat{\alpha_0}}$ we may apply \ref{weakpers}ii) to (1) to obtain
\begin{equation*}
\provx{\mathcal{H}_{\hat{\alpha_0}}}{\alpha_0}{\Omega+1}{\Gamma,(\exists x\in\mathbb{L}_{\psi_\Omega(\hat{\alpha_0})})F(x)\Rightarrow\Delta}.
\end{equation*}
Now $(\exists x\in\mathbb{L}_{\psi_\Omega(\hat{\alpha_0})})F(x)$ is bounded and hence $\Pi$ so by the induction hypothesis we obtain
\begin{equation*}\tag{6}
\provx{\mathcal{H}_{\hat{\alpha_1}}}{\psi_\Omega(\alpha_1}{\psi_\Omega(\alpha_1)}{\Gamma,(\exists x\in\mathbb{L}_{\psi_\Omega(\hat{\alpha_0})})F(x)\Rightarrow\Delta}.
\end{equation*}
Where $\alpha_1:=\hat{\alpha_0}+\omega^{\Omega+\alpha_0}$. Since $\alpha_1<\eta+\omega^{\Omega+\alpha}:=\hat{\alpha}$ and $rk((\exists x\in\mathbb{L}_{\psi_\Omega(\hat{\alpha_0})})F(x))<\psi_\Omega(\alpha)$ we may apply (Cut) to (5) and (6) to complete the proof.
\end{proof}

\subsection{Embedding \textbf{IKP} into $\IRS$}
In this section we show how \textbf{IKP} derivations can be carried out in a very uniform manner within \textbf{IRS}$_\OO$. First some preparatory definitions.
\begin{definition}{\em \begin{description}
\item[i)] Given ordinals $\alpha_1,...,\alpha_n$. The expression $\omega^{\alpha_1}\#...\#\omega^{\alpha_n}$ denotes the ordinal
\begin{equation*}
\omega^{\alpha_{p(1)}}+...+\omega^{\alpha_{p(n)}}
\end{equation*}
where $p\!:\!\{1,...,n\}\mapsto \{1,...,n\}$ such that $\alpha_{p(1)}\geq...\geq\alpha_{p(n)}$. More generally $\alpha\#0:=0\#\alpha:=0$ and if $\alpha=_{NF}\omega^{\alpha_1}+...+\omega^{\alpha_n}$ and $\beta=_{NF}\omega^{\beta_1}+...+\omega^{\beta_m}$ then $\alpha\#\beta:=\omega^{\alpha_1}\#...\#\omega^{\alpha_n}\#\omega^{\beta_1}\#...\#\omega^{\beta_m}$.

\item[ii)] If $A$ is any \textbf{IRS}$_\Omega$-formula then $no(A):=\omega^{rk(A)}$ and if $\Gamma\Rightarrow\Delta$ is an \textbf{IRS}$_\Omega$-sequent containing formulas $\{A_1,...,A_n\}$, then $no(\Gamma\Rightarrow\Delta):=no(A_1)\#...\#no(A_n)$.

\item[iii)] $\Vdash\Gamma\Rightarrow\Delta$ will be used to abbreviate that
\begin{equation*}
\provx{\mathcal{H}[\Gamma\Rightarrow\Delta]}{no(\Gamma\Rightarrow\Delta)}{0}{\Gamma\Rightarrow\Delta}\quad\text{holds for any operator $\mathcal{H}$.}
\end{equation*}
\item[iv)] $\Vdash^\xi_\rho\Gamma\Rightarrow\Delta$ will be used to abbreviate that
\begin{equation*}
\provx{\mathcal{H}[\Gamma\Rightarrow\Delta]}{no(\Gamma\Rightarrow\Delta)\#\xi}{\rho}{\Gamma\Rightarrow\Delta}\quad\text{holds for any operator $\mathcal{H}$.}
\end{equation*}
\end{description}
}\end{definition}
\noindent We would like to be able to use $\Vdash$ as a calculus since it dispenses with a lot of superfluous notation, luckily under certain conditions this is possible.
\begin{lemma}\label{calc}{\em i) If $\GA\RI\DE$ follows from premises $\GA_i\RI\DE_i$ by an inference other than (Cut) or $\SR$ and without contractions then
\begin{equation*}
\text{if}\;\Vdash^\al_\rho\GA_i\RI\DE_i\quad\text{then}\quad\Vdash^\al_\rho\GA\RI\DE.
\end{equation*}
ii) If $\Vdash^\al_\rho \GA,A,B\RI\DE$ then $\Vdash^\al_\rho\GA,A\wedge B\RI\DE$.
}\end{lemma}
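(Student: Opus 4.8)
The plan is to unfold the abbreviation $\Vdash$ and to check, rule by rule, that operator-controlled derivability is closed under the inference in question. To prove $\Vdash^\al_\rho\GA\RI\DE$ I fix an arbitrary operator $\CH$ and must produce $\provx{\CH[\GA\RI\DE]}{no(\GA\RI\DE)\#\al}{\rho}{\GA\RI\DE}$; by hypothesis I already have, for every operator and so in particular for $\CH[\GA\RI\DE]$, the corresponding derivations $\provx{\CH[\GA_i\RI\DE_i]}{no(\GA_i\RI\DE_i)\#\al}{\rho}{\GA_i\RI\DE_i}$ of the premises. Part ii) is the painless case: from $\Vdash^\al_\rho\GA,A,B\RI\DE$ two successive applications of $(\wedge L)$ (the second contracting $A\wedge B$) give $\Vdash^\al_\rho\GA,A\wedge B\RI\DE$, the only things to verify being $no(A\wedge B)=\omega^{rk(A\wedge B)}>\omega^{rk(A)}\#\omega^{rk(B)}=no(A)\#no(B)$, so the ordinal bound has room to spare, and $k(A\wedge B)=k(A)\cup k(B)$, so the operator does not change.

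The arithmetical core of part i) is that, since the inference carries no contraction, the principal formula $P$ is distinct from every side formula; writing $\sigma$ for the natural sum of the $no$-values of the side formulae and $M_i$ for the minor formula of the $i$-th premise, this gives $no(\GA_i\RI\DE_i)=\sigma\#no(M_i)$ and $no(\GA\RI\DE)=\sigma\#no(P)$. By Lemma \ref{rank}, $rk(M_i)<rk(P)$, hence $no(M_i)=\omega^{rk(M_i)}<\omega^{rk(P)}=no(P)$, and strict monotonicity of $\#$ yields $no(\GA_i\RI\DE_i)\#\al<no(\GA\RI\DE)\#\al$. This is exactly the bound $\alpha_i<\alpha$ required by the membership, bounded-quantifier and propositional rules. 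For the unbounded-quantifier rules $(\forall L)$, $(\exists R)$, $(\forall R)_\infty$ and $(\exists L)_\infty$, whose ordinal condition is $\alpha_i+1<\alpha$, I use that $\omega^{rk(P)}$ is additive principal and $rk(M_i)+1\le rk(P)$: then $no(\GA_i\RI\DE_i)\#\al+1=\sigma\#(\omega^{rk(M_i)}+1)\#\al<\sigma\#\omega^{rk(P)}\#\al=no(\GA\RI\DE)\#\al$. The attendant witness side-conditions (such as $\lev s<\alpha$) hold because the $no$-value of the principal formula dominates $\lev s$ — for the unbounded rules $rk(P)\ge\Omega$ forces $no(P)\ge\omega^{\Omega}>\Omega>\lev s$.

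It remains to match operators. For the propositional rules, including $(\perp)$, every term of a premise already occurs in the conclusion, so $k(\GA_i\RI\DE_i)\subseteq k(\GA\RI\DE)$ and therefore $\CH[\GA_i\RI\DE_i]\subseteq\CH[\GA\RI\DE]$; monotonicity of operator-controlled derivability in its operator lets me regard the premise derivations as controlled by $\CH[\GA\RI\DE]$, whereupon the rule applies verbatim. For the genuinely infinitary rules $(\in L)_\infty$, $(b\forall R)_\infty$, $(b\exists L)_\infty$, $(\forall R)_\infty$ and $(\exists L)_\infty$ I instantiate, for each witness $s$, the $s$-th premise of $\Vdash$ at the operator $\CH[\GA\RI\DE]$; since every term of that premise other than $s$ already appears in the conclusion, the resulting operator is $\CH[\GA\RI\DE][s]$, which is precisely the operator the rule prescribes for its $s$-th premise, while the conclusion is drawn under $\CH[\GA\RI\DE]$ itself. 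Both families thus go through for \emph{every} $\CH$.

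The main obstacle is the remaining family of finitary witness rules $(\exists R)$, $(\forall L)$, $(b\exists R)$, $(b\forall L)$ and $(\in R)$. Here the witness term (the $s$, respectively the $r$) occurs in the premise but not in the conclusion, so $k(\text{premise})\not\subseteq k(\GA\RI\DE)$, and a direct application of the rule yields the conclusion under $\CH[\GA\RI\DE][s]$ rather than under the required $\CH[\GA\RI\DE]$. I intend to close this gap by invoking the side-condition $\lev s<\alpha$ together with the fact that whenever the lemma is used during the embedding the witness level is already controlled, i.e. $\lev s\in\CH[\GA\RI\DE]$; then $\CH[\GA\RI\DE][s]=\CH[\GA\RI\DE]$ by the property of operators that $\CH[s]=\CH$ when $\lev s\in\CH$, and the rule delivers the conclusion under the correct operator. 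This is the step that has to be stated and applied with care: the literal ``for every operator $\CH$'' reading is unconditionally correct only for the propositional and infinitary inferences, whereas the finitary witness inferences succeed exactly when the witness's level lies in the controlling operator.
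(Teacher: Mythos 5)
Your proposal is correct in substance and, at its core, takes the same route as the paper: part ii) is the paper's own argument verbatim (two applications of $(\wedge L)$, the operator identity $\CH[\GA,A,B\RI\DE]=\CH[\GA,A\wedge B\RI\DE]$, and $no(A)\#no(B)+2<no(A\wedge B)$), while for part i) the paper's entire proof is the remark that the claim follows from Lemma \ref{rank} together with the additive principal nature of $\omega^{rk(\cdot)}$ --- which is exactly your computation $no(\GA_i\RI\DE_i)\#\al<no(\GA\RI\DE)\#\al$, including the absorption of the extra $+1$ for the unbounded-quantifier rules and the observation that $rk(P)\geq\Omega$ takes care of the side conditions $\lev s<\al$ there. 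So on the ordinal-arithmetic side you have simply filled in what the paper compresses into one sentence.

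Where you diverge is the operator bookkeeping, and your closing caveat is a genuine observation rather than a defect: since $\Vdash$ quantifies over \emph{all} operators, an application of $(\exists R)$, $(\forall L)$, $(b\exists R)$, $(b\forall L)$ or $(\in R)$ whose witness $s$ does not occur in the conclusion cannot be reproduced under $\CH[\GA\RI\DE]$ when the parameters of $s$ escape that operator, because the premise node must control them. The paper's one-line proof passes over this silently; the statement is harmless in practice because in every invocation of Lemma \ref{calc} (in Lemmas \ref{setuplemma} and \ref{equality1} and in Theorem \ref{IKPembed}) the witness term occurs in the conclusion sequent, so its parameters lie in $k(\GA\RI\DE)$ and your proviso holds automatically --- it would be worth saying this explicitly, since it is what actually licenses the later templates. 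Two refinements, though. First, your patch should demand $k(s)\subseteq\CH[\GA\RI\DE]$ rather than merely $\lev s\in\CH[\GA\RI\DE]$: the blanket requirement that a node's controlling operator contain the $k$-set of its sequent involves \emph{all} ordinals occurring in $s$, subterms included, so you need part (iii) of the lemma following Definition \ref{operatorr} ($k(\DX)\subseteq\CH(\emptyset)$ implies $\CH[\DX]=\CH$), not part (iv); the two coincide only for witnesses such as $\mathbb{L}_\beta$ with $k(s)=\{\lev s\}$. Second, the same imprecision infects your treatment of the infinitary rules: instantiating the premise $\Vdash$ at $\CH[\GA\RI\DE]$ gives control under $\CH[\GA\RI\DE\cup k(s)]$, which is the prescribed $\CH[\GA\RI\DE][s]$ on the nose only when $k(s)\subseteq\CH[\GA\RI\DE][s]$; this containment is exactly the condition under which the rule's premise family is satisfiable at all, so the step is benign, but as written your claimed operator equality is not literally correct for composite witnesses.
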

\begin{proof}
The first part follows from the additive principal nature of ordinals of the form $\omega^\al$ and Lemma \ref{rank}.\\

\noindent For the second part suppose $\Vdash^\al_\rho\GA,A,B\RI\DE$ which means we have
\begin{equation*}
\provx{\CH[\GA,A,B\RI\DE]}{no(\Gamma\Rightarrow\Delta)\#no(A)\#no(B)\#\al}{\rho}{\GA,A,B\RI\DE}.
\end{equation*}
Two applications of $(\wedge L)$ yields
\begin{equation*}
\provx{\CH[\GA,A,B\RI\DE]}{no(\Gamma\Rightarrow\Delta)\#no(A)\#no(B)\#\al+2}{\rho}{\GA,A\wedge B\RI\DE}.
\end{equation*}
It remains to note that $\CH[\GA,A,B\RI\DE]=\CH[\GA,A\wedge B\RI\DE]$ and
\begin{equation*}
no(A)\#no(B)+2=\omega^{rk(A)}\#\omega^{rk(B)}+2<\omega^{rk(A\wedge B)}=no(A\wedge B)
\end{equation*}
to complete the proof.
\end{proof}
\begin{lemma}\label{setuplemma} {\em For any \textbf{IRS}$_\Omega$ formulas $A,B$ and terms $s,t$ we have
\begin{description}
\item[i)]$\Vdash\Gamma,A\Rightarrow A\,.$
\item[ii)]$\Vdash s\in s\Rightarrow\,. $
\item[iii)]$\Vdash\Rightarrow s\subseteq s$    here $s\subseteq s$ is shorthand for $(\forall x\in s)(x\in s)$.
\item[iv)]$\Vdash\Rightarrow s\dotin t\rightarrow s\in t$ and $\Vdash s\dotin t\Rightarrow s\in t$, for $\lev s<\lev t$.
\item[v)]$\Vdash s=t\Rightarrow t=s\,.$
\item[vi)] If $\Vdash\Gamma,A\Rightarrow B$ then $\Gamma,s\dotin t\wedge A\Rightarrow s\dotin t\wedge B$ for $\lev s<\lev t$.
\item[vii)] If $\Vdash\Gamma,A,B\Rightarrow\Delta$ then $\Vdash\Gamma,s\dotin t\rightarrow A,s\dotin t\wedge B\Rightarrow\Delta$ for $\lev s<\lev t$.
\item[viii)] If $\lev s<\beta$ then $\Vdash\Rightarrow s\in\mathbb{L}_\beta\,.$
\end{description}
}\end{lemma}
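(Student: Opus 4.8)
The plan is to read these eight statements as the basic \emph{identity} and \emph{axiom} facts of the infinitary calculus and to derive each one purely from the inference rules of $\IRS$, using the $\Vdash$-calculus of Lemma \ref{calc} to build the derivations one inference at a time without carrying the ordinal assignments $no(\cdot)$ explicitly; we shall only ever use single non-$\Cut$, non-$\SR$, contraction-free inferences, so Lemma \ref{calc}(i) applies at each step and the additive principality of the relevant bounds together with Lemma \ref{rank} takes care of the ordinal conditions. The backbone is the identity lemma (i), which generalizes the $\Delta_0$ logical axiom of $\IKP$ to arbitrary $\IRS$-formulae and which I prove by induction on $rk(A)$. For compound $A$ one applies the left and right rules for the outermost connective and appeals to the induction hypothesis on the immediate subformulae, whose ranks are strictly smaller by Lemma \ref{rank} (for a left conjunction one may quote Lemma \ref{calc}(ii) directly). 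The only case needing the infinitary membership rules is the atomic one $A\equiv s\in t$: applying $(\in L)_\infty$ reduces the goal to $\Gamma, r\dotin t\wedge r=s\Rightarrow s\in t$ for every $\lev r<\lev t$, and for each such $r$ a single $(\in R)$ with witness $r$ reduces this to the identity $\Gamma, r\dotin t\wedge r=s\Rightarrow r\dotin t\wedge r=s$; since $r\dotin t\wedge r=s$ is the minor formula of these two inferences, its rank lies strictly below $rk(s\in t)$ by Lemma \ref{rank}, so the induction hypothesis closes the case.

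I next treat (iii) and (iv), which are entangled and so must be ordered with care. I prove (iii), $\Vdash\Rightarrow s\subseteq s$, by induction on $\lev s$. Writing $s\subseteq s$ as $(\forall x\in s)(x\in s)$ and applying $(b\forall R)_\infty$ and $(\rightarrow R)$ reduces the goal to $\Vdash p\dotin s\Rightarrow p\in s$ for each $\lev p<\lev s$; a single $(\in R)$ with witness $p$ reduces this to $\Vdash p\dotin s\Rightarrow p\dotin s\wedge p=p$, and $(\wedge R)$ splits it into the identity (i) and $\Vdash p\dotin s\Rightarrow p=p$. Since $p=p$ is the conjunction of two copies of $p\subseteq p$, the latter follows from the induction hypothesis (iii) at the strictly lower level $p$ by $(\wedge R)$ and weakening. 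With (iii) established, (iv) is immediate: for $\lev s<\lev t$, an $(\in R)$ with witness $s$ reduces $\Vdash s\dotin t\Rightarrow s\in t$ to $\Vdash s\dotin t\Rightarrow s\dotin t\wedge s=s$, hence by $(\wedge R)$ to (i) together with $\Vdash s\dotin t\Rightarrow s=s$, which is (iii) at $s$ after weakening; the first form $\Vdash\Rightarrow s\dotin t\rightarrow s\in t$ then follows by one $(\rightarrow R)$.

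The delicate item is (ii), $\Vdash s\in s\Rightarrow$, the term-level form of foundation, which I prove by induction on $\lev s$ and which is where the main work lies. Applying $(\in L)_\infty$ to the principal formula $s\in s$ reduces the goal to deriving $r\dotin s\wedge r=s\Rightarrow$ for every $\lev r<\lev s$. Fixing such an $r$ and separating the conjunctions by Lemma \ref{calc}(ii), it suffices to derive the sequent with antecedent $r\dotin s,\ (\forall x\in r)(x\in s),\ (\forall x\in s)(x\in r)$ and empty succedent. The decisive hypothesis is the second inclusion: applying $(b\forall L)$ to $(\forall x\in s)(x\in r)$ with witness $r$ (legitimate since $\lev r<\lev s$) replaces it by $r\dotin s\rightarrow r\in r$, and $(\rightarrow L)$ then splits the goal into the right premise $\Rightarrow r\dotin s$, discharged by (i) because $r\dotin s$ is in the antecedent, and the left premise with $r\in r$ adjoined, discharged by the induction hypothesis (ii) at the strictly lower level $r$ after weakening. (When $s\equiv\mathbb{L}_\alpha$ the formula $r\dotin s$ is vacuous by the $\dotin$-abbreviation and the chain shortens, but it goes through unchanged.) This closing of the foundation loop, relying only on (i) and on the well-foundedness of the level ordering, is the step I expect to be the main obstacle.

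The remaining items are short consequences. For (v) observe that $t=s$ is literally $s=t$ with its two conjuncts transposed, so $\Vdash s=t\Rightarrow t=s$ is commutativity of $\wedge$, obtained from (i) on each conjunct via $(\wedge L)$ and $(\wedge R)$. For (vi), $(\wedge R)$ on the right leaves two goals: the $s\dotin t$-conjunct, supplied by (i) through $(\wedge L)$, and $\Gamma, s\dotin t\wedge A\Rightarrow B$, obtained from the hypothesis $\Gamma, A\Rightarrow B$ by $(\wedge L)$. For (vii), $(\rightarrow L)$ applied to $s\dotin t\rightarrow A$ has right premise $\Rightarrow s\dotin t$ (supplied from $s\dotin t\wedge B$ by $(\wedge L)$ and (i)) and left premise $\Gamma, A, s\dotin t\wedge B\Rightarrow\Delta$ (from the hypothesis $\Gamma, A, B\Rightarrow\Delta$ by $(\wedge L)$ extracting $B$). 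Finally (viii): since $\lev s<\beta$, an $(\in R)$ with witness $s$ and $t\equiv\mathbb{L}_\beta$ reduces $\Vdash\Rightarrow s\in\mathbb{L}_\beta$ to $\Vdash\Rightarrow s\dotin\mathbb{L}_\beta\wedge s=s$, which by the $\dotin$-abbreviation is just $\Vdash\Rightarrow s=s$, and this follows from (iii) by $(\wedge R)$. Throughout (v)--(viii) each inference is a single contraction-free, non-$\Cut$, non-$\SR$ rule, so Lemma \ref{calc} yields the $\Vdash$-conclusions directly.
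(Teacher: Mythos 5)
Your proposal is correct and takes essentially the same route as the paper's proof: induction on rank for (i) with the atomic case handled by $(\in L)_\infty$ followed by $(\in R)$, induction on the level of $s$ for (ii) and (iii) with the foundation loop in (ii) closed exactly as in the paper by instantiating $(\forall x\in s)(x\in r)$ at $r$ via $(b\forall L)$ and discharging $r\in r$ with the induction hypothesis, and the same short $(\wedge L)/(\wedge R)/(\rightarrow L)/(\in R)$ templates for (iv)--(viii). The only deviations are cosmetic --- you prove (iii) before (ii), rederive (iv) from (iii) rather than reading it off the (iii)-template, and permute $(b\forall L)$ with $(\rightarrow L)$ in (ii) --- none of which changes the argument.
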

\begin{proof}
i) By induction of $rk(A)$. We split into cases based on the form of the formula $A$.\\

\noindent Case 1. If $A\equiv (r\in t)$ then by the induction hypothesis we have
\begin{equation*}
\Vdash\Gamma, s\dotin t\wedge r=s\Rightarrow s\dotin t\wedge r=s\quad\text{for all $\lev s<\lev t$.}
\end{equation*}
The following is a template for \textbf{IRS}$_\Omega$ derivations.
\begin{prooftree}
\Axiom$\fCenter\Vdash s\dotin t\wedge r=s\Rightarrow s\dotin t\wedge r= s\quad\text{for all $\lev s<\lev t$}$
\LeftLabel{$(\in\! R)$}
\UnaryInf$\fCenter\Vdash s\dotin t\wedge r=s\Rightarrow r\in t\quad\text{for all $\lev s<\lev t$}$
\LeftLabel{$(\in\! L)_\infty$}
\UnaryInf$\fCenter\Vdash r\in t\Rightarrow r\in t$
\end{prooftree}
Case 2. If $A\equiv (\exists x\in t)F(x)$ then by the induction hypothesis we have
\begin{equation*}
\Vdash s\dotin t\wedge F(s)\Rightarrow s\dotin t\wedge F(s)\quad\text{for all $\lev s<\lev t$.}
\end{equation*}
We have the following template for \textbf{IRS}$_\Omega$ derivations.
\begin{prooftree}
\Axiom$\fCenter\Vdash s\dotin t\wedge F(s)\Rightarrow s\dotin t\wedge F(s)\quad\text{for all $\lev s<\lev t$}$
\LeftLabel{$(b\exists R)$}
\UnaryInf$\fCenter\Vdash s\dotin t\wedge F(s)\Rightarrow (\exists x\in t)F(x)\quad\text{for all $\lev s<\lev t$}$
\LeftLabel{$(b\exists L)_\infty$}
\UnaryInf$\fCenter\Vdash (\exists x\in t)F(x)\Rightarrow (\exists x\in t)F(x)$
\end{prooftree}
Case 3. All remaining cases can be proved in a similar fashion to above.\\

\noindent ii) The proof is by induction on $rk(s)$, inductively we get $\Vdash r\in r\Rightarrow$ for all $\lev r<\lev s$. Now if $s$ is of the form $\mathbb{L}_\alpha$, then $r\in r\equiv r\dotin s\rightarrow r\in r$ and we have the following template for \textbf{IRS}$_\Omega$ derivations.
\begin{prooftree}
\Axiom$\fCenter\Vdash r\in r\Rightarrow\quad\text{for all $\lev r<\lev s$}$
\LeftLabel{$(b\forall L)$}
\UnaryInf$\fCenter\Vdash(\forall x\in s)(x\in r)\Rightarrow\quad\text{for all $\lev r<\lev s$}$
\LeftLabel{$(\wedge L)$}
\UnaryInf$\fCenter\Vdash s=r\Rightarrow\quad\text{for all $\lev r<\lev s$}$
\LeftLabel{$(\in\! L)_\infty$}
\UnaryInf$\fCenter\Vdash s\in s\Rightarrow$
\end{prooftree}
Now if $s\equiv [x\in \mathbb{L}_\alpha\:|\:B(x)]$ then we have the following template for derivations in \textbf{IRS}$_\Omega$.
\begin{prooftree}
\Axiom$\fCenter\quad\quad i)$
\UnaryInf$\fCenter\Vdash B(r)\Rightarrow B(r)\quad\text{for all $\lev r<\lev s$}$
\Axiom$\fCenter\quad\text{Induction Hypothesis}$
\UnaryInf$\fCenter\Vdash r\in r\Rightarrow\quad\text{for all $\lev r<\lev s$}$
\LeftLabel{$(\rightarrow L)$}
\BinaryInf$\fCenter\Vdash B(r), B(r)\rightarrow r\in r\Rightarrow$
\LeftLabel{$(b\forall L)$}
\UnaryInf$\fCenter\Vdash B(r),(\forall x\in s)(x\in r)\Rightarrow$
\LeftLabel{$(\wedge L)$}
\UnaryInf$\fCenter\Vdash B(r),r=s\Rightarrow$
\LeftLabel{Lemma \ref{calc}ii)}
\UnaryInf$\fCenter\Vdash B(r)\wedge r=s\Rightarrow$
\LeftLabel{$(\in\! L)_\infty$}
\UnaryInf$\fCenter\Vdash s\in s\Rightarrow$
\end{prooftree}
iii) Again we use induction on $rk(s)$. Inductively we have $\Vdash\Rightarrow r\subseteq r$ for all $\lev r<\lev s$. If $s\equiv[x\in \mathbb{L}_\alpha\:|\:B(x)]$ then we have the following template for derivations in \textbf{IRS}$_\Omega$.
\begin{prooftree}
\Axiom$\fCenter\quad\quad i)$
\UnaryInf$\fCenter\Vdash B(r)\Rightarrow B(r)\quad\text{for all $\lev r<\lev s$}$
\Axiom$\fCenter\text{Induction Hypothesis}$
\UnaryInf$\fCenter\Vdash B(r)\Rightarrow r\subseteq r\quad\text{for all $\lev r<\lev s$}$
\LeftLabel{$(\wedge R)$}
\UnaryInf$\fCenter\Vdash B(r)\Rightarrow r=r$
\LeftLabel{$(\wedge R)$}
\BinaryInf$\fCenter\Vdash B(r)\Rightarrow B(r)\wedge r=r$
\LeftLabel{$(\in\! R)$}
\UnaryInf$\fCenter\Vdash B(r)\Rightarrow r\in s$
\LeftLabel{$(\rightarrow R)$}
\UnaryInf$\fCenter\Vdash \Rightarrow r\dotin s\rightarrow r\in s$
\LeftLabel{$(b\forall R)_\infty$}
\UnaryInf$\fCenter\Vdash\Rightarrow(\forall x\in s)(x\in s)$
\end{prooftree}
If $s\equiv\mathbb{L}_\alpha$ then we have the following template for derivations in \textbf{IRS}$_\Omega$.
\begin{prooftree}
\Axiom$\fCenter\text{Induction Hypothesis}$
\UnaryInf$\fCenter\Vdash\Rightarrow r\subseteq r\quad\text{for all $\lev r<\lev s$}$
\LeftLabel{$(\wedge R)$}
\UnaryInf$\fCenter\Vdash\Rightarrow r=r$
\LeftLabel{$(\in\! R)$}
\UnaryInf$\fCenter\Vdash\Rightarrow r\in s$
\LeftLabel{$(b\forall R)_\infty$}
\UnaryInf$\fCenter\Vdash\Rightarrow(\forall x\in s)(x\in s)$
\end{prooftree}
iv) Was shown whilst proving iii).\\

\noindent v) The following is a template for \textbf{IRS}$_\Omega$ derivations
\begin{prooftree}
\Axiom$\fCenter\quad i)$
\UnaryInf$\fCenter\Vdash s\subseteq t\Rightarrow s\subseteq t$
\LeftLabel{$(\wedge L)$}
\UnaryInf$\fCenter\Vdash s=t\Rightarrow s\subseteq t$
\Axiom$\fCenter\quad i)$
\UnaryInf$\fCenter\Vdash t\subseteq s\Rightarrow t\subseteq s$
\LeftLabel{$(\wedge L)$}
\UnaryInf$\fCenter\Vdash s=t\Rightarrow t\subseteq s$
\LeftLabel{$(\wedge R)$}
\BinaryInf$\fCenter\Vdash s=t\Rightarrow t=s$
\end{prooftree}
vi) Trivial for $t\equiv\mathbb{L}_\alpha$, now if $t\equiv[x\in\mathbb{L}_\alpha\:|\:C(x)]$ then we have the following template for \textbf{IRS}$_\Omega$ derivations.
\begin{prooftree}
\Axiom$\fCenter\Vdash\Gamma,A\Rightarrow B$
\LeftLabel{$(\wedge L)$}
\UnaryInf$\fCenter\Vdash\Gamma, C(s)\wedge A\Rightarrow B$
\Axiom$\fCenter\Vdash\Gamma, C(s)\Rightarrow C(s)$
\LeftLabel{$(\wedge L)$}
\UnaryInf$\fCenter\Vdash\Gamma, C(s)\wedge A\Rightarrow C(s)$
\LeftLabel{$(\wedge R)$}
\BinaryInf$\fCenter\Vdash\Gamma, C(s)\wedge A\Rightarrow C(s)\wedge B$
\end{prooftree}
vii) This is also trivial for $t\equiv\mathbb{L}_\alpha$ so suppose $t\equiv[x\in\mathbb{L}_\alpha\:|\:C(x)]$ and we have the following template for \textbf{IRS}$_\Omega$ derivations.
\begin{prooftree}
\Axiom$\fCenter\Vdash\Gamma,C(s)\Rightarrow C(s)$
\LeftLabel{$(\wedge L)$}
\UnaryInf$\fCenter\Vdash\Gamma, C(s)\wedge B\Rightarrow C(s)$
\Axiom$\fCenter\Vdash\Gamma, A,B\Rightarrow \Delta$
\LeftLabel{$(\wedge L)$}
\UnaryInf$\fCenter\Vdash\Gamma, A, C(s)\wedge B\Rightarrow\Delta$
\LeftLabel{$(\rightarrow L)$}
\BinaryInf$\fCenter\Vdash\Gamma, C(s)\rightarrow A, C(s)\wedge B\Rightarrow\Delta$
\end{prooftree}
viii) Suppose $\lev s<\beta$ then we have the following template for derivations in \text{IRS}$_\Omega$.
\begin{prooftree}
\Axiom$\fCenter\quad\quad iii)$
\UnaryInf$\fCenter\Vdash \Rightarrow s=s$
\LeftLabel{$(\in\! R)$}
\UnaryInf$\fCenter\Vdash\Rightarrow s\in\mathbb{L}_\beta$
\end{prooftree}
\end{proof}
\begin{lemma}\label{equality1}{\em
For any terms $s_1,...,s_n,t_1,...,t_n$ and any formula $A(s_1,...,s_n)$ we have
\begin{equation*}
\Vdash[s_1=t_1],...,[s_n=t_n],A(s_1,...,s_n)\Rightarrow A(t_1,...,t_n)
\end{equation*}
Where $[s_i=t_i]$ is shorthand for $s_i\subseteq t_i, t_i\subseteq s_i$.
}\end{lemma}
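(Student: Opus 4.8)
The plan is to argue by induction on $rk(A(s_1,\dots,s_n))$, reducing the statement to its atomic instances, which in turn rest on a separate congruence principle for $\in$ and $=$. Throughout I write $[\vec s=\vec t]$ for the antecedent list $s_1\subseteq t_1,t_1\subseteq s_1,\dots,s_n\subseteq t_n,t_n\subseteq s_n$; note that having both inclusions available is what lets me move parameters in either direction. The propositional cases are routine: if $A$ is $\neg C$, $C\wedge D$, $C\vee D$ or $C\to D$, I apply the induction hypothesis to the immediate subformulae (using Lemma \ref{setuplemma}(v) for symmetry of $=$ when a subformula sits on the right) and reassemble with $(\neg L)/(\neg R)$, $(\wedge L)/(\wedge R)$, etc., invoking Lemma \ref{calc} and Weakening (Lemma \ref{weakpers}(i)) to absorb the extra equality hypotheses and to keep track of the ordinal bounds. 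For an unbounded quantifier $A\equiv\exists x\,C(x,\vec s)$ I apply $(\exists L)_\infty$, then for each witness term $p$ use the induction hypothesis on $C(p,\vec s)$ together with reflexivity $\Vdash\Rightarrow p\subseteq p$ (Lemma \ref{setuplemma}(iii)) to obtain $C(p,\vec t)$, and close with $(\exists R)$; the case $\forall x\,C$ is dual via $(\forall L)$ and $(\forall R)_\infty$.

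The heart of the argument is the atomic case $A\equiv(s_k\in s_l)$, where induction on formula rank no longer suffices, so I would first establish, as a preliminary claim proved by simultaneous induction on $rk(r)\#rk(s)\#rk(t)$, the three congruence facts
\[
\Vdash s=t,\,r\in s\Rightarrow r\in t,\qquad \Vdash s=t,\,s\in r\Rightarrow t\in r,\qquad \Vdash s=t,\,t=u\Rightarrow s=u .
\]
These are genuinely interdependent: unfolding $r\in s$ by $(\in L)_\infty$ and feeding the resulting constituent $p$ (with $\lev p<\lev s$) into $s\subseteq t$ via $(b\forall L)$ produces $p\in t$, after which reaching $r\in t$ needs left-congruence applied to $r=p$, which in turn appeals to transitivity through a further $(\in L)_\infty$ on $p\in t$ and a final $(\in R)$; dually, establishing $s\subseteq u$ inside the transitivity proof by $(b\forall R)_\infty$ uses the first (monotonicity) fact. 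All three must therefore be carried in one induction, and the atomic case of the main lemma then follows by combining them with symmetry (Lemma \ref{setuplemma}(v)).

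For the bounded quantifier cases, say $A\equiv(\exists x\in s_l)C(x,\vec s)$, I apply $(b\exists L)_\infty$; for each constituent $p$ with $\lev p<\lev{s_l}$ I have $p\dotin s_l\wedge C(p,\vec s)$ in the antecedent. Using $s_l\subseteq t_l$ together with the membership machinery of Lemma \ref{setuplemma}(iv) and the congruence claim, $p$ is shown to lie in $t_l$, i.e.\ some $q\dotin t_l$ with $\lev q<\lev{t_l}$ and $p=q$ becomes available; the induction hypothesis on $C$ then transports both the parameters $\vec s\mapsto\vec t$ and the witness $p\mapsto q$, and $(b\exists R)$ finishes. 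The universal case $(\forall x\in s_l)C$ is handled symmetrically with $(b\forall R)_\infty$ and $(b\forall L)$, now using the reverse inclusion $t_l\subseteq s_l$.

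The main obstacle is precisely this preliminary congruence claim. Its difficulty is twofold: first, $=$ abbreviates a pair of bounded universal statements over $\subseteq$ while $\in$ is governed by the infinitary rules, so the two notions have to be meshed by repeatedly passing between $(\in L)_\infty/(\in R)$ and $(b\forall L)/(b\forall R)_\infty$; second, the auxiliary equalities $r=p$ and $p=q$ between a term and a constituent of another term have rank that may exceed that of the original atomic formula $s_k\in s_l$, so this step cannot be absorbed into the formula-rank induction and demands its own induction on the ranks of the participating terms. Once that claim is in place, verifying the ordinal and operator side conditions of each inference is routine and follows the same pattern as in Lemma \ref{setuplemma}.
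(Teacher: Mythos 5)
Your quantifier-level architecture is sound and genuinely different from the paper's in the atomic case, and your bounded-quantifier cases are in fact the careful version of what the paper does there: re-index the witness $p$ (a constituent of $s_l$) to a constituent $q$ of $t_l$ with $p=q$, and transport parameters and witness in one application of the induction hypothesis with $n+1$ equality pairs. The genuine gap lies in how you propose to use your three binary congruence facts. The relation $\Vdash$ carries cut rank $0$ and the tight bound $no(\Gamma\Rightarrow\Delta)$, so previously derived sequents can only be re-used as leaves whose extra hypotheses already sit in the antecedent and are discharged by left rules (the packaging devices of Lemma \ref{setuplemma}vi) and vii) exist precisely for this); they cannot be chained by cuts. ``Combining them with symmetry'' in the atomic case is exactly such a chaining: unfolding $s_k\in s_l$ by $(\in L)_\infty$ puts a constituent equation $s_k=p$ in the antecedent, feeding $p$ through $s_l\subseteq t_l$ and unfolding $p\in t_l$ puts $p=q$ there, and to close with $(\in R)$ at witness $q$ you need the \emph{single} sequent $[s_k=t_k],\,s_k=p,\,p=q\Rightarrow t_k=q$ --- a length-three equality chain. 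Binary transitivity plus symmetry yield this only via a cut on the intermediate equality $t_k=p$ (or, composing your two membership facts instead, via a cut on $t_k\in s_l$), and no cut is available under $\Vdash$. The same oversight occurs in your unbounded case: after invoking the IH with the extra pair $[p=p]$, you would have to remove the hypothesis $p\subseteq p$ by cutting against $\Vdash\;\Rightarrow p\subseteq p$. There the fix is trivial: hold the witness fixed, as the paper does in its Case 5; since $rk(C(p,\bar s))<rk(\exists x\,C(x,\bar s))$ by Lemma \ref{rank}, the IH applies with the original $n$ pairs and no reflexivity hypothesis ever arises.

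For the atomic case there are two repairs. (a) Strengthen your preliminary claim from binary transitivity to chains of each needed finite length, proved in the same simultaneous term-rank induction; this closes because the reduction preserves chain length --- unfolding each inclusion in a length-$k$ chain produces a length-$k$ chain among constituents at strictly smaller term ranks. (b) Do what the paper does and drop the separate congruence lemma entirely: induct on $rk(A(\bar s))\#rk(A(\bar t))$ and observe that the needed chain --- in the paper's notation $[s_1=t_1],[t=s],\,s_1=s\Rightarrow t_1=t$ --- is itself an instance of the lemma being proved, applied to the formula $x_1=x_2$ with the additional pair $(s,t)$ of constituents; since $\lev s<\lev{s_2}$ and $\lev t<\lev{t_2}$, the offsets in the rank definition make this instance strictly smaller, and used as a leaf it has all its hypotheses in the antecedent, so the derivation stays cut-free with the $no(\cdot)$ bound intact. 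Option (b) is what makes the antecedent list $[s_1=t_1],\dots,[s_n=t_n]$ the right induction statement: the lemma is its own congruence lemma, whereas your externalized binary facts do not compose cut-free.
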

\begin{proof}
We proceed by induction on $rk(A(s_1,...,s_n))\#rk(A(t_1,...,t_n))$.\\

\noindent Case 1. Suppose $A(x_1,x_2)\equiv (x_1\in x_2)$, then for all $\lev s<\lev{s_2}$ and $\lev t<\lev{t_2}$ we have the following template for derivations in $\IRS$.
\begin{prooftree}
\Axiom$\fCenter\Vdash [s_1=t_1],[t=s],s_1=s\Rightarrow t_1=t$
\LeftLabel{Lemma \ref{calc}ii)}
\UnaryInf$\fCenter\Vdash [s_1=t_1], t=s,s_1=s\Rightarrow t_1=t$
\LeftLabel{\ref{setuplemma}vi)}
\UnaryInf$\fCenter\Vdash [s_1=t_1], t\dotin t_2\wedge t=s, s_1=s\Rightarrow t\dotin t_2\wedge t_1=t$
\LeftLabel{$(\in\! R)$}
\UnaryInf$\fCenter\Vdash [s_1=t_1], t\dotin t_2\wedge t=s,s_1=s\Rightarrow t_1\in t_2$
\LeftLabel{$(\in\! L)_\infty$}
\UnaryInf$\fCenter\Vdash [s_1=t_1], s\in t_2,s_1=s\Rightarrow t_1\in t_2$
\LeftLabel{\ref{setuplemma}vii)}
\UnaryInf$\fCenter\Vdash [s_1=t_1], s\dotin s_2\rightarrow s\in t_2,s\dotin s_2\wedge s_1=s\Rightarrow t_1\in t_2$
\LeftLabel{$(\forall L)$}
\UnaryInf$\fCenter\Vdash [s_1=t_1], (\forall x\in s_2)(x\in t_2),s\dotin s_2\wedge s_1=s\Rightarrow t_1\in t_2$
\LeftLabel{$(\in\! L)_\infty$}
\UnaryInf$\fCenter\Vdash [s_1=t_1], (\forall x\in s_2)(x\in t_2),s_1\in s_2\Rightarrow t_1\in t_2$
\LeftLabel{Lemma \ref{weakpers}i)}
\UnaryInf$\fCenter\Vdash [s_1=t_1], [s_2=t_2],s_1\in s_2\Rightarrow t_1\in t_2$
\end{prooftree}
Case 2. If $A(x_1,x_2)\equiv x_1\in x_1$ then the assertion follows by Lemma \ref{setuplemma}ii) and weakening.\\

\noindent Case 3. Suppose $A(x_1,...,x_n)\equiv (\exists y\in x_i)B(y,x_1,...,x_n)$, for simplicity let us suppose that $i=1$. Inductively for all $\lev r<\lev{s_1}$ we have
\begin{prooftree}
\Axiom$\fCenter\Vdash[s_1=t_1],...,[s_n=t_n], r\dotin s_1 \wedge B(r,s_1,...,s_n)\Rightarrow r\dotin t_1\wedge B(r,t_1,...,t_n)$
\LeftLabel{$(b\exists R)$}
\UnaryInf$\fCenter\Vdash[s_1=t_1],...,[s_n=t_n], r\dotin s_1 \wedge B(r,s_1,...,s_n)\Rightarrow (\exists y\in s_1)B(y,t_1,...,t_n)$
\LeftLabel{$(b\exists L)_\infty$}
\UnaryInf$\fCenter\Vdash[s_1=t_1],...,[s_n=t_n], (\exists y\in s_1)B(y,t_1,...,t_n)\Rightarrow (\exists y\in s_1)B(y,t_1,...,t_n)$
\end{prooftree}
Case 4. The bounded universal quantification case is dual to the bounded existential one.\\

\noindent Case 5. If $A(x_1,...,x_n)\equiv\exists yB(y,x_1,...,x_n)$ then inductively for all terms $r$ we have
\begin{equation*}
\Vdash[s_1=t_1],...,[s_n=t_n],B(r,s_1,...,s_n)\Rightarrow B(r,t_1,...,t_n)
\end{equation*}
subsequently applying $(\exists R)$ followed by $(\exists L)_\infty$ yields the desired result.\\

\noindent Case 6. The unbounded universal quantification case is dual to the unbounded existential one.\\

\noindent Case 7. All propositional cases follow immediately from the induction hypothesis.
\end{proof}
\begin{corollary}[Equality] \label{equality}{\em
For any \textbf{IRS}$_\Omega$-formula $A(s_1,...,s_n)$
\begin{equation*}
\Vdash\;\;\Rightarrow s_1=t_1\wedge...\wedge s_n=t_n\wedge A(s_1,...,s_n)\rightarrow A(t_1,...,t_n)\,.
\end{equation*}
}\end{corollary}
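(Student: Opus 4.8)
The plan is to derive the corollary directly from Lemma \ref{equality1} by repackaging the antecedent inclusions into a single conjunctive hypothesis and then discharging it with $(\rightarrow R)$. The whole argument lives in the $\Vdash$-calculus, so no explicit ordinal bookkeeping is needed.

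First I would unfold the abbreviation conventions. Recall that $s\subseteq t$ abbreviates $(\forall x\in s)(x\in t)$ and that $s_i=t_i$ abbreviates $(\forall x\in s_i)(x\in t_i)\wedge(\forall x\in t_i)(x\in s_i)$, i.e. the conjunction $(s_i\subseteq t_i)\wedge(t_i\subseteq s_i)$. Since each $[s_i=t_i]$ in Lemma \ref{equality1} stands for the pair $s_i\subseteq t_i,\ t_i\subseteq s_i$, that lemma supplies
\begin{equation*}
\Vdash s_1\subseteq t_1,\,t_1\subseteq s_1,\,\ldots,\,s_n\subseteq t_n,\,t_n\subseteq s_n,\,A(s_1,\ldots,s_n)\Rightarrow A(t_1,\ldots,t_n).
\end{equation*}

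Next, for each $i$ I would apply Lemma \ref{calc}ii) (the $(\wedge L)$-rule internalised into the $\Vdash$-calculus) to fuse the two inclusions $s_i\subseteq t_i$ and $t_i\subseteq s_i$ into the single antecedent formula $s_i=t_i$; because the antecedent is a set of formulae, the two conjuncts can be selected in the order that produces exactly $s_i=t_i$. This gives
\begin{equation*}
\Vdash s_1=t_1,\,\ldots,\,s_n=t_n,\,A(s_1,\ldots,s_n)\Rightarrow A(t_1,\ldots,t_n).
\end{equation*}
Repeatedly pairing adjacent hypotheses by further applications of Lemma \ref{calc}ii) then collapses the whole antecedent into the single conjunction $s_1=t_1\wedge\cdots\wedge s_n=t_n\wedge A(s_1,\ldots,s_n)$, yielding
\begin{equation*}
\Vdash s_1=t_1\wedge\cdots\wedge s_n=t_n\wedge A(s_1,\ldots,s_n)\Rightarrow A(t_1,\ldots,t_n).
\end{equation*}

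Finally I would invoke Lemma \ref{calc}i) for the inference $(\rightarrow R)$ — which is neither (Cut) nor $\SR$ and involves no contraction — to move the conjunction across the sequent arrow and obtain
\begin{equation*}
\Vdash\;\Rightarrow s_1=t_1\wedge\cdots\wedge s_n=t_n\wedge A(s_1,\ldots,s_n)\rightarrow A(t_1,\ldots,t_n),
\end{equation*}
which is the assertion. There is no genuine obstacle here: Lemma \ref{calc} guarantees that the ordinal bounds implicit in $\Vdash$ are respected under each $(\wedge L)$ and $(\rightarrow R)$ step, so the only points requiring attention are the correct unfolding of the $=$ and $\subseteq$ abbreviations and keeping the bracketing discipline consistent while building up the iterated conjunction.
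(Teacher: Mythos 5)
Your proposal is correct and matches exactly how the paper intends Corollary \ref{equality} to follow (the paper leaves it as an immediate consequence of Lemma \ref{equality1}): unfold $s_i=t_i$ as $(s_i\subseteq t_i)\wedge(t_i\subseteq s_i)$, fuse the hypotheses into a single conjunction by repeated use of Lemma \ref{calc}ii), and discharge it with $(\rightarrow R)$ via Lemma \ref{calc}i). The ordinal and operator bookkeeping is indeed fully absorbed by Lemma \ref{calc}, so no further argument is needed.
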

\begin{lemma}[Set Induction]\label{Set Induction}{\em
For any formula $F$
\begin{equation*}
\Vdash^{\omega^{rk(A)}}_0\:\Rightarrow\forall x[(\forall y\in x)F(y)\rightarrow F(x)]\rightarrow \forall xF(x).
\end{equation*}
Where $A:=\forall x[(\forall y\in x)F(y)\rightarrow F(x)]$.
}\end{lemma}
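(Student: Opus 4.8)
The plan is to strip the two outer connectives and reduce everything to a meta-level transfinite induction along the well-founded assignment $s\mapsto\lev s$ of levels to $\IRS$-terms. First I would apply $(\to R)$, reducing the goal to $A\Rightarrow\forall xF(x)$, and then $(\forall R)_\infty$, reducing it further to producing, uniformly in the term $s$, a derivation of $A\Rightarrow F(s)$ whose height is controlled well enough for the infinitary inference to close. The conceptual engine is that the $\IRS$-``elements'' of a term $s$ are exactly the terms $p$ with $\lev p<\lev s$; thus the set-induction premise $(\forall y\in s)F(y)$ is, read infinitarily, just a derivation for all strictly lower terms, which is precisely what an induction on $\lev s$ furnishes.

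For the step, assume $\provx{\CH}{\alpha_p}{0}{A\Rightarrow F(p)}$ has been secured for every $\lev p<\lev s$. For each such $p$, weakening (Lemma \ref{weakpers}i)) and $(\to R)$ give $A\Rightarrow p\dotin s\to F(p)$; when $s\equiv\mathbb{L}_\alpha$ the antecedent $p\dotin s$ is vacuous and this is literally the induction hypothesis. One application of $(b\forall R)_\infty$ collects these into $A\Rightarrow(\forall y\in s)F(y)$. Feeding this as the right premise of $(\to L)$, against the tautology $A,F(s)\Rightarrow F(s)$ from Lemma \ref{setuplemma}i) as the left premise, yields $A,(\forall y\in s)F(y)\to F(s)\Rightarrow F(s)$. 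A final $(\forall L)$ instantiates the bound variable of $A=\forall x[(\forall y\in x)F(y)\to F(x)]$ at $x:=s$; since $A$ is already present in the antecedent this inference contracts and leaves $A\Rightarrow F(s)$, closing the induction.

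The assembly then rests on one ordinal invariant: that the height of each $A\Rightarrow F(s)$ exceeds $no(A\Rightarrow F(s))$ only by an ``extra'' amount staying below $\Omega$ (more precisely, the whole height remains below $\omega^{rk(A)}\cdot\omega=\omega^{rk(A)+1}$, the head term $\omega^{rk(A)}$ common to every $no(A\Rightarrow F(s))$ dominating the $s$-dependent tail). The single-premise finitary steps are absorbed by the $\Vdash$-calculus of Lemma \ref{calc}; at $(b\forall R)_\infty$ the conclusion's extra is $\sup_{\lev p<\lev s}(\text{premise extras})$ plus a constant, which is again $<\Omega$ by regularity of $\Omega$ (each premise extra is at most $\omega^{\omega\cdot\lev p+c}<\Omega$). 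With this invariant the outer $(\forall R)_\infty$ ranges over all $\lev s<\Omega$; since every premise height is $<\omega^{rk(A)}+\Omega$ (in the $\Delta_0$ case, where $rk(A)=\Omega$), their supremum is $<\Omega\cdot 2$, so the rule closes at a height $\le\Omega\cdot2$ with conclusion $A\Rightarrow\forall xF(x)$. One $(\to R)$ and then Weakening/Persistence (Lemma \ref{weakpers}i)) padding up to $no(\Rightarrow A\to\forall xF(x))\#\omega^{rk(A)}=\omega^{rk(A)+1}\#\omega^{rk(A)}$ produce the stated $\Vdash^{\omega^{rk(A)}}_0\,\Rightarrow A\to\forall xF(x)$.

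I expect the ordinal bookkeeping to be the only real obstacle; the skeleton is the standard ``set-induction as transfinite recursion on term-levels.'' Two pitfalls deserve care. First, the infinitary inferences $(b\forall R)_\infty$ and $(\forall R)_\infty$ have premises whose heights genuinely vary with the level, so they cannot be pushed through the uniform-height hypothesis of Lemma \ref{calc}; their numeric side conditions $\lev p\le\alpha_p<\alpha$ must be verified directly in the relation $\provx{\CH}{\alpha}{\rho}{\Gamma\Rightarrow\Delta}$. Second, the contracting $(\forall L)$-instantiation of $A$ is likewise excluded by Lemma \ref{calc}, and because the contraction deletes a copy of the instance $(\forall y\in s)F(y)\to F(s)$ it lowers the $no$-contribution; one must check that the compensating rise in the extra still keeps the height below $\omega^{rk(A)+1}$, which follows from $rk((\forall y\in s)F(y)\to F(s))\le rk(A)$. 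Once these two points are discharged, together with the analogous but slightly heavier count for non-$\Delta_0$ $F$ where $rk(A)=\Omega+m_F$, the final height lands comfortably inside the claimed bound.
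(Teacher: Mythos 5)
Your proposal is correct and takes essentially the same route as the paper: the paper's proof likewise establishes the claim $\provx{\mathcal{H}[A,s]}{\omega^{rk(A)}\#\omega^{\lev s+1}}{0}{A\Rightarrow F(s)}$ by induction on $\lev s$, using $(\rightarrow R)$, $(b\forall R)_\infty$, then $(\rightarrow L)$ against the identity sequent $F(s)\Rightarrow F(s)$ from Lemma \ref{setuplemma}i) and a contracting $(\forall L)$ instantiating $A$ at $s$, before finishing with $(\forall R)_\infty$ and $(\rightarrow R)$. Your ``head $\omega^{rk(A)}$ plus a tail below $\Omega$'' invariant is exactly the paper's explicit bound, and the two pitfalls you flag (side conditions of the infinitary rules verified directly rather than via Lemma \ref{calc}, and the bookkeeping for the contraction at $(\forall L)$) are precisely the points the paper checks by hand.
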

\begin{proof}
First we verify the following claim:
\begin{equation*}
\tag{*}\provx{\mathcal{H}[A,s]}{\omega^{rk(A)}\#\omega^{\lev{s}+1}}{0}{A\Rightarrow F(s)}\quad\text{for all $s$}.
\end{equation*}
The claim is verified by induction on $\lev s$, inductively suppose that
\begin{equation*}
\provx{\mathcal{H}[A,t]}{\omega^{rk(A)}\#\omega^{\lev{t}+1}}{0}{A\Rightarrow F(t)}\quad\text{holds for all $\lev t<\lev s$}.
\end{equation*}
If necessary we may apply $(\rightarrow R)$ to obtain
\begin{equation*}
\provx{\mathcal{H}[A,t,s]}{\omega^{rk(A)}\#\omega^{\lev t+1}+1}{0}{A\Rightarrow t\dotin s\rightarrow F(t)}.
\end{equation*}
Next applying $(b\forall R)_\infty$ yields
\begin{equation*}
\provx{\mathcal{H}[A,s]}{\omega^{rk(A)}\#\omega^{\lev s}+2}{0}{A\Rightarrow (\forall y\in s)F(y)}.
\end{equation*}
Also by Lemma \ref{setuplemma}i) we have
\begin{equation*}
\provx{\mathcal{H}[A,s]}{\omega^{rk(F(s))}\#\omega^{rk(F(s))}}{0}{F(s)\Rightarrow F(s)}.
\end{equation*}
Now one may note that $\omega^{rk(F(s))}\#\omega^{rk(F(s))}\leq\omega^{rk(F(s))+1}\leq\omega^{\text{max}(\OO,rk(F(\mathbb{L}_0))+3)}=\omega^{rk(A)}$ to see that by weakening we can conclude
\begin{equation*}
\provx{\mathcal{H}[A,s]}{\omega^{rk(A)\#\omega^{|s|}+2}}{0}{F(s)\Rightarrow F(s)}.
\end{equation*}
Hence using one application of $(\rightarrow L)$ we get
\begin{equation*}
\provx{\mathcal{H}[A,s]}{\omega^{rk(A)}\#\omega^{|s|}+3}{0}{A, (\forall y\in s)F(y)\rightarrow F(s)\Rightarrow F(s)}.
\end{equation*}
Applying $(b\forall L)$ yields
\begin{equation*}
\provx{\mathcal{H}[A,s]}{\omega^{rk(A)}\#\omega^{|s|}+4}{0}{A\Rightarrow F(s)}.
\end{equation*}
Thus the claim (*) is verified. A single application of $(\forall R)_\infty$ to (*) furnishes us with
\begin{equation*}
\provx{\mathcal{H}[A]}{\omega^{rk(A)}\#\OO}{0}{A\Rightarrow \forall xF(x)}.
\end{equation*}
Finally applying $(\rightarrow R)$ gives
\begin{equation*}
\Vdash^{\omega^{rk(A)}}_0\Rightarrow A\rightarrow \forall xF(x)
\end{equation*}
as required.
\end{proof}
\begin{lemma}[Infinity]\label{infinity}{\em
For any ordinal $\al>\omega$ we have
\begin{equation*}
\Vdash\;\;\Rightarrow(\exists x\in\mathbb{L}_\al)[(\exists z\in x)(z\in x)\wedge (\forall y\in x)(\exists z\in x)(y\in z)]\,.
\end{equation*}
}\end{lemma}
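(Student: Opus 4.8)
The plan is to exhibit an explicit $\IRS$ derivation whose end-sequent is the displayed one, taking $\mathbb{L}_\omega$ as the witness for the outermost bounded existential quantifier. Since $\al>\omega$ we have $\lev{\mathbb{L}_\omega}=\omega<\al$, so the rule $(b\exists R)$ is applicable with this witness; and because $\mathbb{L}_\al$ has the form $\mathbb{L}_\alpha$, the premise $\mathbb{L}_\omega\dotin\mathbb{L}_\al\wedge C$ collapses, by the abbreviation introduced just before the inference rules, to $C$ alone. Hence it suffices to establish
$$\Vdash\;\;\Rightarrow(\exists z\in\mathbb{L}_\omega)(z\in\mathbb{L}_\omega)\wedge(\forall y\in\mathbb{L}_\omega)(\exists z\in\mathbb{L}_\omega)(y\in z),$$
and then split this via $(\wedge R)$ into its two conjuncts, treated separately.

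For the inhabitedness conjunct I would take $\mathbb{L}_0$ as witness. Since $\lev{\mathbb{L}_0}=0<\omega$, Lemma \ref{setuplemma} viii) gives $\Vdash\;\Rightarrow\mathbb{L}_0\in\mathbb{L}_\omega$, and one application of $(b\exists R)$ (again the membership premise is vacuous, as $\mathbb{L}_\omega$ is of the form $\mathbb{L}_\alpha$) yields $\Vdash\;\Rightarrow(\exists z\in\mathbb{L}_\omega)(z\in\mathbb{L}_\omega)$.

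For the second conjunct I would apply $(b\forall R)_\infty$, which, the bound being vacuous, reduces the task to proving, for every term $p$ with $\lev p<\omega$, that $\Vdash\;\Rightarrow(\exists z\in\mathbb{L}_\omega)(p\in z)$. The key point here is that $\omega$ is a limit ordinal: for each such $p$ the term $\mathbb{L}_{\lev p+1}$ has level $\lev p+1<\omega$, so it is a legal witness for the inner bounded existential, and Lemma \ref{setuplemma} viii) supplies $\Vdash\;\Rightarrow p\in\mathbb{L}_{\lev p+1}$; a single application of $(b\exists R)$ then produces the required premise.

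Finally, the ordinal and operator bookkeeping is automatic: every inference used is one of $(b\exists R)$, $(b\forall R)_\infty$ and $(\wedge R)$, none of which is (Cut) or $\SR$, and since the antecedent stays empty throughout there are no contractions, so Lemma \ref{calc} i) lets us propagate the bare $\Vdash$ at each step. The only genuine subtleties — the nearest thing to an obstacle — are bookkeeping-level: verifying that each chosen witness has level strictly below the relevant bound, which is exactly what forces the use of a limit ordinal (here $\omega$), so that $\mathbb{L}_{\lev p+1}$ remains available for \emph{every} $p$ of level $<\omega$; and noticing that the $\dotin$-abbreviation trivialises each membership premise precisely because all the terms $\mathbb{L}_\gamma$ appearing in the derivation are of the form $\mathbb{L}_\alpha$ rather than separation terms.
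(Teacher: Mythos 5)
Your proposal is correct and matches the paper's own proof essentially step for step: the same final $(b\exists R)$ with witness $\mathbb{L}_\omega$, the same $(\wedge R)$ split with $\mathbb{L}_0$ witnessing inhabitedness and $\mathbb{L}_{\lev p+1}$ (the paper's $\mathbb{L}_\alpha$ with $\lev s<\alpha<\omega$) witnessing the inner existential, all grounded in Lemma \ref{setuplemma}~viii). Your explicit remarks on the $\dotin$-collapse for terms of the form $\mathbb{L}_\alpha$ and the $\Vdash$-bookkeeping via Lemma \ref{calc}~i) merely make visible what the paper's derivation template leaves implicit.
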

\begin{proof}
The following is a template for derivations in \textbf{IRS}$_\Omega$:
\begin{prooftree}
\Axiom$\fCenter\text{Lemma \ref{setuplemma} viii)}$
\UnaryInf$\fCenter\Vdash\Rightarrow\mathbb{L}_0\in\mathbb{L}_\omega$
\LeftLabel{$(b\exists R)$}
\UnaryInf$\fCenter\Vdash\Rightarrow (\exists z\in\mathbb{L}_\omega)(z\in\mathbb{L}_\omega)$
\Axiom$\fCenter\text{Lemma \ref{setuplemma} viii)}$
\UnaryInf$\fCenter\Vdash \Rightarrow s\in\mathbb{L}_\alpha\quad\text{for all $|s|<\alpha<\omega$}$
\LeftLabel{$(b\exists R)$}
\UnaryInf$\fCenter\Vdash \Rightarrow (\exists z\in\mathbb{L}_\omega)(s\in z)\quad\text{for all $|s|<\omega$}$
\LeftLabel{$(b\forall R)_\infty$}
\UnaryInf$\fCenter\Vdash \Rightarrow(\forall y\in\mathbb{L}_\omega)(\exists z\in\mathbb{L}_\omega)(y\in z)$
\LeftLabel{$(\wedge R)$}
\BinaryInf$\fCenter\Vdash \Rightarrow(\exists z\in\mathbb{L}_\omega)(z\in\mathbb{L}_\omega)\wedge(\forall y\in\mathbb{L}_\omega)(\exists z\in\mathbb{L}_\omega)(y\in z)$
\LeftLabel{$(b\exists R)$}
\UnaryInf$\fCenter\Vdash \Rightarrow(\exists x\in\mathbb{L}_\al)[(\exists z\in x)(z\in x)\wedge(\forall y\in x)(\exists z\in x)(y\in z)]$
\end{prooftree}
\end{proof}
\begin{lemma}[$\Delta_0$-Separation]\label{separation}{\em
Suppose $|s|,|t_1|,...,|t_n|<\lambda$ where $\lambda$ is a limit ordinal and $A(a,b_1,...,b_n)$ is a $\Delta_0$-formula of $\IKP$ with all free variables displayed, then
\begin{equation*}
\Vdash\;\;\Rightarrow(\exists y\in\mathbb{L}_\lambda)[(\forall x\in y)(x\in s\wedge A(x,t_1,...,t_n))\wedge (\forall x\in s)(A(x,t_1,...,t_n)\rightarrow x\in y)]\,.
\end{equation*}
}\end{lemma}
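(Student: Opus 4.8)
The plan is to exhibit an explicit separation term, verify the two halves of the displayed conjunction separately, and then assemble them. Write $\bar t$ for $t_1,\dots,t_n$ and let $B(x)$ denote the formula $x\in s\wedge A(x,\bar t)$, which is $\Delta_0$ since $A$ is. Because $\lambda$ is a limit ordinal and $|s|,|t_1|,\dots,|t_n|<\lambda$, I may fix an ordinal $\alpha$ with $\max(|s|,|t_1|,\dots,|t_n|)<\alpha<\lambda$ and form the term
\[
  w:=[x\in\mathbb{L}_\alpha\mid B(x)^{\mathbb{L}_\alpha}].
\]
The term-formation condition is met since $s,t_1,\dots,t_n$ all have level $<\alpha$, and $|w|=\alpha<\lambda$, so $w$ is a legitimate witness for the leading quantifier $(\exists y\in\mathbb{L}_\lambda)$. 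The key simplification is that $B$, being $\Delta_0$, has no unbounded quantifiers, so $B^{\mathbb{L}_\alpha}$ is literally $B$; consequently $p\dotin w$ is exactly the separating condition $B(p)=p\in s\wedge A(p,\bar t)$ for every $|p|<\alpha$.

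For the first conjunct I would derive $\Vdash\;\Rightarrow(\forall x\in w)B(x)$ by one application of $(b\forall R)_\infty$. Its premises require, for each $|p|<|w|=\alpha$, a derivation of $\Vdash\;\Rightarrow p\dotin w\rightarrow B(p)$, that is of $\Vdash\;\Rightarrow B(p)\rightarrow B(p)$; this is immediate from Lemma \ref{setuplemma}(i) followed by $(\rightarrow R)$.

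For the second conjunct I would derive $\Vdash\;\Rightarrow(\forall x\in s)(A(x,\bar t)\rightarrow x\in w)$, again by $(b\forall R)_\infty$. For each $|p|<|s|$ the corresponding premise is $\Vdash\;\Rightarrow p\dotin s\rightarrow(A(p,\bar t)\rightarrow p\in w)$, so after the applicable $(\rightarrow R)$ steps it suffices to obtain $\Vdash\; p\dotin s,\,A(p,\bar t)\Rightarrow p\in w$ (the hypothesis $p\dotin s$ being vacuous when $s$ has the form $\mathbb{L}_\beta$). For this I apply $(\in R)$ with witness $p$, whose premise, recalling $p\dotin w\equiv B(p)$, is
\[
  \Vdash\; p\dotin s,\,A(p,\bar t)\Rightarrow(p\in s\wedge A(p,\bar t))\wedge p=p,
\]
and the level requirement $|p|<\alpha=|w|$ holds since $|p|<|s|<\alpha$. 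This sequent is assembled by $(\wedge R)$ from three ingredients, each imported with weakening: $p\in s$ follows from the hypothesis $p\dotin s$ by Lemma \ref{setuplemma}(iv) (and when $s$ is of the form $\mathbb{L}_\beta$ directly from Lemma \ref{setuplemma}(viii), as $|p|<|s|$); the conjunct $A(p,\bar t)$ is itself a hypothesis, via Lemma \ref{setuplemma}(i); and $p=p$ is given by Lemma \ref{setuplemma}(iii). No contraction occurs, so every inference used falls under Lemma \ref{calc}(i) and the ordinal bookkeeping encoded in $\Vdash$ is automatic.

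Finally, combining the two conjuncts by $(\wedge R)$ yields $\Vdash\;\Rightarrow\Phi(w)$, where $\Phi(y)$ is the matrix $(\forall x\in y)B(x)\wedge(\forall x\in s)(A(x,\bar t)\rightarrow x\in y)$ of the asserted formula. Since $\mathbb{L}_\lambda$ has the form $\mathbb{L}_\alpha$, the expression $w\dotin\mathbb{L}_\lambda\wedge\Phi(w)$ abbreviates to $\Phi(w)$, so a single application of $(b\exists R)$, legitimate because $|w|=\alpha<\lambda$, delivers the desired sequent. I expect no serious obstacle here: the whole argument is template-building in the style of Lemma \ref{setuplemma}, and the only two points genuinely needing care are the choice of $\alpha$ \emph{strictly} below the limit $\lambda$ (so that $w$ lands inside $\mathbb{L}_\lambda$) and the observation that $\Delta_0$-ness collapses the relativization $B^{\mathbb{L}_\alpha}$ to $B$, making $p\dotin w$ coincide with the separating condition $B(p)$ and thus letting both inclusions go through symmetrically.
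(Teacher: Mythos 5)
Your proposal is correct and takes essentially the same route as the paper's own proof: both fix a level strictly below the limit $\lambda$ (the paper takes $\beta:=\max\{|s|,|t_1|,\dots,|t_n|\}+1$), form the separation term $[x\in\mathbb{L}_\beta\mid x\in s\wedge A(x,t_1,\dots,t_n)]$, derive the two conjuncts by $(b\forall R)_\infty$ from Lemma \ref{setuplemma} (using $(\in R)$ with the premise $(p\in s\wedge A(p,\bar t))\wedge p=p$ for the second inclusion, exactly as in the paper's Derivation (2)), and conclude with $(\wedge R)$ followed by $(b\exists R)$. Your explicit observation that $\Delta_0$-ness makes the relativization $B^{\mathbb{L}_\alpha}$ literal, so that $p\dotin w$ coincides with the separating condition, is precisely the fact the paper uses implicitly when it suppresses the superscript in the defining formula of its term $t$.
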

\begin{proof}
First let $\beta:=\text{max}\{|s|,|t_1|,...,|t_n|\}+1$ and note that $\beta<\lambda$ since $\lambda$ is a limit. Now let
\begin{equation*}
t:=[u\in\mathbb{L}_\beta\:|\:u\in s\wedge A(u,t_1,...,t_n)].
\end{equation*}
Let $B(x):=A(x,t_1,...,t_n)$, in what follows $r$ ranges over terms with $\lev r<\lev t$ and $p$ ranges over terms with $\lev p<\lev s$. We have the following two templates for derivations in \textbf{IRS}$_\Omega$:\\

\noindent Derivation (1)
\begin{prooftree}
\Axiom$\fCenter\text{Lemma \ref{setuplemma}i)}$
\UnaryInf$\fCenter\Vdash r\in s\wedge B(r)\Rightarrow r\in s\wedge B(r)$
\LeftLabel{$(\rightarrow R)$}
\UnaryInf$\fCenter\Vdash\Rightarrow r\dotin t\rightarrow (r\in s\wedge B(r))$
\LeftLabel{$(b\forall R)_\infty$}
\UnaryInf$\fCenter\Vdash\Rightarrow (\forall x\in t)(x\in s\wedge B(x))$
\end{prooftree}
Derivation (2)
\begin{prooftree}
\Axiom$\fCenter\text{Lemma \ref{setuplemma}iv)}$
\UnaryInf$\fCenter\Vdash p\dotin s, B(p)\Rightarrow p\in s$
\Axiom$\fCenter\text{Lemma \ref{setuplemma}i)}$
\UnaryInf$\fCenter\Vdash p\dotin s, B(p)\Rightarrow B(p)$
\LeftLabel{$(\wedge R)$}
\BinaryInf$\fCenter\Vdash p\dotin s, B(p)\Rightarrow p\in s\wedge B(p)$
\Axiom$\fCenter\text{Lemma \ref{setuplemma}iii)}$
\UnaryInf$\fCenter\Vdash \Rightarrow p\subseteq p$
\LeftLabel{$(\wedge R)$}
\UnaryInf$\fCenter\Vdash \Rightarrow p=p$
\BinaryInf$\fCenter\Vdash p\dotin s, B(p)\Rightarrow (p\in s\wedge B(p))\wedge p=p$
\LeftLabel{$(\in\! R)$}
\UnaryInf$\fCenter\Vdash p\dotin s, B(p)\Rightarrow p\in t$
\LeftLabel{$(\rightarrow R)$}
\UnaryInf$\fCenter\Vdash p\dotin s\Rightarrow B(p)\rightarrow p\in t$
\LeftLabel{$(\rightarrow R)$}
\UnaryInf$\fCenter\Vdash \Rightarrow p\dotin s\rightarrow (B(p)\rightarrow p\in t)$
\LeftLabel{$(b\forall R)_\infty$}
\UnaryInf$\fCenter\Vdash \Rightarrow (\forall x\in s) (B(x)\rightarrow x\in t)$
\end{prooftree}
Now applying $(\wedge R)$ to the conclusions of derivations (1) and (2) we obtain
\begin{equation*}
\Vdash\Rightarrow(\forall x\in t)(x\in s\wedge B(x))\wedge(\forall x\in s) (B(x)\rightarrow x\in t).
\end{equation*}
Finally note that $|t|=\beta<\lambda$ so we may apply $(b\exists R)$ to obtain
\begin{equation*}
\Vdash\Rightarrow(\exists y\in\mathbb{L}_\lambda)[(\forall x\in y)(x\in s\wedge B(x))\wedge(\forall x\in s) (B(x)\rightarrow x\in y)]
\end{equation*}
as required.
\end{proof}
\begin{lemma}[Pair]\label{pair}{\em
If $\lambda$ is a limit ordinal and $|s|,|t|<\lambda$, then
\begin{equation*}
\Vdash\;\;\Rightarrow(\exists z\in\mathbb{L}_\lambda)(s\in z\wedge t\in z)\,.
\end{equation*}
}\end{lemma}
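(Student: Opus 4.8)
The plan is to exhibit an explicit witnessing term of level below $\lambda$ and verify membership of $s$ and $t$ in it inside the $\Vdash$ calculus, exactly in the style of the proof of Lemma~\ref{separation}. Since $\lambda$ is a limit ordinal and $|s|,|t|<\lambda$, I can fix an ordinal $\beta$ with $\max\{|s|,|t|\}<\beta<\lambda$, for instance $\beta:=\max\{|s|,|t|\}+1$. The cleanest choice of witness is then simply $z:=\mathbb{L}_\beta$, which is a legitimate term of level $\beta<\lambda$; note that the Pair axiom only demands a common superset of $s$ and $t$, not the exact unordered pair, so there is no need to build a separation term here.

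With this choice the derivation is short. Because $|s|<\beta$ and $|t|<\beta$, Lemma~\ref{setuplemma}viii) gives $\Vdash\;\Rightarrow s\in\mathbb{L}_\beta$ and $\Vdash\;\Rightarrow t\in\mathbb{L}_\beta$. Applying $(\wedge R)$, which is a contraction-free inference other than $\Cut$ and $\SR$ so that Lemma~\ref{calc}i) keeps me within the $\Vdash$ calculus, yields $\Vdash\;\Rightarrow s\in\mathbb{L}_\beta\wedge t\in\mathbb{L}_\beta$. Finally, since $|\mathbb{L}_\beta|=\beta<\lambda=|\mathbb{L}_\lambda|$ and $\mathbb{L}_\lambda$ has the form $\mathbb{L}_\alpha$, so that $\mathbb{L}_\beta\dotin\mathbb{L}_\lambda\wedge A$ abbreviates just $A$, a single application of $(b\exists R)$ with witness $\mathbb{L}_\beta$ produces $\Vdash\;\Rightarrow(\exists z\in\mathbb{L}_\lambda)(s\in z\wedge t\in z)$, as required.

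If one prefers an honest pairing term, the same skeleton works with $z:=[x\in\mathbb{L}_\beta\mid x=s\vee x=t]$: here one derives $\Vdash\;\Rightarrow s\in z$ by first obtaining $\Vdash\;\Rightarrow s=s$ from Lemma~\ref{setuplemma}iii) and $(\wedge R)$, then $(\vee R)$ to get $\Vdash\;\Rightarrow s=s\vee s=t$, which is exactly the defining condition $s\dotin z$, then $(\wedge R)$ against $\Vdash\;\Rightarrow s=s$ to reach $\Vdash\;\Rightarrow s\dotin z\wedge s=s$, and finally $(\in R)$ with witness $s$, which is legitimate since $|s|<\beta=|z|$; symmetrically for $t$, after which $(\wedge R)$ and $(b\exists R)$ finish as before. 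Either way the argument is essentially routine and there is no genuine obstacle: the only points requiring a moment's care are checking that $\beta$ can be taken strictly below the limit $\lambda$ and that the side conditions on term levels in $(\in R)$ and $(b\exists R)$ are met, while the implicit ordinal bookkeeping carried by the $\Vdash$ notation is handled automatically by Lemma~\ref{calc}.
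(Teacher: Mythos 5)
Your first argument is exactly the paper's proof: the paper also sets $\delta:=\max\{|s|,|t|\}+1<\lambda$, derives $\Vdash\;\Rightarrow s\in\mathbb{L}_\delta$ and $\Vdash\;\Rightarrow t\in\mathbb{L}_\delta$ from Lemma \ref{setuplemma}viii), and finishes with $(\wedge R)$ and $(b\exists R)$ using the witness $\mathbb{L}_\delta$, relying on the same observation that the Pair axiom only asks for a common superset. Your second variant with the explicit separation term $[x\in\mathbb{L}_\beta\mid x=s\vee x=t]$ is also sound but unnecessary; the proposal is correct and takes essentially the same approach as the paper.
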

\begin{proof}
Let $\delta:=\text{max}\{|s|,|t|\}+1$ and note that $\delta<\lambda$ since $\lambda$ is a limit. We have the following template for \textbf{IRS}$_\Omega$ derivations:
\begin{prooftree}
\Axiom$\fCenter\text{Lemma \ref{setuplemma}viii)}$
\UnaryInf$\fCenter\Vdash\Rightarrow s\in\mathbb{L}_\delta$
\Axiom$\fCenter\text{Lemma \ref{setuplemma}viii)}$
\UnaryInf$\fCenter\Vdash\Rightarrow t\in\mathbb{L}_\delta$
\LeftLabel{$(\wedge R)$}
\BinaryInf$\fCenter\Vdash\Rightarrow (s\in\mathbb{L}_\delta\wedge t\in\mathbb{L}_\delta)$
\LeftLabel{$(b\exists R)$}
\UnaryInf$\fCenter\Vdash\Rightarrow (\exists z\in\mathbb{L}_\lambda)(s\in z\wedge t\in z)$
\end{prooftree}
\end{proof}
\begin{lemma}[Union]\label{union}{\em
If $\lambda$ is a limit ordinal and $|s|<\lambda$ then
\begin{equation*}
\Vdash\;\;\Rightarrow(\exists z\in\mathbb{L}_\lambda)(\forall y\in s)(\forall x\in y)(x\in z)\,.
\end{equation*}
}\end{lemma}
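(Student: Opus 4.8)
The plan is to mimic the proofs of Pair and Separation: exhibit an explicit witness term for $z$ and build the required derivation by two nested applications of $(b\forall R)_\infty$, capped off with a single $(b\exists R)$, working throughout in the $\Vdash$ calculus of Lemma \ref{calc}. Set $\delta := |s|$. Since $|s| < \lambda$ we have $\delta < \lambda$, so $\mathbb{L}_\delta$ is an admissible witness for the leading bounded existential quantifier. The key observation is that every element of an element of $s$ has level strictly below $|s| = \delta$: if $|p| < |s|$ and $|q| < |p|$ then $|q| < \delta$, whence $\mathbb{L}_\delta$ provably contains all such $q$ by Lemma \ref{setuplemma} viii).

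First I would establish the innermost sequent. Fix $p$ with $|p| < |s|$ and $q$ with $|q| < |p|$. By Lemma \ref{setuplemma} viii) we have $\Vdash\; \Rightarrow q \in \mathbb{L}_\delta$, and from this I obtain $\Vdash\; \Rightarrow q \dotin p \rightarrow q \in \mathbb{L}_\delta$ by splitting on the shape of $p$: if $p$ is of the form $\mathbb{L}_\gamma$ the abbreviation $q\dotin p\rightarrow q\in\mathbb{L}_\delta$ is literally $q\in\mathbb{L}_\delta$, so nothing is needed, while if $p$ is $[x\in\mathbb{L}_\gamma\,|\,D(x)]$ it is $D(q)\rightarrow q\in\mathbb{L}_\delta$, obtained by weakening in $D(q)$ (Lemma \ref{weakpers} i)) and applying $(\rightarrow R)$. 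An application of $(b\forall R)_\infty$ ranging over all $|q|<|p|$ then yields $\Vdash\; \Rightarrow (\forall x\in p)(x\in\mathbb{L}_\delta)$.

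Next I would insert the outer bounded universal quantifier. For each $p$ with $|p|<|s|$ I pass from $\Vdash\; \Rightarrow(\forall x\in p)(x\in\mathbb{L}_\delta)$ to $\Vdash\; \Rightarrow p\dotin s\rightarrow(\forall x\in p)(x\in\mathbb{L}_\delta)$, again splitting on the shape of $s$: if $s=\mathbb{L}_\alpha$ nothing is needed, and if $s=[x\in\mathbb{L}_\alpha\,|\,B(x)]$ I weaken in $B(p)$ and apply $(\rightarrow R)$. A second $(b\forall R)_\infty$, now ranging over all $|p|<|s|$, gives $\Vdash\; \Rightarrow(\forall y\in s)(\forall x\in y)(x\in\mathbb{L}_\delta)$. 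Finally, since $|\mathbb{L}_\delta|=\delta<\lambda$ and $\mathbb{L}_\lambda$ has the form $\mathbb{L}_\lambda$, the abbreviation $\mathbb{L}_\delta\dotin\mathbb{L}_\lambda\wedge(\cdots)$ collapses to $(\forall y\in s)(\forall x\in y)(x\in\mathbb{L}_\delta)$, so one application of $(b\exists R)$ with witness $\mathbb{L}_\delta$ produces the desired $\Vdash\;\Rightarrow(\exists z\in\mathbb{L}_\lambda)(\forall y\in s)(\forall x\in y)(x\in z)$.

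The computation is essentially routine; the only points demanding care are the bookkeeping around the $\dotin$-abbreviation (the case distinctions on whether $s$ and the generic subterms $p$ are of the form $\mathbb{L}_\gamma$ or $[x\in\mathbb{L}_\gamma\,|\,\cdot]$, handled uniformly by $(\rightarrow R)$ together with weakening) and the verification that the single choice $\delta=|s|$ simultaneously keeps $\mathbb{L}_\delta$ inside $\mathbb{L}_\lambda$ and makes it large enough to contain every doubly-nested element of $s$. No $\Cut$ or $\SR$ is used and no contraction occurs, so each step stays inside the $\Vdash$ calculus by Lemma \ref{calc} i), and the ordinal bounds are subsumed by the additive-principal nature of the $no(\cdot)$ measure exactly as in the proofs of Pair and Separation.
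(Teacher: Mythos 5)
Your proposal is correct and takes essentially the same route as the paper's proof: the paper likewise sets $\alpha:=|s|$, obtains $\Vdash r\dotin s,\, q\dotin r\Rightarrow q\in\mathbb{L}_\alpha$ for all $|q|<|r|<\alpha$ from Lemma \ref{setuplemma}viii), and then applies $(\rightarrow R)$ and $(b\forall R)_\infty$ twice each before a single $(b\exists R)$ with witness $\mathbb{L}_\alpha$, exactly your derivation with $\delta$ renamed. The only difference is cosmetic: you spell out the case distinctions hidden in the $\dotin$-abbreviation, which the paper's derivation template leaves implicit.
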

\begin{proof}
Let $\alpha=|s|$, we have the following template for derivations in \textbf{IRS}$_\Omega$:
\begin{prooftree}
\Axiom$\fCenter\text{Lemma \ref{setuplemma}viii)}$
\UnaryInf$\fCenter\Vdash r\dotin s, q\dotin r\Rightarrow q\in\mathbb{L}_\alpha\quad\text{for all $|q|<|r|<\alpha$}$
\LeftLabel{$(\rightarrow R)$}
\UnaryInf$\fCenter\Vdash r\dotin s\Rightarrow q\dotin r\rightarrow q\in\mathbb{L}_\alpha$
\LeftLabel{$(b\forall R)_\infty$}
\UnaryInf$\fCenter\Vdash r\dotin s\Rightarrow (\forall x\in r)(x\in\mathbb{L}_\alpha)$
\LeftLabel{$(\rightarrow R)$}
\UnaryInf$\fCenter\Vdash \Rightarrow r\dotin s\rightarrow (\forall x\in r)(x\in\mathbb{L}_\alpha)$
\LeftLabel{$(b\forall R)_\infty$}
\UnaryInf$\fCenter\Vdash \Rightarrow(\forall y\in s)(\forall x\in y)(x\in\mathbb{L}_\alpha)$
\LeftLabel{$(b\exists R)$}
\UnaryInf$\fCenter\Vdash \Rightarrow(\exists z\in\mathbb{L}_\lambda)(\forall y\in s)(\forall x\in y)(x\in z)$
\end{prooftree}
\end{proof}
\begin{lemma}[$\DE_0$-Collection]\label{collection}{\em
For any $\Delta_0$ formula $F(x,y)$,
\begin{equation*}
\Vdash\;\;\Rightarrow(\forall x\in s)\exists yF(x,y)\rightarrow\exists z(\forall x\in s)(\exists y\in z)F(x,y)\,.
\end{equation*}
}\end{lemma}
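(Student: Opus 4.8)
The plan is to read the collection lemma off directly from the reflection rule $\SR$, which was built into $\IRS$ precisely in order to absorb the collection inferences of $\IKP$. Write $A:=(\forall x\in s)\exists yF(x,y)$. By clauses (i)--(iii) of the definition of the $\Sigma$-formulae, $A$ is a $\Sigma$-formula: $F(x,y)$ is $\Delta_0$ hence $\Sigma$; prefixing the unbounded $\exists y$ keeps it $\Sigma$; and finally the bounded quantifier $(\forall x\in s)$ preserves $\Sigma$-hood. Moreover, recalling that $A^z$ replaces every \emph{unbounded} quantifier of $A$ by its restriction to $z$, and that the only such quantifier is the $\exists y$, we have $A^z\equiv(\forall x\in s)(\exists y\in z)F(x,y)$. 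Hence $\exists z\,A^z$ is exactly the consequent we must prove.

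The derivation itself is then two lines. First I would invoke Lemma~\ref{setuplemma}i) for the identity $\Vdash A\Rightarrow A$, that is, $\provx{\CH[A\Rightarrow A]}{no(A\Rightarrow A)}{0}{A\Rightarrow A}$ with $no(A\Rightarrow A)=\omega^{rk(A)}\#\omega^{rk(A)}$ for every operator $\CH$. Since $A$ is a $\Sigma$-formula, a single application of $\SR$ gives $A\Rightarrow\exists z\,A^z$, and one further application of $(\rightarrow R)$ produces the end-sequent $\Rightarrow A\rightarrow\exists z\,A^z$, which is the assertion of the lemma.

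The only point needing genuine care --- because $\SR$ is the single rule \emph{not} covered by the convenient $\Vdash$-calculus of Lemma~\ref{calc} --- is the ordinal bookkeeping at the reflection step. The rule $\SR$ demands that the conclusion ordinal $\al$ satisfy $\al_0+1<\al$ and $\Omega<\al$, where $\al_0=no(A\Rightarrow A)$ is the premise ordinal. Since $A$ contains unbounded quantifiers, Observation~\ref{rkobs}iii) gives $rk(A)=\Omega+m$ for some finite $m$, so $\al_0$ already has a summand $\omega^{\Omega+m}>\Omega$; thus $\al_0>\Omega$ and we may take $\al=\al_0+2$, the subsequent $(\rightarrow R)$ raising this to $\al_0+3$. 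It then remains to check that $\al_0+3$ does not exceed $no(\Rightarrow A\rightarrow\exists z\,A^z)=\omega^{rk(A\rightarrow\exists z\,A^z)}$, so that the plain $\Vdash$ (with the canonical end-sequent ordinal $no$) follows after one appeal to Weakening (Lemma~\ref{weakpers}i)). Here clause 7 of the rank definition gives $rk(\exists z\,A^z)=\Omega$, since instantiating the body at $\mathbb{L}_0$ yields the $\Delta_0$-formula $(\forall x\in s)(\exists y\in\mathbb{L}_0)F(x,y)$ of rank $<\Omega$; hence $rk(A\rightarrow\exists z\,A^z)=rk(A)+1=\Omega+m+1$. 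Consequently $no(\Rightarrow A\rightarrow\exists z\,A^z)=\omega^{\Omega+m+1}$ dominates $\al_0+3=\omega^{\Omega+m}\cdot 2+3$, as required, while the operator-control conditions $k(\cdot)\cup\{\al\}\subseteq\CH$ are immediate because reflection introduces no new term levels. This last inequality is the only computation that is not entirely routine; everything else is the two-line derivation above.
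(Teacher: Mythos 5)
Your proposal is correct and follows essentially the same route as the paper's own proof: the identity sequent $\Vdash A\Rightarrow A$ from Lemma \ref{setuplemma}i), one application of $\SR$ (noting $A$ is $\Sigma$ and $A^z\equiv(\forall x\in s)(\exists y\in z)F(x,y)$), one application of $(\rightarrow R)$, and the final check that the resulting ordinal and operator are majorized by $no(\Rightarrow A\rightarrow\exists z\,A^z)$ and $\CH[\Rightarrow A\rightarrow\exists z\,A^z]$. Your ordinal bookkeeping is in fact slightly more careful than the paper's (which assigns $\al+1$ at the $\SR$ step where the side condition $\al_0+1<\al$ strictly demands $\al_0+2$), but this is a cosmetic difference, not a different argument.
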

\begin{proof}
Using Lemma \ref{setuplemma} we have
\begin{equation*}
\Vdash (\forall x\in s)\exists yF(x,y)\RI(\forall x\in s)\exists yF(x,y)\,.
\end{equation*}
Now let $\bar\CH:=\CH[(\forall x\in s)\exists yF(x,y)]$ and $\al:=no((\forall x\in s)\exists yF(x,y)\RI(\forall x\in s)\exists yF(x,y))$, by applying $\SR$ we obtain
\begin{equation*}
\bar\CH\;\prov{\al+1}{0}{(\forall x\in s)\exists yF(x,y)\RI\exists z(\forall x\in s)(\exists y\in z)F(x,y)}.
\end{equation*}
Applying $(\rightarrow R)$ gives
\begin{equation*}
\bar\CH\;\prov{\al+2}{0}{\RI(\forall x\in s)\exists yF(x,y)\rightarrow\exists z(\forall x\in s)(\exists y\in z)F(x,y)}.
\end{equation*}
It remains to note that
\begin{align*}
\al+2=\al=&no((\forall x\in s)\exists yF(x,y)\RI(\forall x\in s)\exists yF(x,y))+2\\
<&no(\RI(\forall x\in s)\exists yF(x,y)\rightarrow\exists z(\forall x\in s)(\exists y\in z)F(x,y))
\end{align*}
and $\bar\CH=\CH[\RI(\forall x\in s)\exists yF(x,y)\rightarrow\exists z(\forall x\in s)(\exists y\in z)F(x,y)]$ to complete the proof.
\end{proof}
\begin{theorem}\label{IKPembed}{\em
If $\textbf{IKP}\vdash\Gamma(\bar{a})\Rightarrow\Delta(\bar{a})$ where $\Gamma(\bar{a})\Rightarrow\Delta(\bar{a})$ is an intuitionistic sequent containing exactly the free variables $\bar{a}:=a_1,...,a_n$, then there is an $m<\omega$ (which we may compute from the \textbf{IKP}-derivation) such that
\begin{equation*}
{\mathcal{H}[\Gamma(\bar{s})\Rightarrow\Delta(\bar s)]}\:\prov{\Omega\cdot\omega^m}{\Omega+m}{\Gamma(\bar{s})\Rightarrow\Delta(\bar{s})}
\end{equation*}
for any \textbf{IRS}$_\Omega$ terms $\bar{s}:=s_1,...s_n$ and any operator $\CH$.
}\end{theorem}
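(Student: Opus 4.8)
The plan is to argue by induction on the length of the given $\IKP$-derivation of $\Gamma(\bar a)\Rightarrow\Delta(\bar a)$, reading off the witnessing number $m$ from the derivation as we go. The key move is to work with the derivation uniformly in the free variables: we fix arbitrary $\IRS$-terms $\bar s=s_1,\dots,s_n$ and an arbitrary operator $\CH$, and replace each $a_i$ by $s_i$ everywhere. Since the eigenvariable conditions guarantee that the quantified variables do not clash with the $\bar a$, this substitution is harmless, and at each node the bound $\Omega\cdot\omega^m$ for the height together with $\Omega+m$ for the cut-rank is independent of which terms $\bar s$ we chose — this uniformity is exactly what makes the infinitary quantifier rules applicable later.

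For the axioms there is nothing to prove beyond quoting the preceding lemmas: the logical axioms come from Lemma~\ref{setuplemma}(i), Extensionality from Corollary~\ref{equality}, and Pair, Union, Infinity, $\Delta_0$-Separation and Set Induction from Lemmas~\ref{pair}, \ref{union}, \ref{infinity}, \ref{separation} and \ref{Set Induction} respectively. Each of these furnishes a derivation whose height is below $\Omega\cdot\omega^m$ and whose cut-rank is at most $\Omega+m$ for a suitable finite $m$. The one instructive case is $\Delta_0$-Collection (Lemma~\ref{collection}), which is witnessed through the reflection rule $\SR$ rather than by cut elimination; this is precisely why the target cut-rank is permitted to climb to $\Omega+m$ instead of remaining below $\Omega$, and why $\SR$ must be present in $\IRS$ at all.

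For the inference rules I would simulate each $\IKP$-rule by the rule of the same name in $\IRS$, using the $\Vdash$-calculus of Lemma~\ref{calc} to absorb the routine ordinal bookkeeping in the propositional and bounded-quantifier cases (Lemma~\ref{rank} guarantees the rank side conditions). The genuinely delicate cases are the unbounded rules $(\forall R)$ and $(\exists L)$, whose eigenvariable $a$ is absent from the conclusion. Here I would invoke the induction hypothesis once for \emph{each} term $p$ substituted for $a$, obtaining $\provx{\CH[\dots]}{\Omega\cdot\omega^{m_0}}{\Omega+m_0}{\Gamma(\bar s)\Rightarrow F(\bar s,p)}$ (respectively the left-hand analogue) with a bound uniform in $p$, and then feed all these premises into $(\forall R)_\infty$ (respectively $(\exists L)_\infty$). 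This is the step that forces the height up to $\Omega$: the side condition $\lev p<\alpha_p+1<\alpha$ must be met for terms $p$ of arbitrarily large level below $\Omega$, so $\alpha$ must reach $\Omega$, which accounts for the factor $\Omega$ in the bound $\Omega\cdot\omega^m$.

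The main obstacle I anticipate is the ordinal and rank accounting for $(Cut)$. A cut on $C$ in the $\IKP$-derivation becomes a cut on $C(\bar s)$, and as $C$ may carry unbounded quantifiers, Observation~\ref{rkobs}(iii) gives $rk(C(\bar s))=\Omega+k$ for some finite $k$. One must therefore check that a single cut-rank bound $\Omega+m$ can be chosen to dominate $rk(C(\bar s))$ simultaneously for all the finitely many cut formulas occurring in the fixed derivation; this is possible exactly because the derivation is finite, so the relevant $k$'s are finitely many and bounded by one $m$. The remaining task is then the monotone growth estimate for the height: each inference raises the height bound by at most a fixed finite power of $\omega$ (with each unbounded step needing height exceeding $\Omega$), so that a straightforward induction on the derivation length, together with the additive-principal arithmetic underlying $\#$ and the $\Vdash$-calculus, yields a global bound of the announced form $\Omega\cdot\omega^m$, with the requisite operator-control condition $k(\Gamma(\bar s)\Rightarrow\Delta(\bar s))\cup\{\Omega\cdot\omega^m\}\subseteq\CH[\Gamma(\bar s)\Rightarrow\Delta(\bar s)]$ maintained throughout.
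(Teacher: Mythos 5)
Your overall strategy coincides with the paper's: induction on the $\IKP$-derivation, the axioms discharged by Lemmas \ref{equality}, \ref{Set Induction}, \ref{infinity}, \ref{separation}, \ref{pair}, \ref{union}, \ref{collection}, each rule simulated by its infinitary counterpart, the jump past $\Omega$ located at $(\forall R)_\infty$/$(\exists L)_\infty$, and one $m$ chosen to dominate the finitely many cut formulas. However, there is a genuine gap where you declare the bounded-quantifier rules routine and absorbable into the $\Vdash$-calculus. The $\IKP$ rules $(b\forall R)$, $(b\forall L)$, etc.\ have premises formulated with \emph{real} membership, e.g.\ $\Gamma\Rightarrow a\in b\rightarrow F(a)$, whereas the infinitary rules of $\IRS$ use the structural relation $\dotin$, with premises $\Gamma\Rightarrow p\dotin t\rightarrow F(p)$. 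Bridging this mismatch is where the actual work of the embedding lies. For $(b\forall R)$ one must first invert (Lemma \ref{inversion}(v)) to get $\Gamma(\bar s),\,p\in s_i\Rightarrow F(p)$, then cut against $\Vdash p\dotin s_i\Rightarrow p\in s_i$ (Lemma \ref{setuplemma}(iv)) before $(\rightarrow R)$ and $(b\forall R)_\infty$ become applicable. For $(b\forall L)$ the substituted premise carries the side formula $s_1\in s_i\rightarrow F(s_1)$, which is not the minor formula of any $\IRS$ rule; the paper separately establishes the claim $\Vdash\Gamma,(\forall x\in s_i)F(x)\Rightarrow s_1\in s_i\rightarrow F(s_1)$ — requiring an unfolding of $s_1\in s_i$ by $(\in L)_\infty$, a case split on whether $s_i$ is of the form $\mathbb{L}_\alpha$ or a comprehension term, and the equality Lemma \ref{equality1} to pass from $F(r)$ and $r=s_1$ to $F(s_1)$ — and then cuts this claim against the induction hypothesis. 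Without these steps your simulation simply does not type-check against the $\IRS$ rules.

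Relatedly, your explanation of the cut-rank bound is misattributed: the $\SR$-derivation witnessing $\Delta_0$-Collection (Lemma \ref{collection}) has cut rank $0$; $\SR$ raises the \emph{ordinal} bound past $\Omega$, not the cut rank. The budget $\Omega+m$ is consumed by cuts on formulas containing unbounded quantifiers, which arise from two sources: the translated $\IKP$-cuts (your appeal to Observation \ref{rkobs}(iii) is correct here), and the auxiliary cuts just described — in the $(b\forall L)$ case the cut formula $s_1\in s_i\rightarrow F(s_1)$ has rank $\geq\Omega$ whenever $F$ contains unbounded quantifiers, which is exactly why the paper passes to $\Omega+m':=\max\{\Omega+m,\,rk(s_1\in s_i\rightarrow F(s_1))\}$. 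Finally, in the Cut case the premises contain extra free variables $\bar c$ absent from the conclusion; these are not eigenvariables covered by your uniformity remark about $\bar a$, and must be instantiated by concrete terms (the paper uses $\overline{\mathbb{L}_0}$), checking $k(F(\bar s,\overline{\mathbb{L}_0}))\subseteq\CH[\Gamma(\bar s)\Rightarrow\Delta(\bar s)]$ so that the operator-control side condition of $\Cut$ is met.
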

\begin{proof}
Let $A$ be any $\IRS$ formula, note that by Observation \ref{rkobs}, we have $rk(A)\leq\Omega+l$ for some $l<\omega$. Therefore
\begin{equation*}
no(A)=\omega^{rk(A)}\leq\omega^{\Omega+l}=\omega^\Omega\cdot\omega^l=\Omega\cdot\omega^l\,.
\end{equation*}
Thus for any choice of terms $\bar{s}$ we have
\begin{equation*}
no(\Gamma(\bar{s})\Rightarrow\Delta(\bar{s}))\leq\Omega\cdot\omega^m\quad\text{for some $m<\omega$.}
\end{equation*}
\noindent The remainder of the proof is by induction on the derivation $\textbf{IKP}\vdash\Gamma(\bar{a})\Rightarrow\Delta(\bar{a})$.\\

\noindent If $\Gamma(\bar{a})\Rightarrow\Delta(\bar{a})$ is an axiom of \textbf{IKP} then the assertion follows by Lemmas \ref{equality}, \ref{Set Induction}, \ref{infinity}, \ref{separation}, \ref{pair}, \ref{union} or \ref{collection}. If $\Gamma(\bar{a})\Rightarrow\Delta(\bar{a})$ was the result of a propositional inference then we may apply the induction hypothesis to the premises and then the corresponding inference in $\IRS$. In order to cut down on notation we make the following abbreviation, let
\begin{equation*}
\bar{\CH}:=\CH[\Gamma(\bar{s})\Rightarrow\Delta(\bar{s})].
\end{equation*}
Case 1. Suppose that $\Gamma(\bar{a})\Rightarrow\Delta(\bar{a})$ was the result of the inference $(b\forall R)$, then $\Delta(\bar{s})=\{(\forall x\in s_i)F(x)\}$. The induction hypothesis furnishes us with an $k<\omega$ such that
\begin{equation*}
\bar\CH[p]\;\prov{\Omega\cdot\omega^k}{\Omega+k}{\Gamma(\bar{s})\Rightarrow p\in s_i\rightarrow F(p)}\quad\text{for all $\lev p<\lev s_i$.}
\end{equation*}
Now by Lemma \ref{inversion}v) we have
\begin{equation*}
\bar\CH[p]\;\prov{\Omega\cdot\omega^k}{\Omega+k}{\Gamma(\bar{s}), p\in s_i\Rightarrow F(p)\,.}
\end{equation*}
Also by \ref{setuplemma}iv) we have
\begin{equation*}
\Vdash p\dotin s_i\Rightarrow p\in s_i\,.
\end{equation*}
Applying (Cut) to these two yields
\begin{equation*}
\bar\CH[p]\;\prov{\Omega\cdot\omega^k+1}{\Omega+k}{\Gamma(\bar{s}), p\dotin s_i\Rightarrow F(p)\,.}
\end{equation*}
Now by $(\rightarrow\! R)$ we have
\begin{equation*}
\bar\CH[p]\;\prov{\Omega\cdot\omega^k+2}{\Omega+k}{\Gamma(\bar{s})\Rightarrow p\dotin s_i\rightarrow F(p)}.
\end{equation*}
Hence by $(b\forall R)_\infty$ we have
\begin{equation*}
\bar\CH\;\prov{\Omega\cdot\omega^{k+1}}{\Omega+k}{\Gamma(\bar{s})\Rightarrow (\forall x\in s_i)F(x)}
\end{equation*}
as required.\\

\noindent Case 2. Now suppose that $\Gamma(\bar{a})\Rightarrow\Delta(\bar{a})$ was the result of the inference $(b\forall L)$. So $(\forall x\in a_i)F(x)\in\Gamma(\bar{a})$ and we are in the following situation in \textbf{IKP}
\begin{prooftree}
\AxiomC{$\Gamma(\bar{a}),c\in a_i\rightarrow F(c)\Rightarrow\Delta(\bar{a})$}
\LeftLabel{$(b\forall L)$}
\UnaryInfC{$\Gamma(\bar{a})\Rightarrow\Delta(\bar{a})$}
\end{prooftree}
If $c$ is not a member of $\bar{a}$ then by the induction hypothesis we have an $m<\omega$ such that
\begin{equation*}\tag{1}
\bar\CH\;\prov{\Omega\cdot\omega^m}{\Omega+m}{\Gamma(\bar{s}), s_1\in s_i\rightarrow F(s_1)\Rightarrow\Delta(\bar{s})}.
\end{equation*}
Now if $c$ is a member of $\bar{a}$, for simplicity let us suppose that $c=a_1$. Inductively we can find an $m<\omega$ such that (1) is also satisfied.
First we verify the following claim:
\begin{equation*}\tag{2}
\Vdash\Gamma,(\forall x\in s_i)F(x)\Rightarrow s_1\in s_i\rightarrow F(s_1)\,.
\end{equation*}
2.1 Suppose $s_i$ is of the form $\mathbb{L}_\alpha$. The claim is verified by the following template for derivations in $\IRS$, here $r$ ranges over terms with $|r|<|s_i|$.
\begin{prooftree}
\Axiom$\fCenter\quad\text{Lemma \ref{equality1}}$
\UnaryInf$\fCenter\Vdash\Gamma, F(r), r\in s_i, r=s_1\Rightarrow F(s_1)$
\LeftLabel{$(b\forall L)$}
\UnaryInf$\fCenter\Vdash\Gamma, (\forall x\in s_i)F(x), r\in s_i, r=s_1\Rightarrow F(s_1)$
\LeftLabel{Lemma \ref{calc}ii)}
\UnaryInf$\fCenter\Vdash\Gamma, (\forall x\in s_i)F(x), r\in s_i\wedge r=s_1\Rightarrow F(s_1)$
\LeftLabel{$(\in\! L)_\infty$}
\UnaryInf$\fCenter\Vdash\Gamma, (\forall x\in s_i)F(x), s_1\in s_i\Rightarrow F(s_1)$
\LeftLabel{$(\rightarrow\! R)$}
\UnaryInf$\fCenter\Vdash\Gamma, (\forall x\in s_i)F(x)\Rightarrow s_1\in s_i\rightarrow F(s_1)$
\end{prooftree}
2.2 Now supposing $s_i$ is of the form $[x\in\mathbb{L}_\alpha\;|\;B(x)]$, we have the following template for derivations in $\IRS$, where $r$ and $p$ range over terms with level below $\lev{s_i}$.
\begin{prooftree}
\Axiom$\fCenter\quad\text{Lemma \ref{equality1}}$
\UnaryInf$\fCenter\Vdash p\dotin s_i, r=p, r=s_i\Rightarrow r\dotin s_i$
\LeftLabel{Lemma \ref{calc}ii)}
\UnaryInf$\fCenter\Vdash p\dotin s_i\wedge r=p, r=s_i\Rightarrow r\dotin s_i$
\LeftLabel{$(\in\!L)_\infty$}
\UnaryInf$\fCenter\Vdash r\in s_i, r=s_i\Rightarrow r\dotin s_i$
\Axiom$\fCenter\quad\text{Lemma \ref{equality1}}$
\UnaryInf$\fCenter\Vdash F(r), r\in s_i, r=s_1\Rightarrow F(s_1)$
\LeftLabel{$(\rightarrow\! L)$}
\BinaryInf$\fCenter\Vdash\Gamma, r\dotin s_i\rightarrow F(r), r\in s_i, r=s_1\Rightarrow F(s_1)$
\LeftLabel{$(b\forall L)$}
\UnaryInf$\fCenter\Vdash\Gamma, (\forall x\in s_i)F(x), r\in s_i, r=s_1\Rightarrow F(s_1)$
\LeftLabel{Lemma \ref{calc}ii)}
\UnaryInf$\fCenter\Vdash\Gamma, (\forall x\in s_i)F(x), r\in s_i\wedge r=s_1\Rightarrow F(s_1)$
\LeftLabel{$(\in\! L)_\infty$}
\UnaryInf$\fCenter\Vdash\Gamma, (\forall x\in s_i)F(x), s_1\in s_i\Rightarrow F(s_1)$
\LeftLabel{$(\rightarrow\! R)$}
\UnaryInf$\fCenter\Vdash\Gamma, (\forall x\in s_i)F(x)\Rightarrow s_1\in s_i\rightarrow F(s_1)$
\end{prooftree}
Now that the claim is verified we may apply (Cut) to (1) and (2) to obtain
\begin{equation*}
\bar\CH\;\prov{\Omega\cdot\omega^{m^\prime}}{\Omega+m^\prime}{\Gamma(\bar{s})\Rightarrow\Delta(\bar{s})}
\end{equation*}
where $\Omega+m^\prime:=\text{max}\{\Omega+m, rk(s_1\in s_i\rightarrow F(s_1))\}$, which is the desired result.\\

\noindent All other quantification cases are similar to Cases 1 and 2.\\

\noindent Finally suppose $\Gamma(\bar{a})\Rightarrow\Delta(\bar{a})$ was the result of (Cut). So we are in the following situation in \textbf{IKP}.
\begin{prooftree}
\AxiomC{$\Gamma(\bar{a}),F(\bar{a},\bar{c})\Rightarrow\Delta(\bar{a})$}
\AxiomC{$\Gamma(\bar{a})\Rightarrow F(\bar{a},\bar{c})$}
\BinaryInfC{$\Gamma(\bar{a})\Rightarrow\Delta(\bar{a})$}
\end{prooftree}
where $\bar{c}$ are the free variables occurring in $F(\bar a,\bar c)$ that are distinct from $\bar a$. By the induction hypothesis we can find $m_0,m_1<\omega$ such that
\begin{align*}
\bar\CH&\;\prov{\Gamma\cdot\omega^{m_0}}{\Omega+m_0}{\Gamma(\bar{s}), F(\bar{s},\overline{\mathbb{L}_0})\Rightarrow\Delta(\bar{s})}\\
\bar\CH&\;\prov{\Gamma\cdot\omega^{m_1}}{\Omega+m_1}{\Gamma(\bar{s})\Rightarrow F(\bar{s},\overline{\mathbb{L}_0})}.
\end{align*}
Note that $k(F(\bar{s},\overline{\mathbb{L}_0}))\subseteq\bar\CH$ so we may apply (Cut) to finish the proof.
\end{proof}
\subsection{An ordinal analysis of $\IKP$}
\begin{lemma}\label{conc}{\em
If $A$ is a $\Sigma$-sentence and $\IKP\vdash\;\RI A$, then there is some $m<\omega$, which we may compute explicitly from the derivation, such that
\begin{equation*}
\provx{\CH_\gamma}{\varphi(\psio{\gamma})(\psio{\gamma})}{0}{\RI\; A}\quad\text{where $\gamma:=\omega_m(\OO\cdot\omega^m)$.}
\end{equation*}
Here $\omega_0(\al):=\al$ and $\omega_{k+1}(\al):=\omega^{\omega_k(\al)}$.
}\end{lemma}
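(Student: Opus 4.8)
The plan is to push the $\IKP$-derivation through the four standard transformations of the analysis—embedding, cut elimination down to the $\Omega$-level, collapsing, and a final round of cut elimination—while tracking the ordinal bounds. First I would apply the Embedding Theorem \ref{IKPembed}. Since $A$ is a $\Sigma$-\emph{sentence} it has no free variables and contains no term $\mathbb{L}_\alpha$, so $k(A)=\emptyset$ and hence $\CH_0[\RI A]=\CH_0$. Thus \ref{IKPembed} supplies an $m<\omega$, computable from the $\IKP$-derivation, with
\begin{equation*}
\provx{\CH_0}{\Omega\cdot\omega^m}{\Omega+m}{\RI A}.
\end{equation*}
By Weakening \ref{weakpers}(i) we may raise $m$, so we assume $m\geq 1$ (this sidesteps the degenerate case $m=0$).

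Next I would strip away the cuts of rank above $\Omega$ by $m-1$ successive applications of Predicative Cut Elimination \ref{predce}, each with $\be=0$ (so that $\varphi 0\xi=\omega^\xi$) and $\rho=\Omega+k$ for $k=m-1,\dots,1$; the side condition $\Omega\notin[\Omega+k,\Omega+k+1)$ holds since $k\geq 1$, and $0\in\CH_0$ which is closed under $\varphi$. Each step multiplies the height by one $\omega$-exponential, so after the $m-1$ steps the cut rank has dropped to $\Omega+1$ and the height has become the tower $\omega_{m-1}(\Omega\cdot\omega^m)$:
\begin{equation*}
\provx{\CH_0}{\omega_{m-1}(\Omega\cdot\omega^m)}{\Omega+1}{\RI A}.
\end{equation*}
One cannot descend below $\Omega+1$ predicatively because the reflection rule $\SR$ operates at level $\Omega$.

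Now I would invoke the Collapsing Theorem \ref{collapsing} with $\eta=0$; this is legitimate since $0\in\CH_0$, the antecedent is the empty set of $\Pi$-formulae, and the succedent is the single $\Sigma$-formula $A$. Writing $\al:=\omega_{m-1}(\Omega\cdot\omega^m)$ and $\hat\al:=\omega^{\Omega+\al}$, the theorem yields
\begin{equation*}
\provx{\CH_{\hat\al}}{\psio{\hat\al}}{\psio{\hat\al}}{\RI A}.
\end{equation*}
The decisive point is the identity $\hat\al=\gamma$. Since $\Omega$ is an $\varepsilon$-number we have $\omega^\Omega=\Omega$, and for $m\geq 1$ one has $\al\geq\Omega\cdot\omega$, whence the absorption $\Omega+\al=\al$. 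Therefore $\hat\al=\omega^{\Omega+\al}=\omega^{\al}=\omega_m(\Omega\cdot\omega^m)=\gamma$, so $\CH_{\hat\al}=\CH_\gamma$ and we have obtained $\provx{\CH_\gamma}{\psio\gamma}{\psio\gamma}{\RI A}$.

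Finally, all remaining cuts have rank $<\psio\gamma<\Omega$, so a single application of Predicative Cut Elimination \ref{predce} with $\rho=0$ and $\be=\psio\gamma$ removes them: the side condition $\Omega\notin[0,\omega^{\psio\gamma})$ holds because $\psio\gamma<\Omega=\omega^\Omega$ forces $\omega^{\psio\gamma}<\Omega$, and $\be=\psio\gamma\in\CH_\gamma=B^\Omega(\gamma+1)$ by construction of the notation system. This produces
\begin{equation*}
\provx{\CH_\gamma}{\varphi(\psio\gamma)(\psio\gamma)}{0}{\RI A},
\end{equation*}
which is exactly the claim. The individual invocations of the three main theorems are routine; the part that needs care is the ordinal bookkeeping—above all the absorption identity $\Omega+\omega_{m-1}(\Omega\cdot\omega^m)=\omega_{m-1}(\Omega\cdot\omega^m)$ that collapses $\hat\al$ onto $\gamma$, together with verifying the operator and gap conditions ($\be\in\CH$ and $\Omega\notin[\rho,\rho+\omega^\be)$) at every cut-elimination step.
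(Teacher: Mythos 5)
Your proposal is correct and follows exactly the paper's own route: embedding via Theorem \ref{IKPembed}, $(m-1)$ applications of Predicative Cut Elimination \ref{predce} down to cut rank $\Omega+1$, Collapsing \ref{collapsing} with $\eta=0$, and one final application of \ref{predce}. The additional bookkeeping you supply --- the absorption identity $\Omega+\omega_{m-1}(\Omega\cdot\omega^m)=\omega_{m-1}(\Omega\cdot\omega^m)$ giving $\hat\alpha=\gamma$, and the verification of the side conditions $\beta\in\CH$ and $\Omega\notin[\rho,\rho+\omega^\beta)$ --- is precisely what the paper leaves implicit, and you verify it correctly.
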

\begin{proof}
Suppose that $A$ is a $\Sigma$-sentence and that $\IKP\vdash\;\RI A$, then by Theorem \ref{IKPembed} there is some $1\leq m<\omega$ such that
\begin{equation*}
\tag{1}\provx{\CH_0}{\OO\cdot\omega^m}{\OO+m}{\RI\; A}.
\end{equation*}
By applying Predicative Cut Elimination \ref{predce} $(m-1)$ times we obtain
\begin{equation*}
\tag{2}\provx{\CH_0}{\omega_{m-1}(\OO\cdot\omega^m)}{\OO+1}{\RI\; A}.
\end{equation*}
Applying Collapsing \ref{collapsing} to (2) gives
\begin{equation*}
\tag{3}\provx{\CH_\gamma}{\psio{\gamma}}{\psio{\gamma}}{\RI\;A}\quad\text{where $\gamma:=\omega_m(\OO\cdot\omega^m)$.}
\end{equation*}
Finally by applying Predicative Cut Elimination \ref{predce} again we get
\begin{equation*}
\provx{\CH_\gamma}{\varphi(\psio{\gamma})(\psio{\gamma})}{0}{\RI\;A}
\end{equation*}
completing the proof.
\end{proof}
\begin{theorem}\label{conct}{\em
If $A\equiv\exists xC(x)$ is a $\Sigma$-sentence such that $\IKP\vdash\;\RI A$ then there is an ordinal term $\al<\psio{\varepsilon_{\OO+1}}$, which we may compute from the derivation, such that
\begin{equation*}
L_\alpha\models A.
\end{equation*}
Moreover, there is a specific $\IRS$ term $s$, with $\lev s<\al$, which we may compute explicitly from the $\IKP$ derivation, such that
\begin{equation*}
L_\al\models C(s).
\end{equation*}
}\end{theorem}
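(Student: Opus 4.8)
The plan is to read the witness off a cut-free derivation produced by the preceding lemma and then transport truth into a level of the constructible hierarchy. First I would invoke Lemma~\ref{conc} to obtain an $m<\omega$, computable from the given $\IKP$-derivation, together with
\[
\provx{\CH_\gamma}{\varphi(\psio{\gamma})(\psio{\gamma})}{0}{\RI A}\qquad\text{where }\gamma:=\omega_m(\OO\cdot\omega^m).
\]
Put $\alpha:=\varphi(\psio{\gamma})(\psio{\gamma})$. Since $\gamma<\varepsilon_{\OO+1}$, since $\psio{\gamma}\in B^\OO(\gamma+1)$, and since $B^\OO(\gamma+1)$ is closed under $\varphi$, we get $\alpha\in B^\OO(\gamma+1)\cap\OO=\psio{\gamma+1}\le\psio{\varepsilon_{\OO+1}}$, so $\alpha<\psio{\varepsilon_{\OO+1}}$; this $\alpha$ will serve as the ordinal term of the theorem. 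Because the cut-rank is $0$ no $(\mathrm{Cut})$ can occur, and because $\alpha<\OO$ the side condition $\OO<\al$ of $\SR$ fails, so the whole derivation is free of $(\mathrm{Cut})$ and $\SR$.

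Next I would extract $s$ by a root analysis. The endsequent $\RI\exists xC(x)$ has empty antecedent and a single succedent formula whose principal connective is the unbounded $\exists$; the only inference that can stand at the root of a $(\mathrm{Cut})$-free, $\SR$-free derivation of such a sequent is $(\exists R)$ (the superficially matching rule $\SR$ being already excluded). Hence there are a term $s$ with $\lev s<\alpha$ and an ordinal $\alpha_1$ with $\provx{\CH_\gamma}{\alpha_1}{0}{\RI C(s)}$, and this $s$ is the announced witness, plainly computable from the derivation. As $C(x)$ is a $\Sigma$-formula, so is $C(s)$; applying Boundedness (Lemma~\ref{boundedness}(i)) with bound $\alpha$ — legitimate since $\alpha_1\le\alpha\in\CH_\gamma$ — yields
\[
\provx{\CH_\gamma}{\alpha_1}{0}{\RI C(s)^{\mathbb{L}_\alpha}},
\]
whose endformula is now $\Delta_0$.

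The heart of the argument is a soundness lemma for cut-free $\Delta_0$ derivations, which I would isolate and prove by induction on the derivation height. Interpret each closed $\IRS$-term of level $<\psio{\varepsilon_{\OO+1}}$ as the evident element of the constructible hierarchy, so that $\mathbb{L}_\beta$ denotes $L_\beta$ and $[x\in\mathbb{L}_\beta\mid F(x)]$ the subset it defines over $L_\beta$, and assign to each $\Delta_0$ $\IRS$-sentence its truth value accordingly; the lemma asserts that $\provx{\CH}{\delta}{0}{\RI D}$ with $D$ a $\Delta_0$-sentence implies that $D$ is true. In the inductive step one inspects the last inference: by the subformula property of cut-free derivations every formula occurring is $\Delta_0$, so none of $(\forall R)_\infty$, $(\exists L)_\infty$, $(\forall L)$, $(\exists R)$, or $\SR$ can appear, and only the $\Delta_0$-rules remain, each of which is checked to be sound against the interpretation. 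Feeding $C(s)^{\mathbb{L}_\alpha}$ into this lemma shows it is true, i.e. $L_\alpha\models C(s)$, whence $L_\alpha\models\exists xC(x)\equiv A$ with $\lev s<\alpha$, as required; tracing the computations back through Lemma~\ref{conc} and the $(\exists R)$ inference makes both $\alpha$ and $s$ explicit.

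The main obstacle is the soundness lemma itself. One must fix a rigorous truth definition for $\IRS$-sentences that correctly handles the nonstandard terms $[x\in\mathbb{L}_\beta\mid F(x)]$ and the abbreviation $s\dotin t\diamond A$, verify that the levels of all terms occurring in the derivation stay below $\psio{\varepsilon_{\OO+1}}$ so that the interpretation is total, and confirm that each infinitary $\Delta_0$-rule is genuinely validated. The delicate rules are $(\in L)_\infty$, $(b\forall R)_\infty$, and $(b\exists L)_\infty$, where the premises range over \emph{all} terms $p$ with $\lev p<\lev t$: here one needs the match between this range and the elements of the set denoted by $t$, supplied by the clause defining $p\dotin t$, and in particular that $(\in L)_\infty$ faithfully reflects extensional membership in $L_\alpha$. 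Everything else — the root analysis, the ordinal bookkeeping placing $\alpha$ below the Bachmann--Howard ordinal, and the appeals to Lemma~\ref{conc} and Boundedness — is routine.
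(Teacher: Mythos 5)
Your proposal is correct and follows essentially the same route as the paper: Lemma~\ref{conc} to get a cut-free, $\SR$-free derivation with ordinal bound $\al=\varphi(\psio{\gamma})(\psio{\gamma})<\OO$, Boundedness to relativise to $\mathbb{L}_\al$, soundness of the remaining rules verified by induction on the derivation, and extraction of the witness from the existential introduction forced at the root by the intuitionistic single-succedent format. The only difference is the immaterial order of steps — you pull the witness out via $(\exists R)$ before applying Boundedness, whereas the paper first bounds and establishes $L_\al\models A$ and then reads $s$ off the final $(b\exists R)$ — and your explicit soundness lemma (with the subformula/term-level analysis) is exactly the induction the paper asserts in one line and elaborates in the subsequent remark.
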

\begin{proof}
Suppose $\IKP\vdash\;\RI A$ for some $\Sigma$-sentence $A$, from Lemma \ref{conc} we may compute some $1\leq m<\omega$ such that
\begin{equation*}
\provx{\CH_\gamma}{\varphi(\psio{\gamma})(\psio{\gamma})}{0}{\RI\;A}\quad\text{where $\gamma:=\omega_m(\OO\cdot\omega^m)$.}
\end{equation*}
Let $\al:=\varphi(\psio{\gamma})(\psio{\gamma})$, applying Boundedness \ref{boundedness} we obtain
\begin{equation*}
\tag{2}\provx{\CH_\gamma}{\al}{0}{\RI\;A^{\mathbb{L}_\al}}.
\end{equation*}
Since the derivation (2) contains no instances of $\Cut$ or $\SR$ and the correctness of the remaining rules within $L_\al$ is easily verified by induction on the derivation, it may be seen that
\begin{equation*}
L_{\al}\models A.
\end{equation*}
For the second part of the theorem note that it must be the case that the final inference in (2) was $(b\exists R)$ and thus by the intuitionistic nature of $\IRS$ there must be some $s$, with $\lev s<\al$, such that
\begin{equation*}
\tag{3}\provx{\CH_\gamma}{\al}{0}{\RI C(s)^{\mathbb{L}_\al}}.
\end{equation*}
Thus
\begin{equation*}
\tag{4}L_\al\models C(s).
\end{equation*}
The remainder of the proof is by checking that each part of the embedding and cut elimination of the previous two sections may be carried out effectively, details will appear in \cite{rathjen-EP}.
\end{proof}
\begin{remark}{\em
In fact Theorem \ref{conct} can be verified within $\IKP$, this is not immediately obvious since we do not have access to induction up to $\psio{\varepsilon_{\OO+1}}$. However one may observe that in an infinitary proof of the form (3) above, no terms of level higher than $\alpha$ are used. By carrying out the construction of $\IRS$ just using ordinals from $B(\omega_{m+1}(\OO\cdot\omega^m))$ we get a restricted system, but a system still capable of carrying out the embedding and cut elimination necessary for the particular derivation of the sentence $A$. This can be done inside $\IKP$ since we do have access to induction up to $\psio{\omega_{m+1}(\OO\cdot\omega^{m+1})}$. It follows that $\IKP$ has the set existence property for $\Sigma$ sentences. More details will be found \cite{rathjen-EP}.
}\end{remark}

Finally it is also worth pointing out that we can improve on Theorem \ref{IKPembed}. Instead of just verifying $\Delta_0$-Collection in the infinitary system
(Lemma \ref{collection}) we could have shown  the embedding result for $\Sigma$-Reflection.
As result we get a new conservativity result.
\begin{theorem}\label{cons1} {\em $\IKP$ and $\IKP+\mbox{$\Sigma$-Reflection}$ prove the same $\Sigma$-sentences.
In particular if $\IKP\vdash A$ with $A$ a $\Sigma$-sentence, then $\IKP\vdash \exists x\,A^{x}$. }
\end{theorem}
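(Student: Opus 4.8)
The plan is to run the very machinery assembled for $\IKP$ in Sections~2.4--2.7, after first checking that the extra axioms of $\Sigma$-Reflection are absorbed by the infinitary rule $\SR$ at no additional ordinal cost. Concretely, I would (i) extend the Embedding Theorem~\ref{IKPembed} to $\IKP+\mbox{$\Sigma$-Reflection}$, (ii) push a derivation of a $\Sigma$-sentence through cut elimination and collapsing exactly as in Lemma~\ref{conc} and Theorem~\ref{conct}, and (iii) read the resulting cut-free derivation back into $\IKP$ by formalising the analysis and appealing to $\Sigma$-persistence.

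The only new case in step (i) is an instance of the $\Sigma$-Reflection scheme, namely $\RI\,A\to\exists z\,A^{z}$ with $A$ a $\Sigma$-formula of $\IKP$. This is verified just as $\Delta_0$-Collection was in Lemma~\ref{collection}, but now using $\SR$ in place of the collection step: from $\Vdash A(\bar s)\RI A(\bar s)$, furnished by Lemma~\ref{setuplemma}(i), one application of $\SR$ gives $\Vdash A(\bar s)\RI\exists z\,A(\bar s)^{z}$, and $(\rightarrow R)$ then yields $\Vdash\,\RI A(\bar s)\to\exists z\,A(\bar s)^{z}$. The $no(\cdot)$-bookkeeping and the control of the operator are identical to the collection case, so Theorem~\ref{IKPembed} holds verbatim for $\IKP+\mbox{$\Sigma$-Reflection}$: for any $\Sigma$-sentence $A$, a proof $\IKP+\mbox{$\Sigma$-Reflection}\vdash\,\RI A$ yields $\provx{\CH_0}{\Omega\cdot\omega^m}{\Omega+m}{\RI A}$ for some $m<\omega$ computed from the proof.

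Step (ii) is then literally Lemma~\ref{conc} and Theorem~\ref{conct}: Predicative Cut Elimination~\ref{predce} reduces the cut rank to $\Omega+1$, one application of Collapsing~\ref{collapsing} lowers the length below $\Omega$, and a final cut elimination produces $\provx{\CH_\gamma}{\varphi(\psio{\gamma})(\psio{\gamma})}{0}{\RI A}$ with $\gamma:=\omega_m(\OO\cdot\omega^m)$ and $\psio{\gamma}<\Omega$. Boundedness~\ref{boundedness} then gives a cut-free, $\SR$-free derivation of $\RI A^{\mathbb{L}_\alpha}$ for the concrete $\alpha:=\varphi(\psio{\gamma})(\psio{\gamma})<\psio{\varepsilon_{\OO+1}}$, so that $L_\alpha\models A$ with a witnessing term extractable as before. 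Crucially, none of these steps is disturbed by the presence of $\Sigma$-Reflection, since $\SR$ is a rule of $\IRS$ in any case and is removed precisely by Collapsing.

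It remains, in step (iii), to turn $L_\alpha\models A$ into an actual $\IKP$-derivation of $A$, and this is where the main obstacle lies. The full analysis has length up to $\psio{\varepsilon_{\OO+1}}$, beyond the reach of $\IKP$, so one cannot simply argue semantically; instead I would invoke the formalisability noted in the Remark after Theorem~\ref{conct}. For the single proof at hand the entire construction can be carried out inside $\IKP$ using only the initial segment $B^\OO(\omega_{m+1}(\OO\cdot\omega^{m+1}))$ of the representation system, along which $\IKP$ has the requisite transfinite induction; this yields $\IKP\vdash A^{\mathbb{L}_\alpha}$. Since $A$ is $\Sigma$, the implication $A^{\mathbb{L}_\alpha}\to A$ is provable in $\IKP$ by the upward persistence of $\Sigma$-formulae established above (existential witnesses found in $\mathbb{L}_\alpha$ are witnesses in the universe), so $\IKP\vdash A$; the reverse inclusion is trivial, giving the equality of $\Sigma$-theorems. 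The advertised corollary is then immediate: if $\IKP\vdash A$ for a $\Sigma$-sentence $A$, the reflection axiom gives $\IKP+\mbox{$\Sigma$-Reflection}\vdash\exists x\,A^{x}$, and since $A^{x}$ is $\Delta_0$ the sentence $\exists x\,A^{x}$ is again $\Sigma$, whence conservativity delivers $\IKP\vdash\exists x\,A^{x}$. The hard part is thus the reading-back: verifying that embedding, cut elimination and collapsing of the given proof never reach outside the $m$-dependent segment and that $\IKP$ proves induction along it. That adjoining $\Sigma$-Reflection leaves these bounds untouched is exactly what the re-use of the already-present rule $\SR$ secures, which is what makes the argument go through with no change to the ordinal arithmetic.
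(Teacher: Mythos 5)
Your proposal is correct and takes essentially the same route the paper intends for Theorem \ref{cons1}: the paper likewise obtains it by extending the Embedding Theorem \ref{IKPembed} to the $\Sigma$-Reflection scheme via the already-present rule $\SR$ (handled exactly as $\Delta_0$-Collection was in Lemma \ref{collection}), then running cut elimination, collapsing and boundedness as in Lemma \ref{conc} and Theorem \ref{conct}, and reading the result back into $\IKP$ through the formalisation discussed in the remark following Theorem \ref{conct}. The one caveat you flag --- that the reading-back step rests on carrying out the analysis inside $\IKP$ over the $m$-dependent segment of the notation system, with details deferred to \cite{rathjen-EP} --- applies equally to the paper's own argument, so nothing is missing relative to it.
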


\section{The case of $\IKPP$}
This section provides a relativised ordinal analysis for intuitionistic power Kripke-Platek set theory $\IKPP$. The relativised ordinal analysis for the classical version of the theory, $\KPP$, was carried out in \cite{powerKP}, the work in this section adapts the techniques from that paper to the intuitionistic case. We begin by defining an infinitary system $\IRSOP$, unlike in $\IRS$ the terms in $\IRSOP$ can contain sub terms of a higher level, or from higher up the Von-Neumann hierarchy in the intended interpretation. This reflects the impredicativity of the power set operation. Next we prove some cut elimination theorems, allowing us to transform infinite derivations of $\Sigma$ formulae into infinite derivations with only power-bounded cut formulae. The following section provides an embedding of $\IKPP$ into $\IRSOP$. The final section collates these results into a relativised ordinal analysis of $\IKPP$.
\subsection{A sequent calculus formulation of $\IKPP$}
\begin{definition}{\em
The formulas of $\IKPP$ are the same as those of $\IKP$ except we also allow \textit{subset bounded quantifiers} of the form
\begin{equation*}
(\forall x\subseteq a)A(x)\quad\text{and}\quad(\exists x\subseteq a)A(x).
\end{equation*}
These are treated as quantifiers in their own right, not abbreviations. In contrast, the formula $a\subseteq b$ is still viewed as an abbreviation for the formula $(\forall x\in a)(x\in b)$\\

\noindent Quantifiers $\forall x$, $\exists x$ will still be referred to as unbounded, whereas the other quantifiers (including the subset bounded ones) will be referred to as bounded.\\

\noindent A $\Delta_0^\mathcal{P}$-formula of $\IKPP$ is one that contains no unbounded quantifiers.\\

\noindent As with $\IKP$, the system $\IKPP$ derives intuitionistic sequents of the form $\Gamma\Rightarrow\Delta$ where at most one formula can occur in $\Delta$.\\

\noindent The axioms of $\IKPP$ are the following:\\

\begin{tabular} {ll}
{\em {Logical axioms:}}  & $\Gamma, A \Rightarrow A$ \ for every
$\Deltaop$--formula A.\\
{\em {Extensionality:}}   & $\Gamma\Rightarrow a\!=\!b\wedge B(a)\rightarrow
B(b)$
\ for every $\Deltaop$-formula $B(a)$.\\
{\em {Pair:}}   & $\Gamma\Rightarrow \exists
x[a\!\in\!x \wedge b\!\in\!x]$\\
{\em {Union:}}   & $\Gamma\Rightarrow \exists x(\forall y\!\in\!a)(\forall
z\!\in\!y)(z\!\in\!x)$ \\
$\Deltaop$ {\em--{Separation:}}  & $\Gamma\Rightarrow\exists y[(\forall x\in y)(x\in a\wedge B(x))\wedge (\forall x\in a)(B(x)\rightarrow x\in y)]$\\
& for every $\Deltaop$-formula $B(a)$.\\
$\Deltaop$ {\em--{Collection:}}   & $\Gamma\Rightarrow (\forall x\in a)\exists yG(x,y) \rightarrow\exists z(\forall x\in a)(\exists y\in z)G(x,y)$\\
 &for every $\Deltaop$--formula $G(a,b)$.\\
{\em {Set Induction:}}    & $\Gamma\Rightarrow \forall u\,[(\forall x\In u)\,G(x)\,\to\,G(u)]\,\to\,\forall u\,G(u)$\\
& for every formula $G(b)$. \\
{\em {Infinity:}} &  $\Gamma\Rightarrow \exists x\,[(\exists y\In x)\,y\in x\;\wedge\;(\forall y\in x)(\exists z\In x) \,y\in z]$.\\
{\em {Power Set:}} & $\Gamma\Rightarrow \exists z\,(\forall x\subb a) x\in z$.
\end{tabular}
\\[0.4cm]
The rules of $\IKPP$ are the same as those of $\IKP$ (extended to the new language containing subset bounded quantifiers), together with the following four rules:

\begin{prooftree}
\Axiom$\fCenter\Gamma, a\subseteq b\wedge F(a)\Rightarrow \Delta$
\LeftLabel{$(pb\exists L)$}
\UnaryInf$\fCenter\Gamma,(\exists x\subseteq b)F(x)\Rightarrow\Delta$
\Axiom$\fCenter\Gamma\Rightarrow a\subseteq b\wedge F(a)$
\LeftLabel{$(pb\exists R)$}
\UnaryInf$\fCenter\Gamma\Rightarrow(\exists x\subseteq b)F(x)$
\noLine
\BinaryInf$\fCenter$
\end{prooftree}
\begin{prooftree}
\Axiom$\fCenter\Gamma, a\subseteq b\rightarrow F(a)\Rightarrow \Delta$
\LeftLabel{$(pb\forall L)$}
\UnaryInf$\fCenter\Gamma,(\forall x\subseteq b)F(x)\Rightarrow\Delta$
\Axiom$\fCenter\Gamma\Rightarrow a\subseteq b\rightarrow F(a)$
\LeftLabel{$(pb\forall R)$}
\UnaryInf$\fCenter\Gamma\Rightarrow(\forall x\subseteq b)F(x)$
\noLine
\BinaryInf$\fCenter$
\end{prooftree}
As usual it is forbidden for the variable $a$ to occur in the conclusion of the rules $(pb\exists L)$ and $(pb\forall R)$, such a variable is referred to as the eigenvariable of the inference.
}\end{definition}
\subsection{The infinitary system $\IRSOP$}
The purpose of this section is to introduce an infinitary proof system $\IRSOP$. As before all ordinals will be assumed to be members of $B^\OO(\varepsilon_{\OO+1})$.
\begin{definition}{\em We define the $\IRSOP$ terms. To each  $\IRSOP$ term $t$ we also assign its ordinal level, $\lev t$.
\begin{description}
\item[1.] For each $\alpha<\Omega$, $\Va$ is an $\IRSOP$ term with $\lev{\Va}=\alpha$.
\item[2.]For each $\alpha<\Omega$, we have infinitely many free variables $a_0^\alpha,a_1^\alpha,a_2^\alpha,....$, with $\lev{a^{\alpha}_i}=\alpha$.
\item[3.] If $F(x,\bar{y}\,)$ is a $\Deltaop$-formula of $\IKPP$ (whose free variables are exactly those indicated) and  $\sbar\equiv s_1,..., s_n$ are $\IRSOP$ terms, then the formal
expression  $[x\In\Va \mid F(x,\sbar\,)]$ is an $\IRSOP$ term with $\lev{[x\In\Va \mid F(x,\sbar\,)]}:=\alpha$.
\end{description}
The $\IRSOP$ formulae are of the form $A(s_1,...,s_n)$, where $A(a_1,...,a_n)$ is a formula of $\IKPP$ with all free variables indicated and $s_1,...,s_n$ are $\IRSOP$ terms.\\

\noindent A formula $A(s_1,...,s_n)$ of $\IRSOP$ is $\Deltaop$ if $A(a_1,...,a_n)$ is a $\Deltaop$ formula of $\IKPP$.\\

\noindent The $\Sigmap$ formulae of $\IRSOP$ are the smallest collection containing the $\Deltaop$-formulae and containing $A\vee B$, $A\wedge B$, $(\forall x\in s)A$, $(\exists x\in s)A$, $(\forall x\subseteq s)A$, $(\exists x\subseteq s)A$, $\exists xA$, $\neg C$ and $C\to A$ whenever it contains $A$ and $B$
 and $C$ is a $\Pip$-formula. The $\Pip$-formulae are the smallest collection containing the $\Deltaop$ formulae and containing $A\vee B$, $A\wedge B$, $(\forall x\in s)A$, $(\exists x\in s)A$, $(\forall x\subseteq s)A$, $(\exists x\subseteq s)A$, $\forall xA$, $\neg D$ and $D\to A$ whenever it contains $A$ and $B$ and $D$ is a $\Sigmap$-formula. \\

\noindent The {\em axioms} of $\IRSOP$ are:\\

\begin{tabular} {ll}
(A1) & $\Gamma, A\Rightarrow A$ \ for $A$ in $\Deltaop$.\\
(A2) & $\Gamma\Rightarrow t=t$.\\
(A3) & $\Gamma, s_1= t_1,...,s_n= t_n, A(s_1,...,s_n)\Rightarrow A(t_1,...,t_n)$ \ for $A(s_1,\ldots,s_n)$ in $\Deltaop$.\\
(A4) & $\Gamma\Rightarrow s\in \Va$ if $\lev{s}<\alpha$.\\
(A5) & $\Gamma\Rightarrow s\subseteq \Va$ if $\lev{s}\leq\alpha$.\\
(A6) & $\Gamma, t\in [x\in \Va\mid F(x,\sbar)]\Rightarrow F(t, \sbar)$ \ for $F(t,\sbar)$ is $\Deltaop$ and $\lev{t}<\alpha$.\\
(A7) & $\Gamma,  F(t,\sbar)\Rightarrow t\in [x\in \Va\mid F(x,\sbar)]$ \ for $F(t,\sbar)$ is $\Deltaop$ and $\lev{t}<\alpha$.
\end{tabular}\\

\noindent The {\em inference rules} of $\IRSOP$ are:
$$\begin{array}{ll}
(b\forall L) & \frac{\DI\Gamma,\, s\In t\to F(s)\Rightarrow\Delta}{\DI\Gamma,
(\forall x \!\in\! t)F(x)^{\phantom{I}}\Rightarrow\Delta }\;\; \mbox{ if } \lev{s}<\lev t\\[0.6cm]

(b\forall R)_{\infty}& \frac{\DI\Gamma\Rightarrow s\In t\to F(s)\;\mbox{ for all $\lev{s}<\lev t$}} {\DI\Gamma\Rightarrow (\forall x \!\in\!
t) F(x)^{\phantom{I}} \phantom{AAAAAAAAA} }\\[0.6cm]

(b\exists L)_{\infty}& \frac{\DI\Gamma,\; s\In t\wedge F(s)\Rightarrow\Delta\;\mbox{ for all $\lev{s}<\lev t$}} {\DI\Gamma,\; (\exists x \!\in\!
t) F(x)^{\phantom{I}}\Rightarrow\Delta \phantom{AAAAAAAAA} }\\[0.6cm]

(b\exists R) & \frac{\DI\Gamma\Rightarrow s\In t\wedge F(s)}{\DI\Gamma\Rightarrow
(\exists x \!\in\! t)F(x)^{\phantom{I}} }\;\; \mbox{ if } \lev{s}<\lev t\\[0.6cm]

(pb\forall L)& {\frac{\DI\Gamma, s \subb t \to F(s)\Rightarrow\Delta}{\DI\Gamma,
(\forall x \subb t)F(x)^{\phantom{I}}\Rightarrow\Delta}}\;\; \mbox{ if } \lev{s}\leq\lev{t}
\\[0.6cm]

(pb\forall R)_{\infty}& \frac{\DI\Gamma\Rightarrow s \subb t \to F(s)\;\mbox{ for all $\lev s\leq \lev t$}}
 {\DI\Gamma\Rightarrow (\forall x \subb t) F(x)^{\phantom{I}} \phantom{AAAAAAAAA} } \\[0.6cm]

  \end{array}$$
$$\begin{array}{ll}

(pb\exists L)_{\infty}& \frac{\DI\Gamma,\; s \subb t \wedge F(s)\Rightarrow\Delta\;\mbox{ for all $\lev s\leq \lev t$}}
 {\DI\Gamma, (\exists x \subb t) F(x)^{\phantom{I}}\Rightarrow\Delta \phantom{AAAAAAAAA} } \\[0.6cm]

(pb\exists R)& {\frac{\DI\Gamma\Rightarrow s \subb t \wedge F(s)}{\DI\Gamma\Rightarrow
(\exists x \subb t)F(x)^{\phantom{I}}}}\;\; \mbox{ if } \lev{s}\leq\lev{t}\\[0.6cm]

(\forall L)& {\frac{\DI\Gamma, F(s)\Rightarrow\Delta}{\DI\Gamma,\;\forall x
\,F(x)^{\phantom{I}}\Rightarrow\Delta}}\\[0.6cm]

(\forall R)_{\infty}& \frac{\DI\Gamma\Rightarrow F(s)\;\mbox{ for all $s$}}
{\DI\Gamma\Rightarrow \forall x
\, F(x)^{\phantom{I}} \phantom{AAA}  }\\[0.6cm]

(\exists L)_{\infty}& \frac{\DI\Gamma,\; F(s)\Rightarrow\Delta\;\mbox{ for all $s$}}
{\DI\Gamma,\;\exists x
\, F(x)^{\phantom{I}}\Rightarrow\Delta \phantom{AAA}  }\\[0.6cm]

(\exists R)& {\frac{\DI\Gamma\Rightarrow F(s)}{\DI\Gamma\Rightarrow \exists x
\,F(x)^{\phantom{I}}}}\;\;
 \end{array}$$
$$\begin{array}{ll}
(\in\! L)_{\infty}& \frac{\DI\Gamma, r\in t\wedge r=s\Rightarrow\Delta\;\mbox{ for all $\lev r<\lev t$}}
{\DI\Gamma, s\in\! t^{\phantom{I}}\Rightarrow\Delta\phantom{AAAAAAAAA} }
\\[0.6cm]

(\in\! R)& \frac{\DI\Gamma\Rightarrow r\in t \;\wedge\; r =
s}{\DI\Gamma, s \!\in\! t^{\phantom{I}}}\;\; \mbox{ if } \lev{r}<\lev t\\[0.6cm]

(\subseteq\! L)_{\infty}& \frac{\DI\Gamma,\; r\subseteq t\wedge r=s\Rightarrow\Delta\;\mbox{ for all $\lev r\leq \lev t$}}
{\DI\Gamma,\; s\subseteq\! t^{\phantom{I}}\Rightarrow\Delta\phantom{AAAAAAAAA} }
\\[0.6cm]

(\subseteq\! R)& {\frac{\DI\Gamma\Rightarrow r \subseteq
t\,\wedge\,r=s}{\DI\Gamma\Rightarrow s\subseteq t^{\phantom{I}}}}\;\; \mbox{ if } \lev{r}\leq \lev s
\end{array}$$
$$\begin{array}{ll}
\Cut& {\frac{\DI\Gamma, A\Rightarrow\Delta \;\;\;\;\;\; \Gamma\Rightarrow A}{\DI\Gamma^{\phantom{I}}\Rightarrow\Delta}}\\[0.6cm]

\SRP &{\frac{\DI\Gamma\Rightarrow A}{\DI\Gamma\Rightarrow \,\exists z\, A^{z}{\phantom{I}}^{\phantom{I}}}}\;\;
\mbox{ if }A\mbox{ is a }\Sigmap\mbox{-formula,}
\end{array}$$
as well as the rules $(\wedge L)$, $(\wedge R)$, ($\vee L)$, $(\vee R)$, $(\neg L)$, $(\neg R)$, $(\perp)$, $(\rightarrow L)$, $(\rightarrow R)$ from $\IRS$.
As usual $A^z$ results from $A$ by restricting all unbounded quantifiers to $z$.
}\end{definition}
\begin{deff}{\em The {\it rank } of a formula is determined as
follows.}
\begin{enumerate}
\item $rk(s\In t):=max\{\lev s +1,\lev t +1\}$.
\item $rk((\exists x\In t)F(x)):=rk((\forall x\In t)F(x)):=
max\{\lev t,rk(F(\Vb{0}))+2\}.$
\item $rk((\exists x\subb t)F(x)):=rk((\forall x\subb t)F(x)):=
max\{\lev t +1,rk(F(\Vb{0}))+2\}.$
\item $rk(\exists x\,F(x)):=rk(\forall x\,F(x)):=
max\{\Omega,rk(F(\Vb{0}))+2\}.$
\item $rk(A\wedge B):=rk(A\vee B):=rk(A\rightarrow B):=max\{rk(A),rk(B)\}+1.$
\item $rk(\neg A):=rk(A)+1$
\end{enumerate}
\end{deff}
\noindent Note that the definition of rank for $\IRSOP$ formulae is much less complex than for $\IRS$, this is because we are only aiming for partial cut-elimination for this system. In general it will not be possible to remove cuts with $\Deltaop$ cut formulae. Note however that we still have $rk(A)<\OO$ if and only if $A$ is $\Deltaop$.\\

\noindent We also have the following useful lemma.
\begin{lemma}\label{prank}{\em
If $A$ is a formula of $\IRSOP$ with $rk(A)\geq\OO$ (ie. $A$ contains unbounded quantifiers), and $A$ was the result of an $\IRSOP$ inference other than $\SRP$ and (Cut) then the rank of the minor formulae of that inference is strictly less than $rk(A)$.
}\end{lemma}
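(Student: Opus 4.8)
The plan is to argue by a case distinction on the last inference, in the spirit of the proof of Lemma~\ref{rank}, but now using the hypothesis $rk(A)\geq\Omega$ to discard most cases at once. Before the case analysis I would record two auxiliary facts. First, every $\IRSOP$ term has level $<\Omega$, so clause~1 of the rank definition gives $rk(s\in t)<\Omega$ for all terms $s,t$; and since $s\subseteq t$ abbreviates the $\Deltaop$-formula $(\forall x\in s)(x\in t)$, we likewise have $rk(s\subseteq t)<\Omega$. Secondly --- and this is the analogue for $\IRSOP$ of Observation~\ref{rkobs} --- a routine induction on the build-up of a formula $F(a)$ shows that, as soon as $F(a)$ contains an unbounded quantifier, the value $rk(F(s))$ does not depend on the term $s$ substituted for $a$; I write this common value as $rk(F(\Vb0))$, and it satisfies $rk(F(\Vb0))\geq\Omega$.

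With these in hand the cases split as follows. For the membership and inclusion rules $(\in\!L)_\infty$, $(\in\!R)$, $(\subseteq\!L)_\infty$, $(\subseteq\!R)$ the principal formula is $\Deltaop$, hence of rank $<\Omega$, so the hypothesis $rk(A)\geq\Omega$ is never met and there is nothing to check. For the bounded rules $(b\forall L)$, $(b\forall R)_\infty$, $(b\exists L)_\infty$, $(b\exists R)$ the principal formula has the shape $(\forall x\in t)F(x)$ or $(\exists x\in t)F(x)$ with rank $\max\{\lev t,rk(F(\Vb0))+2\}$; since $\lev t<\Omega$, the assumption $rk(A)\geq\Omega$ forces $rk(F(\Vb0))\geq\Omega$, that is, $F$ carries an unbounded quantifier, and then $rk(A)=rk(F(\Vb0))+2$. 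The minor formula is $s\in t\to F(s)$ or $s\in t\wedge F(s)$, whose rank equals $\max\{rk(s\in t),rk(F(s))\}+1=rk(F(\Vb0))+1<rk(F(\Vb0))+2=rk(A)$, using $rk(s\in t)<\Omega\leq rk(F(s))=rk(F(\Vb0))$. The subset-bounded rules $(pb\forall L)$, $(pb\forall R)_\infty$, $(pb\exists L)_\infty$, $(pb\exists R)$ are handled identically, with $rk(s\subseteq t)<\Omega$ in place of $rk(s\in t)$ and the extra summand in clause~3 (the $\lev t+1$) still swallowed by the $\max$ against $rk(F(\Vb0))+2$.

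For the unbounded rules $(\forall L)$, $(\forall R)_\infty$, $(\exists L)_\infty$, $(\exists R)$ the principal formula is $\forall x\,F(x)$ or $\exists x\,F(x)$, always of rank $\max\{\Omega,rk(F(\Vb0))+2\}\geq\Omega$, and the minor formula is $F(s)$. If $F$ is $\Deltaop$ then $rk(F(s))<\Omega\leq rk(A)$, and in fact $rk(A)=\Omega>rk(F(s))$; if $F$ carries an unbounded quantifier then $rk(F(s))=rk(F(\Vb0))<rk(F(\Vb0))+2=rk(A)$; either way the inequality is strict. Finally, for the propositional rules the principal formula is $B\wedge C$, $B\vee C$, $B\to C$ or $\neg B$, whose rank is by clauses~5--6 strictly larger than that of each immediate subformula $B,C$, which are precisely the minor formulae, so the claim is immediate (and holds whether or not $rk(A)\geq\Omega$); the rule $(\perp)$ has no minor formula and is vacuous.

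I expect the only genuine work to be the second auxiliary fact, the substitution-independence of the rank for formulas carrying an unbounded quantifier. This is where one checks that substituting a term $s$ perturbs the rank only through contributions lying below $\Omega$ --- the levels of atomic subformulae and of bounding terms --- while the $\Omega+m$ part is pinned down by the propositional and bounded-quantifier superstructure enclosing the outermost unbounded quantifier. The induction must treat the (subset-)bounded-quantifier clauses with some care, since there the bounding term $t$ may itself contain the substituted variable; but $\lev t<\Omega$ always, so it is absorbed by the $\max$ against $rk(F(\Vb0))+2\geq\Omega$, and the value is unchanged. Everything else is bookkeeping with the rank clauses.
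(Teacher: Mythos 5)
Your proof is correct and is essentially the argument the paper intends: the lemma is stated in the paper without proof (just as Lemma \ref{rank} is deferred to \cite{mi99}), and the standard route is exactly your case distinction together with the key auxiliary fact that $rk(F(s))=rk(F(\Vb{0}))$ whenever $F$ contains an unbounded quantifier, since every term level is $<\OO$ and is absorbed by the $\max$ against $rk(F(\Vb{0}))+2\geq\OO$. You also correctly identify why the hypothesis $rk(A)\geq\OO$ is essential: for the $\Deltaop$ principal formulas of $(\in\! L)_{\infty}$, $(\in\! R)$, $(\subseteq\! L)_{\infty}$, $(\subseteq\! R)$ the rank comparison can fail (e.g.\ the minor formula $r\in t\wedge r=s$ may outrank $s\in t$), which is precisely why this lemma, unlike Lemma \ref{rank} for $\IRS$, is restricted to formulas of rank $\geq\OO$.
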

\begin{definition}[Operator controlled derivability for $\IRSOP$]{\em
If $A(s_1,...,s_n)$ is a formula of $\IRSOP$ then let
\begin{equation*}
\lev{A(s_1,...s_n)}:=\{\lev{s_1},...,\lev{s_n}\}.
\end{equation*}
Likewise if $\Gamma\Rightarrow\Delta$ is an intuitionistic sequent of $\IRSOP$ containing formulas $A_1,...,A_n$, we define
\begin{equation*}
\lev{\Gamma\Rightarrow\Delta}:=\lev{A_1}\cup...\cup\lev{A_n}.
\end{equation*}
}\end{definition}
\begin{deff}{\em
Let $\CH$ be an operator and $\Gamma\Rightarrow\Delta$ an intuitionistic sequent of $\IRSOP$ formulae. We define the relation $\provx{\CH}{\alpha}{\rho}{\Gamma\Rightarrow\Delta}$ by recursion on $\alpha$.\\

\noindent If $\Gamma\Rightarrow\Delta$ is an {\it axiom} and $\lev{\Gamma\Rightarrow\Delta}\cup\{\alpha\}\subseteq\CH$, then $\provx{\CH}{\alpha}{\rho}{\Gamma\Rightarrow\Delta}$.\\

\noindent We require always that $\lev{\Gamma\Rightarrow\Delta}\cup\{\alpha\}\subseteq\mathcal{H}$ where $\Gamma\Rightarrow\Delta$ is the sequent in the conclusion, this condition will not be repeated in the inductive clauses pertaining to the inference rules of $\IRSOP$  given below. The column on the right gives the ordinal requirements for each of the inference rules.
$$ \begin{array}{lcr}

(\in\! L)_{\infty} & \infone{ {\mathcal H}[r]} {\alpha_{r}}
{\rho} {\Gamma, r\in t\wedge r= s\Rightarrow\Delta\mbox{ for all }\lev r<\lev t}
 {\mathcal H} {\alpha} {\rho} {\Gamma,s\in t\Rightarrow\Delta}
&\lev{r}\leq\alpha_r < \alpha \\[0.6cm]

(\in\! R) & \infone{\mathcal H} {\alpha_0} {\rho}
{\Gamma\;\Rightarrow r\in t\wedge r=s} {\mathcal H} {\alpha} {\rho} {\GA\Rightarrow s\in t}
&\begin{array}{r}\alpha_{0} < \alpha\\ \lev r<\lev t \\ \lev{r}<\al
\end{array} \\[0.6cm]

(\subb\! L)_{\infty} & \infone{ {\mathcal H}[r]} {\alpha_{r}}
{\rho} {\Gamma, r\subb t\wedge r= s\Rightarrow\Delta\mbox{ for all }\lev r\leq\lev t}
 {\mathcal H} {\alpha} {\rho} {\Gamma,s\subb t\Rightarrow\Delta}
&\lev{r}\leq\alpha_r < \alpha \\[0.6cm]

(\subb\! R) & \infone{\mathcal H} {\alpha_0} {\rho}
{\Gamma\;\Rightarrow r\subb t\wedge r=s} {\mathcal H} {\alpha} {\rho} {\GA\Rightarrow s\subb t}
&\begin{array}{r}\alpha_{0} < \alpha\\ \lev r\leq\lev t \\ \lev{r}<\al
\end{array}
\end{array}$$
$$\begin{array}{lcr}
(b\forall L) & \infone{\mathcal H} {\alpha_0} {\rho}
{\GA,s\in t\rightarrow A(s)\Rightarrow\Delta} {\mathcal H} {\alpha} {\rho} {\GA,(\forall x\in t)A(x)\Rightarrow\Delta}
&\begin{array}{r}\alpha_{0} < \alpha\\ \lev s<\lev t \\ \lev{s}<\al
\end{array} \\[0.6cm]

(b\forall R)_{\infty} & \infone{ {\mathcal H}[s]} {\alpha_{s}}
{\rho} {\GA\Rightarrow s\in t\rightarrow F(s)\mbox{ for all }\lev s<\lev t}
 {\mathcal H} {\alpha} {\rho} {\Gamma\Rightarrow(\forall x\In t)F(x)}
&\lev{s}\leq\alpha_{s} < \alpha
\\[0.6cm]

(b\exists L)_{\infty} & \infone{ {\mathcal H}[s]} {\alpha_{s}}
{\rho} {\GA,s\in t\wedge F(s)\Rightarrow\Delta\mbox{ for all }\lev s<\lev t}
 {\mathcal H} {\alpha} {\rho} {\Gamma,(\exists x\In t)F(x)\Rightarrow\Delta}
&\lev{s}\leq\alpha_{s} < \alpha
\\[0.6cm]

(b\exists R) & \infone{\mathcal H} {\alpha_0} {\rho}
{\GA\Rightarrow s\in t\wedge A(s)} {\mathcal H} {\alpha} {\rho} {\GA\Rightarrow(\exists x\in t)A(x)}
&\begin{array}{r}\alpha_{0} < \alpha\\ \lev s<\lev t \\ \lev{s}<\al
\end{array}\\[0.6cm]

(pb\forall L) & \infone{\mathcal H} {\alpha_0} {\rho}
{\GA,s\subb t\rightarrow A(s)\Rightarrow\Delta} {\mathcal H} {\alpha} {\rho} {\GA,(\forall x\subb t)A(x)\Rightarrow\Delta}
&\begin{array}{r}\alpha_{0} < \alpha\\ \lev s\leq\lev t \\ \lev{s}<\al
\end{array} \\[0.6cm]

(pb\forall R)_{\infty} & \infone{ {\mathcal H}[s]} {\alpha_{s}}
{\rho} {\GA\Rightarrow s\subb t\rightarrow F(s)\mbox{ for all }\lev s\leq\lev t}
 {\mathcal H} {\alpha} {\rho} {\Gamma\Rightarrow(\forall x\subb t)F(x)}
&\lev{s}\leq\alpha_{s} < \alpha
\\[0.6cm]

(pb\exists L)_{\infty} & \infone{ {\mathcal H}[s]} {\alpha_{s}}
{\rho} {\GA,s\subb t\wedge F(s)\Rightarrow\Delta\mbox{ for all }\lev s\leq\lev t}
 {\mathcal H} {\alpha} {\rho} {\Gamma,(\exists x\subb t)F(x)\Rightarrow\Delta}
&\lev{s}\leq\alpha_{s} < \alpha
\\[0.6cm]

(pb\exists R) & \infone{\mathcal H} {\alpha_0} {\rho}
{\GA\Rightarrow s\subb t\wedge A(s)} {\mathcal H} {\alpha} {\rho} {\GA\Rightarrow(\exists x\subb t)A(x)}
&\begin{array}{r}\alpha_{0} < \alpha\\ \lev s\leq\lev t \\ \lev{s}<\al
\end{array} \\[0.6cm]

(\forall L) & \infone{\mathcal H} {\alpha_0} {\rho}
{\GA, F(s)\Rightarrow\Delta} {\mathcal H} {\alpha} {\rho} {\GA,\forall x F(x)\Rightarrow\Delta}
&\begin{array}{r}\alpha_{0}+1 < \alpha\\  \lev{s}<\al
\end{array} \\[0.6cm]

(\forall R)_{\infty} & \infone{ {\mathcal H}[s]} {\alpha_{s}}
{\rho} {\GA\Rightarrow  F(s)\mbox{ for all } s}
 {\mathcal H} {\alpha} {\rho} {\Gamma\Rightarrow\forall x F(x)}
&\lev{s}<\alpha_{s}+1 < \alpha \\[0.6cm]

(\exists L)_{\infty} & \infone{ {\mathcal H}[s]} {\alpha_{s}}
{\rho} {\GA,  F(s)\Rightarrow\Delta\mbox{ for all } s}
 {\mathcal H} {\alpha} {\rho} {\Gamma,\exists x F(x)\Rightarrow\Delta}
&\lev{s}<\alpha_{s}+1 < \alpha \\[0.6cm]

(\exists R) & \infone{\mathcal H} {\alpha_0} {\rho}
{\GA\Rightarrow F(s)} {\mathcal H} {\alpha} {\rho} {\GA,\Rightarrow\exists x F(x)}
&\begin{array}{r}\alpha_{0}+1 < \alpha\\  \lev{s}<\al
\end{array}\\[0.6cm]

\end{array}$$
$$\begin{array}{lcr}

\Cut & \ifthree{\provx{\mathcal H} {\alpha_0}{\rho}  {\Gamma,
B\Rightarrow\Delta}}{\provx{\mathcal H} {\alpha_0} {\rho}{\Gamma\Rightarrow B}}{\provx
{\mathcal H} {\alpha}{\rho} {\Gamma\Rightarrow\Delta}}
&\begin{array}{r}\alpha_{0} < \alpha\\
rk(B)<\rho\end{array} \\[0.6cm]

\SRP &
\infone{\CH}{\al_0}{\rho}{\Gamma\RI A}{\CH}{\al}{\rho}{\Gamma\RI
\exists z\,A^z}
&\begin{array}{r} \al_0+1,\Omega<\al\\
 A\text{ is a $\Sigmap$-formula}\end{array}
\end{array}$$
Lastly if $\Gamma\Rightarrow\Delta$ is the result of a propositional inference of the form $(\wedge L)$, $(\wedge R)$, ($\vee L)$, $(\vee R)$, $(\neg L)$, $(\neg R)$, $(\perp)$, $(\rightarrow L)$ or $(\rightarrow R)$, with premise(s) $\Gamma_i\Rightarrow\Delta_i$ then from $\provx{\CH}{\alpha_0}{\rho}{\Gamma_i\Rightarrow\Delta_i}$ (for each $i$) we may conclude $\provx{\CH}{\alpha}{\rho}{\Gamma\Rightarrow\Delta}$, provided $\alpha_0<\alpha$.
}\end{deff}

\subsection{Cut elimination for $\IRSOP$}

\begin{lemma}[Weakening and Persistence for $\IRSOP$]$\phantom{a}$\\[-0.5cm] \label{pweakpers}{\em \begin{description}\item[i)] If $\Gamma_0\subseteq\GA$, $\lev{\Gamma}\subseteq\CH$, $\alpha_0\leq\alpha\in\CH$, $\rho_0\leq\rho$ and $\provx{\CH}{\alpha_0}{\rho_0}{\GA_0\Rightarrow\Delta}$ then
\begin{equation*}
\provx{\CH}{\al}{\rho}{\GA\Rightarrow\Delta.}
\end{equation*}
\item[ii)] If $\gamma\in\CH$ and $\provx{\CH}{\al}{\rho}{\GA,\exists xA(x)\Rightarrow\Delta}$ then $\provx{\CH}{\al}{\rho}{\GA,(\exists x\in\mathbb{V}_\gamma)A(x)\Rightarrow\Delta}$.
\item[iii)] If $\gamma\in\CH$ and $\provx{\CH}{\al}{\rho}{\GA\Rightarrow\forall xA(x)}$ then $\provx{\CH}{\al}{\rho}{\GA\Rightarrow(\forall x\in\mathbb{V}_\gamma)A(x)}.$
\end{description}
}\end{lemma}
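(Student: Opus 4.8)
The plan is to establish all three parts by induction on the derivation height, mirroring the proof of Lemma \ref{weakpers} for $\IRS$. The one genuinely new point is that the bounded-quantifier rules of $\IRSOP$ are formulated with honest membership and subset formulae $s\In t$, $s\subb t$ in place of the syntactic convention $s\dotin t$ used in $\IRS$; consequently the passages that were definitional identities in $\IRS$ (e.g. $s\dotin \mathbb{L}_\gamma\to A(s)\equiv A(s)$) now require one explicit propositional inference. For part i) I would run an easy induction on $\al_0$. If $\GA_0\RI\DE$ is an axiom then so is $\GA\RI\DE$, the hypotheses $\lev{\GA}\subseteq\CH$ and $\al\in\CH$ guaranteeing the operator side conditions; otherwise I apply the induction hypothesis to each premise of the last inference and reapply it. Since every ordinal requirement in the clauses defining $\provx{\CH}{\cdot}{\cdot}{\cdot}$ is monotone in the conclusion ordinal, enlarging $\al_0$ to $\al$, raising $\rho_0$ to $\rho$, and padding $\GA_0$ out to $\GA$ all preserve derivability. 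This is routine.

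For part ii) I induct on $\al$. If $\exists xA(x)$ is not the principal formula of the last inference, or that inference is not $(\exists L)_\infty$, I apply the induction hypothesis to the premises (replacing $\exists xA(x)$ by $(\exists x\in\mathbb{V}_\gamma)A(x)$ there) and reapply the inference. Otherwise the last inference is $(\exists L)_\infty$ with premises $\provx{\CH[s]}{\al_s}{\rho}{\GA,A(s)\RI\DE}$ for all $s$, subject to $\lev s<\al_s+1<\al$ (should a contraction have occurred, $\exists xA(x)$ also sits among the side formulae and is first converted by the induction hypothesis). I discard every premise with $\lev s\geq\gamma$; for the remaining $\lev s<\gamma$ one application of $(\wedge L)$ selecting the conjunct $A(s)$ yields $\provx{\CH[s]}{\al_s+1}{\rho}{\GA,s\In\mathbb{V}_\gamma\wedge A(s)\RI\DE}$. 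As $\gamma\in\CH$ and $\lev{\mathbb{V}_\gamma}=\gamma$, the bound $\lev s\leq\al_s+1<\al$ holds for each such $s$, so $(b\exists L)_\infty$ applied with $t:=\mathbb{V}_\gamma$ delivers $\provx{\CH}{\al}{\rho}{\GA,(\exists x\in\mathbb{V}_\gamma)A(x)\RI\DE}$.

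Part iii) is dual, with $(\forall R)_\infty$ the interesting last inference, giving premises $\provx{\CH[s]}{\al_s}{\rho}{\GA\RI A(s)}$ for all $s$ with $\lev s<\al_s+1<\al$. Restricting to $\lev s<\gamma$, I first weaken by part i) to $\provx{\CH[s]}{\al_s}{\rho}{\GA,s\In\mathbb{V}_\gamma\RI A(s)}$ — legitimate since $\lev s,\gamma\in\CH[s]$ — and then apply $(\rightarrow R)$ to obtain $\provx{\CH[s]}{\al_s+1}{\rho}{\GA\RI s\In\mathbb{V}_\gamma\rightarrow A(s)}$, which are exactly the premises demanded by $(b\forall R)_\infty$ for $t:=\mathbb{V}_\gamma$; the bound $\lev s\leq\al_s+1<\al$ again holds, yielding $\provx{\CH}{\al}{\rho}{\GA\RI(\forall x\in\mathbb{V}_\gamma)A(x)}$. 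The only real obstacle, and the sole deviation from the $\IRS$ argument, is precisely this insertion of $s\In\mathbb{V}_\gamma$ via $(\wedge L)$ respectively $(\rightarrow R)$, together with checking that the extra propositional step keeps the controlling ordinal strictly below $\al$; but this is immediate, since the original rule already forces $\al_s+1<\al$. I expect nothing harder than this bookkeeping.
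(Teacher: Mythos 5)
Your proposal is correct and follows essentially the same route as the paper's proof: induction on $\al$, with the only interesting case being $(\exists L)_\infty$ (dually $(\forall R)_\infty$), handled by restricting to premises with $\lev s<\gamma$, inserting $s\in\mathbb{V}_\gamma$ by a single $(\wedge L)$ (resp. $(\rightarrow R)$) step, and reapplying the bounded infinitary rule, with contracted occurrences of $\exists xA(x)$ converted via the induction hypothesis exactly as the paper does. The ordinal and operator bookkeeping you check ($\lev s\leq\al_s+1<\al$, $\gamma\in\CH\subseteq\CH[s]$) matches the paper's side conditions, so nothing is missing.
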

\begin{proof}
All proofs are by induction on $\al$. We show ii), suppose $\gamma\in\CH$ and $\provx{\CH}{\al}{\rho}{\GA,\exists xA(x)\Rightarrow\Delta}$. The interesting case is where $\exists xA(x)$ was the principal formula of the last inference which was $(\exists L)_\infty$, in this case we have $\provx{\CH[s]}{\alpha_s}{\rho}{\Gamma,\exists xA(x), A(s)\Rightarrow\Delta}$ for each term $s$ with $\lev{s}<\alpha_s+1<\alpha$ (If $\exists xA(x)$ was not a side formula we can use part i) to make it one). By the induction hypothesis we obtain $\provx{\CH[s]}{\alpha_s}{\rho}{\Gamma,(\exists x\in\mathbb{V}_\gamma)A(x), A(s)\Rightarrow\Delta}$ for all $\lev s<\gamma$. By $(\wedge L)$ we get
\begin{equation*}
\provx{\CH[s]}{\alpha_s+1}{\rho}{\Gamma,(\exists x\in\mathbb{V}_\gamma)A(x), s\in\mathbb{V}_\gamma\wedge A(s)\Rightarrow\Delta.}
\end{equation*}
Hence we may apply $(b\exists L)_\infty$ to obtain $\provx{\CH}{\al}{\rho}{\GA,(\exists x\in\mathbb{V}_\gamma)A(x)\Rightarrow\Delta}$ as required.
\end{proof}
\begin{lemma}[Inversions of $\IRSOP$]\label{pinversion} {\em $\phantom{a}$\\[-0.5cm]
\begin{description}
\item[i)] If $\provx{\mathcal{H}}{\alpha}{\rho}{\Gamma,A\wedge B\Rightarrow\Delta}$ and $rk(A\wedge B)\geq\OO$ then $\provx{\mathcal{H}}{\alpha}{\rho}{\Gamma,A, B\Rightarrow\Delta}$.
\item[ii)] If $\provx{\mathcal{H}}{\alpha}{\rho}{\Gamma\Rightarrow A\wedge B}$ and $rk(A\wedge B)\geq\OO$ then $\provx{\mathcal{H}}{\alpha}{\rho}{\Gamma\Rightarrow A}$ and $\provx{\mathcal{H}}{\alpha}{\rho}{\Gamma\Rightarrow B}$.
\item[iii)] If $\provx{\mathcal{H}}{\alpha}{\rho}{\Gamma,A\vee B\Rightarrow\Delta}$ and $rk(A\vee B)\geq\OO$ then $\provx{\mathcal{H}}{\alpha}{\rho}{\Gamma,A\Rightarrow\Delta}$ and $\provx{\mathcal{H}}{\alpha}{\rho}{\Gamma,B\Rightarrow\Delta}$.
\item[iv)]  If $\provx{\mathcal{H}}{\alpha}{\rho}{\Gamma,A\rightarrow B\Rightarrow\Delta}$ and $rk(A\rightarrow B)\geq\OO$ then $\provx{\mathcal{H}}{\alpha}{\rho}{\Gamma,B\Rightarrow\Delta}$.
\item[v)] If $\provx{\mathcal{H}}{\alpha}{\rho}{\Gamma\Rightarrow A\rightarrow B}$ and $rk(A\rightarrow B)\geq\OO$ then $\provx{\mathcal{H}}{\alpha}{\rho}{\Gamma,A\Rightarrow B}$.
\item[vi)] If $\provx{\mathcal{H}}{\alpha}{\rho}{\Gamma\Rightarrow\neg A}$ and $rk(A)\geq\OO$ then $\provx{\mathcal{H}}{\alpha}{\rho}{\Gamma, A\Rightarrow}$.
\item[vii)] If $\provx{\mathcal{H}}{\alpha}{\rho}{\Gamma,(\exists x\in t)A(x)\Rightarrow\Delta}$ and $rk(A(\mathbb{V}_0))\geq\OO$ then $\provx{\mathcal{H}[s]}{\alpha}{\rho}{\Gamma,s\in t\wedge A(s)\Rightarrow\Delta}$ for all $|s|<|t|$.
\item[viii)] If $\provx{\mathcal{H}}{\alpha}{\rho}{\Gamma\Rightarrow(\forall x\in t)A(x)}$ and $rk(A(\mathbb{V}_0))\geq\OO$ then $\provx{\mathcal{H}[s]}{\alpha}{\rho}{\Gamma\Rightarrow s\in t\rightarrow A(s)}$ for all $|s|<|t|$.
\item[ix)] If $\provx{\mathcal{H}}{\alpha}{\rho}{\Gamma,(\exists x\subb t)A(x)\Rightarrow\Delta}$ and $rk(A(\mathbb{V}_0))\geq\OO$ then $\provx{\mathcal{H}[s]}{\alpha}{\rho}{\Gamma,s\subb t\wedge A(s)\Rightarrow\Delta}$ for all $|s|\leq|t|$.
\item[x)] If $\provx{\mathcal{H}}{\alpha}{\rho}{\Gamma\Rightarrow(\forall x\subb t)A(x)}$ and $rk(A(\mathbb{V}_0))\geq\OO$ then $\provx{\mathcal{H}[s]}{\alpha}{\rho}{\Gamma\Rightarrow s\subb t\rightarrow A(s)}$ for all $|s|\leq|t|$.
\item[xi)] If $\provx{\mathcal{H}}{\alpha}{\rho}{\Gamma,\exists x A(x)\Rightarrow\Delta}$ then $\provx{\mathcal{H}[s]}{\alpha}{\rho}{\Gamma, F(s)\Rightarrow\Delta}$ for all $s$.
\item[xii)] If $\provx{\mathcal{H}}{\alpha}{\rho}{\Gamma,\Rightarrow\forall x A(x)}$ then $\provx{\mathcal{H}[s]}{\alpha}{\rho}{\Gamma\Rightarrow F(s)}$ for all $s$.
\end{description}
}\end{lemma}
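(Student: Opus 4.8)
The plan is to prove each of the twelve parts on its own, by induction on $\alpha$ and uniformly in the controlling operator $\CH$ (so that the induction hypothesis may be applied to premises carrying an enlarged operator such as $\CH[s]$). For a fixed part I would inspect the last inference of the given derivation and, exactly as in the proof of Lemma \ref{inversion}, split according to whether the formula to be decomposed is the principal formula of that inference. The one structural novelty relative to $\IRS$ is that $\IRSOP$ has genuine axioms (A1)--(A7), so the induction acquires a base case in which the end-sequent is an axiom; this is where the rank hypotheses in the statement do their work.

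For the base case I would use the remark following the rank definition that $rk(A)<\OO$ iff $A$ is $\Deltaop$. Every rank hypothesis present --- $rk(A\wedge B)\geq\OO$ in (i)--(iii), $rk(A\rightarrow B)\geq\OO$ in (iv),(v), $rk(A)\geq\OO$ in (vi), and $rk(A(\Vb 0))\geq\OO$ in (vii)--(x) --- guarantees that the formula being decomposed is not $\Deltaop$, and in the unbounded cases (xi),(xii) this holds automatically. Since the active formula of each of (A1),(A3),(A6),(A7) and the sole succedent formula of (A2),(A4),(A5) is $\Deltaop$ (hence of rank $<\OO$), the formula under inversion can never be an active formula of an axiom. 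For the succedent inversions (ii),(v),(vi),(viii),(x),(xii) this means the end-sequent simply cannot be an axiom, so the base case is vacuous. For the antecedent inversions (i),(iii),(iv),(vii),(ix),(xi) the inverted formula can occur only inside the arbitrary antecedent context that every scheme (A1)--(A7) permits; deleting it and inserting its components therefore yields an axiom of the very same shape, the level side conditions surviving because, for example, $\lev A\cup\lev B=\lev{A\wedge B}$.

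The inductive step follows the pattern of Lemma \ref{inversion}. If the formula to be decomposed is not principal in the last inference, it persists as a side formula in every premise (with its rank hypothesis intact), so I would apply the induction hypothesis to each premise and then reapply the same inference, the ordinal and operator side conditions being inherited and any operator mismatch absorbed by weakening, Lemma \ref{pweakpers}(i). If it is principal, the rank hypothesis excludes the axiom case, so the last inference must be the matching logical or quantifier rule. In the propositional parts (i)--(vi) one reads off the appropriate premise, restoring by weakening a conjunct discarded by $(\wedge L)$ or raising the height; part (vi) additionally allows the last inference to be $(\perp)$, exactly as in Lemma \ref{inversion}(vi). In the bounded parts (vii),(viii) the principal inference is $(b\exists L)_\infty$, resp. $(b\forall R)_\infty$, and I would pick the premise indexed by the given term $s$ --- admissible since $\lev s<\lev t$ --- and raise its height to $\alpha$ by weakening; the subset-bounded parts (ix),(x) are word-for-word copies with $\in$ replaced by $\subb$ and $\lev s<\lev t$ by $\lev s\leq\lev t$, and the unbounded parts (xi),(xii) proceed identically through $(\exists L)_\infty$ and $(\forall R)_\infty$.

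I expect the main difficulty to be bookkeeping rather than conceptual. One has to verify that each rank hypothesis is exactly strong enough to exclude the axioms in its case and that it is preserved on passing to the premises in the non-principal subcase, while simultaneously tracking the passage from $\CH$ to $\CH[s]$ in (vii)--(xii) and invoking Lemma \ref{pweakpers} to reconcile operators and heights. The point genuinely special to $\IRSOP$, with no counterpart in the $\IRS$ argument, is the axiom base case described above: it is precisely there that the rank side conditions in the statement are indispensable, since without them an inversion could be demanded of a $\Deltaop$ formula that the system only ever produces atomically via (A1),(A3),(A6) or (A7).
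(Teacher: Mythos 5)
Your proposal is correct and follows essentially the same route as the paper's own proof: induction on $\alpha$ with the usual principal/non-principal case split, where the rank hypotheses serve precisely to show the inverted formula cannot be the active formula of an axiom (so the base case is vacuous for the succedent inversions, while for antecedent inversions the replacement again yields an axiom of the same shape), and the principal cases are settled by picking the relevant premise and invoking weakening (Lemma \ref{pweakpers}). The one detail worth making explicit in the principal left-rule cases is that a contracted copy of the principal formula may persist in the chosen premise, so one must first apply the induction hypothesis to that premise to decompose it before weakening the ordinal, exactly as the paper does in its treatment of part (ix).
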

\begin{proof}
The proof is by induction on $\al$ and many parts are standard for many intuitionistic systems of a similar nature. We show viii) and ix).\\

\noindent viii) Suppose that $\provx{\CH}{\al}{\rho}{\GA\RI(\forall x\in t)A(x)}$ and $rk(A(\mathbb{V}_0))\geq\OO$. Since $A$ must contain an unbounded quantifier, the sequent $\GA\RI(\forall x\in t)A(x)$ cannot be an axiom. If the last inference was not $(b\forall R)_\infty$ then we may apply the induction hypothesis to the premises of that inference, and then the same inference again. Finally suppose the last inference was $(b\forall R)_\infty$ so we have
\begin{equation*}
\CH[s]\;\prov{\al_s}{\rho}{\GA\RI s\in t\rightarrow A(s)}\quad\text{for all $\lev s<\lev t$, with $\al_s<\al$.}
\end{equation*}
Applying weakening completes the proof of this case.\\

\noindent ix) Suppose that $\provx{\CH}{\al}{\rho}{\GA,(\exists x\subb t)A(x)\RI\DE}$ and $rk(A(\mathbb{V}_0))\geq\OO$. Since $A(x)$ contains an unbounded quantifier $\exists x\subb t)A(x)$ cannot be the \textit{active part} of an axiom, thus if $\GA,(\exists x\subb t)A(x)\RI\DE$ is an axiom then so is $\GA,s\subb t\wedge A(x)\RI\DE$ for any $\lev s\leq \lev t$. As in viii) the remaining interesting case is where $(\exists x\subb t)A(x)$ was the principal formula of the last inference, which was $(pb\exists L)_\infty$. In this case we have
\begin{equation*}
\CH[s]\;\prov{\al_s}{\rho}{\GA,(\exists x\subb t)A(x), s\subb t\wedge A(s)\RI\DE}\quad\text{for all $\lev s\leq\lev t$ with $\al_s<\al$.}
\end{equation*}
Now applying the induction hypothesis yields $\CH[s]\;\prov{\al_s}{\rho}{\GA, s\subb t\wedge A(s)\RI\DE}$, to which we may apply weakening to complete the proof of this case.
\end{proof}
\begin{lemma}[Reduction\,.]\label{preduction}{\em
If $rk(C):=\rho>\Omega$, $\provx{\CH}{\al}{\rho}{\GA,C\Rightarrow\Delta}$ and $\provx{\CH}{\beta}{\rho}{\Xi\Rightarrow C}$ then
\begin{equation*}
\provx{\CH}{\al\#\al\#\beta\#\beta}{\rho}{\GA,\Xi\Rightarrow\Delta}
\end{equation*}
}\end{lemma}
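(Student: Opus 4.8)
The plan is to argue by induction on the natural sum $\alpha\#\alpha\#\beta\#\beta$, following the pattern of the reduction lemma for $\IRS$ (Lemma \ref{reduction}) but now exploiting the hypothesis $rk(C)>\Omega$ to pin down the shape of the two derivations. As usual, if $C$ is not the principal formula of the last inference of \emph{both} premises, I would apply the induction hypothesis to whichever subderivation carries $C$ as a mere side formula and then reinstate the same inference; the ordinal side conditions survive because the relevant premise ordinal is $<\alpha$ (resp.\ $<\beta$) and $\#$ is strictly monotone, and the operator conditions survive since $k$-sets only shrink. So the only genuine work is the case where $C$ is principal on both sides.

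Before the case split I would record two structural facts forced by $rk(C)>\Omega$. First, since $rk(A)<\Omega$ exactly when $A$ is $\Deltaop$, the formula $C$ contains an unbounded quantifier, and hence $\Xi\Rightarrow C$ is not an instance of any of the axioms (A1)--(A7), all of whose succedents are $\Deltaop$ or atomic. Second, and crucially, the last inference of $\provx{\CH}{\beta}{\rho}{\Xi\Rightarrow C}$ cannot be $\SRP$: that rule always yields a formula $\exists z\,A^z$ of rank exactly $\Omega$ (its matrix $A^z$ is $\Deltaop$), whereas $rk(C)>\Omega$. Thus when $C$ is principal on the right the last inference is a genuine logical inference dual to the outer operator of $C$. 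This is precisely why the lemma must be restricted to $rk(C)>\Omega$: at rank $\Omega$ the witness on the right could be produced by reflection rather than by a single $(\exists R)$, and no reduction would be available.

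The case analysis on the outermost layer of $C$ then runs uniformly. For $C\equiv\exists x\,F(x)$ the right premise is an $(\exists R)$ with some witness $s_0$, so $\provx{\CH}{\beta_0}{\rho}{\Xi\Rightarrow F(s_0)}$ with $\lev{s_0}\in\CH$; taking $s=s_0$ among the $(\exists L)_\infty$ premises on the left (legitimate since $\CH[s_0]=\CH$) I would first cut $C$ out of $\Gamma,C,F(s_0)\Rightarrow\Delta$ against $\Xi\Rightarrow C$ by the induction hypothesis, and then cut the minor formula $F(s_0)$, whose rank is $<\rho$ by Lemma \ref{prank}. The dual $C\equiv\forall x\,F(x)$ uses the single $(\forall L)$ witness on the left to select one premise of the $(\forall R)_\infty$ on the right. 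The bounded and subset-bounded cases ($C\equiv(\forall x\in t)F$, $(\exists x\subseteq t)F$, and their duals, always with $F$ unbounded) are identical in form, the reduced cut now falling on $s\in t\rightarrow F(s)$ or $s\subseteq t\wedge F(s)$, again of rank $<\rho$ by Lemma \ref{prank}. For the propositional cases, e.g.\ $C\equiv A\rightarrow B$, I would follow Case~3 of Lemma \ref{reduction}: apply the induction hypothesis to $\Gamma,C\Rightarrow A$ and $\Xi\Rightarrow C$, cut on $A$ against the $(\rightarrow R)$ premise $\Xi,A\Rightarrow B$ to obtain $\Gamma,\Xi\Rightarrow B$, apply Inversion (Lemma \ref{pinversion} iv)) to $\Gamma,C,B\Rightarrow\Delta$ to obtain $\Gamma,B\Rightarrow\Delta$, and finish with a cut on $B$; here $rk(A),rk(B)<\rho=rk(A\rightarrow B)$. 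The connectives $\wedge,\vee,\neg$ are treated the same way via the matching clauses of Lemma \ref{pinversion}.

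The main obstacle is bookkeeping rather than anything conceptual: in each case one must check that the concluding sequent can be assigned the ordinal $\alpha\#\alpha\#\beta\#\beta$. This reduces to strict monotonicity of the natural sum — from $\alpha_0<\alpha$ and $\beta_0<\beta$ one gets $\alpha_0\#\alpha_0\#\beta\#\beta<\alpha\#\alpha\#\beta\#\beta$ and $\beta_0<\alpha\#\alpha\#\beta\#\beta$, so each auxiliary (Cut) has both premises strictly below the target and may be assigned the target ordinal outright, with Weakening (Lemma \ref{pweakpers} i)) padding up where needed. Two points genuinely lean on $rk(C)>\Omega$: the Inversion steps in the propositional cases are licensed only because $rk(C)\geq\Omega$ (the explicit side condition in Lemma \ref{pinversion}), and the claim that every reduced cut is on a formula of rank $<\rho$ rests on Lemma \ref{prank}, which is available precisely because the outer layer of $C$ sits strictly above $\Omega$, so the inference introducing $C$ has principal formula of rank $\geq\Omega$ and its minor formulae drop below $\rho$.
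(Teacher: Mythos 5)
Your proposal is correct and follows essentially the same route as the paper's proof: induction on $\al\#\al\#\beta\#\beta$, the key observation that the last inference yielding $\Xi\Rightarrow C$ cannot be $\SRP$ because its conclusion always has rank exactly $\Omega$ while $rk(C)>\Omega$, and then the symmetric principal-formula cases settled via Lemmas \ref{prank} and \ref{pinversion}, exactly as in the paper's worked case $C\equiv(\forall x\subseteq t)A(x)$ where the reduced cut falls on $s\subseteq t\rightarrow A(s)$. Your additional remarks (that $\Xi\Rightarrow C$ cannot be an axiom since all axiom succedents are $\Deltaop$, and the explicit monotonicity bookkeeping for $\#$) merely spell out details the paper leaves implicit.
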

\begin{proof}
The proof is by induction on $\al\#\al\#\beta\#\beta$. The interesting case is where $C$ was the principal formula of both final inferences, notice that in this case the last inference cannot have been $\SRP$ since $rk(C)>\Omega$ and the conclusion of an application of $\SRP$ always has rank $\OO$. Thus the rest of the proof follows in the usual way by the symmetry of the rules and Lemmas \ref{prank} and \ref{pinversion}, we treat the case where $C\equiv(\forall x\subb t)A(x)$ and $C$ was the principal formula of both last inferences, so we have
\begin{align*}
\tag{1}&\provx{\CH}{\al}{\rho}{\GA,C\Rightarrow\Delta}&\\
\tag{2}&\provx{\CH}{\beta}{\rho}{\Xi\RI C}&\\
\tag{3}&\provx{\CH}{\al_0}{\rho}{\GA,C,s\subb t\rightarrow A(s)\RI\DE}&\text{with $\al_0,\lev s<\al$ and $\lev s\leq\lev t$.}\\
\tag{4}&\provx{\CH[p]}{\beta_p}{\rho}{\Xi\RI p\subb t\rightarrow A(p)}&\text{for all $\lev p\leq\lev t$ with $\lev p\leq\al_p<\al$.}
\end{align*}
From (3) we know that $s\in\CH$, so from (4) we get
\begin{equation*}
\tag{5}\provx{\CH}{\beta_s}{\rho}{\Xi\RI s\subb t\rightarrow A(s)}.
\end{equation*}
Applying the induction hypothesis to (2) and (3) yields
\begin{equation*}
\tag{6}\provx{\CH}{\al_0\#\al_0\#\beta\#\beta}{\rho}{\GA,s\subb t\rightarrow A(s)\RI\DE}.
\end{equation*}
Finally by applying $\Cut$ to (5) and (6), whilst noting that by Lemma \ref{prank} $rk(s\subb t\rightarrow A(s))<\rho$, we obtain
\begin{equation*}
\provx{\CH}{\al\#\al\#\beta\#\beta}{\rho}{\GA,\Xi\RI\DE}
\end{equation*}
as required.
\end{proof}
\begin{lemma}\label{pprecesetup}{\em If $\provx{\CH}{\al}{\OO+n+1}{\GA\RI\DE}$ then $\provx{\CH}{\omega^\al}{\OO+n}{\GA\RI\DE}$ for any $n<\omega$.
}\end{lemma}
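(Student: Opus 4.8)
The plan is to argue by induction on $\alpha$, following the familiar pattern of a single step of predicative cut elimination, the only novelty being that the lowering of complexity takes place in the segment strictly above $\OO$. Throughout I use that $\CH$ is closed under $\sigma\mapsto\omega^\sigma$, so that $\alpha\in\CH$ yields $\omega^\alpha\in\CH$, and that $\omega^\alpha$ is additive principal.

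First the base case and the easy inductive cases. If $\GA\RI\DE$ is an axiom then it is immediately available as $\provx{\CH}{\omega^\alpha}{\OO+n}{\GA\RI\DE}$, the side condition only requiring $\lev{\GA\RI\DE}\cup\{\omega^\alpha\}\subseteq\CH$. If the last inference is anything other than a $\Cut$ whose cut formula has rank exactly $\OO+n$ -- in particular any logical or quantifier rule, an application of $\SRP$, or a $\Cut$ of rank $<\OO+n$ -- then I apply the induction hypothesis to each premise (replacing its length $\alpha_i$ by $\omega^{\alpha_i}$ and lowering its cut rank to $\OO+n$) and reapply the same inference with conclusion length $\omega^\alpha$. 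The ordinal side conditions survive: from $\alpha_i<\alpha$ one gets $\omega^{\alpha_i}<\omega^\alpha$ and $\omega^{\alpha_i}+1<\omega^\alpha$, and from $\lev s\le\alpha_i$ one gets $\lev s\le\omega^{\alpha_i}$, which is everything the rules $(b\forall R)_\infty$, $(\forall R)_\infty$, $(\exists R)$, $\SRP$, and the others demand; the case of $\SRP$ additionally uses $\OO<\alpha\le\omega^\alpha$, which is preserved.

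The critical case is that the last inference is a $\Cut$ with cut formula $C$ of rank $rk(C)=\OO+n$ (the unique rank lying in $[\OO+n,\OO+n+1)$), so that
\[\provx{\CH}{\alpha_0}{\OO+n+1}{\GA,C\RI\DE}\quad\text{and}\quad\provx{\CH}{\alpha_0}{\OO+n+1}{\GA\RI C}\]
with $\alpha_0<\alpha$. Applying the induction hypothesis to both premises gives
\[\provx{\CH}{\omega^{\alpha_0}}{\OO+n}{\GA,C\RI\DE}\quad\text{and}\quad\provx{\CH}{\omega^{\alpha_0}}{\OO+n}{\GA\RI C}.\]
When $n\ge 1$ we have $rk(C)=\OO+n>\OO$, so the Reduction Lemma \ref{preduction} applies with $\rho=\OO+n$ and yields $\provx{\CH}{\omega^{\alpha_0}\#\omega^{\alpha_0}\#\omega^{\alpha_0}\#\omega^{\alpha_0}}{\OO+n}{\GA,\GA\RI\DE}$; since $\GA$ is a finite set, $\GA,\GA$ is just $\GA$, so no separate contraction step is needed. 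Finally $\omega^{\alpha_0}\#\omega^{\alpha_0}\#\omega^{\alpha_0}\#\omega^{\alpha_0}=\omega^{\alpha_0}\cdot 4<\omega^{\alpha_0+1}\le\omega^\alpha$, so Weakening \ref{pweakpers}(i) raises the length to $\omega^\alpha$, delivering the conclusion.

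The main point to watch is precisely this last case: Reduction \ref{preduction} is only available for cut formulae of rank $>\OO$, which is why the step lowers $\OO+n+1$ to $\OO+n$, eliminating the cuts of rank $\OO+n$, rather than attacking the genuinely impredicative cuts of rank $\OO$ (those are deferred to the collapsing argument, and it is for $n\ge 1$ that the lemma is invoked in driving the cut rank down to $\OO+1$). The remaining work is purely bookkeeping: one only has to confirm that each ordinal inequality imposed by a rule is stable under replacing a length $\beta$ by $\omega^\beta$, which follows from the additive principality of $\omega^\alpha$ together with $\alpha_i<\alpha$, and that the operators $\CH[s]$ attached to the infinitary premises are carried through unchanged by the induction hypothesis.
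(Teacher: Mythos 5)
Your proof is correct and takes essentially the same route as the paper's: induction on $\alpha$, pushing the induction hypothesis through every inference except a cut whose cut formula has rank exactly $\Omega+n$, and dispatching that case via the Reduction Lemma \ref{preduction}, additive principality of $\omega^\alpha$, and weakening. Your explicit remark that Reduction requires $rk(C)>\Omega$, so the critical case genuinely works only for $n\geq 1$ (which is all that Theorem \ref{ppredce} ever uses), is in fact slightly more careful than the paper's own proof, which applies \ref{preduction} without flagging that restriction.
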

\begin{proof}
The proof is by induction on $\al$, suppose $\provx{\CH}{\al}{\OO+n+1}{\GA\RI\DE}$. If $\GA\RI\DE$ is an axiom there is nothing to show. If $\GA\RI\DE$ was the result of an inference other that $\Cut$ or a cut with cut-rank $<\OO+n$ then we may apply the induction hypothesis to the premises of that inference and then the same inference again. So suppose the last inference was $\Cut$ with cut-formula $C$, and that $rk(C)=\OO+n$. So we have
\begin{align*}
\tag{1}&\provx{\CH}{\al_0}{\OO+n+1}{\GA,C\RI\DE}&\text{with $\al_0<\al$.}\\
\tag{2}&\provx{\CH}{\al_1}{\OO+n+1}{\GA\RI C}&\text{with $\al_1<\al$.}
\end{align*}
Applying the induction hypothesis to (1) and (2) gives
\begin{align*}
\tag{3}&\provx{\CH}{\omega^{\al_0}}{\OO+n}{\GA,C\RI\DE}&\\
\tag{4}&\provx{\CH}{\omega^{\al_1}}{\OO+n}{\GA\RI C}.&
\end{align*}
Now applying the Reduction Lemma \ref{preduction} to (3) and (4) provides us with
\begin{equation*}
\provx{\CH}{\omega^{\al_0}\#\omega^{\al_0}\#\omega^{\al_1}\#\omega^{\al_1}}{\OO+n}.
\end{equation*}
It remains to note that $\omega^{\al_0}\#\omega^{\al_0}\#\omega^{\al_1}\#\omega^{\al_1}<\omega^\al$ since $\omega^\al$ is additive principal, so we can complete the proof by weakening.\end{proof}
\begin{theorem}[Partial cut elimination for $\IRSOP$]\label{ppredce}{\em If $\provx{\CH}{\alpha}{\OO+n+1}{\Gamma\Rightarrow\Delta}$ then $\provx{\CH}{\omega_n(\alpha)}{\Omega+1}{\Gamma\Rightarrow\Delta}$ where $\omega_0(\beta):=\beta$ and $\omega_{k+1}(\beta):=\omega^{\omega_k(\beta)}$.
}\end{theorem}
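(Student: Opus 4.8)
The plan is to prove the statement by a straightforward induction on $n$, using the single-step reduction recorded in Lemma \ref{pprecesetup} as the engine. The base case $n=0$ is immediate, since $\omega_0(\al)=\al$ and so the hypothesis $\provx{\CH}{\al}{\OO+1}{\GA\RI\DE}$ already \emph{is} the desired conclusion.

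For the inductive step, assume the theorem holds for $n$ and suppose $\provx{\CH}{\al}{\OO+(n+1)+1}{\GA\RI\DE}$. First I would apply Lemma \ref{pprecesetup} once (with the parameter $n+1$), lowering the cut-rank by one step and exponentiating the ordinal bound, to obtain $\provx{\CH}{\omega^\al}{\OO+n+1}{\GA\RI\DE}$. Then I would invoke the induction hypothesis on this derivation, which yields $\provx{\CH}{\omega_n(\omega^\al)}{\OO+1}{\GA\RI\DE}$.

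It remains to identify the ordinal. Using the defining clause $\omega_{k+1}(\be)=\omega^{\omega_k(\be)}$ one checks by a short side induction that $\omega_n(\omega_m(\al))=\omega_{n+m}(\al)$; in particular, since $\omega^\al=\omega_1(\al)$, we get $\omega_n(\omega^\al)=\omega_{n+1}(\al)$, which is exactly the bound claimed. Throughout, the operator side conditions are preserved automatically: operators are closed under $\sigma\mapsto\omega^\sigma$ (clause 5 of Definition \ref{operatorr}), so $\omega_n(\al)\in\CH$ whenever $\al\in\CH$, and the $k$-sets of the end-sequent are untouched by cut elimination.

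Finally, I expect no genuine obstacle at the level of this theorem: all the real work has already been discharged in Lemma \ref{pprecesetup}, which in turn rests on the Reduction Lemma \ref{preduction}. The only points requiring care are purely arithmetical bookkeeping—tracking how the tower $\omega_n$ accumulates across the $n$ iterations and confirming the composition identity above—together with checking that the cut-rank really descends from $\OO+n+1$ to $\OO+1$ in exactly $n$ steps, rather than overshooting into the forbidden region at or below $\OO$ where cuts on $\Deltaop$-formulae cannot be removed.
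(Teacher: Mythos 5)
Your proposal is correct and follows exactly the paper's own argument: the paper proves Theorem \ref{ppredce} by ``an easy induction on $n$'' using Lemma \ref{pprecesetup}, which is precisely your base case, single application of the lemma, invocation of the induction hypothesis, and the identity $\omega_n(\omega^\al)=\omega_{n+1}(\al)$. Your added bookkeeping (the composition identity for the $\omega_k$ towers and the operator closure under $\sigma\mapsto\omega^\sigma$ from clause 5 of Definition \ref{operatorr}) is sound and merely makes explicit what the paper leaves implicit.
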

\begin{proof}
The proof uses an easy induction on $n$ and the previous Lemma.
\end{proof}
\noindent Note that \ref{ppredce} is much weaker than the full predicative cut elimination result we obtained for $\IRS$ (Theorem \ref{predce}), this is because in general we cannot eliminate cuts with $\Deltaop$ cut-formulae from $\IRSOP$ derivations.
\begin{lemma}[Boundedness]\label{pboundedness}{\em
If $A$ is a $\Sigmap$-formula, $B$ is a $\Pip$-formula, $\alpha\leq\beta<\Omega$ and $\beta\in\mathcal{H}$ then
\begin{description}
\item[i)] If $\provx{\mathcal{H}}{\alpha}{\rho}{\Gamma\Rightarrow A}$ then $\provx{\mathcal{H}}{\alpha}{\rho}{\Gamma\Rightarrow A^{\mathbb{V}_\beta}}$.
\item[ii)] If $\provx{\mathcal{H}}{\alpha}{\rho}{\Gamma,B\Rightarrow\Delta}$ then $\provx{\mathcal{H}}{\alpha}{\rho}{\Gamma,B^{\mathbb{V}_\beta}\Rightarrow\Delta\,.}$
\end{description}
}\end{lemma}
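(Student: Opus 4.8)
The plan is to prove \textbf{i)} and \textbf{ii)} simultaneously by induction on $\alpha$, in exact parallel to the proof of Boundedness for $\IRS$ (Lemma~\ref{boundedness}). The single structural fact driving everything is that, since $\alpha\leq\beta<\Omega$, the last inference of the given derivation can be neither $(\forall R)_\infty$, $(\exists L)_\infty$ nor $\SRP$: each of these carries a side condition forcing the conclusion to have height $\geq\Omega$ (for $(\forall R)_\infty$ and $(\exists L)_\infty$ because $\lev s<\alpha_s+1<\alpha$ must hold for terms $s$ of arbitrarily large level $<\Omega$, and for $\SRP$ directly from $\Omega<\alpha$). Consequently every inference we meet has a level-bounded family of premises, and the relativisation $(\cdot)^{\mathbb{V}_\beta}$ only ever has to act on genuine unbounded quantifiers introduced by $(\exists R)$ on the right or $(\forall L)$ on the left.

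For \textbf{i)}, if $A$ is not the principal formula of the last inference (so that inference acts inside $\Gamma$, or is $(\perp)$), I would apply the induction hypothesis to the premise(s), keeping $A$ on the right, and re-apply the same inference; the case $(\perp)$ is immediate. If $A$ is principal with a propositional or bounded-quantifier outermost connective ($C\wedge D$, $C\vee D$, $(\exists x\in t)C$, $(\forall x\in t)C$, $(\exists x\subseteq t)C$, $(\forall x\subseteq t)C$) the result follows from the induction hypothesis on the immediate subformulae, since $(\cdot)^{\mathbb{V}_\beta}$ commutes with each of these connectives, the bounded and subset-bounded quantifiers being left untouched. For the outermost connectives $\neg C$ and $C\to D$ the antecedent $C$ is a $\Pip$-formula migrating to the left, so here I use the \emph{simultaneous} nature of the induction: I bound the succedent by part i) and then the new left occurrence of $C$ by part ii), both applied to the premise at height $\alpha_0<\alpha$.

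The only case with real content is $(\exists R)$. Here $A\equiv\exists x\,C(x)$ and the premise is $\provx{\mathcal H}{\alpha_0}{\rho}{\Gamma\Rightarrow C(s)}$ with $\lev s<\alpha$ and $\alpha_0+1<\alpha$. The induction hypothesis gives $\provx{\mathcal H}{\alpha_0}{\rho}{\Gamma\Rightarrow C(s)^{\mathbb{V}_\beta}}$, and since $\lev s<\alpha\leq\beta$ axiom (A4) yields $\Gamma\Rightarrow s\in\mathbb{V}_\beta$ (placed at the right height by weakening, Lemma~\ref{pweakpers}); combining these by $(\wedge R)$ and then $(b\exists R)$ — legitimate because $\lev s<\beta=\lev{\mathbb{V}_\beta}$, with the slack $\alpha_0+1<\alpha$ absorbing the two extra inferences — produces $\Gamma\Rightarrow(\exists x\in\mathbb{V}_\beta)C(x)^{\mathbb{V}_\beta}$, which is exactly $A^{\mathbb{V}_\beta}$. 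Part \textbf{ii)} is dual, the decisive case being $(\forall L)$ with $B\equiv\forall x\,C(x)$: from the induction hypothesis $\provx{\mathcal H}{\alpha_0}{\rho}{\Gamma,C(s)^{\mathbb{V}_\beta}\Rightarrow\Delta}$ and the axiom $\Gamma\Rightarrow s\in\mathbb{V}_\beta$, one application of $(\to L)$ followed by $(b\forall L)$ yields $\Gamma,B^{\mathbb{V}_\beta}\Rightarrow\Delta$, recalling that $B^{\mathbb{V}_\beta}=(\forall x\in\mathbb{V}_\beta)C(x)^{\mathbb{V}_\beta}$.

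I do not anticipate a serious obstacle: this is a standard boundedness argument, and apart from the new subset-bounded quantifier rules, which behave exactly like their $\in$-bounded counterparts and are purely routine, it is the verbatim intuitionistic-and-relativised analogue of Lemma~\ref{boundedness}. The only points needing mild care are the ordinal bookkeeping — guaranteed to preserve the height $\alpha$ by the $+1$ slack in the side conditions of $(\exists R)$ and $(\forall L)$ — and the discipline of invoking the simultaneous induction at the implication and negation cases so that the $\Sigmap/\Pip$ polarity is correctly respected.
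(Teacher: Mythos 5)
Your proposal is correct and takes essentially the same route as the paper's own proof: a simultaneous induction on $\alpha$ in which $(\forall R)_\infty$, $(\exists L)_\infty$ and $\SRP$ are excluded because $\alpha\leq\beta<\Omega$, with the only substantive cases being $(\exists R)$ and $(\forall L)$, handled exactly as you do via axiom (A4) giving $\Gamma\Rightarrow s\in\mathbb{V}_\beta$ (using $\lev s<\alpha\leq\beta$ and $\beta\in\mathcal{H}$), followed by $(\wedge R)$ then $(b\exists R)$, respectively $(\rightarrow L)$ then $(b\forall L)$, with the $\alpha_0+1<\alpha$ slack absorbing the extra inferences. If anything, your write-up is slightly more careful than the paper's (which only presents the $(\forall L)$ case of ii) and declares i) similar): you correctly relativise the minor formula $C(s)$ to $C(s)^{\mathbb{V}_\beta}$ via the induction hypothesis, whereas the paper's displayed intermediate sequent leaves $C(s)$ unrelativised, and you make explicit the polarity-switching appeal to the simultaneous induction in the $\neg$ and $\rightarrow$ cases.
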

\begin{proof}
The proofs are by induction on $\al$, we show ii), the proof of i) is similar. As with Lemma \ref{boundedness} the only interesting case is where $B$ was the principal formula of the last inference and $B$ is of the form $\forall xC(x)$. So we have
\begin{equation*}
\provx{\CH}{\alpha_0}{\rho}{\Gamma, B, C(s)\Rightarrow\Delta}\quad\text{for some $|s|<\al$ with $\alpha_0+1<\al$.}
\end{equation*}
Using the induction hypothesis we obtain
\begin{equation*}
\provx{\CH}{\alpha_0}{\rho}{\Gamma, B^{\mathbb{V}_\beta}, C(s)\Rightarrow\Delta}\quad\text{for some $|s|<\al$ with $\alpha_0+1<\al$.}
\end{equation*}
Now since $\Gamma,B^{\mathbb{V}_\beta}\Rightarrow s\in\mathbb{V}_\beta$ is an axiom, we have $\provx{\CH}{\alpha_0}{\rho}{\Gamma, B^{\mathbb{V}_\beta}\Rightarrow s\in\mathbb{V}_\beta}$, so by $(\rightarrow\!L)$ we obtain
\begin{equation*}
\provx{\CH}{\alpha_0+1}{\rho}{\Gamma, B^{\mathbb{V}_\beta}, s\in\mathbb{V}_\beta\rightarrow C(s)\Rightarrow\Delta}\quad\text{for some $|s|<\al$ with $\alpha_0+1<\al$.}
\end{equation*}
Finally an application of $(b\forall L)$ yields
\begin{equation*}
\provx{\mathcal{H}}{\alpha}{\rho}{\Gamma,B^{\mathbb{V}_\beta}\Rightarrow\Delta}
\end{equation*}
as required.
\end{proof}
\begin{theorem}[Collapsing]\label{pcollapsing}{\em Suppose that $\eta\in\CH_\eta$, $\Delta$ is a set of at most one $\Sigmap$-formula and $\Gamma$ a set of $\Pip$-formulae.
  Then
\begin{equation*}
\provx{\mathcal{H}_\eta}{\alpha}{\Omega+1}{\Gamma\Rightarrow\Delta}\quad\text{implies}\quad\provx{\mathcal{H}_{\hat{\alpha}}}{\psi_\Omega (\hat{\alpha})}{\psi_\Omega (\hat{\alpha})}{\Gamma\Rightarrow\Delta.}
\end{equation*}
Here $\hat\beta=\eta+\omega^{\Omega+\beta}$ and the operators $\CH_\xi$ are those defined in Definition \ref{opdef}.
}\end{theorem}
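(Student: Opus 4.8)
The plan is to argue by induction on $\alpha$, case-splitting on the last inference of the given derivation and mirroring the proof of Theorem~\ref{collapsing} while inserting the cases forced by the subset-bounded rules. The two facts I would lean on at every step are $\CH_\eta\cap\Omega=\psio{\eta+1}$ (so that $\gamma\in\CH_\eta$ with $\gamma<\Omega$ gives $\gamma<\psio{\eta+1}$) and Lemma~\ref{op2}: for $\alpha_0\in\CH_\eta$ with $\alpha_0<\alpha$ one has $\psio{\hat{\alpha_0}}<\psio{\hat\alpha}$ and $\psio{\hat\alpha}\in\CH_{\hat\alpha}$, together with $\hat{\alpha_0}<\hat\alpha$ and Lemma~\ref{op1}(ii) for monotonicity of the operators. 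For the base case, when $\Gamma\Rightarrow\Delta$ is one of the axioms (A1)--(A7): the given derivation is $\CH_\eta$-controlled, so $\lev{\Gamma\Rightarrow\Delta}\subseteq\CH_\eta\subseteq\CH_{\hat\alpha}$, and since $\psio{\hat\alpha}\in\CH_{\hat\alpha}$ the same sequent is an axiom of the collapsed relation. Propositional inferences follow at once from the induction hypothesis.

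For the right $\Sigmap$-introductions $(\exists R)$, $(b\exists R)$, $(pb\exists R)$, $(\in\!R)$ and $(\subb\!R)$ I would collapse the single premise by the induction hypothesis to height and rank $\psio{\hat{\alpha_0}}$ and then reintroduce the principal formula; the witnessing term $s$ satisfies $\lev s\in\CH_\eta$, hence $\lev s<\psio{\eta+1}<\psio{\hat\alpha}$, which discharges the side condition, and $\psio{\hat{\alpha_0}}<\psio{\hat\alpha}$ supplies the height requirement. The rule $\SRP$ is handled exactly as Case~6 of Theorem~\ref{collapsing}: collapse the premise, then use Boundedness (Lemma~\ref{pboundedness}) to relativise the $\Sigmap$-formula to $\Vb{\psio{\hat{\alpha_0}}}$ and close with $(\exists R)$. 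The one-premise left $\Pip$-introductions $(\forall L)$, $(b\forall L)$ and $(pb\forall L)$ only need the induction hypothesis applied to their premise, since the new minor formula (e.g.\ $s\subb t\to A(s)$) is again $\Pip$, so the hypotheses of the theorem are preserved.

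The heart of the argument is the infinitary rules $(\in\!L)_\infty$, $(\subb\!L)_\infty$, $(b\forall R)_\infty$, $(pb\forall R)_\infty$, $(b\exists L)_\infty$ and $(pb\exists L)_\infty$, whose premises are indexed by all terms $s$ with $\lev s<\lev t$ or $\lev s\leq\lev t$. From $\lev t\in\CH_\eta$ I get $\lev t<\psio{\eta+1}$, so every index term has $\lev s<\psio{\eta+1}$, whence $\lev s\in\CH_\eta$, $\CH_\eta[s]=\CH_\eta$, and in particular $\alpha_s\in\CH_\eta$; applying the induction hypothesis to each premise and Lemma~\ref{op2}(ii) yields $\psio{\hat{\alpha_s}}<\psio{\hat\alpha}$ uniformly, so one reapplication of the same rule (preceded, for $(b\forall R)_\infty$ and $(pb\forall R)_\infty$, by $(\to R)$ as in Case~1 of Theorem~\ref{collapsing}) closes the case at bound $\psio{\hat\alpha}$. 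For (Cut) at rank $\Omega+1$ one has $rk(C)\le\Omega$: if $rk(C)<\Omega$ then $C$ is $\Deltaop$, hence both $\Sigmap$ and $\Pip$, and I collapse both premises and cut, using $rk(C)<\psio{\eta+1}$; if $rk(C)=\Omega$, say $C\equiv\exists x\,F(x)$ with $F(\Vb{0})$ in $\Deltaop$, I collapse the right premise, apply Boundedness to reach $\Gamma\Rightarrow C^{\Vb{\psio{\hat{\alpha_0}}}}$, use Persistence (Lemma~\ref{pweakpers}(ii)) on the left premise to replace $\exists x\,F(x)$ by the bounded (hence $\Pip$) formula $(\exists x\in\Vb{\psio{\hat{\alpha_0}}})F(x)$, collapse it, and finish with a single Cut; the case $C\equiv\forall x\,F(x)$ is dual. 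The inferences $(\exists L)_\infty$ and $(\forall R)_\infty$ cannot occur, as they would place a $\Sigmap$-formula in $\Gamma$ or a $\Pip$-formula in $\Delta$.

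The step I expect to demand the most care is the family of infinitary subset-bounded rules. One must check that the single bound $\lev t<\psio{\eta+1}$ still governs the entire, now power-set-sized, family of premise terms, and that the recombination via Lemma~\ref{op2} is legitimate even though $\IRSOP$ is impredicative, i.e.\ an index term $s$ may carry subterms of level exceeding $\lev t$. The reason this goes through --- and the feature that lets $\IKPP$ be treated in the vein of its classical cousin --- is that operator control in $\IRSOP$ is expressed through the top-level term levels $\lev{\cdot}$ rather than the full subterm closure $k(\cdot)$ used for $\IRS$, so only $\lev s$, and not the levels of the subterms of $s$, must be absorbed into the operator. Once this is in place the rest transcribes the proof of Theorem~\ref{collapsing} essentially verbatim.
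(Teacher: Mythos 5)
Your proposal is correct and follows essentially the same route as the paper's own proof: induction on $\alpha$ with the identical case analysis, using $\CH_\eta(\emptyset)=B^\Omega(\eta+1)$ and $B^\Omega(\eta+1)\cap\Omega=\psio{\eta+1}$ to absorb every index term of the infinitary rules into $\CH_\eta$ (the paper's Case 2 makes exactly this move, exploiting that $\IRSOP$-control is via the top-level levels $\lev{\cdot}$), Lemma \ref{op2} for the ordinal bookkeeping, and Boundedness \ref{pboundedness} plus Persistence \ref{pweakpers}ii) for $\SRP$ and the rank-$\Omega$ cut exactly as in Cases 6 and 7.2 of Theorem \ref{collapsing}. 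The only cosmetic deviation is that the paper collapses the premises $\Gamma\Rightarrow p\subseteq t\rightarrow F(p)$ intact and reapplies $(pb\forall R)_\infty$ directly, whereas you interpose an inversion/$(\rightarrow R)$ step; both are fine.
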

\begin{proof}Note first that from $\eta\in\CH_\eta$ and Lemma \ref{op2} we obtain
\begin{equation*}\tag{1}
\hat\al,\psio{\hat\al}\in\CH_{\hat\al}.
\end{equation*}
The proof is by induction on $\al$.\\

\noindent Case 0. If $\Gamma\Rightarrow\Delta$ is an axiom then the result follows immediately from (1).\\

\noindent Case 1. If the last inference was propositional then the assertion follows easily by applying the induction hypothesis and then the same inference again.\\

\noindent Case 2. Suppose the last inference was $(pb\forall R)_\infty$, then $\Delta=\{(\forall x\subseteq t)F(x)\}$ and
\begin{equation*}
\provx{\CH_\eta[p]}{\al_p}{\Omega+1}{\Gamma\Rightarrow p\subseteq t\rightarrow F(p)}\quad\text{for all $\lev p\leq\lev t$ with $\alpha_p<\alpha$.}
\end{equation*}
Since $\lev t \in\CH_\eta(\emptyset)=B^\OO(\eta+1)$ and $\lev t<\Omega$, we have $\lev t<\psio{\eta+1}$, thus $\lev p\in\CH_\eta$ for all $\lev p \leq\lev t$. So we have
\begin{equation*}
\provx{\CH_\eta}{\al_p}{\Omega+1}{\Gamma\Rightarrow p\subseteq t\rightarrow F(p)}.
\end{equation*}
Since $p\subseteq t\rightarrow F(p)$ is also in $\Sigmap$
 we may apply the induction hypothesis to obtain
\begin{equation*}
\provx{\CH_{\hat{\al_p}}}{\psio{\hat{\al_p}}}{\psio{\hat{\al_p}}}{\Gamma\Rightarrow p\subseteq t\to F(p)}\quad\text{for all $\lev p\leq\lev t$ with $\alpha_p<\alpha$.}
\end{equation*}
Now noting that $\psio{\hat{\al_p}}+1<\psio{\hat{\al}}$, by applying $(pb\forall R)_\infty$ we obtain the desired result. The cases where the last inference was $(b\forall R)_\infty$, $(pb\exists L)_\infty$, $(b\exists L)_\infty$, $(\in\! L)_\infty$ or $(\subseteq\! L)_\infty$ are similar. \\

\noindent Case 3. Now suppose the last inference was $(pb\forall L)$, so $(\forall x\subseteq t)F(x)\in\Gamma$ and
\begin{equation*}
\provx{\CH_\eta}{\al_0}{\OO+1}{\Gamma,s \subseteq t\rightarrow F(s)\Rightarrow\Delta}\quad\text{for some $\lev s\leq\lev t$ with $\al_0<\al$.}
\end{equation*}
 Noting that $s\subseteq t\rightarrow F(s)$ is in $\Pip$, too,  we may apply the induction hypothesis to obtain
\begin{equation*}
\provx{\CH_{\hat{\alpha_0}}}{\psio{\hat{\al_0}}}{\psio{\hat{\al_0}}}{\Gamma, s \subseteq t\rightarrow F(s)\Rightarrow\Delta}
\end{equation*}
to which we may apply $(pb\forall L)$ to complete this case. The cases where the last inference was $(b\forall L)$, $(pb\exists R)$, $(b\exists R)$, $(\in\! R)$ or $(\subseteq\! R)$ are similar.\\

\noindent Case 4. Now suppose the last inference was $(\forall L)$, so $\forall xA(x)\in\GA$ and
\begin{equation*}
\provx{\CH_\eta}{\al_0}{\Omega+1}{\GA,F(s)\Rightarrow\Delta}\quad\text{for some $\lev s<\al$ and $\al_0<\al$.}
\end{equation*}
Since $F(s)$ is $\Pip$ we may apply the induction hypothesis to obtain
\begin{equation*}
\provx{\CH_{\hat{\al_0}}}{\psio{\hat{\al_0}}}{\psio{\hat{\al_0}}}{\Gamma,F(s)\Rightarrow\Delta\,.}
\end{equation*}
Now since $\lev s\in\CH_\eta=B^\OO(\eta+1)$ we have $\lev s<\psio{\eta+1}<\psio{\hat\al}$. So we may apply $(\forall L)$ to complete the case. The case where the last inference was $(\exists R)$ is similar.\\

\noindent The rest of the proof is completely analogous to that of Theorem \ref{collapsing}, using boundedness for $\IRSOP$ (\ref{pboundedness}) instead of for $\IRS$.
\end{proof}
\subsection{Embedding $\IKPP$ into $\IRSOP$}
\begin{definition}
{\em As in the embedding section for the case of \textbf{IKP}, $\Vdash\Gamma\Rightarrow\Delta$ will be used to abbreviate that
\begin{equation*}
\provx{\mathcal{H}[\Gamma\Rightarrow\Delta]}{no(\Gamma\Rightarrow\Delta)}{0}{\Gamma\Rightarrow\Delta}\quad\text{holds for any operator $\mathcal{H}$.}
\end{equation*}
Also $\Vdash^\xi_\rho\Gamma\Rightarrow\Delta$ will be used to abbreviate that
\begin{equation*}
\provx{\mathcal{H}[\Gamma\Rightarrow\Delta]}{no(\Gamma\Rightarrow\Delta)\#\xi}{\rho}{\Gamma\Rightarrow\Delta}\quad\text{holds for any operator $\mathcal{H}$.}
\end{equation*}
Only this time we are referring to operator controlled derivability in $\IRSOP$.
}\end{definition}
\begin{lemma}\label{plog}{\em
For any formula $A$
\begin{equation*}
\Vdash A\Rightarrow A\,.
\end{equation*}
}\end{lemma}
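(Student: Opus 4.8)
The plan is to unfold the abbreviation: $\Vdash A\RI A$ means $\provx{\mathcal{H}[A\RI A]}{no(A\RI A)}{0}{A\RI A}$ for every operator $\mathcal{H}$, where $no(A\RI A)=\omega^{rk(A)}\#\omega^{rk(A)}$, and to argue by induction on $rk(A)$. The base case is $rk(A)<\OO$, which (by the observation recorded right after the rank definition, namely $rk(A)<\OO$ iff $A$ is $\Deltaop$) holds precisely when $A$ is $\Deltaop$; here $A\RI A$ is a direct instance of the logical axiom (A1), and the side condition $\lev{A\RI A}\cup\{no(A\RI A)\}\subseteq\mathcal{H}[A\RI A]$ is immediate, since $\mathcal{H}[A\RI A]$ contains $\lev A$ by construction and operators are closed under $+$, under $\sigma\mapsto\omega^{\sigma}$, and under Cantor normal form decomposition.

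For the inductive step I would take $rk(A)\geq\OO$, so that $A$ carries an unbounded quantifier, and split on the outermost symbol of $A$, applying the induction hypothesis to the immediate subformula(e) and then reintroducing $A$ by its own rules. The propositional cases $A\equiv B\wedge C$, $B\vee C$, $\neg B$, $B\to C$ are routine combinations of the matching left and right rules fed by $\Vdash B\RI B$ and $\Vdash C\RI C$; for instance for $B\to C$ one obtains $B\to C,\,B\RI C$ by $(\to L)$ from $\Vdash B\RI B$ and $\Vdash C\RI C$ (weakened), then applies $(\to R)$. The quantifier cases are uniform: for a bounded existential $A\equiv(\exists x\in t)F(x)$ the induction hypothesis applied to the premise formula gives $\Vdash s\in t\wedge F(s)\RI s\in t\wedge F(s)$ for every $\lev s<\lev t$, whence $(b\exists R)$ followed by $(b\exists L)_\infty$ yields $\Vdash A\RI A$; dually $(\forall x\in t)F$ uses the induction hypothesis on $s\in t\to F(s)$, then $(b\forall L)$ and $(b\forall R)_\infty$. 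The subset-bounded pair $(\exists x\subb t)F$, $(\forall x\subb t)F$ is identical with $(pb\exists R)/(pb\exists L)_\infty$ and $(pb\forall L)/(pb\forall R)_\infty$ in place of the $\in$-rules, and the unbounded pair $\exists x\,F$, $\forall x\,F$ is the same but simpler, the premise formula being just $F(s)$ and the rules $(\exists R)/(\exists L)_\infty$, $(\forall L)/(\forall R)_\infty$.

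The one genuine point to check — and the step I expect to carry all the weight — is the ordinal and operator bookkeeping that shows these finitely many inferences stay within the target label $no(A\RI A)=\omega^{rk(A)}\#\omega^{rk(A)}$ at cut rank $0$ (note that no $\Cut$ or $\SRP$ ever appears). Here the crucial input is Lemma \ref{prank}: since in every case the reintroduced principal formula $A$ has $rk(A)\geq\OO$, each premise formula to which the induction hypothesis is applied — $s\in t\wedge F(s)$, $s\in t\to F(s)$, $F(s)$, and their subset-bounded analogues — occurs as a minor formula of an introduction of $A$, and therefore has rank strictly below $rk(A)$; this simultaneously makes the induction hypothesis applicable to it and guarantees that its $no$-value $\omega^{(\cdot)}$ is strictly below $\omega^{rk(A)}$. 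Because $\omega^{rk(A)}$ is additively principal, adjoining such terms together with the finitely many unit increments coming from the rule applications keeps the ordinal strictly below $\omega^{rk(A)}\#\omega^{rk(A)}$, exactly as in the ``$\Vdash$ as a calculus'' argument of Lemma \ref{calc}, which transfers without change to $\IRSOP$. The matching of operator decorations (the $\mathcal{H}[s]$ of the $\infty$-rules against $\mathcal{H}[A\RI A]$) is routine, since every relevant $\lev s$ satisfies $\lev s\leq\lev t\in\lev A$.
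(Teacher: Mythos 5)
Your proposal is correct and takes essentially the paper's route: the paper likewise inducts on the buildup of $A$ with axiom (A1) as the $\Deltaop$ base case, reintroduces $A$ by the matching right rule followed by the corresponding left $\infty$-rule (it details only $A\equiv\exists xF(x)$ and declares the other cases similar), and absorbs the finitely many increments below $no(A\RI A)=\omega^{rk(A)}\#\omega^{rk(A)}$ via additive principality, with Lemma \ref{prank} doing exactly the work you assign it. One bookkeeping correction: in the unbounded quantifier cases there is no bounding term $t$, so the side condition $\lev{s}\leq\alpha_s$ of $(\exists L)_\infty$ and $(\forall R)_\infty$ is not secured by your remark that ``$\lev s\leq\lev t$''; the paper repairs this by padding, setting $\alpha_s:=\lev{s}+no(F(s)\RI F(s))$, which your ordinal budget absorbs since $rk(A)\geq\Omega>\lev{s}$ in that case.
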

\begin{proof}
We proceed by induction on the complexity of $A$. If $A$ is $\Deltaop$ then this is axiom (A1) of $\IRSOP$.\\

\noindent Suppose $A$ is of the form $\exists xF(x)$. Let $\alpha_s=\lev s + no(F(s)\Rightarrow F(s))$ and $\alpha= no(A\Rightarrow A)$, note that $\lev s<\alpha_s+1<\alpha_s+2<\alpha$ for all $s$. By the induction hypothesis we have
\begin{equation*}
\provx{\CH[F(s),s]}{\alpha_s}{0}{F(s)\Rightarrow F(s)}\quad\text{for all terms $s$ and for an arbitrary operator $\CH$.}
\end{equation*}
Now using $(\exists R)$ we get
\begin{equation*}
\provx{\CH[F(s),s]}{\alpha_s+1}{0}{F(s)\Rightarrow\exists xF(x)\,.}
\end{equation*}
Finally since $\CH[F(s),s](\emptyset)\subseteq\CH[\exists xF(x)][s](\emptyset)$ we may apply $(\exists L)_\infty$ to obtain the desired result. The other cases are similar.
\end{proof}
\begin{lemma}[Extensionality]\label{pextensionality}{\em
For any formula $A$ and any terms $s_1,...,s_n,t_1,...,t_n$
\begin{equation*}
\Vdash s_1=t_1,...,s_n=t_n,A(s_1,...,s_n)\Rightarrow A(t_1,...,t_n).
\end{equation*}
}\end{lemma}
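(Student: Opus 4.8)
The plan is to argue by induction on $rk(A(s_1,\ldots,s_n))\#rk(A(t_1,\ldots,t_n))$, exactly as in the proof of Lemma \ref{equality1}, but exploiting the richer axiom base of $\IRSOP$ to dispose of all the low-complexity cases at a stroke. Indeed, if $A(a_1,\ldots,a_n)$ is a $\Deltaop$-formula, then $A(s_1,\ldots,s_n)$ is $\Deltaop$ and the sequent in question is just (a weakening of) axiom (A3); by Lemma \ref{pweakpers} i) we are done. In particular every atomic case and every bounded-quantifier case whose matrix is itself $\Deltaop$ is immediate, which is the main economy over the situation in Lemma \ref{equality1}, where the $\in$-case had to be proved by hand. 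Hence only those $A$ that actually contain an unbounded quantifier need to be treated, and for these I would peel off the outermost logical operation and appeal to the induction hypothesis for the immediate subformulae, which have strictly smaller rank.

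The propositional cases $A\equiv\neg C,\ C\wedge D,\ C\vee D,\ C\to D$ follow at once from the induction hypothesis; for the contravariant positions in $\to$ and $\neg$ one invokes the induction hypothesis with the roles of $\bar s$ and $\bar t$ interchanged, which is legitimate because each hypothesis $s_j=t_j$ abbreviates the symmetric formula $s_j\subseteq t_j\wedge t_j\subseteq s_j$. For $A\equiv\exists x\,C(x,\bar x)$ the induction hypothesis yields $\Vdash s_1=t_1,\ldots,s_n=t_n,C(r,\bar s)\Rightarrow C(r,\bar t)$ for every term $r$; applying $(\exists R)$ and then $(\exists L)_\infty$ gives the claim, and $A\equiv\forall x\,C$ is dual via $(\forall R)_\infty$ and $(\forall L)$.

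The crux is the bounded-quantifier cases in which the matrix still carries an unbounded quantifier, say $A\equiv(\exists y\in x_i)C(y,\bar x)$, since here the \emph{bound} changes from $s_i$ to $t_i$ and these terms need not share the same level. I would handle it as follows. Apply $(b\exists L)_\infty$ to the antecedent, reducing to the derivation, for each $s$ with $\lev s<\lev{s_i}$, of the sequent with antecedent $s\dotin s_i\wedge C(s,\bar s)$ and succedent $(\exists y\in t_i)C(y,\bar t)$. From $s\dotin s_i$ one obtains $s\in s_i$ (by the axioms (A4)/(A7)), and since $s_i=t_i$ carries the inclusion $s_i\subseteq t_i$, extracting it with $(\wedge L)$ and using $(b\forall L)$ produces $s\in t_i$. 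Now apply $(\in L)_\infty$ to $s\in t_i$: for each $t$ with $\lev t<\lev{t_i}$ one acquires $t\dotin t_i\wedge s=t$, and in particular a witness term $t$ of the \emph{correct} level $\lev t<\lev{t_i}$. As $C$ is an immediate subformula its combined rank is strictly smaller, so the induction hypothesis, applied with the additional equality $s=t$, gives $C(s,\bar s)\Rightarrow C(t,\bar t)$; a final $(b\exists R)$ with witness $t$ then produces $(\exists y\in t_i)C(y,\bar t)$. The subset-bounded case $A\equiv(\exists y\subseteq x_i)C$ is entirely analogous, replacing $(b\exists L)_\infty,(\in L)_\infty,(b\exists R)$ by $(pb\exists L)_\infty,(\subseteq L)_\infty,(pb\exists R)$ and transferring $s\subseteq s_i$ to $s\subseteq t_i$ through $s_i=t_i$; the universally bounded cases are dual.

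The step I expect to require the most care is precisely this bound-change, and specifically the verification that the inner appeal to the induction hypothesis is well-founded, i.e. that $rk(C(s,\bar s))\#rk(C(t,\bar t))$ lies strictly below $rk(A(\bar s))\#rk(A(\bar t))$. This is where the $+2$ offsets built into the rank clauses for the bounded quantifiers do the work: because $C$ here contains an unbounded quantifier, $rk(C(r,\bar u))\geq\Omega$ and is unchanged when one substitutes terms of level $<\Omega$ (the finite structural offsets sit on top of $\Omega$, which absorbs all sub-$\Omega$ level contributions), whereas $rk\big((\exists y\in x_i)C\big)=rk(C(\Vb0,\bar u))+2$ strictly exceeds it. Thus the level mismatch between $s_i$ and $t_i$ is absorbed harmlessly, the induction measure genuinely decreases, and the remaining ordinal bookkeeping for the abbreviation $\Vdash$ is routine given Lemma \ref{pweakpers}.
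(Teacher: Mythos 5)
Your overall architecture matches the paper's: induction on rank (the paper inducts on $rk(A(\bar s))$ alone, noting $rk(A(\bar s))=rk(A(\bar t))$ once $A$ is not $\Deltaop$), the base case disposed of by axiom (A3) plus weakening (Lemma \ref{pweakpers}\,i)), and the unbounded-quantifier and propositional cases exactly as in the paper. Where you genuinely diverge is the bounded-quantifier cases. The paper performs none of your witness-relocation: it applies the induction hypothesis to the entire minor formula $r\subseteq s_i\wedge B(r,\bar s)$ (the bound $s_i$ is just another parameter governed by the equality hypotheses), which is legitimate by Lemma \ref{prank} since that conjunction still contains an unbounded quantifier and so has rank in $[\Omega, rk(A(\bar s)))$; this yields $\Vdash \bar s=\bar t,\, r\subseteq s_i\wedge B(r,\bar s)\Rightarrow r\subseteq t_i\wedge B(r,\bar t)$ for all $\lev r\leq\lev{s_i}$, followed by $(pb\exists R)$ and $(pb\exists L)_\infty$. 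Your explicit treatment of the $\in$-bounded case is correct and in fact more scrupulous about the level side conditions than the paper's two-line version: routing through $(\wedge L)$, $(b\forall L)$ and $(\rightarrow L)$ on the unfolding $(\forall x\in s_i)(x\in t_i)$ of $s_i\subseteq t_i$ puts $s\in t_i$ into the \emph{antecedent}, and then $(\in L)_\infty$ hands you a witness $r$ with $\lev r<\lev{t_i}$, so the closing $(b\exists R)$ is legal even when $\lev{s_i}>\lev{t_i}$. (A cosmetic slip: $\IRSOP$ has no $\dotin$; the premise of $(b\exists L)_\infty$ already supplies the genuine membership $s\in s_i\wedge C(s,\bar s)$, so your appeal to (A4)/(A7) is vacuous.)

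The genuine gap is your subset-bounded case, which is \emph{not} ``entirely analogous''. After $(pb\exists L)_\infty$ you hold $p\subseteq s_i$ with $\lev p\leq\lev{s_i}$ in the antecedent, and you propose to transfer this to $p\subseteq t_i$ ``through $s_i=t_i$'' and then apply $(\subseteq L)_\infty$. But $(\subseteq L)_\infty$ is a left rule: it consumes $p\subseteq t_i$ in the antecedent, whereas the transfer only produces it in the \emph{succedent} (it is an instance of axiom (A3) with the $\Deltaop$ formula $p\subseteq x$). In the $\in$-case the transfer itself was effected by left rules, because the antecedent formula $s_i\subseteq t_i$ quantifies over the \emph{elements} of $s_i$ and so deposits $s\in t_i$ on the left; no antecedent formula quantifies over the \emph{subsets} of $s_i$, so there is no left-rule route to $p\subseteq t_i$. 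Decomposing $p\subseteq s_i$ or $s_i\subseteq t_i$ by $(\subseteq L)_\infty$ only yields representatives of level $\leq\lev{s_i}$, respectively representatives of $s_i$, never a term $q$ with $\lev q\leq\lev{t_i}$ and $q=p$; and bridging the succedent-derived $p\subseteq t_i$ (or an equality $p=q$ with a low-level separation term $q:=[x\in\Vb{\lev{t_i}}\mid x\in p]$) back into the antecedent requires a cut on a $\Deltaop$ formula, which the relation $\Vdash$ --- cut rank $0$ --- forbids. So the case fails at exactly the point you yourself identified as the crux. You are in good company: the paper's own $(pb\exists R)$ step in its Case 2 silently assumes $\lev r\leq\lev{t_i}$ although $r$ ranges up to $\lev{s_i}$; a complete repair needs either $\Deltaop$ cuts (harmless for all later uses of the lemma, since the partial cut elimination for $\IRSOP$ never removes $\Deltaop$ cuts anyway) or a further argument manufacturing a level-$\lev{t_i}$ representative of $p$, neither of which your sketch, or the paper's, supplies.
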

\begin{proof}
If $A$ is $\Deltaop$ then this is an axiom. The remainder of the proof is by induction on $rk(A(s_1,...,s_n))$, note that $rk(A(s_1,...,s_n)=rk(A(t_1,...,t_n)$ since $A$ is not $\Deltaop$.\\

\noindent Case 1. Suppose $A(s_1,...,s_n)\equiv\exists xB(x,s_1,...,s_n)$, we know that $rk(B(r,s_1,...,s_n))<rk(A(s_1,...,s_n))$ for all $r$ by Lemma \ref{prank}, so by induction hypothesis we have
\begin{equation*}
\Vdash s_1=t_1,...,s_n=t_n,B(r,s_1,...,s_n)\Rightarrow B(r,t_1,...,t_n)\quad\text{for all terms $r$.}
\end{equation*}
Now successively applying $(\exists R)$ and then $(\exists L)_\infty$ yields the desired result.\\

\noindent Case 2. Now suppose $A(s_1,...,s_n)\equiv(\exists x\subb s_i)B(x,s_1,...,s_n)$. Since $A$ is not $\Deltaop$, $B$ must contain an unbounded quantifier, and thus by Lemma \ref{prank} $\Omega\leq rk(r\subb s_i\wedge B(r,s_1,...,s_n))<rk(A(s_1,...,s_n))$ for any $\lev r\leq \lev{s_i}$, thus by induction hypothesis we have
\begin{equation*}
\Vdash s_1=t_1,...,s_n=t_n,r\subb s_i\wedge B(r,s_1,...,s_n)\Rightarrow r\subb t_i\wedge B(r,t_1,...,t_n)\quad\text{for all $\lev r\leq\lev{s_i}$.}
\end{equation*}
Thus successively applying $(pb\exists R)$ and then $(pb\exists L)_\infty$ yields the desired result. The other cases are similar.
\end{proof}
\begin{lemma}[$\Deltaop$-Collection]\label{pdeltaocoll}{\em
For any $\Deltaop$ formula $F$
\begin{equation*}
\Vdash\Rightarrow (\forall x\in s)\exists y F(x,y)\rightarrow\exists z(\forall x\in s)(\exists y\in z)F(x,y).
\end{equation*}
}\end{lemma}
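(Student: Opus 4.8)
The plan is to reproduce, almost verbatim, the proof of $\Delta_0$-Collection for $\IRS$ (Lemma \ref{collection}), substituting the power-set reflection rule $\SRP$ for $\SR$ and the $\IRSOP$ reflexivity lemma \ref{plog} for Lemma \ref{setuplemma}. The decisive observation that licenses this is that the antecedent $A:=(\forall x\in s)\exists y F(x,y)$ is itself a $\Sigmap$-formula: $F$ is $\Deltaop$ and hence $\Sigmap$; adjoining the unbounded quantifier $\exists y$ stays within $\Sigmap$; and $\Sigmap$ is closed under the bounded quantifier $(\forall x\in s)$. Hence $\SRP$ is applicable with principal formula $A$. Moreover the only unbounded quantifier of $A$ is the $\exists y$, so its relativisation is exactly $A^z\equiv(\forall x\in s)(\exists y\in z)F(x,y)$, which is precisely the matrix of the desired consequent $\exists z(\forall x\in s)(\exists y\in z)F(x,y)$.

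First I would apply Lemma \ref{plog} to get
$$\Vdash (\forall x\in s)\exists yF(x,y)\RI(\forall x\in s)\exists yF(x,y).$$
Writing $\bar\CH:=\CH[(\forall x\in s)\exists yF(x,y)]$ and $\al:=no((\forall x\in s)\exists yF(x,y)\RI(\forall x\in s)\exists yF(x,y))$, and noting that $\al>\Omega$ since $A$ contains an unbounded quantifier, one application of $\SRP$ yields
$$\bar\CH\;\prov{\al+1}{0}{(\forall x\in s)\exists yF(x,y)\RI\exists z(\forall x\in s)(\exists y\in z)F(x,y)}.$$
A single application of $(\rightarrow R)$ then gives
$$\bar\CH\;\prov{\al+2}{0}{\RI(\forall x\in s)\exists yF(x,y)\rightarrow\exists z(\forall x\in s)(\exists y\in z)F(x,y)}.$$

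It then remains only to discharge the $\Vdash$-abbreviation by the usual bookkeeping. On the operator side one checks $\bar\CH=\CH[\RI(\forall x\in s)\exists yF(x,y)\rightarrow\exists z(\forall x\in s)(\exists y\in z)F(x,y)]$, which holds because relativising $\exists y$ to $z$ introduces no new term levels. On the ordinal side one verifies $\al+2<no(\RI(\forall x\in s)\exists yF(x,y)\rightarrow\exists z(\forall x\in s)(\exists y\in z)F(x,y))$; this follows since the right-hand $no$ equals $\omega^{rk(\ldots)}$ with $rk$ of the implication strictly exceeding $rk(A)$, so that $\al+2=\omega^{rk(A)}\#\omega^{rk(A)}+2$ lies well below the additive principal number $\omega^{rk(A)+1}$. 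There is no genuine obstacle here: the only points needing care are the syntactic verification that $A\in\Sigmap$ (so $\SRP$ applies and produces exactly the intended relativised formula) and the elementary ordinal inequality, both of which run exactly as in the $\IKP$ case and require no new ideas.
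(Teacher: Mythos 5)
Your proof is correct and takes essentially the same route as the paper's own: Lemma \ref{plog} to get $A\Rightarrow A$ for $A:=(\forall x\in s)\exists yF(x,y)$, the observation that $A$ is $\Sigmap$ and that $A^z$ is exactly $(\forall x\in s)(\exists y\in z)F(x,y)$, one application of $\SRP$, then $(\rightarrow R)$, then the operator and ordinal bookkeeping against $no(\RI A\rightarrow\exists z\,A^z)=\omega^{\Omega+3}$. The only quibble is an off-by-one in your ordinal tags: the side condition $\al_0+1<\al$ of $\SRP$ forces the conclusion to carry tag at least $\al+2$ rather than $\al+1$ (the paper uses $\omega^{\Omega+2}\cdot 2+2$, then $+3$ after $(\rightarrow R)$), which is harmless since the final tag still lies well below the additive principal bound.
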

\begin{proof}
Lemma \ref{plog} provides us with
\begin{equation*}
\Vdash(\forall x\in s)\exists yF(x,y)\Rightarrow(\forall x\in s)\exists yF(x,y)\,.
\end{equation*}
Noting that $(\forall x\in s)\exists yF(x,y)$ is a $\Sigmap$ formula and that $rk((\forall x\in s)\exists yF(x,y))=\omega^{\Omega+2}$ we may apply $\SRP$ to obtain
\begin{equation*}
\bar{\CH}\;\prov{\omega^{\Omega+2}\cdot 2+2}{0}{(\forall x\in s)\exists y F(x,y)\Rightarrow\exists z(\forall x\in s)(\exists y\in z)F(x,y)}
\end{equation*}
where $\bar{\CH}=\CH[(\forall x\in s)\exists y F(x,y)]$ and $\CH$ is an arbitrary operator. Now applying $(\rightarrow R)$ we get
\begin{equation*}
\bar{\CH}\;\prov{\omega^{\Omega+2}\cdot 2+3}{0}{\Rightarrow(\forall x\in s)\exists y F(x,y)\rightarrow\exists z(\forall x\in s)(\exists y\in z)F(x,y).}
\end{equation*}
It remains to note that $\omega^{\Omega+2}\cdot 2+3<\omega^{\Omega+3}=no((\Rightarrow\forall x\in s)\exists y F(x,y)\rightarrow\exists z(\forall x\in s)(\exists y\in z)F(x,y))$ to see that the result is verified.
\end{proof}
\begin{lemma}[Set Induction]\label{pSet Induction}{\em
For any formula $F$
\begin{equation*}
\Vdash\Rightarrow\forall x[(\forall y\in x)F(y)\rightarrow F(x)]\rightarrow\forall xF(x).
\end{equation*}
}\end{lemma}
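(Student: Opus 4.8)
The plan is to mirror the proof of Set Induction for $\IRS$ (Lemma \ref{Set Induction}) almost verbatim, the only changes being that the formal terms $\mathbb{L}_\alpha$ are replaced by $\Va$, that the abbreviation $\dotin$ is replaced throughout by literal membership $\in$ (since the bounded rules of $\IRSOP$ are phrased with $s\in t$ under the side condition $\lev s<\lev t$), that the logical identity axiom is supplied by Lemma \ref{plog} in place of Lemma \ref{setuplemma}, and that the $\IRSOP$ rank function is used. Set $A:=\forall x[(\forall y\in x)F(y)\rightarrow F(x)]$; since $A$ begins with an unbounded quantifier, $rk(A)=\max\{\OO,rk(F(\Vb{0}))+5\}\geq\OO$. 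As in the $\IRS$ case the crux is the auxiliary claim
\begin{equation*}
\provx{\CH[A,s]}{\omega^{rk(A)}\#\omega^{\lev s+1}}{0}{A\RI F(s)}\quad\text{for every term }s,
\end{equation*}
which I would prove by induction on $\lev s$.

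First I would carry out the inductive step for the claim. Assuming the statement for every $t$ with $\lev t<\lev s$, weakening (Lemma \ref{pweakpers}) together with $(\rightarrow R)$ gives $A\RI t\in s\rightarrow F(t)$, and a single application of $(b\forall R)_\infty$ collecting these over all $\lev t<\lev s$ yields $A\RI(\forall y\in s)F(y)$ at ordinal about $\omega^{rk(A)}\#\omega^{\lev s}+2$. Lemma \ref{plog} supplies $F(s)\RI F(s)$, which after weakening sits below the running ordinal; then $(\rightarrow L)$ against the premise $(\forall y\in s)F(y)\rightarrow F(s)$ produces $A,(\forall y\in s)F(y)\rightarrow F(s)\RI F(s)$, and finally $(\forall L)$ absorbs this instance back into the principal formula $A$, giving $A\RI F(s)$ and closing the induction.

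To finish, I would apply $(\forall R)_\infty$ to the claim across all terms $s$; since $\omega^{\lev s+1}<\OO$ for every $\lev s<\OO$ this collects to $A\RI\forall xF(x)$ at ordinal $\omega^{rk(A)}\#\OO$, and one last $(\rightarrow R)$ delivers $\RI A\rightarrow\forall xF(x)$. It then remains to observe that $\omega^{rk(A)}\#\OO+1\leq\omega^{rk(A)}\cdot 2<\omega^{rk(A)+1}=no(\RI A\rightarrow\forall xF(x))$, using $\OO\leq\omega^{rk(A)}$ (valid because $rk(A)\geq\OO$), so that the derivation is indeed a $\Vdash$ in the required sense.

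The main obstacle is the ordinal bookkeeping rather than the shape of the proof. In particular, weakening $F(s)\RI F(s)$ (produced by Lemma \ref{plog} at ordinal $\omega^{rk(F(s))}\#\omega^{rk(F(s))}$) into the running bound requires $rk(F(s))<rk(A)$ uniformly in $s$. This needs the $\IRSOP$-analogue of Observation \ref{rkobs}: if $F$ is $\Deltaop$ then $rk(F(s))<\OO\leq rk(A)$, while if $F$ contains an unbounded quantifier then the levels of the subterms of $s$ are absorbed by the $\max$ with $\OO$ at the outermost unbounded quantifier, so that $rk(F(s))=\OO+m$ for some $m<\omega$ depending only on the syntactic shape of $F$ and not on $s$; in either case $rk(F(s))<rk(A)=\max\{\OO,rk(F(\Vb{0}))+5\}$, whence $\omega^{rk(F(s))}\#\omega^{rk(F(s))}<\omega^{rk(A)}$. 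Verifying the remaining side conditions $\lev s\leq\alpha_s<\alpha$ for the infinitary rules, and that each displayed natural sum stays below $\omega^{rk(A)+1}$, is then routine, using that ordinals of the form $\omega^\gamma$ are additive principal.
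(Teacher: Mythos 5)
Your proposal is correct and follows essentially the same route as the paper's own proof: the identical auxiliary claim $\provx{\CH[A,s]}{\omega^{rk(A)}\#\omega^{\lev s+1}}{0}{A\RI F(s)}$ proved by induction on $\lev s$ via $(b\forall R)_\infty$, Lemma \ref{plog}, $(\rightarrow L)$ and $(\forall L)$, followed by $(\forall R)_\infty$ and $(\rightarrow R)$, with your explicit check that $rk(F(s))<rk(A)$ uniformly in $s$ merely filling in a step the paper leaves tacit. (One trivial slip: when $rk(A)=\Omega$ the intermediate bound $\omega^{rk(A)}\#\OO+1\leq\omega^{rk(A)}\cdot 2$ fails by one, since $\Omega\cdot 2+1>\Omega\cdot 2$, but the needed inequality $\omega^{rk(A)}\#\OO+1<\omega^{rk(A)+1}=no(\RI A\rightarrow\forall xF(x))$ still holds.)
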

\begin{proof}
Let $\CH$ be an arbitrary operator and let $A:=\forall x[(\forall y\in x)F(y)\rightarrow F(x)]$. First we prove the following
\begin{equation*}
\text{Claim:}\quad\CH[A,s]\;\prov{\omega^{rk(A)}\#\omega^{\lev s +1}}{0}{A\Rightarrow F(s)}\quad\text{for all terms $s$.}
\end{equation*}
The claim is proved by induction on $\lev s$. By the induction hypothesis we have
\begin{equation*}
\CH[A,t]\;\prov{\omega^{rk(A)}\#\omega^{\lev t +1}}{0}{A\Rightarrow F(t)}\quad\text{for all $\lev t<\lev s$.}
\end{equation*}
Using weakening and then $(\rightarrow R)$ we get
\begin{equation*}
\CH[A,s,t]\;\prov{\omega^{rk(A)}\#\omega^{\lev t +1}+1}{0}{A\Rightarrow t\in s\rightarrow F(t)}\quad\text{for all $\lev t<\lev s$.}
\end{equation*}
Hence by $(b\forall R)_\infty$ we get
\begin{equation*}
\CH[A,s]\;\prov{\omega^{rk(A)}\#\omega^{\lev s}+2}{0}{A\Rightarrow (\forall x\in s)F(x)}
\end{equation*}
(the extra $+2$ is needed when $\lev s$ is not a limit). Now let $\eta_s:=\omega^{rk(A)}\#\omega^{\lev s}+2$. By Lemma \ref{plog} we have $\CH[A,s]\prov{\eta_s}{0}{F(s)\Rightarrow F(s)}$, so by $(\rightarrow L)$ we get
\begin{equation*}
\CH[A,s]\;\prov{\eta_s +1}{0}{A,(\forall y\in s)F(y)\rightarrow F(s)\Rightarrow F(s)}.
\end{equation*}
Finally by applying $(\forall L)$ we get
\begin{equation*}
\CH[A,s]\;\prov{\eta_s+3}{0}{A\Rightarrow F(s)},
\end{equation*}
since $\eta_s+3<\omega^{rk(A)}\#\omega^{\lev s +1}$ the claim is verified. Now by applying $(\forall R)_\infty$ we deduce from the claim that
\begin{equation*}
\CH[A]\;\prov{\omega^{rk(A)}+\Omega}{0}{A\Rightarrow \forall xF(x)}.
\end{equation*}
Hence by $(\rightarrow R)$ we obtain the desired result.
\end{proof}
\begin{lemma}[Infinity]\label{pinfinity}{\em
For any operator $\CH$ we have
\begin{equation*}
\provx{\CH}{\omega+2}{0}{\Rightarrow \exists x[(\exists y\in x)(y\in x)\wedge(\forall y\in x)(\exists z\in x)(y\in z)]}
\end{equation*}
}\end{lemma}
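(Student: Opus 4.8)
The plan is to transcribe the proof of Infinity for $\IRS$ (Lemma \ref{infinity}) into the von Neumann setting, taking $\Vb{\omega}$ as the witness for the outer unbounded quantifier and reducing the goal to deriving the two conjuncts of the matrix. The essential simplification over the $\IRS$ case is that the membership facts which there had to be produced by hand (via Lemma \ref{setuplemma}viii)) are now literally axioms of $\IRSOP$: axiom (A4) yields $\provx{\CH}{0}{0}{\RI s\in\Vb{\alpha}}$ whenever $\lev{s}<\alpha$, and these are available over an arbitrary operator $\CH$ because all the finite levels together with $\omega$ lie in $\CH$ by closure under Cantor normal form. Thus the very last inference will be $(\exists R)$ with witness $\Vb{\omega}$ (legitimate since $\lev{\Vb{\omega}}=\omega$ will be below the final ordinal), and it suffices to derive $\RI(\exists y\in\Vb{\omega})(y\in\Vb{\omega})\wedge(\forall y\in\Vb{\omega})(\exists z\in\Vb{\omega})(y\in z)$ by $(\wedge R)$ from its two halves.

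For the first conjunct I would take $\Vb{0}$ as the witness for $y$. Two copies of the axiom $\RI\Vb{0}\in\Vb{\omega}$ combine under $(\wedge R)$ to give $\RI\Vb{0}\in\Vb{\omega}\wedge\Vb{0}\in\Vb{\omega}$, and then $(b\exists R)$ (with $\lev{\Vb{0}}=0<\omega$) yields $\RI(\exists y\in\Vb{\omega})(y\in\Vb{\omega})$ with a finite ordinal label.

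The second conjunct is the heart of the matter and is where the ordinal climbs to $\omega$. For each term $s$ with $\lev{s}=n<\omega$, the set $\Vb{n+1}$ serves as witness for $z$: axiom (A4) supplies both $\RI\Vb{n+1}\in\Vb{\omega}$ and $\RI s\in\Vb{n+1}$, so $(\wedge R)$ followed by $(b\exists R)$ (whose witness level $n+1$ is $<\omega$) gives $\RI(\exists z\in\Vb{\omega})(s\in z)$; weakening (Lemma \ref{pweakpers}i)) together with $(\rightarrow R)$ then produces $\RI s\in\Vb{\omega}\rightarrow(\exists z\in\Vb{\omega})(s\in z)$ at an ordinal $\alpha_s$ that is still finite. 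Because the witness level $n+1$ depends on $s$, the label $\alpha_s$ also grows with $\lev{s}$, but it stays below $\omega$ for every $s$; this is precisely the side condition $\lev{s}\le\alpha_s<\alpha$ required to apply $(b\forall R)_\infty$ at $\alpha=\omega$, collecting these premises into $\RI(\forall y\in\Vb{\omega})(\exists z\in\Vb{\omega})(y\in z)$.

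Finally, $(\wedge R)$ joins the two conjuncts and $(\exists R)$ reintroduces the outer quantifier with witness $\Vb{\omega}$, raising the label by a finite amount above $\omega$; one then checks that the resulting ordinal satisfies the $(\exists R)$ side conditions ($\lev{\Vb{\omega}}=\omega<\alpha$) and that every operator-membership requirement holds, since the only ordinals occurring anywhere are finite ones together with $\omega$. I expect the only genuinely delicate point to be the \emph{uniformity} of the witness assignment $s\mapsto\Vb{\lev{s}+1}$ across the whole class of terms entering $(b\forall R)_\infty$: one must verify that $\lev{s}+1<\omega$, and hence $\alpha_s<\omega$, simultaneously for all terms $s$ of finite level, so that $\alpha=\omega$ is a legitimate conclusion ordinal. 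Everything else is a routine accounting of ordinal labels along this finite-depth skeleton.
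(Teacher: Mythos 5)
Your proposal is correct and follows essentially the same route as the paper's proof: witness $\mathbb{V}_\omega$ introduced last by $(\exists R)$, the first conjunct via $\mathbb{V}_0$ and axiom (A4), and the second conjunct via the witness assignment $s\mapsto\mathbb{V}_{\lev{s}+1}$ feeding $(b\forall R)_\infty$ at ordinal $\omega$, with exactly the side-condition check $\lev{s}\leq\alpha_s<\omega$ you flag as the delicate point. The only difference is that the paper commits to the precise final label $\omega+2$ while you leave it as ``a finite amount above $\omega$,'' which is harmless.
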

\begin{proof}
First note that for any $\lev s<\alpha$ we have $\CH\;\prov{0}{0}{s\in\mathbb{V}_\alpha}$ by virtue of axiom (A4). Let $\lev s = n<\omega$, we have the following derivation in $\IRSOP$:
\begin{prooftree}
\Axiom$\fCenter\provx{\CH}{0}{0}{\Rightarrow \mathbb{V}_{n+1}\in\mathbb{V}_\omega}$
\Axiom$\fCenter\provx{\CH}{0}{0}{\Rightarrow s\in\mathbb{V}_{n+1}}$
\LeftLabel{$(\wedge R)$}
\BinaryInf$\fCenter\provx{\CH}{1}{0}{\Rightarrow \mathbb{V}_{n+1}\in\mathbb{V}_\omega\wedge s\in\mathbb{V}_{n+1}}$
\LeftLabel{$(b\exists R)$}
\UnaryInf$\fCenter\provx{\CH}{n+2}{0}{\Rightarrow(\exists z\in\mathbb{V}_\omega)(s\in z)}$
\LeftLabel{$(\rightarrow R)$}
\UnaryInf$\fCenter\provx{\CH}{n+3}{0}{\Rightarrow s\in\mathbb{V}_\omega\rightarrow (\exists z\in\mathbb{V}_\omega)(s\in z)}$
\LeftLabel{$(b\forall R)_\infty$}
\UnaryInf$\fCenter\provx{\CH}{\omega}{0}{\Rightarrow (\forall y\in\mathbb{V}_\omega)(\exists z\in\mathbb{V}_\omega)(y\in z)}$
\Axiom$\fCenter\provx{\CH}{0}{0}{\Rightarrow \mathbb{V}_0\in\mathbb{V}_{\omega}}$
\LeftLabel{$(\wedge R)$}
\UnaryInf$\fCenter\provx{\CH}{1}{0}{\Rightarrow \mathbb{V}_0\in\mathbb{V}_{\omega}\wedge\mathbb{V}_0\in\mathbb{V}_{\omega}}$
\LeftLabel{$(b\exists R)$}
\UnaryInf$\fCenter\provx{\CH}{2}{0}{\Rightarrow (\exists z\in\mathbb{V}_{\omega})(z\in\mathbb{V}_{\omega})}$
\LeftLabel{$(\wedge R)$}
\BinaryInf$\fCenter\provx{\CH}{\omega+1}{0}{\Rightarrow (\forall y\in\mathbb{V}_\omega)(\exists z\in\mathbb{V}_\omega)(y\in z)\wedge (\exists z\in\mathbb{V}_{\omega})(z\in\mathbb{V}_{\omega})}$
\LeftLabel{$(\exists R)$}
\UnaryInf$\fCenter\provx{\CH}{\omega+2}{0}{\Rightarrow \exists x[(\forall y\in x)(\exists z\in x)(y\in z)\wedge(\exists z\in x)(z\in x)]}$
\end{prooftree}
\end{proof}
\begin{lemma}[$\Deltaop$-Separation]\label{pseparation}{\em
If $A(a,b,c_1,...,c_n)$ is a $\Deltaop$-formula of $\IKPP$ with all free variables indicated, $r,\bar{s}:=s_1,...,s_n$ are $\IRSOP$ terms and $\CH$ is an arbitrary operator then:
\begin{equation*}
\CH[r,\bar s]\;\prov{\alpha+7}{\rho}{\RI\exists y[(\forall x\in y)(x\in r\wedge A(x,r,\bar{s}))\wedge(\forall x\in r)(A(x,r,\bar{s})\rightarrow x\in y)]}
\end{equation*}
where $\alpha:=\lev r$ and $\rho:=\text{max}\{\lev r, \lev{s_1},...,\lev{s_n}\}+\omega$.
}\end{lemma}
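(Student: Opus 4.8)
The plan is to exhibit an explicit witnessing term and to build the two halves of the separation formula directly from the comprehension axioms (A6) and (A7). Set $\alpha:=\lev r$ and take as witness the term $t:=[x\in\mathbb{V}_\alpha\mid x\in r\wedge A(x,r,\bar{s})]$. Since $x\in r\wedge A(x,r,\bar{s})$ is a $\Deltaop$-formula of $\IKPP$ whose parameters are among $r,\bar{s}$, this is a legitimate $\IRSOP$ term with $\lev t=\alpha$, and for every $\lev p<\alpha$ the defining formula instantiated at $p$ is precisely $p\in r\wedge A(p,r,\bar{s})$. Hence axioms (A6) and (A7) apply to $t$, and the whole derivation will be controlled by $\CH[r,\bar{s}]$, since every level that occurs equals $\alpha$ or a level of $r$ or of some $s_i$.

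First I would derive the left conjunct $(\forall x\in t)(x\in r\wedge A(x,r,\bar{s}))$. For each $\lev p<\lev t=\alpha$, axiom (A6) gives $\provx{\CH[r,\bar{s},p]}{\lev p}{\rho}{p\in t\RI p\in r\wedge A(p,r,\bar{s})}$; applying $(\rightarrow R)$ and then $(b\forall R)_\infty$ over all such $p$ yields this conjunct with an ordinal bound of about $\alpha+1$.

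The right conjunct $(\forall x\in r)(A(x,r,\bar{s})\rightarrow x\in t)$ is the delicate step, and the one genuinely new feature compared with the $\IRS$ case (Lemma \ref{separation}). For $\lev p<\alpha$, axiom (A7) delivers $\provx{\CH[r,\bar{s},p]}{\lev p}{\rho}{p\in r\wedge A(p,r,\bar{s})\RI p\in t}$, but in order to apply $(\rightarrow R)$ twice I need the uncurried sequent $p\in r,\,A(p,r,\bar{s})\RI p\in t$. I cannot obtain this by inverting the conjunction on the left, because Lemma \ref{pinversion}(i) inverts $\wedge$ only when $rk(A\wedge B)\geq\Omega$, whereas here the conjunction is $\Deltaop$. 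Instead I would merge the two antecedents using the logical axioms (A1), namely $p\in r,\,A(p,r,\bar{s})\RI p\in r$ and $p\in r,\,A(p,r,\bar{s})\RI A(p,r,\bar{s})$, followed by $(\wedge R)$, and then cut against (A7). The cut-formula $p\in r\wedge A(p,r,\bar{s})$ is $\Deltaop$, and since every term occurring in it has level $\leq\max\{\lev r,\lev{s_1},\ldots,\lev{s_n}\}$, its rank is $\max\{\lev r,\lev{s_i}\}+(\text{finite})<\rho$; this is exactly why the conclusion must carry cut-rank $\rho=\max\{\lev r,\lev{s_i}\}+\omega$ rather than $0$. After the cut, two applications of $(\rightarrow R)$ and one of $(b\forall R)_\infty$ complete this conjunct, with an ordinal bound of about $\alpha+4$.

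Finally I would combine the two conjuncts by $(\wedge R)$ and conclude with a single $(\exists R)$ supplying the witness $t$, which is admissible because $\lev t=\alpha$ lies below the conclusion ordinal. A routine count of the finitely many propositional inferences, the one $\Deltaop$-cut, and the closing $(\exists R)$ sitting on top of the two $(b\forall R)_\infty$ inferences yields the stated bound $\alpha+7$. The main obstacle is precisely that forced $\Deltaop$-cut used to uncurry (A7): it is unavoidable because $\wedge$-inversion is unavailable below rank $\Omega$, and it is what dictates the appearance of $\rho$ in the statement.
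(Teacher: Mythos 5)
Your proposal matches the paper's own proof essentially step for step: the same witness term $[x\in\mathbb{V}_\alpha\mid x\in r\wedge A(x,r,\bar{s})]$, the same use of axioms (A6) and (A7) for the two conjuncts, and in particular the same $\Deltaop$-cut against (A7) (built from two instances of (A1) and one $(\wedge R)$) to uncurry the conjunction, which is exactly what forces the cut-rank $\rho$ in the statement. The only deviations are cosmetic — the order in which the two conjuncts are derived and your slightly rough intermediate ordinal tallies — so the argument is correct as given.
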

\begin{proof}
First we define
\begin{equation*}
p:=[x\in\mathbb{V}_\alpha\;|\;x\in r\wedge A(x,r,\bar{s})]\quad\text{and}\quad\bar{\CH}:=\CH[r,\bar{s}].
\end{equation*}
For $t$ any term with $\lev t<\alpha$ the following are derivations in $\IRSOP$, first we have:
\begin{prooftree}
\Axiom$\fCenter\quad\text{Axiom (A1)}$
\UnaryInf$\fCenter\bar{\CH}\;\prov{0}{0}{t\in r\Rightarrow t\in r}$
\Axiom$\fCenter\quad\text{Axiom (A1)}$
\UnaryInf$\fCenter\bar{\CH}\;\prov{0}{0}{A(t,r,\bar{s})\Rightarrow A(t,r,\bar{s})}$
\LeftLabel{$(\wedge R)$}
\BinaryInf$\fCenter\bar{\CH}\;\prov{1}{0}{t\in r,A(t,r,\bar{s})\Rightarrow t\in r\wedge A(t,r,\bar{s})}$
\Axiom$\fCenter\quad\quad\text{Axiom (A7)}$
\UnaryInf$\fCenter\bar{\CH}\;\prov{0}{0}{t\in r\wedge A(t,r,\bar{s})\Rightarrow t\in p}$
\LeftLabel{$(\text{cut})$}
\BinaryInf$\fCenter\bar{\CH}\;\prov{2}{\rho}{t\in r,A(t,r,\bar{s})\Rightarrow t\in p}$
\LeftLabel{$(\rightarrow R)$}
\UnaryInf$\fCenter\bar{\CH}\;\prov{3}{\rho}{t\in r\Rightarrow A(t,r,\bar{s})\rightarrow t\in p}$
\LeftLabel{$(\rightarrow R)$}
\UnaryInf$\fCenter\bar{\CH}\;\prov{4}{\rho}{\Rightarrow t\in r\rightarrow (A(t,r,\bar{s})\rightarrow t\in p)}$
\LeftLabel{$(b\forall R)_\infty$}
\UnaryInf$\fCenter\bar{\CH}\;\prov{\alpha+5}{\rho}{\Rightarrow (\forall x\in r) (A(x,r,\bar{s})\rightarrow x\in p)}$
\end{prooftree}
Next we have:
\begin{prooftree}
\Axiom$\fCenter\quad\quad\text{Axiom (A6)}$
\UnaryInf$\fCenter\bar{\CH}\;\prov{0}{0}{t\in p\Rightarrow t\in r\wedge A(t,r,\bar{s})}$
\LeftLabel{$(\rightarrow R)$}
\UnaryInf$\fCenter\bar{\CH}\;\prov{1}{0}{\Rightarrow t\in p\rightarrow t\in r\wedge A(t,r,\bar{s})}$
\LeftLabel{$(b\forall R)_\infty$}
\UnaryInf$\fCenter\bar{\CH}\;\prov{\alpha+2}{0}{\Rightarrow (\forall x\in p)(x\in r\wedge A(x,r,\bar{s}))}$
\end{prooftree}
Now by applying $(\wedge R)$ followed by $(\exists R)$ to the conclusions of these two derivations we get
\begin{equation*}
\bar{\CH}\;\prov{\alpha+7}{\rho}{\RI\exists y[(\forall x\in y)(x\in r\wedge A(x,r,\bar{s}))\wedge(\forall x\in r)(A(x,r,\bar{s})\rightarrow x\in y)]}
\end{equation*}
as required.
\end{proof}
\begin{lemma}[Pair]\label{ppair}{\em
For any operator $\CH$ and any terms $s$ and $t$ we have
\begin{equation*}
\CH[s,t]\;\prov{\alpha+2}{0}{\Rightarrow\exists z(s\in z\wedge t\in z)}
\end{equation*}
Where $\alpha:=\text{max}(\lev s,\lev t)+1$.
}\end{lemma}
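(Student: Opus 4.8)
The plan is to exhibit an explicit short derivation witnessing the pairing axiom, using $z:=\mathbb{V}_\alpha$ as the pairing set, where $\alpha:=\text{max}(\lev s,\lev t)+1$. The key observation is that both $s$ and $t$ have level strictly below $\alpha$, so that $s\in\mathbb{V}_\alpha$ and $t\in\mathbb{V}_\alpha$ are instances of axiom (A4) and hence available with ordinal height and cut-rank $0$. The whole lemma then reduces to threading these two axioms through one conjunction rule and one existential rule.

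Concretely, I would first record the two axiom instances
\[
\provx{\CH[s,t]}{0}{0}{\Rightarrow s\in\mathbb{V}_\alpha}\qquad\text{and}\qquad\provx{\CH[s,t]}{0}{0}{\Rightarrow t\in\mathbb{V}_\alpha},
\]
both legitimate by (A4) since $\lev s<\alpha$ and $\lev t<\alpha$. Applying $(\wedge R)$ to these two premises yields
\[
\provx{\CH[s,t]}{1}{0}{\Rightarrow s\in\mathbb{V}_\alpha\wedge t\in\mathbb{V}_\alpha}.
\]
Finally, a single application of $(\exists R)$, using $\mathbb{V}_\alpha$ as the witnessing term for the bound variable $z$ (so that $F(\mathbb{V}_\alpha)$ is exactly $s\in\mathbb{V}_\alpha\wedge t\in\mathbb{V}_\alpha$), delivers
\[
\provx{\CH[s,t]}{\alpha+2}{0}{\Rightarrow\exists z(s\in z\wedge t\in z)},
\]
which is the claimed bound.

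There is no genuine obstacle here; the only thing that needs care is the operator-control and ordinal bookkeeping, and it is routine. For the controlling operator one needs $\alpha\in\CH[s,t]$: since $\lev s,\lev t\in\CH[s,t](\emptyset)$ by definition of $\CH[\,\cdot\,]$, and operators are closed under $+$ and under decomposition into Cantor normal form components (clauses 4 and 5 of the operator definition, noting $1=\omega^0$ and $0\in\CH$), it follows that $\text{max}(\lev s,\lev t)+1=\alpha\in\CH[s,t]$, and likewise $\alpha+2\in\CH[s,t]$. For the $(\exists R)$ inference the side conditions are $\alpha_0+1<\alpha+2$ and $\lev{\mathbb{V}_\alpha}<\alpha+2$; here the premise height is $\alpha_0=1$, so the first reduces to $2<\alpha+2$, which holds because $\alpha\geq 1$, and the second is simply $\alpha<\alpha+2$. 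The $(\wedge R)$ step only requires $0<1$. Hence the two inferences produce precisely the ordinal $\alpha+2$ at cut-rank $0$, completing the argument.
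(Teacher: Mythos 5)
Your proposal is correct and is essentially the paper's own proof: the paper derives $\Rightarrow s\in\mathbb{V}_\alpha$ and $\Rightarrow t\in\mathbb{V}_\alpha$ from axiom (A4), applies $(\wedge R)$ to reach height $1$, and then a single $(\exists R)$ with witness $\mathbb{V}_\alpha$ to obtain $\prov{\alpha+2}{0}{\Rightarrow\exists z(s\in z\wedge t\in z)}$, exactly as you do. Your additional verification of the operator-control and ordinal side conditions (e.g.\ $\alpha\in\CH[s,t]$ via closure under $+$ and Cantor normal form components) is left implicit in the paper but is accurate.
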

\begin{proof}
The following is a derivation in $\IRSOP$:
\begin{prooftree}
\Axiom$\fCenter\quad\text{Axiom (A4)}$
\UnaryInf$\fCenter\CH[s,t]\;\prov{0}{0}{\Rightarrow s\in\mathbb{V}_\alpha}$
\Axiom$\fCenter\quad\text{Axiom (A4)}$
\UnaryInf$\fCenter\CH[s,t]\;\prov{0}{0}{\Rightarrow t\in\mathbb{V}_\alpha}$
\LeftLabel{$(\wedge R)$}
\BinaryInf$\fCenter\CH[s,t]\;\prov{1}{0}{\Rightarrow s\in\mathbb{V}_\alpha\wedge  t\in\mathbb{V}_\alpha}$
\LeftLabel{$(\exists R)$}
\UnaryInf$\fCenter\CH[s,t]\;\prov{\alpha+2}{0}{\Rightarrow\exists z(s\in z\wedge t\in z)}$
\end{prooftree}
\end{proof}
\begin{lemma}[Union]\label{punion}{\em
For any operator $\CH$ and any term $s$ we have
\begin{equation*}
\CH[s]\;\prov{\beta+5}{0}{\Rightarrow\exists z(\forall y\in s)(\forall x\in y)(x\in z)}
\end{equation*}
where $\beta=\lev s$.
}\end{lemma}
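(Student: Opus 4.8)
The plan is to mirror the proof of Union for $\IRS$ (Lemma \ref{union}), taking $\mathbb{V}_\beta$ as the witnessing term for the existential quantifier $\exists z$, where $\beta=\lev s$. The guiding observation is that membership strictly lowers levels: if a term $r$ is an element of $s$ then $\lev r<\lev s=\beta$, and if $q$ is an element of $r$ then $\lev q<\lev r<\beta$; hence every such $q$ satisfies $\lev q<\beta$, so $q\in\mathbb{V}_\beta$ is available as axiom (A4). Crucially, the impredicativity of $\IRSOP$ — the fact that a term of level $\alpha$ may carry subterms of higher level — does not interfere here, because we only ever descend the membership relation and the rules $(b\forall R)_\infty$ range over the genuine elements (terms of strictly smaller level), whose levels remain below $\beta$.

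First I would fix terms $q,r$ with $\lev q<\lev r<\beta$, record the leaf instance of (A4), and then build the derivation upward by two nested applications of $(b\forall R)_\infty$, inserting $(\rightarrow R)$ to produce the implications these rules demand and one weakening to reinstate the bounding hypothesis $r\in s$. Concretely:
\begin{prooftree}
\Axiom$\fCenter\text{(A4)}$
\UnaryInf$\fCenter\provx{\CH[s][r][q]}{\lev q}{0}{q\in r\Rightarrow q\in\mathbb{V}_\beta}$
\LeftLabel{$(\rightarrow R)$}
\UnaryInf$\fCenter\provx{\CH[s][r][q]}{\lev q+1}{0}{\Rightarrow q\in r\rightarrow q\in\mathbb{V}_\beta}$
\LeftLabel{$(b\forall R)_\infty$}
\UnaryInf$\fCenter\provx{\CH[s][r]}{\lev r+1}{0}{\Rightarrow(\forall x\in r)(x\in\mathbb{V}_\beta)}$
\LeftLabel{Lemma \ref{pweakpers}i)}
\UnaryInf$\fCenter\provx{\CH[s][r]}{\lev r+1}{0}{r\in s\Rightarrow(\forall x\in r)(x\in\mathbb{V}_\beta)}$
\LeftLabel{$(\rightarrow R)$}
\UnaryInf$\fCenter\provx{\CH[s][r]}{\lev r+2}{0}{\Rightarrow r\in s\rightarrow(\forall x\in r)(x\in\mathbb{V}_\beta)}$
\LeftLabel{$(b\forall R)_\infty$}
\UnaryInf$\fCenter\provx{\CH[s]}{\beta+2}{0}{\Rightarrow(\forall y\in s)(\forall x\in y)(x\in\mathbb{V}_\beta)}$
\LeftLabel{$(\exists R)$}
\UnaryInf$\fCenter\provx{\CH[s]}{\beta+5}{0}{\Rightarrow\exists z(\forall y\in s)(\forall x\in y)(x\in z)}$
\end{prooftree}

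The only things to verify are the side conditions attached to each rule. For the inner $(b\forall R)_\infty$ one checks $\lev q\leq\lev q+1<\lev r+1$ (using $\lev q<\lev r$), and for the outer one $\lev r\leq\lev r+2<\beta+2$ (using $\lev r<\beta$); the final $(\exists R)$ needs $\beta+2+1<\beta+5$ together with $\lev{\mathbb{V}_\beta}=\beta<\beta+5$. The operator-control conditions hold throughout because $\beta\in\CH[s]$ and each eigenterm contributes its level to the controlling operator, so that $\{\lev q,\lev r,\beta\}\subseteq\CH[s][r][q]$ at the leaf; here I use that $\lev A$ records only the levels of the displayed argument terms, so the (possibly large) subterm levels of $r$ never need to enter the operator.

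I do not expect a genuine obstacle: the argument is essentially the $\IRS$ Union lemma with $\mathbb{V}_\beta$ replacing $\mathbb{L}_\alpha$, ordinary membership $\in$ replacing $\dotin$, and axiom (A4) replacing the corresponding clause of the setup lemma. The one point requiring slight care is the ordinal arithmetic at the two $(b\forall R)_\infty$ steps, where the conclusion ordinal must dominate $\lev r+2$ uniformly over all $\lev r<\beta$ (covering both the limit and the successor case of $\beta$); this is what pushes the bound up to $\beta+5$ after the final $(\exists R)$, and a closing appeal to Weakening (Lemma \ref{pweakpers}i)) absorbs any remaining slack.
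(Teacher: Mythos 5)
Your proposal is correct and takes essentially the same route as the paper's own proof: the witness $\mathbb{V}_\beta$ introduced via axiom (A4), two nested applications of $(b\forall R)_\infty$ interleaved with $(\rightarrow R)$, and a closing $(\exists R)$, landing on the same bound $\beta+5$ with the side conditions verified as you state. The only cosmetic difference is that you derive the inner sequent without the hypothesis $r\in s$ and weaken it in afterwards (with slightly more careful leaf ordinals $\lev q$, $\lev q+1$, \dots), whereas the paper carries $t\in s$ as a side formula from the axiom onward; both bookkeepings satisfy the rules' ordinal constraints.
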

\begin{proof}
Let $r$ and $t$ be terms such that $\lev r<\lev t<\beta$, we have the following derivation in $\IRSOP$:
\begin{prooftree}
\Axiom$\fCenter\quad\text{Axiom (A4)}$
\UnaryInf$\CH[s,t,r]\;\fCenter\prov{0}{0}{t\in s,r\in t\Rightarrow r\in\mathbb{V}_\beta}$
\LeftLabel{$(\rightarrow R)$}
\UnaryInf$\CH[s,t,r]\;\fCenter\prov{1}{0}{t\in s\Rightarrow r\in t\rightarrow r\in\mathbb{V}_\beta}$
\LeftLabel{$(b\forall R)_\infty$}
\UnaryInf$\CH[s,t]\;\fCenter\prov{\beta+2}{0}{t\in s\Rightarrow (\forall x\in t)(x\in\mathbb{V}_\beta)}$
\LeftLabel{$(\rightarrow R)$}
\UnaryInf$\CH[s,t]\;\fCenter\prov{\beta+3}{0}{\Rightarrow t\in s\rightarrow (\forall x\in t)(x\in\mathbb{V}_\beta)}$
\LeftLabel{$(b\forall R)_\infty$}
\UnaryInf$\CH[s]\;\fCenter\prov{\beta+4}{0}{\Rightarrow (\forall y\in s)(\forall x\in y)(x\in\mathbb{V}_\beta)}$
\LeftLabel{$(\exists R)$}
\UnaryInf$\CH[s]\;\fCenter\prov{\beta+5}{0}{\Rightarrow\exists z(\forall y\in s)(\forall x\in y)(x\in z)}$
\end{prooftree}
\end{proof}
\begin{lemma}[Powerset]\label{ppowerset}{\em
For any operator $\CH$ and any term $s$ we have
\begin{equation*}
\CH[s]\;\prov{\alpha+3}{0}{\Rightarrow\exists z(\forall x\subb s)(x\in z)}
\end{equation*}
where $\alpha=\lev s$.
}\end{lemma}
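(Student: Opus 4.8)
The plan is to take $\Vb{\alpha+1}$ as the witness for the existential quantifier $\exists z$ and to verify $(\forall x\subb s)(x\in\Vb{\alpha+1})$ by a direct level computation. The key observation is that every term $r$ lying in the range of the subset-bounded quantifier over $s$ satisfies $\lev r\leq\lev s=\alpha$, hence $\lev r<\alpha+1$, so that $r\in\Vb{\alpha+1}$ is simply an instance of axiom (A4). This is the formal reflection of the fact that in the Von Neumann hierarchy $\mathcal P(\Vb\alpha)=\Vb{\alpha+1}$, and it is the reason the bound $\alpha+3$ comes out so small.

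First I would fix a term $r$ with $\lev r\leq\alpha$ and, working with the operator $\CH[s,r]$ so that $\lev r$ is available, invoke (A4) to obtain $\CH[s,r]\;\prov{0}{0}{\Rightarrow r\in\Vb{\alpha+1}}$; note here that $\alpha+1\in\CH[s]$ since $\alpha\in\CH[s]$ and operators are closed under $+$ by clause 5 of Definition \ref{operatorr}. A weakening (Lemma \ref{pweakpers}(i)) inserts $r\subb s$ into the antecedent, and one application of $(\rightarrow R)$ then yields $\CH[s,r]\;\prov{1}{0}{\Rightarrow r\subb s\to r\in\Vb{\alpha+1}}$. Observe that we never need to establish $r\subb s$: it is merely assumed as a hypothesis, and $r\in\Vb{\alpha+1}$ holds for purely syntactic reasons.

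The one step requiring a little care is the application of $(pb\forall R)_\infty$ to produce $\CH[s]\;\prov{\alpha+1}{0}{\Rightarrow(\forall x\subb s)(x\in\Vb{\alpha+1})}$. Its side condition demands, for each such $r$, a premise height $\alpha_r$ with $\lev r\leq\alpha_r<\alpha+1$; since $\lev r$ can be as large as $\alpha$, I would first weaken the height-$1$ derivation above up to $\alpha_r:=\max\{\lev r,1\}$ via Lemma \ref{pweakpers}(i) (legitimate because $\lev r\in\CH[s,r]$), so that $\lev r\leq\alpha_r\leq\alpha<\alpha+1$. The rule then applies with conclusion height $\alpha+1$, exactly as the analogous height-raising is handled in the Union lemma (Lemma \ref{punion}).

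Finally, an application of $(\exists R)$ with witness $\Vb{\alpha+1}$ delivers the claim: its side conditions require $(\alpha+1)+1<\alpha+3$ and $\lev{\Vb{\alpha+1}}=\alpha+1<\alpha+3$, both of which hold, yielding $\CH[s]\;\prov{\alpha+3}{0}{\Rightarrow\exists z(\forall x\subb s)(x\in z)}$ as desired. I expect no genuine obstacle here; the entire argument is bookkeeping of ordinal levels, and the only mildly delicate point is matching the premise heights to the $\lev r\leq\alpha_r$ constraint of $(pb\forall R)_\infty$ by weakening.
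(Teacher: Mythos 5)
Your proposal is correct and takes essentially the same route as the paper: the same witness $\mathbb{V}_{\alpha+1}$ obtained from axiom (A4), followed by $(\rightarrow R)$, $(pb\forall R)_\infty$ and $(\exists R)$, arriving at the same bound $\alpha+3$. Your explicit weakening of the premise heights to $\max\{\lev r,1\}$ so as to meet the side condition $\lev r\leq\alpha_r$ merely spells out bookkeeping the paper leaves implicit (and your range $\lev r\leq\alpha$ is the correct one for the subset-bounded quantifier, where the paper's text says $\lev t<\alpha$).
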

\begin{proof}
Let $t$ be any term with $\lev t<\alpha$, we have the following derivation in $\IRSOP$:
\begin{prooftree}
\Axiom$\fCenter\quad\text{Axiom (A4)}$
\UnaryInf$\CH[s,t]\;\fCenter\prov{0}{0}{t\subb s\Rightarrow t\in\mathbb{V}_{\alpha+1}}$
\LeftLabel{$(\rightarrow R)$}
\UnaryInf$\CH[s,t]\;\fCenter\prov{1}{0}{\Rightarrow t\subb s\rightarrow t\in\mathbb{V}_{\alpha+1}}$
\LeftLabel{$(pb\forall R)_\infty$}
\UnaryInf$\CH[s]\;\fCenter\prov{\alpha+2}{0}{\Rightarrow (\forall x\subb s)(x\in\mathbb{V}_{\alpha+1})}$
\LeftLabel{$(\exists R)$}
\UnaryInf$\CH[s]\;\fCenter\prov{\alpha+3}{0}{\Rightarrow \exists z(\forall x\subb s)(x\in z)}$
\end{prooftree}
\end{proof}
\begin{theorem}\label{IKPPembed}{\em
If $\IKPP\vdash\Gamma(\bar{a})\Rightarrow\Delta(\bar a)$ where $\Gamma(\bar{a})\Rightarrow\Delta(\bar a)$ is an intuitionistic sequent containing exactly the free variables $\bar{a}=a_1,...,a_n$, then there exists an $m<\omega$ (which we may calculate from the derivation) such that
\begin{equation*}
\CH[\bar{s}]\;\prov{\Omega\cdot\omega^m}{\Omega+m}{\Gamma(\bar{s})\Rightarrow\Delta(\bar s)}
\end{equation*}
for any operator $\CH$ and any $\IRSOP$ terms $\bar s=s_1,...,s_n$.
}\end{theorem}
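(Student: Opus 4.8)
The plan is to mirror the proof of Theorem~\ref{IKPembed} for the stronger system, proceeding by induction on the given $\IKPP$-derivation after first fixing the ordinal bookkeeping. Directly from the definition of the rank function for $\IRSOP$, every $\IRSOP$-formula $A$ satisfies $rk(A)\leq\Omega+l$ for some $l<\omega$, so that $no(A)=\omega^{rk(A)}\leq\omega^{\Omega+l}=\omega^\Omega\cdot\omega^l=\Omega\cdot\omega^l$, using $\omega^\Omega=\Omega$. Hence for any choice of terms $\bar s$ there is an $m<\omega$ with $no(\Gamma(\bar s)\Rightarrow\Delta(\bar s))\leq\Omega\cdot\omega^m$, which is what licenses the claimed height $\Omega\cdot\omega^m$. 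For the base case, when $\Gamma(\bar a)\Rightarrow\Delta(\bar a)$ is an axiom of $\IKPP$ the assertion is supplied directly by the corresponding preparatory lemma: the logical axioms by Lemma~\ref{plog}, Extensionality by Lemma~\ref{pextensionality}, Pair, Union and Power Set by Lemmas~\ref{ppair}, \ref{punion} and \ref{ppowerset}, Infinity by Lemma~\ref{pinfinity}, $\Deltaop$-Separation by Lemma~\ref{pseparation}, $\Deltaop$-Collection by Lemma~\ref{pdeltaocoll}, and Set Induction by Lemma~\ref{pSet Induction}. In each case the height and cut-rank these lemmas deliver lie below $\Omega\cdot\omega^m$ and $\Omega+m$ respectively (only $\Deltaop$-Collection introduces an $\SRP$-inference, whose conclusion has rank $\Omega$ but is not itself a cut).

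For the inductive step I would dispose first of the routine inferences. Propositional inferences are handled by applying the induction hypothesis to the premises and re-applying the matching $\IRSOP$-rule, the increase in height being absorbed into $\Omega\cdot\omega^m$. The right rules for quantifiers ($\exists R$, $\forall R$, $b\forall R$, $b\exists R$, $pb\forall R$, $pb\exists R$) are the easy direction: since the membership and subset relations are handled directly in $\IRSOP$ (there is no $\dotin$ device as in $\IRS$), the premise obtained from the induction hypothesis after substituting terms for the eigenvariable already has the exact shape required by the corresponding infinitary $\IRSOP$-rule, so one applies $(b\forall R)_\infty$, $(pb\forall R)_\infty$, $(\forall R)_\infty$, $(\exists R)$, and so on, possibly after an appeal to the Inversion Lemma~\ref{pinversion} and Weakening~\ref{pweakpers}. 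The unbounded left rules $(\exists L)$ and $(\forall L)$ are treated similarly, the induction hypothesis furnishing the premise uniformly in the instantiating term $s$ so that $(\exists L)_\infty$ can be applied. For a $(Cut)$ inference one applies the induction hypothesis to both premises, notes that $k(F(\bar s,\overline{\mathbb{V}_0}))\subseteq\bar\CH$, and applies a single $\IRSOP$-$(Cut)$; the cut-formula is an $\IRSOP$-formula of rank $\leq\Omega+m$, so increasing $m$ keeps it below the cut-rank.

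The hard part will be the left rules for bounded quantifiers, $(b\forall L)$, $(b\exists L)$, $(pb\forall L)$ and $(pb\exists L)$, exactly as in Case~2 of Theorem~\ref{IKPembed}. The difficulty is that the $\IKPP$-rule may instantiate the bound variable by an arbitrary term, whereas the $\IRSOP$-rule $(b\forall L)$ (resp.\ $(pb\forall L)$) only permits terms $s$ with $\lev s<\lev t$ (resp.\ $\lev s\leq\lev t$). When the bounding variable $a_i$ is among $\bar a$ and is replaced by a term $s_i$ of arbitrary level, I would first verify a bridging claim of the form $\Vdash\Gamma,(\forall x\in s_i)F(x),s_1\in s_i\Rightarrow F(s_1)$ and then apply $(\rightarrow R)$ and a $(Cut)$ against the induction hypothesis. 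The claim is established by reducing the hypothesis $s_1\in s_i$ through $(\in\! L)_\infty$ to members $r$ with $\lev r<\lev{s_i}$ --- for which the genuine $\IRSOP$-rule $(b\forall L)$ is applicable --- and then discharging the residual equation $r=s_1$ by Extensionality (Lemma~\ref{pextensionality}); the case $s_i\equiv\mathbb{V}_\alpha$ is immediate and the case $s_i\equiv[x\in\mathbb{V}_\alpha\mid B(x)]$ uses the defining axioms (A6), (A7). The genuinely new feature relative to $\IKP$ is the subset-bounded pair $(pb\forall L)$, $(pb\exists L)$: here the bridging runs through $(\subseteq\! L)_\infty$ and one must carry the weaker level condition $\lev r\leq\lev t$ throughout, but no essentially new idea is needed. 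Collecting the heights and cut-ranks accumulated across all cases and taking $m$ large enough to dominate them completes the induction.
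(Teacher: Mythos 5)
Your overall skeleton---the norm bound $no(\Gamma(\bar s)\Rightarrow\Delta(\bar s))\leq\Omega\cdot\omega^m$, the axiom cases via Lemmas \ref{plog}, \ref{pextensionality}, \ref{pdeltaocoll}, \ref{pSet Induction}, \ref{pinfinity}, \ref{pseparation}, \ref{ppair}, \ref{punion}, \ref{ppowerset}, the treatment of (Cut), and the bridging-claim-plus-Cut technique for level-restricted rules---matches the paper's proof. But you have inverted the hard/easy classification for the bounded \emph{existential} rules, and as written the argument fails there. In $\IKPP$ the eigenvariable condition attaches to $(pb\exists L)$ and $(b\exists L)$ (alongside $(pb\forall R)$ and $(b\forall R)$); these are precisely the rules whose $\IRSOP$ counterparts are infinitary, so the induction hypothesis, holding uniformly for all terms substituted for the eigenvariable, feeds directly into $(pb\exists L)_\infty$, $(b\exists L)_\infty$, etc. These are the easy cases (the paper's Case 1), and your proposal to route them through a bridging claim is both unnecessary and inapplicable, since the difficulty you cite---instantiation of the bound variable by an arbitrary term---does not arise for them. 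Conversely, $(pb\exists R)$ and $(b\exists R)$ carry \emph{no} eigenvariable condition: the witness variable $c$ in the premise $\Gamma(\bar a)\Rightarrow c\subseteq a_i\wedge F(c)$ may be one of the $\bar a$, say $c=a_1$, and then the substituted term $s_1$ can have $\lev{s_1}>\lev{s_i}$, violating the side condition $\lev s\leq\lev t$ of the $\IRSOP$ rule $(pb\exists R)$. Your assertion that for these right rules ``the premise \ldots already has the exact shape required by the corresponding infinitary $\IRSOP$-rule'' is false on both counts: they are not infinitary, and the shape condition can fail. This is exactly the case the paper works out in detail (Case 2 of the proof of Theorem \ref{IKPPembed}).

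The repair is the precise dual of the bridging you correctly set up for $(pb\forall L)$, and it is the paper's argument: verify the claim $\Vdash^{\omega}\Gamma(\bar s), s_1\subseteq s_i\wedge F(s_1)\Rightarrow(\exists x\subseteq s_i)F(x)$ by unravelling the hypothesis $s_1\subseteq s_i$ with $(\subseteq\! L)_\infty$ into components $r\subseteq s_i\wedge r=s_1$ with $\lev r\leq\lev{s_i}$, transferring $F(s_1)$ to $F(r)$ via Extensionality (Lemma \ref{pextensionality}), applying $(pb\exists R)$ with the legitimate witness $r$, and finally cutting the claim against the induction hypothesis; the analogous correction for $(b\exists R)$ runs through $(\in\! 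L)_\infty$. With this fix, and with the superfluous bridging deleted from the eigenvariable-bearing left existential cases, your proof coincides with the paper's. One further small point: in $\IRSOP$ the premises of $(\in\! L)_\infty$, $(\subseteq\! L)_\infty$, $(b\forall L)$ and $(pb\forall L)$ are formulated with genuine $\in$ and $\subseteq$ rather than the $\dotin$ device of $\IRS$, so in your $(b\forall L)$ bridging no case distinction on whether $s_i$ is $\mathbb{V}_\alpha$ or $[x\in\mathbb{V}_\alpha\mid B(x)]$, and no appeal to axioms (A6), (A7), is needed.
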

\begin{proof}
Note that the rank of $\IRSOP$ formulas is always $<\Omega+\omega$ and thus the norm of $\IRSOP$ sequents is always $<\omega^{\Omega+\omega}=\Omega\cdot\omega^\omega$. The proof is by induction on the $\IKPP$ derivation. If $\Gamma(\bar{a})\Rightarrow\Delta(\bar b)$ is an axiom of $\IKPP$ then the result follows by one of Lemmas \ref{plog}, \ref{pextensionality} \ref{pdeltaocoll}, \ref{pSet Induction}, \ref{pinfinity}, \ref{pseparation}, \ref{ppair}, \ref{punion} and \ref{ppowerset}. Let $\bar\CH:=\CH[\bar{s}]$.\\

\noindent Case 1. Suppose the last inference of the $\IKPP$ derivation was $(pb\exists L)$ then $(\exists x\subb a_i)F(x)\in\Gamma(\bar{a})$ and from the induction hypothesis we obtain a $k$ such that
\begin{equation*}
\bar\CH[p]\;\prov{\Omega\cdot\omega^k}{\Omega+k}{\Gamma(\bar{s}),p\subb s_i\wedge F(p)\Rightarrow\Delta(\bar s)}
\end{equation*}
for all $\lev p\leq\lev{s_i}$ (using weakening if necessary). Thus we may apply $(pb\exists L)_\infty$ to obtain the desired result.\\

\noindent Case 2. Now suppose the last inference was $(pb\exists R)$ then $\Delta(\bar a)=\{(\exists x\subb a_i)F(x)\}$ and we are in the following situation in $\IKPP$:
\begin{prooftree}
\Axiom $\fCenter\vdash\Gamma(\bar{a})\Rightarrow c\subb a_i\wedge F(c)$
\LeftLabel{$(pb\exists R)$}
\UnaryInf$\fCenter\vdash\Gamma(\bar{a})\Rightarrow(\exists x\subb a_i)F(x)$
\end{prooftree}
2.1 If $c$ is not a member of $\bar a$ then by the induction hypothesis we have a $k<\omega$ such that
\begin{equation*}
\bar\CH\;\prov{\Omega\cdot\omega^k}{\Omega+k}{\Gamma(\bar{s})\Rightarrow\mathbb{V}_0\subb s_i\wedge F(\mathbb{V}_0)\,.}
\end{equation*}
Hence we can apply $(pb\exists R)$ to complete this case.\\

\noindent 2.2 Now suppose $c$ is a member of $\bar a$ for simplicity let us suppose that $c=a_1$. Inductively we can find a $k<\omega$ such that
\begin{equation*}\tag{1}
\bar\CH\;\prov{\Omega\cdot\omega^k}{\Omega+k}{\Gamma(\bar{s})\Rightarrow s_1\subb s_i\wedge F(s_1)\,.}
\end{equation*}
Next we verify the following
\begin{equation*}\tag{2}
\text{claim:}\quad\Vdash^\omega\Gamma(\bar s), s_1\subb s_i\wedge F(s_1)\Rightarrow(\exists x\subb s_i)F(x).
\end{equation*}
Owing to axiom (A1) we have
\begin{equation*}
\tag{3}\bar\CH[r]\;\prov{0}{0}{r\subb s_i\RI r\subb s_i}\quad\text{for all $\lev r\leq\lev{s_i}$}.
\end{equation*}
Also by Lemma \ref{pextensionality} we have
\begin{equation*}
\tag{4}\Vdash\GA[\sbar],r=s_1,F(s_1)\RI F(r)\quad\text{for all $\lev r\leq\lev{s_i}$}.
\end{equation*}
Now let $\gamma_r=no(\GA[\sbar],r=s_1,F(s_1)\RI F(r))$. Applying $(\wedge R)$ to (3) and (4) provides
\begin{equation*}
\bar\CH[r]\;\prov{\gamma_r+1}{0}{\GA(\sbar),r\subb s_i,r=s_1,F(s_1)\RI r\subb s_i\wedge F(r)}.
\end{equation*}
Using $(pb\exists R)$ we may conclude
\begin{equation*}
\bar\CH[r]\;\prov{\gamma_r+2}{0}{\GA(\sbar),r\subb s_i,r=s_1,F(s_1)\RI (\exists x\subb s_i)F(x)}.
\end{equation*}
Now two applications of $(\wedge L)$ gives us
\begin{equation*}
\bar\CH[r]\;\prov{\gamma_r+4}{0}{\GA(\sbar),r\subb s_i\wedge r=s_1,F(s_1)\RI (\exists x\subb s_i)F(x)}.
\end{equation*}
Now applying $(\subseteq L)_\infty$ provides
\begin{equation*}
\bar\CH\;\prov{\gamma+5}{0}{\GA(\sbar),s_1\subb s_i,F(s_1)\RI (\exists x\subb s_i)F(x)}
\end{equation*}
where $\gamma=\sup_{\lev r\leq\lev{s_i}}\gamma_r$. Finally, by applying $(\wedge L)$ a further two times we can conclude
\begin{equation*}
\bar\CH\;\prov{\gamma+7}{0}{\GA(\sbar),s_1\subb s_i\wedge F(s_1)\RI (\exists x\subb s_i)F(x)}.
\end{equation*}
Via some ordinal arithmetic it can be observed that
\begin{equation*}
\gamma+7\leq no(\GA(\sbar),s_1\subb s_i\wedge F(s_1)\RI (\exists x\subb s_i)F(x))\#\omega,
\end{equation*}
so the claim is verified.\\

\noindent To complete this case we may now apply $\Cut$ to (1) and (2).\\

\noindent All other cases are similar to those above, or may be treated in a similar manner to Theorem \ref{IKPembed}.
\end{proof}

\subsection{A relativised ordinal analysis of $\IKPP$}
\noindent A major difference to the case of $\IKP$ is that we don't immediately have the soundness of cut-reduced $\IRSOP$ derivations of $\Sigmap$-formulae within the appropriate segment of the Von-Neumann Hierarchy. This is partly due to the fact that we don't have a term for each element of the hierarchy (this can be seen from a simple cardinality argument). In fact we do still have soundness for certain derivations within $V_{\psio{\varepsilon_{\OO+1}}}$, which is demonstrated in the next lemma, where we must make essential use of the free variables in $\IRSOP$. First we need the notion of an assignment. Let $VAR_{\mathcal{P}}$ be the set of free variables of $\IRSOP$. A variable assignment is a function
\begin{equation*}
v:VAR_{\mathcal{P}}\longrightarrow V_{\psio{\varepsilon_{\OO+1}}}
\end{equation*}
such that $v(a_i^\al)\in V_{\al+1}$ for each $i$. $v$ canonically lifts to all terms as follows
\begin{align*}
v(\mathbb{V}_\al)\:&=\:V_\al\\
v(\{x\in \mathbb{V}_\al\;|\;F(x, s_1,...,s_n)\})\:&=\:\{x\in V_\al\;|F(x,v(s_1),...,v(s_n))\}.
\end{align*}
Moreover it can be seen that $v(s)\in V_{\lev s+1}$ and thus $v(s)\in V_{\psio{\varepsilon_{\OO+1}}}$ for all terms $s$.
\begin{theorem}[Soundness for $\IRSOP$]\label{psoundness}{\em Suppose $\GA[s_1,...,s_n]$ is a finite set of $\Pip$ formulae with max$\{rk(A)\;|A\in\GA\}\leq\OO$, $\DE[s_1,...,s_n]$ a set containing at most one $\Sigmap$ formula and
\begin{equation*}
\provx{\CH}{\al}{\rho}{\GA[\sbar]\RI\DE[\sbar]}\quad\text{for some operator $\CH$ and some $\al,\rho<\OO$.}
\end{equation*}
Then for any assignment $v$,
\begin{equation*}
V_{\psio{\varepsilon_{\OO+1}}}\models\bigwedge\GA[v(s_1),...,v(s_n)]\rightarrow\bigvee\DE[v(s_1),...,v(s_n)]
\end{equation*}
where $\bigwedge\Gamma$ and $\bigvee\DE$ stand for the conjunction of formulas in $\GA$ and the disjunction of formulas in $\DE$ respectively, by convention $\bigwedge\emptyset = \top$ and $\bigvee\emptyset=\bot$.
}\end{theorem}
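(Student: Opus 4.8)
The plan is to argue by induction on $\alpha$, proving the stated soundness in $M:=V_{\psio{\varepsilon_{\OO+1}}}$ for the conclusion whenever it holds for every premise \emph{and for every assignment $v$ simultaneously}; quantifying over all $v$ in the induction hypothesis is precisely what tames the infinitary rules. Before the case analysis I would record four standing reductions. Since $\rho<\OO$, every cut formula has rank $<\OO$ and is therefore $\Deltaop$. Since the antecedent consists of $\Pip$-formulae and the succedent of at most one $\Sigmap$-formula, the last inference can be neither $(\forall R)_\infty$ nor $(\exists L)_\infty$ (their principal formulae would be an unbounded $\forall$ in the succedent, resp.\ an unbounded $\exists$ in the antecedent, neither of admissible polarity), while $\SRP$ is excluded by its ordinal side condition $\OO<\alpha$, impossible as $\alpha<\OO$; moreover each remaining inference, read upwards, leaves the antecedent $\Pip$ and the succedent at most one $\Sigmap$-formula, so the induction hypothesis applies to its premises. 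Next, $\psio{\varepsilon_{\OO+1}}$ is a limit ordinal, so $M$ is closed under power set; hence every $\Deltaop$-formula is absolute between $M$ and $V$ (bounded $\in$-quantifiers are absolute for transitive sets, and $\mathcal P(a)\cap M=\mathcal P(a)$ for $a\in M$ handles the subset-bounded quantifiers). Finally, by the discussion preceding the theorem the lift of $v$ sends each term $s$ occurring in the derivation to $v(s)\in V_{\lev s+1}\subseteq M$.

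I would then dispatch the axioms and the finitary inferences. Axioms (A1)--(A3) reduce to the absoluteness of $\Deltaop$-formulae together with extensionality of $M$; (A4),(A5) hold because $v(s)\in V_{\lev s+1}$, so $v(s)\in V_\alpha$ when $\lev s<\alpha$ and $v(s)\subseteq V_\alpha$ when $\lev s\le\alpha$; and (A6),(A7) hold because $v([x\in\mathbb{V}_\alpha\mid F(x,\sbar)])=\{x\in V_\alpha : M\models F(x,v(\sbar))\}$ by absoluteness. For the propositional rules, $\Cut$, $(\exists R)$, $(\forall L)$ and the finite-premise bounded rules $(b\forall L),(b\exists R),(pb\forall L),(pb\exists R),(\in\! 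R),(\subseteq\! R)$ I would simply transcribe the induction hypothesis under the \emph{same} $v$: each is a valid inference in the classical structure $M$, using that the displayed witness term $s$ has $v(s)\in M$ (for the right-existential and left-universal cases) and that $\Cut$ has a $\Deltaop$ cut formula with a determinate truth value in $M$.

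The heart of the argument, and the step I expect to be the main obstacle, is the infinitary bounded rules $(b\forall R)_\infty,(pb\forall R)_\infty,(b\exists L)_\infty,(pb\exists L)_\infty,(\in\! L)_\infty$ and $(\subseteq\! L)_\infty$, where one must pass between syntactic quantification over all terms of bounded level and semantic quantification over the elements, resp.\ subsets, of $v(t)$ in $M$. The device is to name arbitrary objects by fresh free variables. For $(b\forall R)_\infty$ with conclusion $\GA\RI(\forall x\in t)F(x)$, to verify $M\models_v\bigwedge\GA\to(\forall x\in v(t))F(x)$ I would take an arbitrary $a\in v(t)$; since $v(t)\subseteq V_{\lev t}$ we have $a\in V_{\delta+1}$ for some $\delta<\lev t$, so a free variable $a_i^{\delta}$ not occurring in the sequent, together with the modified assignment $v'=v[a_i^{\delta}\mapsto a]$, respects the level constraint $v'(a_i^{\delta})\in V_{\delta+1}$. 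Applying the induction hypothesis to the premise indexed by the term $a_i^{\delta}$ under $v'$ gives $M\models_{v'}\bigwedge\GA\to(a_i^{\delta}\in t\to F(a_i^{\delta}))$; as $a\in v(t)$ and $v'$ agrees with $v$ off $a_i^{\delta}$, this yields $M\models F(a)$ under $\bigwedge\GA$, as needed. The left rules and the subset-bounded variants run the same way, the subset case using that any $a\subseteq v(t)$ lies in $V_{\lev t+1}$ and is therefore nameable by a free variable of level $\lev t$ --- exactly the point at which closure of $M$ under power set is indispensable.

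The delicate bookkeeping, rather than any deep idea, is the real work: one must keep the chosen variable genuinely fresh so that the interpretations of $\GA$, $t$ and $F$ are unchanged in passing from $v$ to $v'$, and one must confirm in each rule that the level of the naming variable falls in the exact range ($<\lev t$ for the $\in$-bounded rules, $\le\lev t$ for the $\subseteq$-bounded ones) over which the premises are indexed. Once this correspondence between admissible terms and elements of the corresponding rank is set up uniformly, every infinitary rule becomes a routine instance of ``test an arbitrary witness by an assignment that names it,'' and the induction closes.
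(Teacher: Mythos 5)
Your proposal is correct and takes essentially the same route as the paper's proof: induction on $\al$ with the induction hypothesis quantified over all assignments, the same exclusion of $(\forall R)_\infty$, $(\exists L)_\infty$ and $\SRP$ (the first two by polarity, the last by the ordinal side condition $\Omega<\al$), $\Deltaop$ cut formulae, and the crucial use of the free variables $a^\al_i$ of $\IRSOP$ to name arbitrary sets when treating the infinitary bounded rules. The only inessential difference is presentational: the paper instantiates a single fresh variable $a^\beta_j$ at the maximal admissible level $\beta=\lev{s_i}$ and then exploits the universal quantification over assignments (inserting an inversion step when $F$ contains an unbounded quantifier), whereas you name each individual witness $a$ by a fresh variable at its exact level $\delta$ and pass to the modified assignment $v[a^\delta_i\mapsto a]$ --- the same device in slightly different clothing.
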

\begin{proof}
The proof is by induction on $\al$. Note that the derivation $\provx{\CH}{\al}{\rho}{\GA[\sbar]\RI\DE[\sbar]}$ contains no inferences of the form $(\forall R)_\infty$, $(\exists L)_\infty$ or $\SRP$ and all cuts have $\Deltaop$ cut formulae. All axioms of $\IRSOP$ can be observed to be sound with respect to the interpretation.\\

\noindent First we treat the case where the last inference was $(pb\forall L)$ so we have
\begin{equation*}
\provx{\CH}{\al_0}{\rho}{\GA[\sbar], t\subb s_i\rightarrow F(t,\sbar)\RI\DE[\sbar]}\quad\text{for some $\al_0,\lev t<\al$, with $\lev t\leq\lev{s_i}$.}
\end{equation*}
Since max$\{rk(A)\;|\;A\in\GA\}\leq\OO$, it follows that $t\subb s_i\rightarrow F(t,\sbar)$ is a $\Deltaop$ formula. So we may apply the induction hypothesis to obtain
\begin{equation*}
V_{\psio{\varepsilon_{\OO+1}}}\models\bigwedge\GA[v(\sbar)]\wedge [v(t)\subb v(s_i)\rightarrow F(v(t),v(\sbar))] \rightarrow\bigvee\DE[v(\sbar)],
\end{equation*}
where $v(\sbar):=v(s_1),...,v(s_n)$. From here the desired result follows by regular logical semantics.\\

\noindent Now suppose the last inference was $(pb\forall R)_\infty$, so we have
\begin{equation*}
\tag{1}\provx{\CH}{\al_t}{\rho}{\GA[\sbar]\RI t\subb s_i\rightarrow F(t,\sbar)}\quad\text{for all $\lev t\leq\lev{s_i}$ with $\al_t<\al$.}
\end{equation*}
In particular this means we have
\begin{equation*}
\tag{2}\provx{\CH}{\al_0}{\rho}{\GA[\sbar]\RI a^\beta_j\subb s_i\rightarrow F(a^\beta_j,\sbar)}\quad\text{for some $\al_0<\al$.}
\end{equation*}
Here $\beta:=\lev{s_i}$ and $j$ is chosen such that $a^\beta_j$ does not occur in any of the terms $s_1,...,s_n$. If $F$ contains an unbounded quantifier we may use inversion for $\IRSOP$ \ref{pinversion}v) to obtain
\begin{equation*}
\tag{3}\provx{\CH}{\al_0}{\rho}{\GA[\sbar], a^\beta_j\subb s_i\RI F(a^\beta_j,\sbar)}\quad\text{for some $\al_0<\al$.}
\end{equation*}
So we may apply the induction hypothesis to get
\begin{equation*}
\tag{4}V_{\psio{\varepsilon_{\OO+1}}}\models\bigwedge\GA[v(\sbar)],v(a^\beta_j)\subb v(s_i)\rightarrow F(v(a^\beta_j),v(\sbar))
\end{equation*}
for all variable assignments $v$. Thus by the choice of $a^\beta_j$ we have
\begin{equation*}
\tag{5}V_{\psio{\varepsilon_{\OO+1}}}\models\bigwedge\GA[v(\sbar)]\rightarrow (\forall x\subb v(s_i))F(x,v(\sbar))
\end{equation*}
as required. If $F$ is $\Deltaop$ then we may immediately apply the induction hypothesis to (2) to obtain
\begin{equation*}
\tag{6}V_{\psio{\varepsilon_{\OO+1}}}\models\bigwedge\GA[v(\sbar)]\rightarrow [v(a^\beta_j)\subb v(s_i)\rightarrow F(v(a^\beta_j),v(\sbar))]
\end{equation*}
for all variable assignments $v$, again by the choice of $a^\beta_i$ we obtain the desired result. All other cases may be treated in a similar manner to the two above.
\end{proof}
\begin{lemma}\label{pconc}{\em
Suppose $\IKPP\vdash\;\RI A$ for some $\Sigmap$ sentence $A$, then there is an $m<\omega$, which we may compute from the derivation, such that
\begin{equation*}
\provx{\CH_\sigma}{\psio\sigma}{\psio\sigma}{\RI\;A}\quad\text{where $\sigma:=\omega_m(\OO\cdot\omega^m)$.}
\end{equation*}
}\end{lemma}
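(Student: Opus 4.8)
The plan is to follow the same three-step template used for $\IKP$ in Lemma \ref{conc}, now feeding in the power-set analogues of the embedding, cut-elimination and collapsing theorems. Since $A$ is a $\Sigmap$-sentence it has no free variables, so the embedding Theorem \ref{IKPPembed} applies with empty $\bar a$ and furnishes a computable $m<\omega$ with
\begin{equation*}
\provx{\CH_0}{\OO\cdot\omega^m}{\OO+m}{\RI A}.
\end{equation*}
Here the clause ``for any operator $\CH$'' lets me specialise to $\CH_0$, and the parameter condition $\lev A=\emptyset\subseteq\CH_0$ is trivial because a sentence carries no $\IRSOP$-terms. After padding we may assume $1\le m$.

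Next I would bring the cut rank down to $\OO+1$. Writing $\OO+m=\OO+(m-1)+1$, a single application of Partial Cut Elimination (Theorem \ref{ppredce}) with $n=m-1$ gives
\begin{equation*}
\provx{\CH_0}{\omega_{m-1}(\OO\cdot\omega^m)}{\OO+1}{\RI A}.
\end{equation*}
In contrast to the $\IKP$ case I would \emph{not} attempt to eliminate the remaining cuts with $\Deltaop$ cut-formulae; these are exactly the power-bounded cuts that the Soundness Theorem \ref{psoundness} is designed to tolerate, and Theorem \ref{ppredce} cannot remove them anyway.

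The third step is Collapsing. The empty antecedent $\Gamma=\emptyset$ is vacuously a set of $\Pip$-formulae and $\DE=\{A\}$ is a single $\Sigmap$-formula, so the hypotheses of Theorem \ref{pcollapsing} are met with $\eta=0$, noting $0\in\CH_0=\CH_\eta$ since every operator contains $0$. Setting $\al:=\omega_{m-1}(\OO\cdot\omega^m)$ and $\hat\al:=\eta+\omega^{\OO+\al}=\omega^{\OO+\al}$ as in Lemma \ref{op2}, the theorem yields
\begin{equation*}
\provx{\CH_{\hat\al}}{\psio{\hat\al}}{\psio{\hat\al}}{\RI A}.
\end{equation*}

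Finally the ordinal bookkeeping identifies $\hat\al$ with $\sigma$. Since $\al=\omega_{m-1}(\OO\cdot\omega^m)$ is an additive principal ordinal strictly above $\OO$ (for $m=1$ it is $\OO\cdot\omega=\omega^{\OO+1}$, otherwise a higher power of $\omega$), it absorbs $\OO$ on the left, so $\OO+\al=\al$ and
\begin{equation*}
\hat\al=\omega^{\OO+\al}=\omega^{\al}=\omega^{\omega_{m-1}(\OO\cdot\omega^m)}=\omega_m(\OO\cdot\omega^m)=\sigma,
\end{equation*}
turning the last display into the claimed $\provx{\CH_\sigma}{\psio\sigma}{\psio\sigma}{\RI A}$. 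The whole argument is just a chaining of the three cited theorems, and the computability of $m$ is inherited verbatim from Theorem \ref{IKPPembed} while all the ordinal operations are primitive recursive in the notation system. The only genuinely delicate point is the absorption identity $\OO+\al=\al$; this is the same arithmetic fact that makes the parallel passage work for $\IKP$, and everything else is routine.
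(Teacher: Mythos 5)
Your proof is correct and follows essentially the same route as the paper's own: embed via Theorem \ref{IKPPembed}, lower the cut rank to $\OO+1$ by one application of Theorem \ref{ppredce} with $n=m-1$, and then apply Collapsing (Theorem \ref{pcollapsing}) with $\eta=0$, deliberately leaving the $\Deltaop$-cuts untouched since full predicative cut elimination fails for $\IRSOP$. The only addition is that you spell out the absorption identity $\OO+\omega_{m-1}(\OO\cdot\omega^m)=\omega_{m-1}(\OO\cdot\omega^m)$, giving $\hat\al=\omega^{\OO+\al}=\omega^\al=\sigma$, which the paper leaves implicit.
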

\begin{proof}
Suppose $\IKPP\vdash\;\RI A$ for some $\Sigmap$ sentence $A$, then by Theorem \ref{IKPPembed} we can explicitly find some $m<\omega$ such that
\begin{equation*}
\provx{\CH_0}{\OO\cdot\omega^m}{\OO+m}{\RI A}.
\end{equation*}
Applying Partial cut elimination \ref{ppredce} we have
\begin{equation*}
\provx{\CH_0}{\omega_{m-1}(\OO\cdot\omega^m)}{\OO+1}{\RI A}.
\end{equation*}
Now using Collapsing \ref{pcollapsing} we obtain
\begin{equation*}
\provx{\CH_\sigma}{\psio\sigma}{\psio\sigma}{\RI A}\quad\text{where $\sigma:=\omega_m(\OO\cdot\omega^m)$,}
\end{equation*}
completing the proof.
\end{proof}
\noindent Note that we cannot eliminate \textit{all} cuts from the derivation since we don't have full predicative cut elimination for $\IRSOP$ as we do for $\IRS$.\\

\begin{theorem}\label{pconct}{\em If $A$ is a $\Sigmap$-sentence and $\IKPP\vdash\;\RI A$ then there is some ordinal term $\al<\psio{\varepsilon_{\OO+1}}$, which we may compute from the derivation, such that
\begin{equation*}
V_\al\models A.
\end{equation*}
}\end{theorem}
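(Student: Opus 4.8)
The plan is to follow the template of Theorem \ref{conct} from the $\IKP$ case, but with the weaker cut-reduction available here and with Soundness (Theorem \ref{psoundness}) replacing the direct ``correctness of the rules'' argument, since $\IRSOP$ derivations retain $\Deltaop$-cuts and there is no term for every element of $V_{\psio{\varepsilon_{\OO+1}}}$. First I would invoke Lemma \ref{pconc} to compute, from the given $\IKPP$-derivation, an $m<\omega$ with
$$\provx{\CH_\sigma}{\psio\sigma}{\psio\sigma}{\RI A},\qquad \sigma:=\omega_m(\OO\cdot\omega^m).$$
I then set $\al:=\psio\sigma$. Since $\sigma<\varepsilon_{\OO+1}$ and $\psio{}$ is order-preserving on the relevant domain (Lemma \ref{op2}), we get $\al<\psio{\varepsilon_{\OO+1}}$, so this $\al$ is an admissible candidate; moreover $\sigma\in\CH_\sigma$ together with Lemma \ref{op1}(iii) gives $\al=\psio\sigma\in\CH_\sigma$, while both the height $\psio\sigma$ and the cut-rank $\psio\sigma$ of the derivation lie below $\OO$.

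Next I would localize the witnesses below $\al$ using Boundedness. Applying Lemma \ref{pboundedness}(i) with $\beta:=\al=\psio\sigma$ (legitimate because $A$ is $\Sigmap$, $\al\le\al<\OO$ and $\al\in\CH_\sigma$) converts the derivation into
$$\provx{\CH_\sigma}{\psio\sigma}{\psio\sigma}{\RI A^{\mathbb{V}_\al}}.$$
The key observation is that, $A$ being $\Sigmap$, relativizing its unbounded existential quantifiers to $\mathbb{V}_\al$ leaves a formula with no unbounded quantifiers at all, so $A^{\mathbb{V}_\al}$ is $\Deltaop$. In particular the conclusion sequent $\RI A^{\mathbb{V}_\al}$ meets the hypotheses of the Soundness theorem (empty, hence $\Pip$, antecedent; a single $\Sigmap$---indeed $\Deltaop$---succedent), and the derivation has height and cut-rank both $<\OO$.

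I would then apply Theorem \ref{psoundness}; since $A^{\mathbb{V}_\al}$ is a sentence the assignment plays no role, and we obtain
$$V_{\psio{\varepsilon_{\OO+1}}}\models A^{\mathbb{V}_\al}.$$
The final step is a pure absoluteness argument: because $V_\al$ is a transitive subclass of $V_{\psio{\varepsilon_{\OO+1}}}$, the relativized sentence $A^{\mathbb{V}_\al}$ holds in $V_{\psio{\varepsilon_{\OO+1}}}$ if and only if $A$ holds in $V_\al$, i.e. $V_\al\models A$. I expect the main obstacle to be precisely this last equivalence for the subset-bounded quantifiers $\forall x\subseteq a$, $\exists x\subseteq a$ that may occur in the $\Deltaop$-formula $A^{\mathbb{V}_\al}$: their absoluteness between $V_\al$ and $V_{\psio{\varepsilon_{\OO+1}}}$ requires that $V_\al$ already contain every subset of each of its elements, which holds because $\al=\psio\sigma$ is a limit ordinal (being additive principal), so $V_\al$ is closed under the power set of its members. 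Verifying that $\psio\sigma$ is indeed a limit, and that the ordinary $\in$-bounded quantifiers are likewise absolute by transitivity, completes the reduction; the computability of $\al$ from the derivation is inherited from the computability of $m$ in Lemma \ref{pconc}.
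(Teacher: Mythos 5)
Your proposal is correct and follows the paper's own proof essentially step for step: Lemma \ref{pconc} to obtain $\provx{\CH_\sigma}{\psio\sigma}{\psio\sigma}{\RI A}$, Boundedness \ref{pboundedness} with $\beta=\al=\psio\sigma$ to pass to $A^{\mathbb{V}_\al}$, Soundness \ref{psoundness} to get $V_{\psio{\varepsilon_{\OO+1}}}\models A^{V_\al}$, and then absoluteness to conclude $V_\al\models A$, with your extra verifications (that $A^{\mathbb{V}_\al}$ is $\Deltaop$, that $\psio\sigma\in\CH_\sigma$) being sound elaborations of steps the paper leaves implicit. One micro-correction: no limit hypothesis on $\al$ is needed for the absoluteness of the subset-bounded quantifiers, since any $x\subseteq a$ satisfies $\mathrm{rank}(x)\leq\mathrm{rank}(a)<\al$, so $V_\al$ contains all subsets of its members for every ordinal $\al$.
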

\begin{proof}
From Lemma \ref{pconc} we obtain some $m<\omega$ such that
\begin{equation*}
\tag{1}\provx{\CH_\sigma}{\psio\sigma}{\psio\sigma}{\RI A}\quad\text{where $\sigma:=\omega_m(\OO\cdot\omega^m)$.}
\end{equation*}
Let $\al:=\psio{\sigma}$. Applying Boundedness \ref{pboundedness} to (1) we obtain
\begin{equation*}
\tag{2}\provx{\CH_\sigma}{\al}{\al}{\RI A^{\mathbb{V}_\al}}.
\end{equation*}
Now applying Theorem \ref{psoundness} to (2) we obtain
\begin{equation*}
V_{\psio{\varepsilon_{\OO+1}}}\models A^{V_\al}
\end{equation*}
and thus
\begin{equation*}
V_\al\models A
\end{equation*}
as required.
\end{proof}
\begin{remark}{\em
Suppose $A\equiv \exists xC(x)$ is a $\Sigmap$ sentence and $\IKPP\vdash\;\RI A$. As well as the ordinal term $\al$ given by Theorem \ref{pconct}, it is possible to determine (making essential use of the intuitionistic nature of $\IRSOP$) a term $s$, with $\lev s<\al$, such that
\begin{equation*}
V_\al\models C(s).
\end{equation*}
This proof is somewhat more complex than in the case of $\IKP$ since the proof tree corresponding to (2) above can still contain cuts with $\Deltaop$ cut formulae. \\

\noindent Moreover, in order to show that $\IKPP$ has the existence property, the embedding and cut elimination for a given finite derivation of a $\Sigmap$ sentence, needs to be carried out \textit{inside} $\IKPP$. In order to do this it needs to be shown that from the finite derivation we can calculate some ordinal term $\gamma<\varepsilon_{\OO+1}$ such that the embedding and cut elimination for that derivation can still be performed inside $\IRSOP$ with the term structure restricted to $B(\gamma)$. \\

\noindent These proofs will appear in \cite{rathjen-EP}.
}\end{remark}

Like in the case of $\IKP$ we also arrive at a conservativity result.

\begin{theorem}\label{cons2}{\em $\IKPP+\mbox{$\Sigmap$-Reflection}$ is conservative over $\IKPP$ for $\Sigmap$-sentences.}
\end{theorem}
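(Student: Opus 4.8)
The plan is to derive Theorem \ref{cons2} from the infinitary machinery already set up for $\IKPP$, using the fact that the rule $\SRP$ itself \emph{is} reflection inside $\IRSOP$. First I would improve the Embedding Theorem \ref{IKPPembed} so that it covers $\IKPP$ augmented by $\Sigmap$-Reflection. The only new axioms are the reflection instances $\RI A\to\exists z\,A^{z}$ for $\Sigmap$-formulae $A$, and each of these is derivable in $\IRSOP$ within the required bounds: from $\Vdash A\RI A$ (Lemma \ref{plog}) a single application of $\SRP$ gives $\Vdash A\RI\exists z\,A^{z}$, and $(\to R)$ then gives $\Vdash\;\RI A\to\exists z\,A^{z}$, exactly as $\Deltaop$-Collection was treated in Lemma \ref{pdeltaocoll}. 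Hence whenever $\IKPP$ together with $\Sigmap$-Reflection proves $\Gamma(\bar a)\RI\Delta(\bar a)$ there is an $m<\omega$, computable from the derivation, with $\CH[\bar s]\;\prov{\OO\cdot\omega^m}{\OO+m}{\Gamma(\bar s)\RI\Delta(\bar s)}$.

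Next I would run the collapsing analysis unchanged. Assume $B$ is a $\Sigmap$-sentence provable in $\IKPP$ plus $\Sigmap$-Reflection. The improved embedding gives $\provx{\CH_0}{\OO\cdot\omega^m}{\OO+m}{\RI B}$, and then Partial Cut Elimination (Theorem \ref{ppredce}) followed by Collapsing (Theorem \ref{pcollapsing}) yields $\provx{\CH_\sigma}{\psio\sigma}{\psio\sigma}{\RI B}$ with $\sigma:=\omega_m(\OO\cdot\omega^m)$, exactly as in Lemma \ref{pconc}. Because this derivation has height and cut-rank below $\OO$, it contains no $\SRP$-inference and only $\Deltaop$-cuts. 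Boundedness (Lemma \ref{pboundedness}) and Soundness (Theorem \ref{psoundness}) then furnish an explicit ordinal term $\al=\psio\sigma<\psio{\varepsilon_{\OO+1}}$ with $V_\al\models B$, which is precisely the output of Theorem \ref{pconct}. The essential point is that this is the \emph{same} conclusion one reaches for $\Sigmap$-sentences provable in $\IKPP$ alone: the extra reflection axioms leave no trace downstream, since they are absorbed into the $\SRP$-inferences that Collapsing removes.

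It then remains to convert the metatheoretic fact $V_\al\models B$ into a formal derivation $\IKPP\vdash B$. For this I would internalize the whole construction within $\IKPP$: from the given finite derivation one computes a notation $\gamma<\varepsilon_{\OO+1}$ for which the embedding, partial cut elimination and collapsing can be carried out with the $\IRSOP$-term structure restricted to $B^{\OO}(\gamma)$, where $\IKPP$ has the transfinite induction up to the relevant collapsed ordinal needed to formalize the soundness argument. This lets $\IKPP$ prove $V_\al\models B$, whence $\IKPP\vdash B$ by $\Sigmap$-persistence. Combined with the trivial inclusion in the other direction, the two theories share the same $\Sigmap$-theorems. I expect this internalization --- the very step also required for the existence property and presented in full in \cite{rathjen-EP} --- to be the main obstacle; the proof-theoretic steps above are straightforward transcriptions of the $\IKP$ argument underlying Theorem \ref{cons1}.
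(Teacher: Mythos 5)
Your proposal is correct and takes essentially the same route the paper intends for Theorem \ref{cons2}: extend the Embedding Theorem \ref{IKPPembed} to cover $\Sigmap$-Reflection (each instance being derivable from Lemma \ref{plog} by one application of $\SRP$ and $(\rightarrow R)$, exactly as $\Deltaop$-Collection was handled in Lemma \ref{pdeltaocoll}), then run partial cut elimination, collapsing, boundedness and soundness unchanged as in Lemma \ref{pconc} and Theorem \ref{pconct}, and finally internalize the analysis in $\IKPP$ and apply $\Sigmap$-persistence. Your flagging of the internalization step as the main remaining burden also matches the paper, which defers precisely that formalization to \cite{rathjen-EP}.
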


\section{The case of $\IKPE$}

This final section provides a relativised ordinal analysis for intuitionistic exponentiation Kripke-Platek set theory $\IKPE$. Given sets $a$ and $b$, set-exponentiation allows the formation of the set $^ab$, of all functions from $a$ to $b$. A problem that presents itself in this case is that it is not clear how to formulate a term structure in such a way that we can read off a terms level in the pertinent `exponentiation hierarchy' from that terms syntactic structure. Instead we work with a term structure similar to that used in $\IRSOP$, and a terms level becomes a dynamic property {\em inside} the infinitary system. Making this work in a system for which we can prove all the necessary embedding and cut-elimination theorems turned out to be a major technical hurdle. The end result of the section is a characterisation of $\IKPE$ in terms of provable height of the exponentiation hierarchy, this machinery will also be used in a later paper by Rathjen \cite{rathjen-EP}, to show that $\CZF^\mathcal{E}$ has the full existence property.

\subsection{A sequent calculus formulation of $\IKPE$}
\begin{definition}
{\em The formulas of $\IKPE$ are the same as those of $\IKP$ except we also allow \textit{exponentiation bounded quantifiers} of the form
\begin{equation*}
(\forall x\in\;\!^ab)A(x)\quad\text{and}\quad(\exists x\in\;\!^ab)A(x).
\end{equation*}
These are treated as quantifiers in their own right, not abbreviations. The formula "$\text{fun}(x,a,b)$" is defined below. It's intuitive meaning is "$x$ is a function from $a$ to $b$".
\begin{align*}
\text{fun}(x,a,b):=&\:x\subseteq a\times b\:\wedge\: (\forall y\in a)(\exists z\in b)((y,z)\in x)\\
&\:\wedge(\forall y\in a)(\forall z_1\in b)(\forall z_2\in b)[((y,z_1)\in x\wedge(y,z_2)\in x)\rightarrow z_1=z_2]
\end{align*}
Quantifiers $\forall x$, $\exists x$ will be referred to as unbounded, whereas the other quantifiers (including the exponentiation bounded ones) will be referred to as bounded.\\

\noindent A $\Delta_0^\mathcal{E}$-formula of $\IKPE$ is one that contains no unbounded quantifiers.\\

\noindent As with $\IKP$, the system $\IKPE$ derives intuitionistic sequents of the form $\Gamma\Rightarrow\Delta$ where $\Gamma$ and $\Delta$ are finite sets of formulae and $\Delta$ contains at most one formula.\\

\noindent The axioms of $\IKPE$ are given by:\\

\begin{tabular} {ll}
{\em {Logical axioms:}}  & $\Gamma, A, \Rightarrow A$ \ for every
$\Deltaoe$--formula A.\\
{\em {Extensionality:}}   & $\Gamma\Rightarrow a\!=\!b\wedge B(a)\rightarrow
B(b)$
\ for every $\Deltaoe$-formula $B(a)$.\\
{\em {Pair:}}   & $\Gamma\Rightarrow \exists
x[a\!\in\!x \wedge b\!\in\!x]$\\
{\em {Union:}}   & $\Gamma\Rightarrow \exists x(\forall y\!\in\!a)(\forall
z\!\in\!y)(z\!\in\!x)$ \\
{\em {Infinity:}} &  $\Gamma\Rightarrow \exists x\,[(\exists y\In x)\,y\in x\;\wedge\;(\forall y\in x)(\exists z\In x) \,y\in z]$.\\
$\Deltaoe$ {\em--{Separation:}}   & $\Gamma\Rightarrow \exists x ((\forall y\in x)(y\in a\wedge A(y))\wedge(\forall y\in a)(A(y)\rightarrow y\in x))$\\
& for every $\Deltaoe$ formula $A(b)$. \\
$\Deltaoe$ {\em--{Collection:}}   & $\Gamma\Rightarrow (\forall x\in a) \exists yB(x,y)\rightarrow\exists z(\forall x\in a)(\exists y\in z)B(x,y)$\\
& for every $\Deltaoe$ formula $B(b,c)$. \\
{\em {Set Induction:}}    & $\Gamma\Rightarrow \forall u\,[(\forall x\In u)\,G(x)\,\to\,G(u)]\,\to\,\forall u\,G(u)$\\
& for every formula $G(b)$. \\
{\em {Exponentiation:}} & $\Gamma\Rightarrow \exists z\,(\forall x\in\;\! ^ab)(x\in z)$.
\end{tabular}
\\[0.4cm]
The rules of $\IKPE$ are the same as those of $\IKP$ (extended to the new language containing exponentiation bounded quantifiers), together with the following four rules:

\begin{prooftree}
\Axiom$\fCenter\Gamma, \text{fun}(c,a,b)\wedge F(c)\Rightarrow \Delta$
\LeftLabel{$(\mathcal{E}b\exists L)$}
\UnaryInf$\fCenter\Gamma,(\exists x\in\;\!^ab)F(x)\Rightarrow\Delta$
\Axiom$\fCenter\Gamma\Rightarrow \text{fun}(c,a,b)\wedge F(c)$
\LeftLabel{$(\mathcal{E}b\exists R)$}
\UnaryInf$\fCenter\Gamma\Rightarrow(\exists x\in\;\!^ab)F(x)$
\noLine
\BinaryInf$\fCenter$
\end{prooftree}
\begin{prooftree}
\Axiom$\fCenter\Gamma, \text{fun}(c,a,b)\rightarrow F(c)\Rightarrow \Delta$
\LeftLabel{$(\mathcal{E}b\forall L)$}
\UnaryInf$\fCenter\Gamma,(\forall x\in\;\!^ab)F(x)\Rightarrow\Delta$
\Axiom$\fCenter\Gamma\Rightarrow \text{fun}(c,a,b)\rightarrow F(c)$
\LeftLabel{$(\mathcal{E}b\forall R)$}
\UnaryInf$\fCenter\Gamma\Rightarrow(\forall x\in\;\!^ab)F(x)$
\noLine
\BinaryInf$\fCenter$
\end{prooftree}
As usual it is forbidden for the variable $a$ to occur in the conclusion of the rules $(\mathcal{E}b\exists L)$ and $(\mathcal{E}b\forall R)$, such a variable is referred to as the eigenvariable of the inference.}
\end{definition}

\subsection{The infinitary system $\IRSOE$}
The purpose of this section is to introduce an infinitary system $\IRSOE$ within which we will be able to embed $\IKPE$. As with the von Neumann hierarchy built by iterating the power set operation through the ordinals, one may define an Exponentiation-hierarchy as follows
\begin{align*}
E_0&:=\emptyset\\
E_1&:=\{\emptyset\}\\
E_{\alpha+2}&:=\{X\;|\;X\text{ is definable over $\langle E_{\al+1},\in\rangle$ with parameters}\}\\
&\:\;\cup\{f\;|\; \text{fun}(f,a,b)\text{ for some $a,b\in E_\al$.}\}\\
E_\lambda&:=\bigcup_{\beta<\lambda} E_\beta\quad\text{for $\lambda$ a limit ordinal.}\\
E_{\lambda+1}&:=\{X\;|\;X\text{ is definable over $\langle E_{\al+1},\in\rangle$ with parameters}\}\quad\text{for $\lambda$ a limit ordinal.}
\end{align*}
\begin{lemma}\label{etran}{\em
If $y\in E_{\al+1}$ and $x\in y$ then $x\in E_\al$.
}\end{lemma}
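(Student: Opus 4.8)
The plan is to prove the reformulation $P(\alpha)$: \emph{every element of $E_{\alpha+1}$ is a subset of $E_\alpha$}, which is exactly the assertion that $x \in y \in E_{\alpha+1}$ implies $x \in E_\alpha$. I would argue by transfinite induction on $\alpha$. Before starting, I would record the auxiliary monotonicity fact that $\xi \le \eta$ implies $E_\xi \subseteq E_\eta$; this follows from $E_\xi \subseteq E_{\xi+1}$ for every $\xi$, which in turn is proved by a routine simultaneous induction together with the transitivity of each $E_\xi$ (for $w \in E_\xi$ one has $w \subseteq E_\xi$ by the inductive transitivity clause, whence $w = \{z \in E_\xi : z \in w\}$ is a definable subset of $E_\xi$ and so $w \in E_{\xi+1}$). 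I will treat these cumulativity facts as background bookkeeping.

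For the induction itself, the base case $\alpha = 0$ is vacuous since $E_1 = \{\emptyset\}$ and $\emptyset$ has no members. If $\alpha$ is a limit, then $E_{\alpha+1}$ consists only of sets definable over $E_\alpha$, each of which is by construction a subset of $E_\alpha$, so $P(\alpha)$ is immediate; the same remark disposes of the definable part of $E_{\alpha+1}$ when $\alpha$ is a successor. Thus the only real content is the \emph{function clause}, which occurs exactly when $\alpha = \beta+1$, so that $E_{\alpha+1} = E_{\beta+2}$ also contains every $f$ with $\mathrm{fun}(f,a,b)$ for some $a,b \in E_\beta$.

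In that case an arbitrary $x \in f$ is a Kuratowski pair $(u,v) = \{\{u\},\{u,v\}\}$ with $u \in a$ and $v \in b$, and I must show $(u,v) \in E_\alpha = E_{\beta+1}$. First I locate $u,v$: using monotonicity I may assume $a,b$ lie in a common \emph{successor} level $E_{\zeta+1} \subseteq E_\beta$ with $\zeta < \alpha$ (if $\beta = \delta+1$ take $\zeta+1 = \beta$ directly; if $\beta$ is a limit, $a,b$ appear below $\beta$ and can be pushed up to a successor level $<\beta$). The induction hypothesis $P(\zeta)$ then gives $a,b \subseteq E_\zeta$, hence $u,v \in E_\zeta$. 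Consequently $\{u\}$ and $\{u,v\}$ are definable subsets of $E_\zeta$, so they lie in $E_{\zeta+1}$; and then $\{\{u\},\{u,v\}\}$ is a definable subset of $E_{\zeta+1}$, so it lies in $E_{\zeta+2}$. When $\beta = \delta+1$ this is exactly $E_{\beta+1} = E_\alpha$, and when $\beta$ is a limit one has $\zeta+2 \le \beta < \alpha$, so $(u,v) \in E_{\zeta+2} \subseteq E_\alpha$ by monotonicity. Either way $x = (u,v) \in E_\alpha$, completing the induction.

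The step I expect to be the main obstacle is precisely this ordered-pair bookkeeping in the function clause: one must feed the two parameters through the induction hypothesis into a common successor level and then verify that the two nested applications of ``definable subsets climb exactly one level'' land the Kuratowski pair in $E_\alpha$ and not one stage too high. Getting the level count to come out exactly right, and handling the limit-$\beta$ subcase where monotonicity is needed to absorb the pair back into $E_\alpha$, is the only delicate point; everything else is a direct reading of the defining clauses of the hierarchy.
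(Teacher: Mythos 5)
Your proof is correct and follows essentially the same route as the paper's: transfinite induction on $\alpha$, with the definable clause immediate and the real work in the function clause at $\alpha=\beta+1$, where the Kuratowski pair $(u,v)$ is rebuilt two levels up from its components via the definable-subset clause, splitting on whether $\beta$ is a successor or a limit. The only difference is presentational: you isolate the cumulativity fact $E_\xi\subseteq E_\eta$ explicitly (the paper uses it tacitly, e.g.\ when placing $\{x_0\},\{x_1\},\{x_0,x_1\}$ in $E_{\max(\beta_0,\beta_1)+1}$), which is a welcome tightening rather than a new approach.
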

\begin{proof}
The proof is by induction on $\al$. If $y$ is a set definable over $\langle E_\al,\in\rangle$ with parameters, the members of $y$, including $x$, must be members of $E_\al$.\\

\noindent Now suppose $\al=\beta+1$ and $y\in E_{\al+1}$ is a function $y:p\rightarrow q$ for two sets $p,q\in E_\beta$. Since $x\in y$, it follows that $x$ is of the form $(x_0,x_1)$ with $x_0\in p$ and $x_1\in q$, we use the standard definition of ordered pair so
\begin{equation*}
\tag{1}(x_0,x_1):=\{\{x_0,x_1\},\{x_0\}\}
\end{equation*}
We must now verify the following claim:
\begin{equation*}
\tag{*}\{x_0\},\{x_1\},\{x_0,x_1\}\in E_\beta.
\end{equation*}
If $\beta=\gamma+1$ then by the induction hypothesis applied to $x_0\in p\in E_\beta$ and $x_1\in q\in E_\beta$ we get $x_0,x_1\in E_\gamma$ and thus $\{x_0\},\{x_1\},\{x_0,x_1\}\in E_\beta$ as required.\\

\noindent If $\beta$ is a limit then by the induction hypothesis and the construction of the $E$ hierarchy at limit ordinals, we know that $s_0\in E_{\beta_0}$ and $s_1\in E_{\beta_1}$ for some $\beta_0,\beta_1<\beta$, thus $\{s_0\},\{s_1\},\{s_0,s_1\}\in E_{\text{max}(\beta_0,\beta_1)+1}$ which completes the proof of (*).\\

\noindent From (*) and (1) it is clear that $(s_0,s_1)\in E_{\beta+1}$ as required.
\end{proof}
\noindent The idea of $\IRSOE$ is to build an infinitary system for reasoning about the $E$ hierarchy.
\begin{definition}{\em
The terms of $\IRSOE$ are defined as follows
\begin{description}
\item[1.] $\Ea$ is an $\IRSOE$ term for each $\al<\Omega.$
\item[2.] $a^\al_i$ is an $\IRSOE$ term for each $\al<\Omega$ and each $i<\omega$, these terms will be known as free variables.
\item[3.] If $F(a,\bar b)$ is a $\Deltaoe$ formula of $\IKPE$ containing exactly the free variables indicated, and $t, \bar s:= s_1,...,s_n$ are $\IRSOE$ terms then
\begin{equation*}
[x\in t\;|\;F(x,\bar s)]
\end{equation*}
is also a term of $\IRSOE$.
\end{description}
Observe that $\IRSOE$ terms do not come with `levels' as in the other infinitary systems. This is because it is not clear how to immediately read off the location of a given term within the $E$ hierarchy, just from the syntactic information available within that term.\\

\noindent The formulas of $\IRSOE$ are of the form $F(s_1,...,s_n)$, where $F(a_1,...,a_n)$ is a formula of $\IKPE$ with all free variables indicated and $s_1,...,s_n$ are $\IRSOE$ terms. The formula $A(s_1,...,s_n)$ is said to be $\Deltaoe$ if $A(a_1,...,a_n)$ is a $\Deltaoe$ formula of $\IKPE$. The $\Sigma^\mathcal{E}$ formulae are the smallest collection containing the $\Deltaoe$ formulae such that $A\wedge B$, $A\vee B$,
$(\forall x\in t)A$, $(\exists x\in t)A$, $(\exists x\in\;\!^ab)A$, $(\forall x\in\;\!^ab)A$, $\exists xA$,
$\neg C$, and $C\to A$ are in $\Sigma^\mathcal{E}$ whenever $A,B$ are in $\Sigma^\mathcal{E}$ and
  $C$ is in $\Pi^\mathcal{E}$. Dually, the $\Pi^\mathcal{E}$ formulae are the smallest collection containing the $\Deltaoe$ formulae such that $A\wedge B$, $A\vee B$,
$(\forall x\in t)A$, $(\exists x\in t)A$, $(\exists x\in\;\!^ab)A$, $(\forall x\in\;\!^ab)A$, $\forall xA$,
$\neg C$, and $C\to A$ are in $\Pi^\mathcal{E}$ whenever $A,B$ are in $\Pi^\mathcal{E}$ and
  $C$ is in $\Sigma^\mathcal{E}$.

\noindent The axioms of $\IRSOE$ are given by \\

\begin{tabular} {ll}
(E1) & $\Gamma, A\Rightarrow A$ \ for every
$\Deltaoe$--formula A.\\
(E2) & $\Gamma\Rightarrow t=t$ \ for every $\IRSOE$ term $t$.\\
(E3) & $\Gamma, \bar s\!=\!\bar t, B(\bar s)\Rightarrow
B(\bar t)$
\ for every $\Deltaoe$-formula $B(\bar s)$.\\
(E4) & $\Gamma\Rightarrow \Eb\in\Ea$ \ for all $\beta<\al<\Omega$\\
(E5) & $\Gamma\Rightarrow a_i^\beta\in\Ea$ \ for all $i\in\omega$ and $\beta<\al<\Omega$\\
(E6) & $\Gamma, t\in \Ea, s\in t\Rightarrow s\in \Ea$ \ for all $\al<\Omega$\\
(E7) & $\Gamma, t\in \mathbb{E}_{\al+1}, s\in t\Rightarrow s\in \Ea$ \ for all $\al<\Omega$\\
(E8) & $\Gamma, s\in t, F(s,\bar p)\Rightarrow s\in[x\in t\;|\;F(x,\bar p)]$\\
(E9) & $\Gamma, s\in[x\in t\;|\;F(x,\bar p)]\Rightarrow s\in t\wedge F(s,\bar p)$\\
(E10) & $\Gamma, s\in \Ea,t\in\Eb, \text{fun}(p,s,t)\Rightarrow p\in\Eg$ \ for all $\gamma\geq\text{max}(\al,\beta)+2$.\\
(E11) & $\Gamma, t\in\Eb,\bar p\in\mathbb{E}_{\bar\al}\Rightarrow [x\in t\;|\;F(x,\bar p)]\in\Eg$ \ for all $\gamma\geq\text{max}(\beta,\bar\al)$.
\end{tabular}
}\end{definition}
\begin{definition}{\em
For a formula $A(a_1,...,a_n)$ of $\IKPE$ containing exactly the free variables $\bar{a}:=a_1,...,a_n$ and any  $\IRSOE$ terms $\bar{s}:=s_1,...,s_n$, we define the {\em$\bar{\beta}$-rank} $\|A(\bar{s})\|_{\bar{\beta}}$ where $\bar{\beta}:=\beta_1,...,\beta_n$ are any ordinals $<\Omega$. The definition is made by recursion on the build up of the formula $A$.
\begin{description}
\item{i)} $\|s\in t\|_{\beta_1,\beta_2}:=\text{max}(\beta_1,\beta_2)$
\item{ii)} $\|(\exists x\in t)F(x,\bar{s})\|_{\gamma,\bar{\beta}}:=\|(\forall x\in t)F(x,\bar{s})\|_{\gamma,\bar{\beta}}:=\text{max}(\gamma,\|F(\mathbb{E}_0,\bar{s})\|_{0,\bar{\beta}}+2)$
\item{iii)} $\|(\exists x\in\;\!\! ^st)F(x,\bar{p})\|_{\gamma,\delta,\bar{\beta}}:=\|(\forall x\in\;\!\! ^st)F(x,\bar{p})\|_{\gamma,\delta,\bar{\beta}}\\ :=\text{max}(\gamma+\omega,\delta+\omega,\|F(\mathbb{E}_0,\bar{p})\|_{0,\bar{\beta}}+2)$
\item{iv)} $\|\exists xF(x,\bar{s})\|_{\bar{\beta}}:=\|\forall xF(x,\bar{s})\|_{\bar{\beta}}:=\text{max}(\Omega,\|F(\mathbb{E}_0,\bar{s})\|_{0,\bar{\beta}}+2)$
\item{v)} $\|A\wedge B\|_{\bar{\beta}}:=\|A\vee B\|_{\bar{\beta}}:=\|A\rightarrow B\|_{\bar{\beta}}:=\text{max}(\|A\|_{\bar{\beta}}, \|B\|_{\bar{\beta}})+1$
\item{vi)} $\|\neg A\|_{\bar{\beta}}:=\| A\|_{\bar{\beta}}+1$
\end{description}
We define the {\em rank} of $A(\bar{s})$ by
\begin{equation*}
rk(A(\bar{s})):=\|A(\bar{s})\|_{\bar{0}}\,.
\end{equation*}
}\end{definition}
\begin{observation}{\em $\phantom{A}$
\begin{description}
\item{i)} $\|A(\bar{s})\|_{\bar{\beta}}<\Omega\quad\text{if and only if}\: A\:\text{is}\: \Deltaoe\,.$
\item{ii)} If $A$ contains unbounded quantifiers then $rk(A(\bar{s}))=\|A(\bar{s})\|_{\bar{\beta}}$ for all $\bar{s}$ and $\bar\beta$.
\end{description}
}\end{observation}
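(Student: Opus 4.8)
The plan is to establish both items by a single structural induction on the $\IKPE$-formula $A$, the only external ingredient being the closure properties of $\Omega$: since $\Omega$ is additively closed (it is a cardinal), every ordinal $<\Omega$ stays $<\Omega$ after adding a finite amount or $\omega$, and the ordinals below $\Omega$ are closed under finite maxima. I would prove (i) first and then feed it into (ii).

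For (i), the forward direction is immediate by induction: a $\Deltaoe$-formula never triggers clause (iv) of the definition of $\|\cdot\|$, which is the only clause introducing $\Omega$, while clauses (i), (ii), (iii), (v), (vi) only form finite maxima of smaller ranks together with additions of $+2$, $+\omega$, or the level ordinals $\gamma,\delta<\Omega$; closure of $\Omega$ under these operations keeps the value $<\Omega$. For the converse I argue contrapositively: if $A$ is not $\Deltaoe$ it contains an unbounded quantifier, and I show $\|A(\bar s)\|_{\bar\beta}\geq\Omega$ by induction. The base case is an outermost unbounded quantifier, where clause (iv) yields $\mathrm{max}(\Omega,\dots)\geq\Omega$ directly; in each inductive clause the displayed value is a maximum (possibly incremented) of the ranks of the immediate subformulae, at least one of which carries the unbounded quantifier and hence has rank $\geq\Omega$ by the induction hypothesis, so the maximum is again $\geq\Omega$.

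For (ii) I induct on $A$ assuming it contains an unbounded quantifier, proving the sharper claim that $\|A(\bar s)\|_{\bar\beta}$ is independent of $\bar\beta$, so in particular it equals $\|A(\bar s)\|_{\bar 0}=rk(A(\bar s))$. The crucial point is that $\bar\beta$ enters the computation only through the atomic clause (i), and any subformula not containing an unbounded quantifier has rank $<\Omega$ by part (i); hence whenever a clause forms a maximum, the contribution of an unbounded-quantifier subformula is $\geq\Omega$ and, by the induction hypothesis, already independent of $\bar\beta$, so it dominates and absorbs every $\bar\beta$-dependent or level-dependent contribution, each of which is $<\Omega$. Concretely, in a bounded-quantifier clause (ii) or (iii) the inner formula $F$ must itself contain the unbounded quantifier, so $\|F(\mathbb{E}_0,\bar s)\|_{0,\bar\beta}\geq\Omega$ is $\bar\beta$-independent by the induction hypothesis and swallows the terms $\gamma$, $\delta$, $\gamma+\omega$, $\delta+\omega$ (all $<\Omega$); in a propositional clause (v) or (vi) the subformula(e) carrying the unbounded quantifier dominate, and when only one of two conjuncts or disjuncts is non-$\Deltaoe$ the other contributes a rank $<\Omega$ that is absorbed by the max.

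I do not expect a genuine obstacle; the statement is essentially a bookkeeping lemma. The only points requiring care are the matching of the index list $\bar\beta$ with the free-variable occurrences as one passes to subformulae (how $\bar\beta$ splits across the immediate subformulae of $A\wedge B$, and how the bound-variable slot is reset to $0$ in clauses (ii)--(iv)), and the verification in clause (iii) that absorbing the level ordinals $\gamma+\omega,\delta+\omega$ needs only $\gamma,\delta<\Omega$ together with the closure of $\Omega$ under $+\omega$.
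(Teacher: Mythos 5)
Your proof is correct and is exactly the routine structural induction the paper has in mind: the statement appears there as an Observation with no proof given, and your argument supplies precisely the omitted bookkeeping. In particular, your two key points — closure of $\Omega$ under finite maxima, $+2$ and $+\omega$ (handling the $\Deltaoe$ direction and the level ordinals $\gamma,\delta<\Omega$ in clauses (ii) and (iii)), and the $\bar\beta$-independence of any rank $\geq\Omega$, obtained because the subformula carrying the unbounded quantifier dominates every maximum while all $\bar\beta$-dependent contributions stay below $\Omega$ — are the intended justification, with (ii) in the form you prove it being equivalent to the paper's formulation since $rk(A(\bar s))=\|A(\bar s)\|_{\bar 0}$.
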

\begin{definition}[Operator Controlled Derivability in $\IRSOE$] {\em
$\IRSOE$ derives intuitionistic sequents of the form $\Gamma\Rightarrow\Delta$ where $\Gamma$ and $\Delta$ are finite sets of $\IRSOE$ formulae and $\Delta$ contains at most one formula. For $\mathcal{H}$ an operator and $\alpha,\rho$ ordinals we define the relation $\provx{\mathcal{H}}{\al}{\rho}{\Gamma\Rightarrow\Delta}$ by recursion on $\al$.\\

\noindent If $\Gamma\Rightarrow\Delta$ is an axiom and $\al\in\mathcal{H}$ then $\provx{\mathcal{H}}{\al}{\rho}{\Gamma\Rightarrow\Delta}$.\\

\noindent It is always required that $\al\in\CH$, this requirement is not repeated for each inference rule below.

$$ \begin{array}{llr}

(\mathbb{E}\text{-Lim})_{\infty} & \infonel{ {\mathcal H}[\delta]} {\alpha_{\delta}}
{\rho} {\Gamma, s\in\mathbb{E}_\delta\Rightarrow\Delta\mbox{ for all } \delta< \gamma}
 {\mathcal H} {\alpha} {\rho} {\Gamma,s\in \mathbb{E}_\gamma\Rightarrow\Delta}
&\begin{array}{r}\gamma\text{ a limit}\\ \alpha_\delta<\al \\ \gamma\in\CH
\end{array} \\[1cm]

(b\forall L) & \infonethree{\provx{\mathcal H} {\alpha_0} {\rho}
{\GA,s\in t\rightarrow A(s)\Rightarrow\Delta}}{\provx{\mathcal H} {\alpha_1} {\rho}
{\GA\Rightarrow t\in\Eb}}{\provx{\mathcal H} {\alpha_2} {\rho}
{\GA\Rightarrow s\in\Eg}}{ \provx{\mathcal H} {\alpha} {\rho} {\GA,(\forall x\in t)A(x)\Rightarrow\Delta}}
&\begin{array}{r}\alpha_{0},\al_1,\al_2 < \alpha\\ \beta,\gamma\in\CH\\ \gamma<\al \\ \gamma\leq\beta
\end{array} \\[1.5cm]

(b\forall R)_{\infty} & \infonetwo{\provx{\mathcal H}{\alpha_0}{\rho}{\GA\Rightarrow s\in t\rightarrow F(s)\mbox{ for all } s}}{\provx{\CH}{\al_1}{\rho}{\Gamma\Rightarrow t\in\Eb}}
{\provx{\mathcal H} {\alpha} {\rho} {\Gamma\Rightarrow(\forall x\In t)F(x)}}
&\begin{array}{r}\alpha_{0},\al_1 < \alpha\\ \beta\in\CH\\ \beta<\al
\end{array} \\[1.5cm]

(b\exists L)_{\infty} & \infonetwo{\provx{\mathcal H}{\alpha_0}{\rho}{\GA, s\in t\wedge F(s)\Rightarrow\Delta\mbox{ for all } s}}{\provx{\CH}{\al_1}{\rho}{\Gamma\Rightarrow t\in\Eb}}
{\provx{\mathcal H} {\alpha} {\rho} {\Gamma,(\exists x\In t)F(x)\Rightarrow\Delta}}
&\begin{array}{r}\alpha_{0},\al_1 < \alpha\\ \beta\in\CH\\ \beta<\al
\end{array} \\[1.5cm]

\end{array}$$
$$\begin{array}{llr}

(b\exists R) & \infonethree{\provx{\mathcal H} {\alpha_0} {\rho}
{\GA\Rightarrow s\in t\wedge A(s)}}{\provx{\mathcal H} {\alpha_1} {\rho}
{\GA\Rightarrow t\in\Eb}}{\provx{\mathcal H} {\alpha_2} {\rho}
{\GA\Rightarrow s\in\Eg}}{ \provx{\mathcal H} {\alpha} {\rho} {\GA\Rightarrow(\exists x\in t)A(x)}}
&\begin{array}{r}\alpha_{0},\al_1,\al_2 < \alpha\\ \beta,\gamma\in\CH\\ \gamma<\al \\ \gamma\leq\beta
\end{array} \\[1.5cm]

(\mathcal{E}b\forall L) & \infonefour{\provx{\mathcal H} {\alpha_0} {\rho}
{\GA,\text{fun}(p,s,t)\rightarrow A(p)\Rightarrow\Delta}}{\provx{\mathcal H} {\alpha_1} {\rho}
{\GA\Rightarrow s\in\Eb}}{\provx{\mathcal H} {\alpha_2} {\rho}
{\GA\Rightarrow t\in\Eg}}{\provx{\mathcal H} {\alpha_3} {\rho}
{\GA\Rightarrow p\in\Ed}}{ \provx{\mathcal H} {\alpha} {\rho} {\GA,(\forall x\in\;\!\!^st)A(x)\Rightarrow\Delta}}
&\begin{array}{r}\alpha_{0},\al_1,\al_2,\al_3 < \alpha\\ \beta,\gamma,\delta\in\CH\\ \delta<\al \\ \delta\leq\text{max}(\beta,\gamma)+2
\end{array} \\[2cm]

(\mathcal{E}b\forall R)_{\infty} & \infonethree{\provx{\mathcal H}{\alpha_0}{\rho}{\GA\Rightarrow \text{fun}(p,s,t)\rightarrow F(p)\mbox{ for all } p}}{\provx{\CH}{\al_1}{\rho}{\Gamma\Rightarrow s\in\Eb}}{\provx{\CH}{\al_2}{\rho}{\Gamma\Rightarrow t\in\Eg}}
{\provx{\mathcal H} {\alpha} {\rho} {\Gamma\Rightarrow(\forall x\in\;\!\!^st)F(x)}}
&\begin{array}{r}\alpha_{0},\al_1,\al_2 < \alpha\\ \beta,\gamma\in\CH\\ \text{max}(\beta,\gamma)+2\leq\al
\end{array} \\[1.5cm]

(\mathcal{E}b\exists L)_{\infty} & \infonethree{\provx{\mathcal H}{\alpha_0}{\rho}{\GA, \text{fun}(p,s,t)\wedge F(p)\Rightarrow\Delta \mbox{ for all } p}}{\provx{\CH}{\al_1}{\rho}{\Gamma\Rightarrow s\in\Eb}}{\provx{\CH}{\al_2}{\rho}{\Gamma\Rightarrow t\in\Eg}}
{\provx{\mathcal H} {\alpha} {\rho} {\Gamma,(\exists x\in\;\!\!^st)F(x)\Rightarrow\Delta}}
&\begin{array}{r}\alpha_{0},\al_1,\al_2 < \alpha\\ \beta,\gamma\in\CH\\ \text{max}(\beta,\gamma)+2\leq\al
\end{array} \\[1.5cm]

(\mathcal{E}b\exists R) & \infonefour{\provx{\mathcal H} {\alpha_0} {\rho}
{\GA\Rightarrow\text{fun}(p,s,t)\wedge A(p)}}{\provx{\mathcal H} {\alpha_1} {\rho}
{\GA\Rightarrow s\in\Eb}}{\provx{\mathcal H} {\alpha_2} {\rho}
{\GA\Rightarrow t\in\Eg}}{\provx{\mathcal H} {\alpha_3} {\rho}
{\GA\Rightarrow p\in\Ed}}{ \provx{\mathcal H} {\alpha} {\rho} {\GA\Rightarrow(\exists x\in\;\!\!^st)A(x)}}
&\begin{array}{r}\alpha_{0},\al_1,\al_2,\al_3 < \alpha\\ \beta,\gamma,\delta\in\CH\\ \delta<\al \\ \delta\leq\text{max}(\beta,\gamma)+2
\end{array} \\[2cm]

(\forall L) & \infonetwo{\provx{\mathcal H} {\alpha_0} {\rho}
{\GA, F(s)\Rightarrow\Delta}}{\provx{\CH}{\al_1}{\rho}{\Gamma\Rightarrow s\in\Eb}}{\provx{\mathcal H}{\alpha}{\rho}{\GA,\forall x F(x)\Rightarrow\Delta}}
&\begin{array}{r}\alpha_{0}+3,\al_1+3 < \alpha\\  \beta<\al\\\beta\in\CH
\end{array} \\[0.6cm]

(\forall R)_{\infty} & \infonel{ {\mathcal H}[\beta]} {\alpha_{\beta}}
{\rho} {\GA,s\in\Eb\Rightarrow  F(s)\mbox{ for all } s\text{ and all $\beta<\Omega$}}
 {\mathcal H} {\alpha} {\rho} {\Gamma\Rightarrow\forall x F(x)}
&\beta<\alpha_{\beta}+3 < \alpha

\end{array}$$
$$\begin{array}{llr}

(\exists L)_{\infty} & \infonel{ {\mathcal H}[\beta]} {\alpha_{\beta}}
{\rho} {\GA,s\in\Eb,F(s) \Rightarrow  \Delta\mbox{ for all } s\text{ and all $\beta<\Omega$}}
 {\mathcal H} {\alpha} {\rho} {\Gamma\Rightarrow\forall x F(x)}
&\beta<\alpha_{\beta}+3 < \alpha \\[1cm]

(\exists R) & \infonetwo{\provx{\mathcal H} {\alpha_0} {\rho}
{\GA\Rightarrow F(s)}}{\provx{\CH}{\al_1}{\rho}{\Gamma\Rightarrow s\in\Eb}}{\provx{\mathcal H}{\alpha}{\rho}{\GA\Rightarrow\exists xF(x)}}
&\begin{array}{r}\alpha_{0}+3,\al_1+3 < \alpha\\  \beta<\al\\\beta\in\CH
\end{array} \\[0.6cm]

\SRE &
\infonel{\CH}{\al_0}{\rho}{\Gamma\Rightarrow A}{\CH}{\al}{\rho}{\Gamma\Rightarrow
\exists z\,A^z}
&\begin{array}{r} \al_0+1,\Omega<\al\\
 A\text{ is a $\Sigmae$-formula}\end{array}\\[1cm]

 \Cut & \infonethree{\provx{\mathcal H}{\alpha_0}{\rho} {\Gamma,
A(s_1,...,s_n)\Rightarrow\Delta}}{\provx{\mathcal H} {\alpha_1} {\rho}{\Gamma\Rightarrow A(s_1,...,s_n)}}{\provx{\CH}{\al_2}{\rho}{\Gamma\Rightarrow s_i\in\mathbb{E}_{\beta_i}}\mbox{ $i=1,...,n$ }}{\provx
{\mathcal H}{\alpha}{\rho}{\Gamma\Rightarrow\Delta}}
&\begin{array}{r}\alpha_{0},\al_1,\al_2< \alpha\\
\|A(\bar{s})\|_{\bar\beta}<\rho\\
\bar\beta\in\CH\end{array}
\end{array}$$
Lastly if $\Gamma\Rightarrow\Delta$ is the result of a propositional inference of the form $(\wedge L)$, $(\wedge R)$, ($\vee L)$, $(\vee R)$, $(\neg L)$, $(\neg R)$, $(\perp)$, $(\rightarrow L)$ or $(\rightarrow R)$, with premise(s) $\Gamma_i\Rightarrow\Delta_i$ then from $\provx{\CH}{\alpha_0}{\rho}{\Gamma_i\Rightarrow\Delta_i}$ (for each $i$) we may conclude $\provx{\CH}{\alpha}{\rho}{\Gamma\Rightarrow\Delta}$, provided $\alpha_0<\alpha$.
}\end{definition}
\begin{convention}{\em
In cases where terms $\Ea$ and $a_i^\al$ occur directly as witnesses in existential rules or in cut formulae we will omit the extra premise declaring the terms location in the $\mathbb{E}$ term hierarchy since
\begin{equation*}
\Ea\in\mathbb{E}_{\al+1}\quad\text{and}\quad a_i^\al\in\mathbb{E}_{\al+1}
\end{equation*}
are axioms (E4) and (E5) respectively. It must still be checked that $\al\in\CH$ however.
}\end{convention}
\subsection{Cut elimination for $\IRSOE$}
\begin{lemma}[Inversions of $\IRSOE$]\label{einversion}{\em If max$(rk(A),rk(B))\geq\Omega$ then we have the usual propositional inversions for intuitionistic systems:
\begin{description}
\item[i)] If $\provx{\mathcal{H}}{\alpha}{\rho}{\Gamma,A\wedge B\Rightarrow\Delta}$ then $\provx{\mathcal{H}}{\alpha}{\rho}{\Gamma,A, B\Rightarrow\Delta}$.
\item[ii)] If $\provx{\mathcal{H}}{\alpha}{\rho}{\Gamma\Rightarrow A\wedge B}$ then $\provx{\mathcal{H}}{\alpha}{\rho}{\Gamma\Rightarrow A}$ and $\provx{\mathcal{H}}{\alpha}{\rho}{\Gamma\Rightarrow B}$.
\item[iii)] If $\provx{\mathcal{H}}{\alpha}{\rho}{\Gamma,A\vee B\Rightarrow\Delta}$ then $\provx{\mathcal{H}}{\alpha}{\rho}{\Gamma,A\Rightarrow\Delta}$ and $\provx{\mathcal{H}}{\alpha}{\rho}{\Gamma,B\Rightarrow\Delta}$.
\item[iv)]  If $\provx{\mathcal{H}}{\alpha}{\rho}{\Gamma,A\rightarrow B\Rightarrow\Delta}$ then $\provx{\mathcal{H}}{\alpha}{\rho}{\Gamma,B\Rightarrow\Delta}$.
\item[v)] If $\provx{\mathcal{H}}{\alpha}{\rho}{\Gamma\Rightarrow A\rightarrow B}$ then $\provx{\mathcal{H}}{\alpha}{\rho}{\Gamma,A\Rightarrow B}$.
\end{description}
If $rk(A)\geq\Omega$ we have the following additional inversions:
\begin{description}
\item[vi)] If $\provx{\mathcal{H}}{\alpha}{\rho}{\Gamma\Rightarrow\neg A}$ then $\provx{\mathcal{H}}{\alpha}{\rho}{\Gamma, A\Rightarrow}$.
\item[vii)] If $\provx{\CH}{\al}{\rho}{\Gamma\Rightarrow(\forall x\in t)A(x)}$ then $\provx{\CH}{\al}{\rho}{\GA\Rightarrow s\in t\rightarrow A(s)}$ for all terms $s$.
\item[viii)] If $\provx{\CH}{\al}{\rho}{\Gamma,(\exists x\in t)A(x)\Rightarrow\Delta}$ then $\provx{\CH}{\al}{\rho}{\GA, s\in t\wedge A(s)\Rightarrow\Delta}$ for all terms $s$.
\item[ix)] If $\provx{\CH}{\al}{\rho}{\Gamma\Rightarrow(\forall x\in\;\!\!^st)A(x)}$ then $\provx{\CH}{\al}{\rho}{\GA\Rightarrow \text{fun}(p,s,t)\rightarrow A(p)}$ for all terms $p$.
\item[x)] If $\provx{\CH}{\al}{\rho}{\Gamma,(\exists x\in\;\!\!^st)A(x)\Rightarrow\Delta}$ then $\provx{\CH}{\al}{\rho}{\GA, \text{fun}(p,s,t)\wedge A(p)\Rightarrow\Delta}$ for all terms $p$.
\end{description}
Finally we have the following persistence properties:
\begin{description}
\item[xi)]If $\gamma\in\CH\cap\Omega$ and $\provx{\CH}{\al}{\rho}{\GA\Rightarrow\forall xA(x)}$ then $\provx{\CH}{\al}{\rho}{\GA\Rightarrow(\forall x\in\Eg)A(x)}$.
\item[xii)]If $\gamma\in\CH\cap\Omega$ and $\provx{\CH}{\al}{\rho}{\GA,\exists xA(x)\Rightarrow\Delta}$ then $\provx{\CH}{\al}{\rho}{\GA,(\exists x\in\Eg)A(x)\Rightarrow\Delta}$.
\end{description}
}\end{lemma}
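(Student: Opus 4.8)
The plan is to establish all twelve clauses by a single induction on $\al$, run simultaneously where the clauses feed into one another, mirroring the proofs of Lemma \ref{inversion} and Lemma \ref{weakpers} for $\IRS$ and of Lemma \ref{pinversion} for $\IRSOP$. The one genuinely new ingredient is the bookkeeping forced by the rank-declaration side premises (those of the shape $\GA\RI t\in\Eb$) that now decorate every quantifier rule and $\Cut$ in $\IRSOE$. The first point I would record is the role of the rank hypotheses: in each clause the formula being inverted has rank $\geq\Omega$, so by the Observation following the $\bbeta$-rank definition it contains an unbounded quantifier and is not $\Deltaoe$. Hence it can never be the active part of any axiom (E1)--(E11), all of whose active formulae are $\Deltaoe$. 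Thus whenever the inverted formula is prominent in the endsequent, that endsequent really was the conclusion of a bona fide inference, so the induction hypothesis applies to the premises; this is exactly the purpose the rank side conditions serve.

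For the generic step I would split on whether the inverted formula was the principal formula of the last inference. If it was not (this includes the case of $(\mathbb{E}\text{-Lim})_\infty$, whose principal ``formula'' $s\in\Eg$ is $\Deltaoe$, and the case of $\Cut$), I apply the induction hypothesis to whichever premise carries the formula and re-apply the same inference. Here is the single departure from the earlier systems: a left rule such as $(b\forall L)$ or $(\mathcal{E}b\forall L)$ carries side premises $\GA\RI t\in\Eb$, $\GA\RI s\in\Eg$ whose succedent is not the inverted formula. For a right-inversion I carry these premises along verbatim; for a left-inversion, where a new formula $A$ enters the antecedent, I first weaken each side premise by $A$ (weakening for $\IRSOE$ being the routine analogue of Lemma \ref{weakpers}i) and Lemma \ref{pweakpers}i)). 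The $+3$ offsets in the ordinal conditions of $(\forall L)$, $(\forall R)_\infty$, $(\exists L)_\infty$ and $(\exists R)$ leave ample room to re-run the inference within the same bound $\al$.

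If instead the inverted formula was the principal formula of a matching last inference, the conclusion is read off the premise together with weakening. For clause (vii), if $(\forall x\in t)A(x)$ was principal in $(b\forall R)_\infty$, then the premise $\GA\RI s\in t\rightarrow A(s)$ is already present for the requested term $s$ — crucially, this premise ranges over \emph{all} terms in $\IRSOE$, so no level side condition obstructs the extraction — and weakening up to $\al$ closes the case; clauses (iv), (v), (vi), (viii), (ix), (x) go the same way, appealing where needed to the induction hypothesis on the relevant premise. The persistence clauses I treat as in Lemma \ref{weakpers}v). For (xi), with $\forall xA(x)$ principal in $(\forall R)_\infty$, I have $\provx{\CH[\beta]}{\al_\beta}{\rho}{\GA,s\in\Eb\RI A(s)}$ for all $s$ and all $\beta<\Omega$ with $\beta<\al_\beta+3<\al$; instantiating $\beta:=\gamma$ and applying $(\rightarrow R)$ yields $\GA\RI s\in\Eg\rightarrow A(s)$ for every $s$, whereupon $(b\forall R)_\infty$ produces $\GA\RI(\forall x\in\Eg)A(x)$, its remaining side premise $\GA\RI\Eg\in\mathbb{E}_{\gamma+1}$ being axiom (E4). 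This is legitimate because $\gamma\in\CH$ gives $\gamma+1\in\CH$ (operators are closed under $+$, Definition \ref{operatorr}) and $\gamma<\al_\gamma+3<\al$ gives $\gamma+1<\al$. Clause (xii) is dual: from the $(\exists L)_\infty$ premises $\GA,s\in\Eg,A(s)\RI\DE$, one $(\wedge L)$ gives $\GA,s\in\Eg\wedge A(s)\RI\DE$ and then $(b\exists L)_\infty$ delivers $\GA,(\exists x\in\Eg)A(x)\RI\DE$.

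I expect the main obstacle to be purely the side-premise and ordinal bookkeeping rather than anything conceptual: at each re-application of a rule one must re-supply the $\RI t\in\Eb$ premises (by weakening) and re-check their ordinal constraints, and in the persistence clauses one must verify both $\gamma+1\in\CH$ and the strict inequality $\gamma+1<\al$ extracted from the $+3$ slack. Once these verifications are organized, every clause collapses to the two-case template above.
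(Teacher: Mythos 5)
Your overall strategy coincides with the paper's proof: induction on $\al$; the split on whether the inverted formula was principal in the last inference; the key observation that a formula of rank $\geq\Omega$ is not $\Deltaoe$ and hence never the active part of an axiom (E1)--(E11); and, for (xi), instantiating the $(\forall R)_\infty$ premise at $\delta:=\gamma$ (using $\gamma\in\CH$ so that $\CH[\gamma]=\CH$), applying $(\rightarrow R)$, and closing with $(b\forall R)_\infty$, whose rank-declaration premise $\GA\RI\Eg\in\mathbb{E}_{\gamma+1}$ is axiom (E4) --- exactly as in the paper. Your ordinal bookkeeping for (xi) is also sound: $\gamma+1\in\CH$ by closure under $+$ and $\gamma+1<\al$ from the $+3$ slack.

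There is, however, one concrete misstep in your treatment of the side premises, in the non-principal case of every clause that modifies the \emph{antecedent} (so (i), (iii), (iv), (viii), (x), (xii), and likewise when the last inference was $\Cut$). You propose to ``weaken each side premise by'' the incoming formula. But the side premises of, say, $(b\forall L)$ have the form $\GA\RI t\in\Eb$ with the \emph{same} antecedent $\GA$ as the conclusion, so in the inversion scenario they themselves contain the formula being inverted; weakening them by, e.g., $s\in t\wedge A(s)$ leaves $(\exists x\in t)A(x)$ sitting in their antecedents, and re-applying the inference then produces a conclusion that still contains the inverted formula --- not the sequent the lemma asserts. The correct move, and the one the paper makes (``apply the induction hypothesis to the premises of that inference followed by the same inference again''), is to apply the induction hypothesis to the side premises as well: this is legitimate within your own setup, since their ordinal tags are strictly below $\al$ and each clause is stated for an arbitrary succedent $\DE$, which may in particular be $\{t\in\Eb\}$. (Your remark that side premises can be carried along verbatim is correct only for the right-inversion clauses (ii), (v), (vi), (vii), (ix), (xi), where the side premises do not mention $\DE$.) Two further trifles: in (xii) a single $(\wedge L)$ does not turn $\GA,s\in\Eg,A(s)\RI\DE$ into $\GA,s\in\Eg\wedge A(s)\RI\DE$ --- two applications are needed, for which the slack $\al_\gamma+3<\al$ still suffices; and in the base case of a left inversion the endsequent may itself be an axiom (with its active formulas inside $\GA,\DE$), in which case one should note, as the paper does for (viii), that the target sequent is then an axiom too, rather than asserting the endsequent was necessarily the conclusion of an inference.
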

\begin{proof}
All proofs are by induction on $\al$, i) to vi) are standard for intuitionistic systems of this type.\\

\noindent For viii) suppose that $\provx{\CH}{\al}{\rho}{\Gamma,(\exists x\in t)A(x)\Rightarrow\Delta}$ and $rk(A(\mathbb{E}_0))\geq\Omega$. $(\exists x\in t)A(x)$ cannot have been the "active component" of an axiom, so if $\GA,(\exists x\in t)A(x)\Rightarrow\Delta$ is an axiom then so is $\GA, s\in t\wedge A(s)\Rightarrow\Delta$. Now if $(\exists x\in t)A(x)$ was not the principal formula of the last inference we may apply the induction hypothesis to the premises of that inference followed by the same inference again. Finally if $(\exists x\in t)A(x)$ was the principal formula of the last inference and the last inference was $(b\exists L)_\infty$ so we have
\begin{equation*}
\provx{\CH}{\al_0}{\rho}{\GA,(\exists x\in t)A(x),s\in t\wedge A(s)\Rightarrow\Delta}\quad\text{for all terms $s$ and for some $\al_0<\al$.}
\end{equation*}
Applying the induction hypothesis followed by weakening yields
\begin{equation*}
\provx{\CH}{\al}{\rho}{\GA,s\in t\wedge A(s)\Rightarrow\Delta}\quad\text{for all terms $s$}
\end{equation*}
as required.\ The proofs of vii), xi) and x) are similar.\\

\noindent For xi) suppose $\provx{\CH}{\al}{\rho}{\GA\Rightarrow\forall xA(x)}$ and $\gamma\in\CH\cap\Omega$. $\Gamma\Rightarrow\forall xA(x)$ cannot be an axiom. If the last inference was not $(\forall R)_\infty$ then we may apply the induction hypothesis to its premises and then the same inference again. So suppose the last inference was $(\forall R)_\infty$ in which case we have the premise
\begin{equation*}
\provx{\CH[\delta]}{\al_\delta}{\rho}{\GA,s\in\Ed\Rightarrow A(s)}\quad\text{for all $s$ and all $\delta<\Omega$, with $\delta<\al_\delta+3<\al$.}
\end{equation*}
In particular since $\gamma\in\CH$ we have
\begin{equation*}
\provx{\CH}{\al_\gamma}{\rho}{\GA,s\in\Eg\Rightarrow A(s)}\quad\text{for all $s$ with $\gamma<\al_\gamma+3<\al$.}
\end{equation*}
So by $(\rightarrow\! R)$ we have
\begin{equation*}
\provx{\CH}{\al_\gamma+1}{\rho}{\GA\Rightarrow s\in\Eg\rightarrow A(s)}\quad\text{for all $s$}\,.
\end{equation*}
Now since $\Rightarrow\Eg\in\mathbb{E}_{\gamma+1}$ is an instance of axiom (E4), $\gamma\in\CH$ and $\gamma<\al$ we may apply $(b\forall R)$ to obtain
\begin{equation*}
\provx{\CH}{\al}{\rho}{\GA\Rightarrow (\forall x\in\Eg)A(x)}
\end{equation*}
as required. The proof of xii) is similar.
\end{proof}
\begin{lemma}[Reduction for $\IRSOE$] \label{ereduction}Suppose $rk(C(\bar{s})):=\rho>\Omega$ where $C(\bar{a})$ is an $\IKPE$ formula with all free variables displayed. If
\begin{align*}
&\provx{\CH}{\al}{\rho}{\GA\Rightarrow C(\bar{s})}\\
&\provx{\CH}{\beta}{\rho}{\GA,C(\bar{s})\Rightarrow\Delta}\\
&\provx{\CH}{\gamma_i}{\rho}{\GA\Rightarrow s_i\in\mathbb{E}_{\eta_i}}\quad\text{with $\eta_i\in\CH\cap\Omega$ for each $1\leq i\leq n$.}
\end{align*}
then
\begin{equation*}
\provx{\CH}{\al\#\al\#\beta\#\beta\#\gamma}{\rho}{\GA\Rightarrow\Delta}\quad\text{where $\gamma:=\text{max}_{i=1,...,n}(\gamma_i)$}
\end{equation*}
\end{lemma}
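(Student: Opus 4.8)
The plan is to prove the statement by induction on the natural sum $\al\#\al\#\beta\#\beta$, treating the three derivations $\GA\Rightarrow C(\bar s)$, $\GA,C(\bar s)\Rightarrow\Delta$ and the location declarations $\provx{\CH}{\gamma_i}{\rho}{\GA\Rightarrow s_i\in\mathbb{E}_{\eta_i}}$ as data threaded through the recursion, with $\gamma=\max_i\gamma_i$ serving as an additive reservoir that absorbs the cost of re-using those declarations. Since $rk(C)>\Omega$, the formula $C$ is not $\Deltaoe$, so it can be neither the active part of an axiom nor (exactly as in the proof of Lemma \ref{preduction}) the principal formula of an $\SRE$-inference, every conclusion of $\SRE$ having rank precisely $\Omega$. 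If $C$ is not principal in the last inference of $\GA\Rightarrow C(\bar s)$, or not principal in that of $\GA,C(\bar s)\Rightarrow\Delta$, I would apply the induction hypothesis to the main premises of that inference (cutting $C$ against the other derivation at a strictly smaller measure, the location data $\gamma_i$ being threaded unchanged) and then re-apply the same inference. The point requiring care here, and absent from Lemmas \ref{reduction} and \ref{preduction}, is that a bounded-quantifier inference of $\IRSOE$ drags along location side-premises of the form $\GA,C(\bar s)\Rightarrow u\in\Ed$; since $u\in\Ed$ is $\Deltaoe$ and the reassembled inference must have $C$-free side-premises, $C$ must be removed from these too, which is again achieved by the induction hypothesis at a smaller measure.

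The substantial case is when $C$ is principal in both final inferences. The propositional subcases ($C\equiv A\wedge B$, $A\vee B$, $A\to B$, $\neg A$) proceed as in the classical reduction, using the inversions of Lemma \ref{einversion} and the symmetry of the rules. Among the quantifier subcases the representative and most delicate one is the new exponentiation-bounded quantifier, say $C\equiv(\forall x\in\;\!^{s}t)A(x)$. Here the right derivation ends in $(\mathcal{E}b\forall R)_\infty$, furnishing $\GA\Rightarrow\text{fun}(p,s,t)\to A(p)$ for every term $p$ at a single ordinal $\al_0<\al$, together with location premises $\GA\Rightarrow s\in\Eb$ and $\GA\Rightarrow t\in\Eg$; the left derivation ends in $(\mathcal{E}b\forall L)$, naming a witness $p_0$, supplying $\GA,C,\text{fun}(p_0,s,t)\to A(p_0)\Rightarrow\Delta$ at $\beta_0<\beta$ and a declaration $\GA\Rightarrow p_0\in\Ed$. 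I would instantiate the $(\mathcal{E}b\forall R)_\infty$ premise at $p_0$, apply the induction hypothesis to $\GA\Rightarrow C$ and the left premise to obtain $\GA,\text{fun}(p_0,s,t)\to A(p_0)\Rightarrow\Delta$ at a smaller measure, and finally perform one ordinary $\Cut$ on the minor formula $\text{fun}(p_0,s,t)\to A(p_0)$, whose rank is $<\rho=rk(C)$ by the defining clauses of $\|\cdot\|_{\bar\beta}$. The unbounded case $C\equiv\forall xA(x)$ is analogous, using the persistence clauses \ref{einversion}(xi),(xii) and one auxiliary $\Deltaoe$-cut against the side-premise $\GA\Rightarrow s_0\in\Eb$ to feed the $(\forall R)_\infty$ premise, which is controlled by $\CH[\beta]$ rather than by a term.

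The heart of the argument, and the reason the bound carries the extra summand $\gamma$, is the supply of the $\mathbb{E}$-membership side-premises demanded by that concluding $\Cut$. The $\Cut$-rule of $\IRSOE$ requires, for each term $r$ displayed in the minor cut formula $\text{fun}(p_0,s,t)\to A(p_0)$, a derivation $\GA\Rightarrow r\in\mathbb{E}_{(\cdot)}$. I would assemble these from exactly two sources: the given hypotheses $\GA\Rightarrow s_i\in\mathbb{E}_{\eta_i}$, at ordinals $\gamma_i\le\gamma$, for the free terms $\bar s$ of $C$ inherited from $A$, $s$ and $t$; and the location premises $\GA\Rightarrow s\in\Eb$, $\GA\Rightarrow t\in\Eg$, $\GA\Rightarrow p_0\in\Ed$ carried by the two principal inferences, whose ordinals are bounded by $\al$ and $\beta$. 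Because the external declarations for $\bar s$ may have ordinals as large as $\gamma$, the conclusion of the final cut can only be bounded by $\al\#\al\#\beta\#\beta\#\gamma$, which is precisely the claimed ordinal; weakening then pads every intermediate bound up to this common value.

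The main obstacle, which distinguishes this lemma from its $\IRS$ and $\IRSOP$ predecessors, is exactly this dynamic bookkeeping of term locations: because $\IRSOE$-terms carry no syntactic level, neither the rank of a cut formula nor the admissibility of a cut can be read off locally, so every cut and every reconstruction of a bounded inference must be accompanied by freshly propagated location derivations. I expect the $\mathcal{E}b$-cases to be the worst, since those rules carry three location premises and the matrix is the compound formula $\text{fun}(p_0,s,t)$; the genuinely fiddly part will be to verify, uniformly across all subcases, that the synthesized location derivations have ordinals strictly below the target, that removing $C$ from the $\Deltaoe$ side-premises in the non-principal cases does not inflate the measure, and that the rank of each minor formula genuinely stays below $\rho$ so that the concluding $\Cut$ is legitimate.
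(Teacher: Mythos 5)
Your proposal is correct and follows essentially the same route as the paper: induction on the combined measure (your $\al\#\al\#\beta\#\beta$ with $\gamma$ held fixed is equivalent to the paper's $\al\#\al\#\beta\#\beta\#\gamma$, since $\gamma$ never decreases during the recursion), exclusion of axioms and $\SRE$ via $rk(C)>\Omega$, IH-plus-reapply in the non-principal cases, and in the principal quantifier cases instantiation of the infinitary premise at the named witness, IH applications to strip $C$ both from the main left premise and from the $C$-contaminated $\Deltaoe$ location premises such as $\GA,C\Rightarrow p_0\in\mathbb{E}_\xi$ (which you state slightly imprecisely as $C$-free in the $\mathcal{E}b\forall$ case, though you identify the needed IH mechanism elsewhere), followed by a concluding $\Cut$ on the minor formula of rank $<\rho$ fed by the hypothesised $\mathbb{E}$-membership derivations, which is exactly where the extra summand $\gamma$ enters, as in the paper; your appeal to the persistence clauses in the unbounded case is unnecessary (the paper just instantiates the $(\forall R)_\infty$ premise at $\xi\in\CH$ and cuts the cleaned declaration against it), but harmless.
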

\begin{proof}
The proof is by induction on $\al\#\al\#\beta\#\beta\#\gamma$. Assume that
\begin{align*}
&\tag{1}rk(C(\bar{s})):=\rho>\Omega\\
&\tag{2}\provx{\CH}{\al}{\rho}{\GA\Rightarrow C(\bar{s})}\\
&\tag{3}\provx{\CH}{\beta}{\rho}{\GA,C(\bar{s})\Rightarrow\Delta}\\
&\tag{4}\provx{\CH}{\gamma_i}{\rho}{\GA\Rightarrow s_i\in\mathbb{E}_{\eta_i}}\quad\text{for each $1\leq i\leq n$ and for some $\eta_i\in\CH\cap\Omega$.}
\end{align*}
Since $rk(C(\bar{s})):=\rho>\Omega$, $C$ cannot be the `active part' of an axiom, hence if (2) or (3) are axioms of $\IRSOE$ then so is $\GA\Rightarrow\Delta$.\\

\noindent If $C(\bar s)$ was not the principal formula of the last inference in either (2) or (3) then we may apply the induction hypothesis to the premises of that inference and then the same inference again.\\

\noindent So suppose $C(\bar s)$ was the principal formula of the last inference in both (2) and (3). Since the conclusion of a $\SRE$ inference always has rank $\Omega$ and $rk(C(\bar{s})):=\rho>\Omega$ we may conclude that the last inference of (2) was not $\SRE$.\\

\noindent Case 1. Suppose $C(\bar s)\equiv(\exists x\in s_i)F(x,\bar s)$, thus we have
\begin{align*}
&\tag{5}\provx{\CH}{\al_0}{\rho}{\GA\Rightarrow r\in s_i\wedge F(r,\bar s)}\quad&\al_0<\al\\
&\tag{6}\provx{\CH}{\al_1}{\rho}{\GA\Rightarrow s_i\in\Ed}\quad&\al_1<\al\:\text{and}\:\delta\in\CH\\
&\tag{7}\provx{\CH}{\al_2}{\rho}{\GA\Rightarrow r\in\mathbb{E}_\xi}\quad&\xi,\al_2<\al\:,\:\xi\in\CH(\emptyset)\:\text{and}\:\xi\leq\delta\\
&\tag{8}\provx{\CH}{\beta_0}{\rho}{\GA,C(\bar s), p\in s_i\wedge F(p,\bar s)\Rightarrow\Delta}\quad&\text{for all $p$ and $\beta_0<\beta$}\\
&\tag{9}\provx{\CH}{\beta_1}{\rho}{\GA,C(\bar s)\Rightarrow s_i\in\mathbb{E}_{\delta^\prime}}\quad&\delta^\prime,\beta_1<\beta\:\text{and}\:\delta^\prime\in\CH(\emptyset)
\end{align*}
From (8) we obtain
\begin{equation*}
\tag{10}\provx{\CH}{\beta_0}{\rho}{\GA,C(\bar s),r\in s_i\wedge F(r,\bar{s})\Rightarrow\Delta}.
\end{equation*}
Applying the induction hypothesis to (2), (4) and (10) yields
\begin{equation*}
\tag{11}\provx{\CH}{\al\#\al\#\beta_0\#\beta_0\#\gamma}{\rho}{\GA,r\in s_i\wedge F(r, \bar s)\Rightarrow\Delta}.
\end{equation*}
Note that
\begin{align*}
\Omega<rk(r\in s_i\wedge F(r,\bar s))&=rk(F(r,\bar s))+1\\
&<rk(F(r,\bar s))+2\\
&=rk(C(\bar s)):=\rho.
\end{align*}
So we may apply $\Cut$ to (4),(5),(7) and (11) giving
\begin{equation*}
\provx{\CH}{\al\#\al\#\beta\#\beta\#\gamma}{\rho}{\GA\Rightarrow\Delta}
\end{equation*}
as required. The case where $C(\bar s)\equiv (\forall x\in s_i)F(x,\bar s)$ is similar.\\

\noindent Now suppose $C(\bar s)\equiv (\forall x\in\;\!\!^{s_i}s_j)F(x,\bar s)$, so we have
\begin{align*}
\tag{12}&\provx{\CH}{\al_0}{\rho}{\GA\Rightarrow\text{fun}(p,s_i,s_j)\rightarrow F(p,\bar s)} \quad&\text{for all $p$ and $\al_0<\al$}\\
\tag{13}&\provx{\CH}{\al_1}{\rho}{\GA\Rightarrow s_i\in\Ed}\quad&\text{$\al_1<\al$ and $\delta\in\CH(\emptyset)$}
\end{align*}
\vspace{-0.715cm}
\begin{align*}
\tag{14}&\provx{\CH}{\al_2}{\rho}{\GA\Rightarrow s_j\in\mathbb{E}_{\delta^\prime}}\quad&\text{$\al_2<\al$, $\delta^\prime\in\CH(\emptyset)$ and max$(\delta,\delta^\prime)+2\leq\al$}
\end{align*}
\vspace{-0.715cm}
\begin{align*}
\tag{15}&\hspace{-0.8cm}\provx{\CH}{\beta_0}{\rho}{\GA,C(\bar s),\text{fun}(r,s_i,s_j)\rightarrow F(r,\bar{s})\Rightarrow\Delta}\quad&\text{$\beta_0<\beta$}
\end{align*}
\vspace{-0.715cm}
\begin{align*}
\tag{16}&\provx{\CH}{\beta_1}{\rho}{\GA,C(\bar s)\Rightarrow r\in\mathbb{E}_\xi}\quad&\text{$\xi<\beta$, $\xi\in\CH(\emptyset)$ and $\beta_1<\beta$}\\
\tag{17}&\provx{\CH}{\beta_2}{\rho}{\GA,C(\bar s)\Rightarrow s_i\in\mathbb{E}_\zeta}\quad&\text{$\zeta\in\CH(\emptyset)$ and $\beta_2<\beta$}\\
\tag{18}&\provx{\CH}{\beta_3}{\rho}{\GA,C(\bar s)\Rightarrow s_j\in\mathbb{E}_{\zeta^\prime}}\quad&\text{$\zeta^\prime\in\CH(\emptyset)$, $\beta_3<\beta$ and $\xi\leq\text{max}(\zeta,\zeta^\prime)+2$}
\end{align*}
As an instance of (12) we have
\begin{equation*}
\tag{19}\provx{\CH}{\al_0}{\rho}{\GA\Rightarrow\text{fun}(r,s_i,s_j)\rightarrow F(r,\bar s)}.
\end{equation*}
Applying the induction hypothesis to (2), (4) and (15) gives
\begin{equation*}
\tag{20}\provx{\CH}{\al\#\al\#\beta_0\#\beta_0\#\gamma}{\rho}{\GA,\text{fun}(r,s_i,s_j)\rightarrow F(r,\bar{s})\Rightarrow\Delta}.
\end{equation*}
Furthermore the induction hypothesis applied to (2),(4) and (16) gives
\begin{equation*}
\tag{21}\provx{\CH}{\al\#\al\#\beta_1\#\beta_1\#\gamma}{\rho}{\GA\Rightarrow r\in\mathbb{E}_\xi}.
\end{equation*}
Note that
\begin{align*}
\Omega<rk(\text{fun}(r,s_i,s_j)\rightarrow F(r,\bar s))&=rk(F(r,\bar s))+1\\
&<rk(F(r,\bar s))+2=rk(C(\bar s))
\end{align*}
so we may apply $\Cut$ to (4), (19), (20), (21) to give
\begin{equation*}
\tag{22}\provx{\CH}{\al\#\al\#\beta\#\beta\#\gamma}{\rho}{\GA\Rightarrow\Delta}
\end{equation*}
as required.\\

\noindent The case where $C(\bar s)\equiv(\exists x\in \;\!\!^{s_i}s_j)F(x,\bar s)$ is similar.\\

\noindent Case 3. Now suppose that $C(\bar s)\equiv\forall x F(x,\bar s)$, so we have
\begin{align*}
\tag{23}&\provx{\CH[\delta]}{\al_\delta}{\rho}{\GA,p\in\Ed\Rightarrow F(p,\bar s)}\quad&\text{for all $p$ and all $\delta<\Omega$ with $\al_\delta+3<\al$}\\
\tag{24}&\provx{\CH}{\beta_0}{\rho}{\GA,C(\bar s), F(r,\bar s)}\Rightarrow\Delta\quad&\text{with $\beta_0+3<\beta$}\\
\tag{25}&\provx{\CH}{\beta_1}{\rho}{\GA,C(\bar s)\Rightarrow r\in\mathbb{E}_\xi}\quad&\text{with $\xi<\beta$, $\xi\in\CH(\emptyset)$ and $\beta_1+3<\beta$.}
\end{align*}
Since $\xi\in\CH(\emptyset)$, from (23) we obtain
\begin{equation*}
\tag{26}\provx{\CH}{\al_\xi}{\rho}{\GA,r\in\mathbb{E}_\xi\Rightarrow F(r,\bar s)\,.}
\end{equation*}
Applying the induction hypothesis to (2), (4) and (24) gives
\begin{equation*}
\tag{27}\provx{\CH}{\al\#\al\#\beta_0\#\beta_0\#\gamma}{\rho}{\GA,F(r,\bar s)\Rightarrow\Delta}.
\end{equation*}
Again applying the induction hypothesis to (2), (4) and (25) gives
\begin{equation*}
\tag{28}\provx{\CH}{\al\#\al\#\beta_1\#\beta_1\#\gamma}{\rho}{\GA\Rightarrow r\in\mathbb{E}_\xi}.
\end{equation*}
Now a $\Cut$ applied to (26) and (28) yields
\begin{equation*}
\tag{29}\provx{\CH}{\al\#\al\#\beta\#\beta_1\#\gamma}{\rho}{\GA\Rightarrow F(r,\bar s)}.
\end{equation*}
Note that
\begin{equation*}
\Omega\leq rk(F(r,\bar s))<rk(F(r,\bar s))+2=rk(C)=\rho\,.
\end{equation*}
So a $\Cut$ applied to (4), (27), (28) and (29) yields
\begin{equation*}
\tag{30}\provx{\CH}{\al\#\al\#\beta\#\beta\#\gamma}{\rho}{\GA\Rightarrow\Delta}
\end{equation*}
as required.\\

\noindent The case where $C(\bar s)\equiv\exists xF(x,\bar s)$ is similar.\\

\noindent In the cases where $C\equiv A\wedge B, A\vee B, A\rightarrow B\text{ or } \neg A$ we may argue as with other intuitionistic systems of a similar nature.
\end{proof}
\begin{theorem}[Cut Elimination I]\label{epredce}{\em If $\provx{\CH}{\al}{\Omega+n+1}{\GA\Rightarrow\Delta}$ then $\provx{\CH}{\omega_n(\al)}{\Omega+1}{\GA\RI\DE}$ for all $n<\omega$,
where  $\omega_0(\al)=\al$ and $\omega_{n+1}(\al)=\omega^{\omega_n(\al)}$.
}\end{theorem}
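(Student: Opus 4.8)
The goal is Theorem \ref{epredce}: from $\provx{\CH}{\al}{\Omega+n+1}{\GA\Rightarrow\Delta}$ conclude $\provx{\CH}{\omega_n(\al)}{\Omega+1}{\GA\RI\DE}$. The plan is to mirror exactly the two-step strategy used for $\IRSOP$ in Lemma \ref{pprecesetup} and Theorem \ref{ppredce}, since the rank conventions here ($rk(A)<\Omega$ iff $A$ is $\Deltaoe$, and the conclusion of $\SRE$ always has rank $\Omega$) are set up to make that argument go through verbatim. So first I would isolate the single-step lemma analogous to \ref{pprecesetup}:
\begin{equation*}
\text{if}\quad\provx{\CH}{\al}{\Omega+n+1}{\GA\RI\DE}\quad\text{then}\quad\provx{\CH}{\omega^\al}{\Omega+n}{\GA\RI\DE},
\end{equation*}
and then obtain the theorem by an easy external induction on $n$, unwinding $\omega_n$.

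For the single-step lemma I would induct on $\al$. If $\GA\RI\DE$ is an axiom of $\IRSOE$ there is nothing to do. If the last inference is anything other than a $\Cut$ whose cut-formula has rank exactly $\Omega+n$, I apply the induction hypothesis to each premise and reapply the same inference; here I must check that the various side premises of the $\IRSOE$ rules (the extra $\GA\Rightarrow s\in\Eb$, $\GA\Rightarrow t\in\Eg$, etc.\ attached to the quantifier rules and to $\Cut$) are all handled by the induction hypothesis too, and that the ordinal bounds survive replacing each $\al_i$ by $\omega^{\al_i}$, using that $\omega^\al$ is additive principal and that $\al_i<\al$ gives $\omega^{\al_i}<\omega^\al$. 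The genuine case is a $\Cut$ with cut-formula $C(\bar s)$ where $\|C(\bar s)\|_{\bar 0}=\Omega+n$, with premises
\begin{align*}
\tag{1}&\provx{\CH}{\al_0}{\Omega+n+1}{\GA,C(\bar s)\RI\DE}&\text{with }\al_0<\al,\\
\tag{2}&\provx{\CH}{\al_1}{\Omega+n+1}{\GA\RI C(\bar s)}&\text{with }\al_1<\al,\\
\tag{3}&\provx{\CH}{\al_2}{\Omega+n+1}{\GA\RI s_i\in\mathbb{E}_{\beta_i}}&\text{with }\al_2<\al,\ \bar\beta\in\CH.
\end{align*}
The induction hypothesis applied to (1), (2) and each instance of (3) lowers the cut-rank to $\Omega+n$ and raises the ordinals to $\omega^{\al_0},\omega^{\al_1},\omega^{\al_2}$. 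Since $rk(C(\bar s))=\Omega+n>\Omega$, the Reduction Lemma \ref{ereduction} applies directly, yielding
\begin{equation*}
\provx{\CH}{\omega^{\al_0}\#\omega^{\al_0}\#\omega^{\al_1}\#\omega^{\al_1}\#\gamma}{\Omega+n}{\GA\RI\DE},
\end{equation*}
where $\gamma=\max_i\omega^{\al_2}$; noting that this ordinal is $<\omega^\al$ by additive principality, weakening (Lemma \ref{pweakpers}i), transported to $\IRSOE$) finishes the case.

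The main obstacle, and the only place the $\IRSOE$ argument departs from the $\IRSOP$ template, is the bookkeeping forced by the $\beta$-rank and the extra ``location'' premises of the form $\GA\RI s_i\in\mathbb{E}_{\beta_i}$ that every existential rule and every $\Cut$ now carries. I must confirm two things: that the hypothesis $rk(C(\bar s))=\Omega+n$ guarantees $C$ is not $\Deltaoe$ so that Reduction is genuinely available (this is Observation (ii) on rank together with $rk(C)>\Omega$), and that when I reapply a quantifier inference after invoking the induction hypothesis, the side premises witnessing membership in the $\mathbb{E}$-hierarchy are themselves reduced by the induction hypothesis while their ordinal and operator constraints ($\beta,\gamma,\delta\in\CH$, $\bar\beta\in\CH$) are unaffected by passing from $\al_i$ to $\omega^{\al_i}$. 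Once these routine but fiddly verifications are in place, the external induction on $n$ collapses $\Omega+n+1$ down to $\Omega+1$ while iterating $\al\mapsto\omega^\al$ exactly $n$ times, giving $\omega_n(\al)$ as claimed.
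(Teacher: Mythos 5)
Your proposal is correct and follows essentially the same route as the paper: the paper proves Theorem \ref{epredce} directly by main induction on $n$ and subsidiary induction on $\al$, with the only interesting case being a cut of rank $\Omega+n$ handled by the induction hypothesis on all three premise groups (including the location premises $\GA\RI s_i\in\mathbb{E}_{\beta_i}$) followed by the Reduction Lemma \ref{ereduction} and the observation that $\omega^{\al_0}\#\omega^{\al_0}\#\omega^{\al_1}\#\omega^{\al_1}\#\omega^{\al_2}<\omega^\al$. Your factoring into a single-step lemma plus an external induction on $n$ is just the \ref{pprecesetup}/\ref{ppredce} packaging of the same nested induction, and your two flagged verifications (that $rk(C(\bar s))=\Omega+n>\Omega$ licenses Reduction, and that the $\mathbb{E}$-membership side premises and their constraints survive the passage $\al_i\mapsto\omega^{\al_i}$) are exactly what the paper's proof relies on.
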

\begin{proof}
By main induction on $n$ and subsidiary induction on $\al$. The interesting case is where the last inference was $\Cut$, with cut formula $A(\bar s)$ such that $rk(A(\bar s))=\Omega+n$ and $\bar s=s_1,...,s_m$ are the only terms occurring $A(\bar s)$. In this case we have
\begin{align*}
\tag{1}&\provx{\CH}{\al_0}{\Omega+n+1}{\GA\RI A(\bar s)}&\text{with $\al_0<\al$}\\
\tag{2}&\provx{\CH}{\al_1}{\Omega+n+1}{\GA, A(\bar s)\RI\DE}&\text{with $\al_1<\al$}\\
\tag{3}&\provx{\CH}{\al_2}{\Omega+n+1}{\GA\RI s_i\in\mathbb{E}_{\beta_i}}&\text{with $\al_2<\al$ and $\beta_i\in\CH$ for each $i=1,...,m$.}
\end{align*}
Applying the subsidiary induction hypothesis to (1), (2) and (3) gives
\begin{align*}
\tag{4}&\provx{\CH}{\omega^{\al_0}}{\Omega+n}{\GA\RI A(\bar s)}&\text{with $\al_0<\al$}\\
\tag{5}&\provx{\CH}{\omega^{\al_1}}{\Omega+n}{\GA, A(\bar s)\RI\DE}&\text{with $\al_1<\al$}\\
\tag{6}&\provx{\CH}{\omega^{\al_2}}{\Omega+n}{\GA\RI s_i\in\mathbb{E}_{\beta_i}}&\text{with $\al_2<\al$ and $\beta_i\in\CH$ for each $i=1,...,m$.}
\end{align*}
Now applying the Reduction Lemma \ref{ereduction} to (4), (5) and (6) gives
\begin{equation*}
\tag{7}\provx{\CH}{\omega^{\al_0}\#\omega^{\al_0}\#\omega^{\al_1}\#\omega^{\al_1}\#\omega^{\al_2}}{\Omega+n}{\GA\RI\DE}.
\end{equation*}
Note that $\omega^{\al_0}\#\omega^{\al_0}\#\omega^{\al_1}\#\omega^{\al_1}\#\omega^{\al_2}<\omega^\al$ so by weakening we have
\begin{equation*}
\tag{8}\provx{\CH}{\omega^\al}{\Omega+n}{\GA\RI\DE}.
\end{equation*}
Finally applying the main induction hypothesis gives
\begin{equation*}
\provx{\CH}{\omega_n(\al)}{\Omega+1}{\GA\RI\DE}
\end{equation*}
as required.
\end{proof}
\begin{lemma}\label{ehier}
{\em If $\gamma\leq\beta<\Omega$ with $\beta,\gamma\in\CH(\emptyset)$ and $\provx{\CH}{\al}{\rho}{\GA\RI s\in\Eg}$ then
\begin{equation*}
\provx{\CH}{\al+2}{\rho^*}{\GA\RI s\in\Eb}
\end{equation*}
where $\rho^*:=\text{max}(\rho,\beta+1)$.
}\end{lemma}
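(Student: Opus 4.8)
The plan is to dispose of the trivial case $\ga=\be$ first and then reduce the genuine case $\ga<\be$ to one instance of the transitivity axiom (E6) followed by two cuts. When $\ga=\be$ the hypothesis already reads $\provx{\CH}{\al}{\rho}{\GA\RI s\in\Eb}$, so a single appeal to weakening (the obvious analogue for $\IRSOE$ of Lemmas \ref{weakpers} and \ref{pweakpers}) raises the length to $\al+2$ and the cut-rank to $\rho^*\geq\rho$; this is legitimate since $\be+1\in\CH$, which holds because $\be\in\CH(\emptyset)$ together with clauses 4--5 of Definition \ref{operatorr}. Henceforth assume $\ga<\be$.

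For $\ga<\be$ the starting point is the instance of axiom (E6) with $t:=\Eg$ and ambient level $\be$, namely $\provx{\CH}{0}{\rho^*}{\GA,\Eg\in\Eb,\,s\in\Eg\RI s\in\Eb}$, available at length $0$ since $\ga,\be,0\in\CH$. I would first cut out $s\in\Eg$: weakening the hypothesis to $\provx{\CH}{\al}{\rho^*}{\GA,\Eg\in\Eb\RI s\in\Eg}$ and supplying the location premises for the two terms of the cut formula — the weakened hypothesis itself, declaring $s$ at level $\ga$, and $\Eg\in\mathbb{E}_{\ga+1}$ from axiom (E4), omitted in practice by the Convention — one application of $\Cut$ gives $\provx{\CH}{\al+1}{\rho^*}{\GA,\Eg\in\Eb\RI s\in\Eb}$. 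A second $\Cut$, this time on $\Eg\in\Eb$, whose right premise is the axiom (E4) $\provx{\CH}{0}{\rho^*}{\GA\RI\Eg\in\Eb}$ and whose location premises are again instances of (E4), removes $\Eg\in\Eb$ from the antecedent and produces $\provx{\CH}{\al+2}{\rho^*}{\GA\RI s\in\Eb}$. The two cuts account exactly for the increment $\al\mapsto\al+2$, and the side conditions $\al+1,\al+2\in\CH$ follow from $\al\in\CH$.

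The delicate point, and the one I expect to be the main obstacle, is the rank bookkeeping: one must check that both cut formulae have $\bar\be$-rank strictly below $\rho^*=\max(\rho,\be+1)$. The cut on $s\in\Eg$ is cheap, since with $s$ declared at level $\ga$ and $\Eg$ at level $\ga+1$ its rank does not exceed $\ga+1\leq\be<\rho^*$, using $\ga<\be$. The governing constraint is therefore the cut on $\Eg\in\Eb$, whose rank is controlled by the level of the container $\Eb$; it is precisely to absorb this cut that $\rho^*$ carries the summand $\be+1$, and confirming that the location data forced by the Convention keep this rank below $\rho^*$ is the crux of the argument. One must also verify throughout that the levels $\ga,\ga+1,\be,\be+1$ and all displayed ordinals lie in $\CH$, which again reduces to $\ga,\be\in\CH(\emptyset)$ and the closure clauses of an operator. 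Should the direct cut on $\Eg\in\Eb$ turn out to be too expensive at the boundary $\rho\leq\be+1$, the fallback is to induct on the derivation of $\GA\RI s\in\Eg$, analysing the axiom or inference that introduced $s\in\Eg$ and rebuilding it with $\Eb$ in place of $\Eg$; in the axiom cases (E4) and (E5) the conclusion $s\in\Eb$ is itself an axiom, so no costly cut is incurred and the rank stays controlled.
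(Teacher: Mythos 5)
Your proposal is correct and takes essentially the same route as the paper: the case $\gamma=\beta$ is handled by weakening, and for $\gamma<\beta$ the paper likewise combines the axiom (E6) instance $\GA,s\in\Eg,\Eg\in\Eb\RI s\in\Eb$ with the axiom (E4) instance $\GA\RI\Eg\in\Eb$ and performs exactly two cuts to reach length $\al+2$, merely in the opposite order (it first cuts $\Eg\in\Eb$, obtaining $\GA,s\in\Eg\RI s\in\Eb$ at length $1$, and then cuts $s\in\Eg$ against the hypothesis). The rank issue you single out as the crux is real but is shared by the paper's own proof, whose intermediate sequent explicitly carries cut-rank $\beta+2$ because $\|\Eg\in\Eb\|_{\gamma+1,\beta+1}=\beta+1$ must lie \emph{strictly} below the cut-rank; so no fallback induction on the derivation is needed, one simply reads $\rho^*$ as absorbing this cut (compare the rank $\psio{\hat{\al_0}}+2$ used when the lemma feeds into Theorem \ref{ecollapsing}).
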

\begin{proof}
If $\gamma=\beta$ the result follows by weakening, so suppose $\gamma<\beta$. Assume that
\begin{equation*}
\tag{1}\provx{\CH}{\al}{\rho}{\GA\RI s\in\Eg}.
\end{equation*}
Now as instances of axioms (E4) and (E6) respectively we have
\begin{align*}
\tag{2}&\provx{\CH}{0}{0}{\GA\Rightarrow\Eg\in\Eb}\\
\tag{3}&\provx{\CH}{0}{0}{\GA,s\in\Eg,\Eg\in\Eb\RI s\in\Eb}.
\end{align*}
Applying $\Cut$ to (2) and (3) yields
\begin{equation*}
\tag{4}\provx{\CH}{1}{\beta+2}{\GA,s\in\Eg\RI s\in\Eb}.
\end{equation*}
Now applying a second $\Cut$ to (1) and (4) supplies us with
\begin{equation*}
\provx{\CH}{\al+2}{\rho^*}{\GA\RI s\in\Eb}
\end{equation*}
as required.
\end{proof}
\begin{lemma}[Boundedness]\label{eboundedness}{\em Suppose $\al\leq\beta<\OO$, $\beta\in\CH$, $A$ is a $\Sigmae$-formula and $B$ is a $\Pie$ formula then:
\begin{description}
\item{i)} If $\provx{\CH}{\al}{\rho}{\GA\RI A}$ then $\provx{\CH}{\al}{\rho^*}{\GA\RI A^{\Eb}}$.
\item{ii)} If $\provx{\CH}{\al}{\rho}{\GA,B\RI \DE}$ then $\provx{\CH}{\al}{\rho^*}{\GA, B^{\Eb}\RI\DE}$.
\end{description}
Where we set $\rho^*:=\text{max}(\rho,\beta+1)$.
}\end{lemma}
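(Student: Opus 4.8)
The plan is to prove i) and ii) simultaneously by induction on $\al$, following the template of the earlier boundedness lemmas (Lemmas \ref{boundedness} and \ref{pboundedness}) but accounting for the fact that $\IRSOE$ terms carry no intrinsic level. Because $\al\leq\be<\OO$, the final inference can be neither $(\forall R)_\infty$, $(\exists L)_\infty$ nor $\SRE$: each of these produces a formula of rank $\geq\OO$ and hence requires $\al>\OO$. In every case where the displayed $\Sigmae$-formula $A$ (resp.\ $\Pie$-formula $B$) is \emph{not} principal in the last inference, as well as in the propositional cases, the $\Cut$ case, and the cases where $A$ (resp.\ $B$) is introduced by an already-bounded quantifier rule, I would simply apply the induction hypothesis to the premises and re-apply the same inference, raising the cut-rank from $\rho$ to $\rho^*$ by weakening where necessary. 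The interplay between $\neg$ and $\to$ (which interchange $\Sigmae$ and $\Pie$) is exactly what makes the simultaneous treatment of i) and ii) necessary.

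The first genuinely new case is i) with $A\equiv\exists xF(x)$ introduced by $(\exists R)$, so that $A^{\Eb}\equiv(\exists x\in\Eb)F(x)^{\Eb}$. Here the premises are $\provx{\CH}{\al_0}{\rho}{\GA\RI F(s)}$ and $\provx{\CH}{\al_1}{\rho}{\GA\RI s\in\Ed}$ with $\al_0+3,\al_1+3<\al$, $\de<\al$ and $\de\in\CH$. Applying the induction hypothesis i) to the first premise yields $\provx{\CH}{\al_0}{\rho^*}{\GA\RI F(s)^{\Eb}}$. Since $\de<\al\leq\be$ and $\de,\be\in\CH$, Lemma \ref{ehier} lifts the second premise to $\provx{\CH}{\al_1+2}{\rho^*}{\GA\RI s\in\Eb}$, and it is precisely this step that introduces the cut-rank $\rho^*=\text{max}(\rho,\be+1)$. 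One then applies $(\wedge R)$ to obtain $\GA\RI s\in\Eb\wedge F(s)^{\Eb}$ and finally $(b\exists R)$ with bounding term $\Eb$, discharging the two side premises by means of the axiom $\Eb\in\mathbb{E}_{\be+1}$ (E4, legitimate since $\be\in\CH$) and the declaration $s\in\Ed$ with $\de\leq\be+1$. The generous slack $\al_0+3,\al_1+3<\al$ in the $(\exists R)$ rule is exactly what absorbs the extra $+2$ coming from Lemma \ref{ehier} and the $+1$ from $(\wedge R)$, so the conclusion is obtained at level $\al$.

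The dual case is ii) with $B\equiv\forall xC(x)$ introduced by $(\forall L)$, whence $B^{\Eb}\equiv(\forall x\in\Eb)C(x)^{\Eb}$. The premises are $\provx{\CH}{\al_0}{\rho}{\GA,C(s)\RI\DE}$ and $\provx{\CH}{\al_1}{\rho}{\GA\RI s\in\Ed}$ under the same ordinal constraints. Applying the induction hypothesis ii) to the first premise gives $\provx{\CH}{\al_0}{\rho^*}{\GA,C(s)^{\Eb}\RI\DE}$, while Lemma \ref{ehier} again supplies $\provx{\CH}{\al_1+2}{\rho^*}{\GA\RI s\in\Eb}$. An application of $(\rightarrow L)$ then yields $\GA,s\in\Eb\rightarrow C(s)^{\Eb}\RI\DE$, and one concludes with $(b\forall L)$ whose principal formula is $(\forall x\in\Eb)C(x)^{\Eb}$, discharging its side premises via (E4) and the declaration $s\in\Ed$.

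The hard part will be the bookkeeping rather than any conceptual leap: one must check that in every case the auxiliary derivations produced by Lemma \ref{ehier}, by $(\wedge R)$ and $(\rightarrow L)$, and by the reassembling quantifier rule all remain strictly below $\al$ (which is exactly why the existential and universal rules of $\IRSOE$ were equipped with the $+3$ margins), and that the uniform cut-rank $\rho^*=\text{max}(\rho,\be+1)$ is respected throughout, in particular that every $\Cut$ inherited from the original derivation still carries a cut-formula of rank $<\rho\leq\rho^*$. The essential difference from $\IRS$ and $\IRSOP$, where $s\in\mathbb{L}_\be$ (resp.\ $s\in\mathbb{V}_\be$) was available as an axiom once $\lev s<\be$, is that here the witness's membership in $\Eb$ is not given syntactically but must be derived from the level-declaration premise and promoted up the hierarchy; this promotion, carried out by Lemma \ref{ehier}, is the sole reason the cut-rank has to rise to $\rho^*$.
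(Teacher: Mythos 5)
Your proposal is correct and follows essentially the same route as the paper's proof: simultaneous induction on $\al$ with the critical cases being $(\exists R)$ for i) and $(\forall L)$ for ii), where Lemma \ref{ehier} promotes the level declaration $s\in\Ed$ to $s\in\Eb$ (the sole source of the raised cut-rank $\rho^*$), followed by $(\wedge R)$ resp.\ $(\rightarrow L)$ and reassembly via $(b\exists R)$ resp.\ $(b\forall L)$ with axiom (E4) supplying $\Eb\in\mathbb{E}_{\beta+1}$. Your ordinal bookkeeping, including the observation that the $+3$ margins in the unbounded quantifier rules absorb the $+2$ from Lemma \ref{ehier}, matches the paper's computation $\text{max}(\al_0+1,\al_1+3)<\al$ exactly.
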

\begin{proof}
By induction on $\al$. The interesting case of i) is where $A\equiv\exists xC(x)$ and $A$ was the principal formula of the last inference which was $(\exists R)$. Note that since $\al<\OO$ the last inference cannot have been $\SRE$. So we have
\begin{align*}
\tag{1}&\provx{\CH}{\al_0}{\rho}{\GA\RI C(r)}&\text{with $\al_0+3<\al$.}\\
\tag{2}&\provx{\CH}{\al_1}{\rho}{\GA\RI r\in\Eg}&\text{with $\al_1<\al$, $\gamma\in\CH$ and $\gamma<\al$.}
\end{align*}
Since $\gamma<\al$ we also know that $\gamma<\beta$ so using Lemma \ref{ehier} we get
\begin{equation*}
\tag{3}\provx{\CH}{\al_1+2}{\rho^*}{\GA\RI r\in\Eb}.
\end{equation*}
Now by applying the induction hypothesis to (1) we get
\begin{equation*}
\tag{4}\provx{\CH}{\al_0}{\rho}{\GA\RI C(r)^{\Eb}}.
\end{equation*}
$(\wedge R)$ applied to (3) and (4) yields
\begin{equation*}
\tag{5}\provx{\CH}{\text{max}(\al_0+1,\al_1+3)}{\rho^*}{\GA\RI r\in\Eb \wedge C(r)^{\Eb}}.
\end{equation*}
Now since $\GA\RI\Eb\in\mathbb{E}_{\beta+1}$ is an axiom we may apply $(b\exists R)$ to (2) and (5) giving
\begin{equation*}
\provx{\CH}{\al}{\rho^*}{\GA\RI(\exists x\in\Eb)C(x)^{\Eb}}
\end{equation*}
as required.\\

\noindent Now for ii) the interesting case is where $B$ was the principal formula of the last inference which was $(b\forall L)$, thus $B\equiv\forall xC(x)$. So we have
\begin{align*}
\tag{6}&\provx{\CH}{\al_0}{\rho}{\GA,B,C(s)\RI\DE}&\text{with $\al_0<\al$,}\\
\tag{7}&\provx{\CH}{\al_1}{\rho}{\GA,B\RI s\in\Eg}&\text{with $\al_1+3<\al$, $\gamma\in\CH$ and $\gamma<\al$.}
\end{align*}
Applying the induction hypothesis twice to (6) and once to (7) we get
\begin{align*}
\tag{8}&\provx{\CH}{\al_0}{\rho}{\GA,B^{\Eb},C(s)^{\Eb}\RI\DE}&\text{with $\al_0<\al$,}\\
\tag{9}&\provx{\CH}{\al_1}{\rho}{\GA,B^{\Eb}\RI s\in\Eg}&\text{with $\al_1+3<\al$, $\gamma\in\CH$ and $\gamma<\al$.}
\end{align*}
Now since $\gamma<\al$ we also know that $\gamma<\beta$ so by applying Lemma \ref{ehier} to (9) we get
\begin{equation*}
\tag{10}\provx{\CH}{\al_1+2}{\rho^*}{\GA, B^{\Eb}\RI s\in\Eb}.
\end{equation*}
Applying $(\rightarrow L)$ to (8) and (10) supplies us with
\begin{equation*}
\tag{11}\provx{\CH}{\text{max}(\al_0+1,\al_1+3)}{\rho^*}{\GA,B^{\Eb},s\in\Eb\rightarrow C(s)^{\Eb}\RI\DE}.
\end{equation*}
Now applying $(b\forall L)$ to (11), (9) and $\RI\Eb\in\mathbb{E}_{\beta+1}$ which is an instance of axiom (E4), we obtain
\begin{equation*}
\provx{\CH}{\al}{\rho^*}{\GA,B^{\Eb}\RI\DE}
\end{equation*}
completing the proof.
\end{proof}
\begin{theorem}[Cut Elimination II; Collapsing]\label{ecollapsing}{\em Suppose $\eta\in\CH_\eta$, $\DE$ is a set of at most one $\Sigmae$ formula and $\GA$ is a finite set of $\Pie$ formulae.
 Then
\begin{equation*}
\provx{\CH_\eta}{\al}{\OO+1}{\GA\RI\DE}\quad\text{implies}\quad\provx{\CH_{\hat\al}}{\psio{\hat\al}}{\psio{\hat\al}}{\GA\RI\DE},
\end{equation*}
where $\hat\al:=\eta+\omega^\al$.
}\end{theorem}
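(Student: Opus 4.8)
The plan is to argue by induction on $\al$, following closely the template of Theorem \ref{collapsing} for $\IRS$ and Theorem \ref{pcollapsing} for $\IRSOP$, with the adaptations forced by the auxiliary term-location premises that the $\IRSOE$-rules carry and by the new rule $(\mathbb{E}\text{-Lim})_\infty$. First I would record the two operator facts on which everything rests, proved exactly as in Lemma \ref{op2} out of Lemma \ref{op1}: since $\al\in\CH_\eta=B^\OO(\eta+1)$ one gets $\hat\al,\psio{\hat\al}\in\CH_{\hat\al}$, and whenever $\al_0<\al$ with $\al_0\in\CH_\eta$ one has $\hat{\al_0}<\hat\al$ and hence $\psio{\hat{\al_0}}<\psio{\hat\al}$. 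These already settle the base case: if $\GA\RI\DE$ is an axiom, the conclusion holds because $\psio{\hat\al}\in\CH_{\hat\al}$. I would also note at the outset that, because $\GA$ consists of $\Pie$-formulae and $\DE$ of at most one $\Sigmae$-formula, the last inference can be neither $(\forall R)_\infty$ nor $(\exists L)_\infty$, and that every ordinal $\beta,\gamma,\delta,\ldots$ appearing as a subscript $\mathbb{E}_{(\cdot)}$ in the last inference satisfies $\beta\in\CH_\eta\cap\OO$, so $\beta<\psio{\eta+1}<\psio{\hat\al}$; this supplies the numerical side conditions whenever a rule is re-applied below the collapse.

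For the routine cases — propositional inferences and the quantifier rules $(b\forall R)_\infty$, $(\mathcal{E}b\forall R)_\infty$, $(b\exists L)_\infty$, $(\mathcal{E}b\exists L)_\infty$, $(\mathbb{E}\text{-Lim})_\infty$, $(b\forall L)$, $(\mathcal{E}b\forall L)$, $(b\exists R)$, $(\mathcal{E}b\exists R)$, $(\exists R)$, $(\forall L)$, all of which preserve the required shape of $\GA\RI\DE$ — I would apply the induction hypothesis to every premise and re-apply the same rule downstairs. The one genuinely new bookkeeping point, absent from \ref{collapsing}, is that each of these $\IRSOE$-rules carries auxiliary premises of the form $\GA\RI t\in\Eb$, $\GA\RI s\in\Eg$, $\GA\RI p\in\Ed$. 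Since such a formula is $\Deltaoe$, it is simultaneously $\Sigmae$ and $\Pie$, so the induction hypothesis applies to these premises verbatim, collapsing them to height and cut-rank $\psio{\hat{\al_i}}<\psio{\hat\al}$; the subscript ordinals are unchanged and remain $<\psio{\hat\al}$ by the remark above, so the side conditions of the re-applied rule (e.g. $\gamma\leq\beta$, $\delta\leq\text{max}(\beta,\gamma)+2$) are inherited. In the $(\mathbb{E}\text{-Lim})_\infty$ case one uses in addition that $\delta<\gamma\in\CH_\eta\cap\OO$ forces $\CH_\eta[\delta]=\CH_\eta$.

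The $\SRE$-case mirrors Case 6 of \ref{collapsing}: from $\provx{\CH_\eta}{\al_0}{\OO+1}{\GA\RI A}$ with $A$ a $\Sigmae$-formula, the induction hypothesis yields $\provx{\CH_{\hat{\al_0}}}{\psio{\hat{\al_0}}}{\psio{\hat{\al_0}}}{\GA\RI A}$; Boundedness \ref{eboundedness}(i) then bounds the reflection witness, giving $\GA\RI A^{\mathbb{E}_{\psio{\hat{\al_0}}}}$, and a single $(\exists R)$ — whose location premise $\RI\mathbb{E}_{\psio{\hat{\al_0}}}\in\mathbb{E}_{\psio{\hat{\al_0}}+1}$ is the axiom (E4), legitimate since $\psio{\hat{\al_0}}\in\CH_{\hat{\al_0}}$ — delivers the conclusion.

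The $\Cut$-case is where the real work lies, and I expect it to be the main obstacle. Since $\rho=\OO+1$, the cut formula $C$ has $\|C(\bar s)\|_{\bar\beta}\leq\OO$. If $\|C(\bar s)\|_{\bar\beta}<\OO$ then $C$ is $\Deltaoe$, hence both $\Sigmae$ and $\Pie$; here I would apply the induction hypothesis to the two logical premises and to the location premises $\GA\RI s_i\in\mathbb{E}_{\beta_i}$, then re-cut, checking $\|C(\bar s)\|_{\bar\beta}<\psio{\eta+1}<\psio{\hat\al}$. The hard subcase is $\|C(\bar s)\|_{\bar\beta}=\OO$, i.e. $C\equiv\exists xF(x)$ or $C\equiv\forall xF(x)$ with $F(\mathbb{E}_0)$ being $\Deltaoe$; I would treat $C\equiv\exists xF(x)$ and handle $\forall x$ dually (using Boundedness \ref{eboundedness}(ii) and the persistence clause \ref{einversion}(xi) in place of (i) and (xii)). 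Following Case 7.2 of \ref{collapsing}: collapse the $\Sigmae$-premise $\GA\RI C$ and apply \ref{eboundedness}(i) to obtain $\GA\RI(\exists x\in\mathbb{E}_{\psio{\hat{\al_0}}})F(x)$; apply the persistence clause \ref{einversion}(xii) to the premise $\GA,C\RI\DE$ to replace $\exists x$ by the bound $\mathbb{E}_{\psio{\hat{\al_0}}}$, still at cut-rank $\OO+1$; since the resulting bounded formula is $\Deltaoe$, hence $\Pie$, apply the induction hypothesis once more — with the auxiliary height $\al_1:=\hat{\al_0}+\omega^{\al_0}<\hat\al$ — and finally apply the $\IRSOE$-$\Cut$ to the two derivations, the cut formula now having $\bar\beta$-rank $<\psio{\hat\al}$. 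The delicate points I would verify are precisely that this new bounded cut formula has $\bar\beta$-rank below $\psio{\hat\al}$, that the extra location premise demanded by the $\IRSOE$-$\Cut$ is available (again via (E4) and $\psio{\hat{\al_0}}\in\CH_{\hat{\al_0}}$), and that $\al_1$ is chosen so that both $\psio{\hat{\al_0}}$ and $\psio{\hat{\al_1}}$ stay strictly below $\psio{\hat\al}$.
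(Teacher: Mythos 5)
Your proposal is correct and follows essentially the same route as the paper's proof: induction on $\al$, re-application of each rule after collapsing its logical premises together with its $\Deltaoe$ location premises, the $\SRE$ case via Boundedness plus axiom (E4) and $(\exists R)$, and the $\Cut$ case split on $\|C(\bar s)\|_{\bar\beta}<\OO$ versus $=\OO$, with the latter handled by collapsing one premise, bounding with \ref{eboundedness}, applying persistence \ref{einversion} to the other premise, invoking the induction hypothesis a second time relative to the base $\hat{\al_0}$, and cutting on the resulting $\Deltaoe$ formula of $\bar\beta$-rank $\psio{\hat{\al_0}}+1<\psio{\hat\al}$. The only divergences are immaterial: you treat the subcase $C\equiv\exists xF(x)$ where the paper treats the dual $C\equiv\forall xF(x)$ (explicitly noted as symmetric), and your auxiliary ordinal $\hat{\al_0}+\omega^{\al_0}$ tracks the theorem's literal definition $\hat\al:=\eta+\omega^\al$, whereas the paper's proof uses $\hat{\al_0}+\omega^{\OO+\al_0}$ in accordance with Lemma \ref{op2} --- an inconsistency internal to the paper, and either convention makes the required inequalities go through.
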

\begin{proof}
The proof is by induction on $\al$. Note that since $\eta\in\CH_\eta$ we know from Lemma \ref{op2} that
\begin{equation*}
\hat\al,\psio{\hat\al}\in\CH_{\hat\al}.
\end{equation*}
Case 1. If $\GA\RI\DE$ is an axiom the result follows easily.\\

\noindent Case 2. If $\GA\RI\DE$ was the result of a propositional inference we may apply the induction hypothesis to the premises of that inference, and then the same inference again.\\

\noindent Case 3. Suppose the last inference was $(\mathbb{E}\text{-Lim})$, then $s\in\Eg$ is a formula in $\GA$ for some limit ordinal $\gamma$ and
\begin{equation*}
\provx{\CH_\eta[\delta]}{\al_\delta}{\OO+1}{\GA,s\in\Ed\RI\DE}\quad\text{for all $\delta<\gamma$ with $\al_\delta<\al$.}
\end{equation*}
Since $\gamma\in\CH_\eta(\emptyset)=B^\OO(\eta+1)$ and $\gamma<\OO$ we know that $\gamma<\psio{\eta+1}$ and thus $\delta\in\CH_\eta$ for all $\delta<\gamma$. So we have
\begin{equation*}
\provx{\CH_\eta}{\al_\delta}{\OO+1}{\GA,s\in\Ed\RI\DE}\quad\text{for all $\delta<\gamma$ with $\al_\delta<\al$.}
\end{equation*}
Now applying the induction hypothesis provides
\begin{equation*}
\provx{\CH_{\hat\al}}{\psio{\hat{\al_\delta}}}{\psio{\hat{\al_\delta}}}{\GA,s\in\Ed\RI\DE}\quad\text{for all $\delta<\gamma$ with $\al_\delta<\al$.}
\end{equation*}
Now since $\psio{\hat{\al_\delta}}<\psio{\hat\al}$ we may apply $(\mathbb{E}\text{-Lim})$ to get the desired result.\\

\noindent Case 4. Suppose the last inference was $(b\forall L)$, then $(\forall x\in t)F(x)\in\GA$ and:
\begin{align*}
\tag{1}&\provx{\CH_\eta}{\al_0}{\OO+1}{\GA,s\in t\rightarrow F(s)\RI\DE}&\text{with $\al_0<\al$.}\\
\tag{2}&\provx{\CH_\eta}{\al_1}{\OO+1}{\GA\RI t\in\Eb}&\text{$\beta\in\CH_\eta(\emptyset)$ and $\al_1<\al$.}\\
\tag{3}&\provx{\CH_\eta}{\al_2}{\OO+1}{\GA\RI s\in\Eg}&\text{$\gamma\in\CH_\eta(\emptyset)$, $\gamma,\al_2<\al$ and $\gamma\leq\beta$.}
\end{align*}
Since  $s\in t\rightarrow F(s)$ is also a $\Pi^{\mathcal E}$-formula we may immediately apply the induction hypothesis to (1), (2) and (3) giving
\begin{align*}
\tag{4}&\provx{\CH_{\hat\al}}{\psio{\hat{\al_0}}}{\psio{\hat{\al}}}{\GA,s\in t\rightarrow F(s)\RI\DE}\\
\tag{5}&\provx{\CH_{\hat\al}}{\psio{\hat{\al_1}}}{\psio{\hat{\al}}}{\GA\RI t\in\Eb}\\
\tag{6}&\provx{\CH_{\hat\al}}{\psio{\hat{\al_2}}}{\psio{\hat{\al}}}{\GA\RI s\in\Eg.}
\end{align*}
Since $\gamma\in\CH_\eta$ we know that $\gamma<\psio{\eta+1}$ and thus $\gamma\in\CH_{\hat\al}$ and $\gamma<\psio{\hat\al}$. Moreover $\psio{\al_i}<\psio\al$ for $i=0,1,2$ so we may apply $(b\forall L)$ to complete this case. The case where the last inference was $(b\exists R)$ is treated in a similar manner.\\

\noindent Case 5. Suppose the last inference was $(b\forall R)_\infty$, then $\DE=\{(\forall x\in t)F(x)\}$ and
\begin{align*}
\tag{7}&\provx{\CH_\eta}{\al_0}{\OO+1}{\GA\RI s\in t\rightarrow F(s)}&\text{for all $s$, with $\al_0<\al$,}\\
\tag{8}&\provx{\CH_\eta}{\al_1}{\OO+1}{\GA\RI t\in\Eb}&\text{with $\al_1,\beta<\al$ and $\beta\in\CH_\eta$.}
\end{align*}
Applying the induction hypothesis to (7) and (8) yields
\begin{align*}
\tag{9}&\provx{\CH_{\hat\al}}{\psio{\hat{\al_1}}}{\psio{\hat{\al}}}{\GA\RI t\in\Eb}\\
\tag{10}&\provx{\CH_{\hat\al}}{\psio{\hat{\al_0}}}{\psio{\hat{\al}}}{\GA\RI s\in t\to F(s).}
\end{align*}
Note that since $\beta\in\CH_\eta$ we know that $\beta<\psio{\eta+1}<\psio{\hat\al}$, thus applying  $(b\forall R)_\infty$ to (10) (noting that $\psio{\hat{\al_0}}+1<\psio{\hat\al}$) gives the desired result. The case where the last inference was $(b\exists L)_\infty$ is treated in a similar manner.\\

\noindent Case 6. Now suppose the last inference was $(\mathcal{E}b\exists L)_\infty$, so $(\exists x\in\;\!\!^st)F(x)\in\GA$ and:
\begin{align*}
\tag{11}&\provx{\CH_\eta}{\al_0}{\OO+1}{\GA,\text{fun}(p,s,t)\wedge F(p)\RI\DE}&\text{for all $p$, with $\al_0<\al$.}
\end{align*}
\vspace{-1cm}
\begin{align*}
\tag{12}&\provx{\CH_\eta}{\al_1}{\OO+1}{\GA\RI s\in\Eb}&\text{with $\beta\in\CH_\eta$ and $\al_1<\al$.}\\
\tag{13}&\provx{\CH_\eta}{\al_2}{\OO+1}{\GA\RI t\in\Eg}&\text{with $\al_2<\al$, $\gamma\in\CH_\eta$ and $\text{max}(\beta,\gamma)+2\leq\al$.}
\end{align*}
By assumption $\text{fun}(p,s,t)\wedge F(p)$ is a $\Pie$ formula so we may apply the induction hypothesis to (11), (12) and (13) giving:
\begin{align*}
\tag{14}&\provx{\CH_{\hat\al}}{\psio{\hat{\al_0}}}{\psio{\hat\al}}{\GA,\text{fun}(p,s,t)\wedge F(p)\RI\DE}&\text{for all $p$;}\\
\tag{15}&\provx{\CH_{\hat\al}}{\psio{\hat{\al_1}}}{\psio{\hat\al}}{\GA\RI s\in\Eb}&\\
\tag{16}&\provx{\CH_{\hat\al}}{\psio{\hat{\al_2}}}{\psio{\hat\al}}{\GA\RI t\in\Eg}.&
\end{align*}
Since $\psio{\hat{\al_i}}<\psio{\hat\al}$ for $i=0,1,2$ and $\beta,\gamma\in\CH_\eta$ means that $\text{max}(\beta,\gamma)+2<\psio{\eta+1}<\psio{\hat\al}$ we may apply $(\mathcal{E}b\exists L)_\infty$ to (14), (15) and (16) to complete this case. The case where the last inference was $(\mathcal{E}b\forall R)_\infty$ may be treated in a similar manner.\\

\noindent Case 7. Now suppose the last inference was $(\mathcal{E}b\forall R)$, so $\DE=\{(\exists x\in\;\!\!^st)F(x)\}$ and we have:
\begin{align*}
\tag{17}&\provx{\CH_\eta}{\al_0}{\OO+1}{\GA\RI\text{fun}(p,s,t)\wedge F(p)}&\text{for all $p$ with $\al_0<\al$;}\\
\tag{18}&\provx{\CH_\eta}{\al_1}{\OO+1}{\GA\RI s\in\Eb}&\text{with $\beta\in\CH_\eta(\emptyset)$ and $\al_1<\al$;}\\
\tag{19}&\provx{\CH_\eta}{\al_2}{\OO+1}{\GA\RI t\in\Eg}&\text{with $\gamma\in\CH_\eta(\emptyset)$ and $\al_2<\al$;}\\
\tag{20}&\provx{\CH_\eta}{\al_3}{\OO+1}{\GA\RI p\in\Ed}&\text{$\al_3,\delta<\al$, $\delta\in\CH_\eta(\emptyset)$ and $\delta\leq\text{max}(\beta,\gamma)+2$.}
\end{align*}
Since $\text{fun}(p,s,t)\wedge F(p)$ is a $\Sigmae$ formula we can apply the induction hypothesis to (17), (18), (19) and (20) followed by $(\mathcal{E}b\forall R)$, in a similar manner to Case 4. The case where the last inference was $(\mathcal{E}b\forall L)$ can also be treated in a similar manner.\\

\noindent Now suppose the last inference was $(\forall L)$, so $\forall xF(x)\in\GA$ and
\begin{align*}
\tag{21}&\provx{\CH_\eta}{\al_0}{\OO+1}{\GA,F(s)\RI\DE}&\text{with $\al_0+3<\al$,}\\
\tag{22}&\provx{\CH_\eta}{\al_1}{\OO+1}{\GA\RI s\in\Eb}&\text{$\beta,\al_1+3<\al$ and $\beta\in\CH_\eta(\emptyset)$.}
\end{align*}
Since $F(s)$ is $\Pie$ we may immediately apply the induction hypothesis to (21) and (22) giving
\begin{align*}
\tag{23}&\provx{\CH_{\hat{\al}}}{\psio{\hat{\al_0}}}{\psio{\hat\al}}{\GA,F(s)\RI\DE}&\\
\tag{24}&\provx{\CH_{\hat{\al}}}{\psio{\hat{\al_1}}}{\psio{\hat\al}}{\GA\RI s\in\Eb}.&
\end{align*}
Now since $\beta\in\CH_\eta$ we know that $\beta<\psio{\eta+1}<\psio{\hat\al}$ hence we may apply $(\forall L)$ to (23) and (24) to complete this case. The case where the last inference was $(\exists R)$ can be treated in a similar manner.\\

\noindent Case 9. Now suppose the last inference was $\SRE$, so $\DE=\{\exists z A^z\}$ where $A$ is a $\Sigmae$ formula and
\begin{align*}
\tag{25}&\provx{\CH_\eta}{\al_0}{\OO+1}{\GA\RI A}&\text{with $\al_0+1,\OO<\al$.}
\end{align*}
We may immediately apply the induction hypothesis to (25) giving
\begin{equation*}
\tag{26}\provx{\CH_{\hat{\al_0}}}{\psio{\hat{\al_0}}}{\psio{\hat{\al_0}}}{\GA\RI A}.
\end{equation*}
Applying Boundedness \ref{eboundedness}i) to (26) provides
\begin{equation*}
\tag{27}\provx{\CH_{\hat{\al_0}}}{\psio{\hat{\al_0}}}{\psio{\hat{\al_0}}+2}{\GA\RI A^{\mathbb{E}_{\psio{\hat{\al_0}}}}}.
\end{equation*}
Now as an instance of axiom (E4) we have
\begin{equation*}
\tag{28}\provx{\CH_{\hat{\al_0}}}{0}{0}{\RI \mathbb{E}_{\psio{\hat{\al_0}}}\in\mathbb{E}_{\psio{\hat{\al_0}}+1}}.
\end{equation*}
Since $\psio{\hat{\al_0}}+1\in\CH_{\hat\al}$ and $\psio{\hat{\al_0}}+1<\psio{\hat\al}$ we may apply $(\exists R)$ to (27) and (28) to complete the case.\\

\noindent Now suppose the last inference was $\Cut$, so that we have:
\begin{align*}
\tag{29}&\provx{\CH_\eta}{\al_0}{\OO+1}{\GA\RI A(\bar s)}&\text{with $\al_0<\al$;}\\
\tag{30}&\provx{\CH_\eta}{\al_1}{\OO+1}{\GA,A(\bar s)\RI \DE}&\text{with $\al_1<\al$;}\\
\tag{31}&\provx{\CH_\eta}{\al_2}{\OO+1}{\GA\RI s_i\in\mathbb{E}_{\beta_i}}&\text{with $\al_2<\al$, $\bar\beta\in\CH_\eta$ and $\|A(\bar s)\|_{\bar\beta}\leq\OO$.}
\end{align*}
Subcase 10.1: If $\|A(\bar s)\|_{\bar\beta}<\OO$ it follows from $\bar\beta\in\CH_\eta=B^\OO(\eta+1)$ that $\|A(\bar s)\|_{\bar\beta}\in B^\OO(\eta+1)$ and thus  $\|A(\bar s)\|_{\bar\beta}<\psio{\eta+1}<\psio{\hat\al}$. Also $A$ is $\Deltaoe$, thus we may apply the induction hypothesis to (29), (30) and (31) followed by $\Cut$ to complete this (sub)case.\\

\noindent Subcase 10.2: Now suppose $\|A(\bar s)\|_{\bar\beta}=\OO$. Then either $A\equiv\forall x F(x)$ or $A\equiv\exists xF(x)$ with $F$ a $\Deltaoe$ formula. The two cases are dual, we assume that the former is the case. Thus $A$ is $\Pie$, so we may apply the induction hypothesis to (30) giving
\begin{equation*}
\tag{32}\provx{\CH_{\hat{\al_1}}}{\psio{\hat{\al_1}}}{\psio{\hat{\al_1}}}{\GA,A(\bar s)\RI\DE\,.}
\end{equation*}
Applying Boundedness \ref{eboundedness}ii) to (32) yields
\begin{equation*}
\tag{33}\provx{\CH_{\hat\al}}{\text{max}(\psio{\hat{\al_0}},\psio{\hat{\al_1}})}{\psio{\hat{\al_1}}}{\GA,A(\bar s)^{\mathbb{E}_{\psio{\hat{\al_0}}}}\RI\DE}.
\end{equation*}
Now applying \ref{einversion}xi) (persistence) to (29) gives
\begin{equation*}
\tag{34}\provx{\CH_{\hat{\al_0}}}{\al_0}{\OO+1}{\GA\RI A(\bar s)^{\mathbb{E}_{\psio{\hat{\al_0}}}}}.
\end{equation*}
Noting that $A(\bar s)^{\mathbb{E}_{\psio{\hat{\al_0}}}}$ is $\Deltaoe$ we may apply the induction hypothesis to (34) giving
\begin{equation*}
\tag{35}\provx{\CH_{\al^*}}{\psio{\al^*}}{\psio{\al^*}}{\GA\RI A(\bar s)^{\mathbb{E}_{\psio{\hat{\al_0}}}}}.
\end{equation*}
where $\al^*:=\hat{\al_0}+\omega^{\OO+\al_0}$. Now applying the induction hypothesis to (31) gives
\begin{equation*}
\tag{36}\provx{\CH_{\hat{\al_2}}}{\psio{\hat{\al_2}}}{\psio{\hat{\al_2}}}{\GA\RI s_i\in\mathbb{E}_{\beta_i}}.
\end{equation*}
Now as an instance of axiom (E4) we have
\begin{equation*}
\tag{37}\provx{\CH_{\hat\al}}{0}{0}{\RI \mathbb{E}_{\psio{\hat{\al_0}}}\in\mathbb{E}_{\psio{\hat{\al_0}}+1}}.
\end{equation*}
Since $\bar\beta\in B^\OO(\eta+1)$ we get
\begin{equation*}
\|A(\bar s)^{\mathbb{E}_{\psio{\hat{\al_0}}}}\|_{\bar\beta,\psio{\hat{\al_0}}+1}=\psio{\hat{\al_0}}+1<\psio{\hat\al}.
\end{equation*}
It remains to note that
\begin{equation*}
\al^*=\eta+\omega^{\OO+\al_0}+\omega^{\OO+\al_0}<\eta+\omega^{\OO+\al}=\hat\al
\end{equation*}
and thus $\psio{\al^*}<\psio\al$. So we may apply $\Cut$ to (33),(35),(36) and (37) to conclude
\begin{equation*}
\provx{\CH_{\hat{\al}}}{\psio{\hat\al}}{\psio{\hat\al}}{\GA\RI\DE}
\end{equation*}
as required.
\end{proof}
\subsection{Embedding $\IKPE$ into $\IRSOE$.}
\begin{definition}{\em
If $\GA[\bar a]\RI\DE[\bar a]$ is an intuitionistic sequent of $\IKPE$ with exactly the free variables $\bar a=a_1,...,a_n$ and containing the formulas $A_1(\bar a),...,A_m(\bar a)$ then
\begin{equation*}
no_{\bar\beta}(\GA[\bar s]\RI\DE[\bar s]):=\omega^{\|A_1\|_{\bar\beta}}\#...\#\omega^{\|A_m\|_{\bar\beta}}.
\end{equation*}
For terms $\sbar:=s_1,...s_n$ and ordinals $\bbeta:=\beta_1,...,\beta_n$ the expression $\sbar\in\Ebb$ will be considered shorthand for $s_1\in\mathbb{E}_{\beta_1},...,s_n\in\mathbb{E}_{\beta_n}$\\

\noindent The expression $\Vdash\GA[\bar s]\RI\DE[\bar s]$ will be considered shorthand for
\begin{equation*}
{\CH[\bar{\beta}]}\;\prov{no_{\bar\beta}(\GA[\bar s]\RI\DE[\bar s])}{0}{\bar s\in\mathbb{E}_{\bar\beta},\GA[\bar s]\RI\DE[\bar s]}.
\end{equation*}
For any operator $\CH$ and any ordinals $\bar\beta<\OO$.\\

\noindent The expression $\Vdash^\al_\rho \GA[\bar s]\RI\DE[\bar s]$ will be considered shorthand for
\begin{equation*}
{\CH[\bar{\beta}]}\;\prov{no_{\bar\beta}(\GA[\bar s]\RI\DE[\bar s])\#\al}{\rho}{\bar s\in\mathbb{E}_{\bar\beta},\GA[\bar s]\RI\DE[\bar s]}.
\end{equation*}
For any operator $\CH$ and any ordinals $\bar\beta<\OO$.\\

\noindent As might be expected $\Vdash^\al\GA[\bar s]\RI\DE[\bar s]$ and $\Vdash_\rho\GA[\bar s]\RI\DE[\bar s]$ will be considered shorthand for $\Vdash^\al_0\GA[\bar s]\RI\DE[\bar s]$ and $\Vdash_\rho^0\GA[\bar s]\RI\DE[\bar s]$ respectively.
}\end{definition}
\begin{lemma}\label{elogic}{\em For any formula $A(\bar a)$ of $\IKPE$ containing exactly the free variables displayed and any $\IRSOE$ terms $\bar s=s_1,...,s_n$
\begin{equation*}
\Vdash_\OO A(\bar s)\RI A(\bar s)\,.
\end{equation*}
}\end{lemma}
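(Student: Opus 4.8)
The plan is to induct on the build-up of the $\IKPE$-formula $A$, uniformly in the substituted terms $\sbar=s_1,\dots,s_n$ and the level parameters $\bbeta=\beta_1,\dots,\beta_n$. Unwinding the abbreviation, the goal is, for every operator $\CH$ and all $\bbeta<\OO$, a derivation
$$\provx{\CH[\bbeta]}{no_{\bbeta}(A(\sbar)\RI A(\sbar))}{\OO}{\sbar\in\Ebb,\;A(\sbar)\RI A(\sbar)}.$$
The base case is $A$ a $\Deltaoe$-formula, where the whole sequent is an instance of the logical axiom (E1); here one only checks that $no_{\bbeta}(A(\sbar)\RI A(\sbar))=\omega^{\|A(\sbar)\|_{\bbeta}}$ lies in $\CH[\bbeta]$, which holds since $\|A(\sbar)\|_{\bbeta}$ is generated from $\bbeta,0,\OO$ by $\max$, $+$ and $\sigma\mapsto\omega^{\sigma}$. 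In the inductive step $A$ is not $\Deltaoe$, so its immediate subformula(e) carry the unbounded quantifier(s) and I may apply the induction hypothesis to them; I then split on the outermost symbol of $A$.

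The heart of the matter is the quantifier cases, where the defining feature of $\IRSOE$---that terms carry no intrinsic level---forces every membership side-premise to be discharged explicitly. For $A(\sbar)\equiv\exists x\,F(x,\sbar)$ the induction hypothesis gives, for each term $r$ and each $\gamma<\OO$,
$$\provx{\CH[\gamma,\bbeta]}{\omega^{\|F(r,\sbar)\|_{\gamma,\bbeta}}}{\OO}{r\in\Eg,\;\sbar\in\Ebb,\;F(r,\sbar)\RI F(r,\sbar)}.$$
Feeding in the trivial premise $\RI r\in\Eg$ from (E1), a single $(\exists R)$ (witness $r$, level $\gamma$) produces $\RI\exists x\,F(x,\sbar)$; weakening $\exists x\,F(x,\sbar)$ into the antecedent and applying $(\exists L)_\infty$---whose premises range over all terms $r$ and all levels $\gamma<\OO$ and already carry $r\in\Eg$ as a hypothesis---closes the case (the controlling operators match because $\CH[\bbeta][\gamma]=\CH[\gamma,\bbeta]$). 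The bounded cases $(\exists x\in s_i)F$ and $(\exists x\in{}^{s_i}s_j)F$ run the analogous $(b\exists L)_\infty/(b\exists R)$ and $(\mathcal{E}b\exists L)_\infty/(\mathcal{E}b\exists R)$ sandwiches, but now the witness level is not free and must be \emph{computed}: from $s_i\in\mathbb{E}_{\beta_i}$ (part of $\sbar\in\Ebb$) and the conjunct $r\in s_i$ of the minor formula, axiom (E6) yields $r\in\mathbb{E}_{\beta_i}$, so taking $\gamma:=\beta_i$ meets the side-condition $\gamma\leq\beta$; for the exponentiation quantifier, (E10) turns $s_i\in\mathbb{E}_{\beta_i}$, $s_j\in\mathbb{E}_{\beta_j}$ and $\text{fun}(p,s_i,s_j)$ into $p\in\Ed$ with $\delta:=\max(\beta_i,\beta_j)+2$, matching $\delta\leq\max(\beta,\gamma)+2$. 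The self-implication of the minor conjunction $s\in t\wedge A(s)$ is assembled by $(\wedge L)$ and $(\wedge R)$ from the induction hypothesis for $F$ and (E1) for the $\in$-/$\text{fun}$-conjunct. The universal cases are exactly dual, via $(\forall L)/(\forall R)_\infty$, $(b\forall L)/(b\forall R)_\infty$ and $(\mathcal{E}b\forall L)/(\mathcal{E}b\forall R)_\infty$, with the identical level computations.

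The propositional cases are routine: for $A\equiv B\rightarrow C$ I combine the hypotheses $\Vdash_\OO B\RI B$ and $\Vdash_\OO C\RI C$ by $(\rightarrow L)$ and then $(\rightarrow R)$, and $\wedge,\vee,\neg$ are similar. What remains is to check that every constructed derivation keeps its height below $no_{\bbeta}(A(\sbar)\RI A(\sbar))=\omega^{\|A(\sbar)\|_{\bbeta}}$. This is pure ordinal arithmetic with ample room: since $A$ carries an unbounded quantifier, $\|A(\sbar)\|_{\bbeta}=rk(A(\sbar))$ exceeds the rank of each minor formula by at least $2$, so $\omega^{\|A\|_{\bbeta}}$ dominates the premise heights $\omega^{\|F\|}+\text{(finite)}$ and absorbs the finitely many $+3$ gaps of the rules; and even when the matrix is $\Deltaoe$---so that the premise heights grow with the witness level $\gamma$---each stays below $\OO=\omega^{\OO}$ because $\OO$ is an $\varepsilon$-number, whence the $\infty$-rule side-conditions $\gamma<\al_\gamma+3<\al$ hold for every fixed $\gamma<\OO$. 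Each ordinal used as a level ($\beta_i$, $\max(\beta_i,\beta_j)+2$, and the running $\gamma<\OO$) lies in the controlling operator by the closure properties invoked in the base case.

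I expect the genuine obstacle---and the reason this lemma is harder than its $\IRSOP$ analogue (Lemma \ref{plog})---to be precisely the discharging of these level side-premises. Unlike in $\IRSOP$, where a term's level is fixed by its syntax, here every quantifier rule of $\IRSOE$ carries hypotheses placing the witness and the bounding terms at specified stages of the $\mathbb{E}$-hierarchy, and the argument only succeeds because the membership axioms (E4)--(E11) let one read off a \emph{provably correct} level from the membership facts already present in the antecedent, with that level simultaneously satisfying the arithmetical constraints ($\gamma\leq\beta$, $\delta\leq\max(\beta,\gamma)+2$) and lying in the operator. Accordingly I would isolate the two level computations---(E6) for $\in$-bounded witnesses and (E10) for exponentiation-bounded witnesses---once, and then invoke them uniformly across the existential and universal cases.
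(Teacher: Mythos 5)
Your proposal is correct and follows essentially the same route as the paper's own proof: the same induction on the build-up of $A$ with base case (E1), the same right-rule/infinitary-left-rule sandwich in each quantifier case, and the same discharging of the level side-premises --- by (E1) when the witness level is hypothesised in the premises of the infinitary rule, and by (E6) with $\gamma=\beta_i$, resp.\ (E10) with $\delta=\max(\beta_i,\beta_j)+2$, followed by a cut, in the bounded and exponentiation-bounded cases. Your closing observation is also exactly the paper's: these cuts have $\bbeta$-rank unbounded in $\OO$ (which is why the lemma only yields $\Vdash_\OO$), and the height bookkeeping with $\gamma<\al^\gamma+3<\omega^{rk(A)}$, using that $\OO$ is an $\varepsilon$-number when the matrix is $\Deltaoe$, matches the paper's Case 1.
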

\begin{proof}
By induction on the construction of the formula $A$. If $A$ is $\Deltaoe$ then this is an instance of axiom (E1).\

\noindent Suppose $A(\bar s)\equiv\forall xF(x,\bar s)$. For each $\ga<\OO$ we define
\begin{equation*}
\al^\gamma:=\gamma+no_{\gamma,\bar\beta}(F(t,\bar s)\RI F(t,\bar s)),
\end{equation*}
note that
\begin{equation*}
\gamma<\al^\gamma<\al^\gamma+8<no_{\bar\beta}(A(\bar s)\RI A(\bar s)).
\end{equation*}
By axiom (E1) we have
\begin{equation*}
\tag{1}{\CH[\gamma,\bar\beta]}\;\prov{0}{0}{t\in\Eg\RI t\in\Eg}\quad\text{for all $t$ and all $\gamma<\OO$.}
\end{equation*}
Now from the induction hypothesis we have
\begin{equation*}
\tag{2}\CH[\gamma,\bar\beta]\;\prov{\al^\gamma}{\OO}{\bar s\in\mathbb{E}_{\bar\beta}, t\in\Eg, F(t,\bar s)\RI F(t,\bar s)}\quad\text{for all $t$ and all $\gamma<\OO$.}
\end{equation*}
It is worth noting that this use of the induction hypothesis is where we really need cuts of $\bar\beta$-rank arbitrarily high in $\OO$. Applying $(\forall L)$ to (1) and (2) yields
\begin{equation*}
\CH[\gamma,\bar\beta]\;\prov{\al^\gamma+4}{\OO}{\bar s\in\mathbb{E}_{\bar\beta},t\in\Eg, A(\bar s)\RI F(t,\bar s)}
\end{equation*}
to which we may apply $(\forall R)_\infty$ to get the desired result.\\

\noindent Case 2. Now suppose $A\equiv(\forall x\in s_i)F(x,\bar s)$. From the induction hypothesis we have
\begin{equation*}
\tag{3}\CH[\delta,\bar\beta]\;\prov{(\omega^{\|F(t,\bar s)\|_{\delta,\bar\beta}})\cdot 2}{\OO}{t\in\Ed,\bar s\in\mathbb{E}_{\bar\beta},F(t,\bar s)\RI F(t,\bar s)}\quad\text{for all $t$ and all $\delta<\OO$.}
\end{equation*}
In particular when $\delta=\beta_i$ in (3) we have
\begin{equation*}
\tag{4}\CH[\delta,\bar\beta]\;\prov{\al_0}{\OO}{t\in\mathbb{E}_{\beta_i},\bar s\in\mathbb{E}_{\bar\beta},F(t,\bar s)\RI F(t,\bar s)}
\end{equation*}
where $\al_0:=(\omega^{\|F(t,\bar s)\|_{\beta_i,\bar\beta}})\cdot 2$. Now as an instance of axiom (E6) we have
\begin{equation*}
\tag{5}\CH[\bar\beta]\;\prov{0}{0}{s_i\in\mathbb{E}_{\beta_i},t\in s_i\RI t\in\mathbb{E}_{\beta_i}}\,.
\end{equation*}
Now applying $\Cut$ to (4) and (5) yields
\begin{equation*}
\tag{6}\CH[\bar\beta]\;\prov{\al_0+1}{\OO}{\bar s\in\mathbb{E}_{\bar\beta},t\in s_i,F(t,\bar s)\RI F(t,\bar s)}.
\end{equation*}
As an instance of axiom (E1) we have
\begin{equation*}
\tag{7}\CH[\bar\beta]\;\prov{0}{0}{t\in s_i\RI t\in s_i}.
\end{equation*}
Applying $(\rightarrow L)$ to (6) and (7) yields
\begin{equation*}
\tag{8}\CH[\bar\beta]\;\prov{\al_0+2}{\OO}{\bar s\in\mathbb{E}_{\bar\beta},t\in s_i,t\in s_i\rightarrow F(t,\bar s)\RI F(t,\bar s)}.
\end{equation*}
An application of $(b\forall L)$ to (5) and (8) provides
\begin{equation*}
\CH[\bar\beta]\;\prov{\al_0+3}{\OO}{\bar s\in\mathbb{E}_{\bar\beta},t\in s_i,(\forall x\in s_i)F(x,\bar s)\RI F(t,\bar s)}.
\end{equation*}
Finally using $(\rightarrow R)$ followed by $(b\forall R)_\infty$ and noting that $\al_0+5<no_{\bar\beta}(A(\bar s)\RI A(\bar s))$ we get the desired result.\\

\noindent Case 3. Suppose that $A\equiv (\exists x\in\;\!\!^{s_i}s_j)F(x,\bar s)$. From the induction hypothesis we know that
\begin{equation*}
\tag{9}\CH[\bar\beta,\delta]\;\prov{(\omega^{\|F(t,\bar s)\|_{\delta,\bar\beta}})\cdot 2}{\OO}{\bar s\in\mathbb{E}_{\bar\beta},t\in\Ed,F(t,\bar s)\RI F(t,\bar s)}\quad\text{for all $t$ and all $\delta<\OO$.}
\end{equation*}
In particular when $\delta=\gamma:=\text{max}(\beta_i,\beta_j)+2$ we have
\begin{equation*}
\tag{10}\CH[\bar\beta]\;\prov{\al_0}{\OO}{\bar s\in\mathbb{E}_{\bar\beta}, t\in\Eg, F(t,\bar s)\RI F(t,\bar s)}\quad\text{for all $t$,}
\end{equation*}
where $\al_0:=(\omega^{\|F(t,\bar s)\|_{\bar\beta,\gamma}})\cdot 2$. Now as an instance of axiom (E10) we have
\begin{equation*}
\tag{11}\CH[\bar\beta]\;\prov{0}{0}{\sbar\in\Ebb,\text{fun}(t,s_i,s_j)\RI t\in\Eg}.
\end{equation*}
Applying $\Cut$ to (10) and (11) gives
\begin{equation*}
\tag{12}\CH[\bbeta]\;\prov{\al_0+1}{\OO}{\sbar\in\Ebb,\text{fun}(t,s_i,s_j),F(t,\bar s)\RI F(t,\sbar)}.
\end{equation*}
As an instance of axiom (E1) we have
\begin{equation*}
\tag{13}\CH[\bar\beta]\;\prov{0}{0}{\text{fun}(t,s_i,s_j)\RI\text{fun}(t,s_i,s_j)}
\end{equation*}
Applying $(\wedge R)$ to (12) and (13) gives
\begin{equation*}
\tag{14}\CH[\bbeta]\;\prov{\al_0+2}{\OO}{\sbar\in\Ebb,\text{fun}(t,s_i,s_j),F(t,\sbar)\RI\text{fun}(t,s_i,s_j)\wedge F(t,\sbar)}.
\end{equation*}
Now applying $(\mathcal{E}b\exists R)$ to (11) and (14) yields
\begin{equation*}
\tag{15}\CH[\bbeta]\;\prov{\al_0+3}{\OO}{\sbar\in\Ebb,\text{fun}(t,s_i,s_j),F(t,\sbar)\RI(\exists x\in\;\!\!^{s_i}s_j) F(x,\sbar)}.
\end{equation*}
Two applications of $(\wedge L)$ gives
\begin{equation*}
\tag{16}\CH[\bbeta]\;\prov{\al_0+5}{\OO}{\sbar\in\Ebb,\text{fun}(t,s_i,s_j)\wedge F(t,\sbar)\RI(\exists x\in\;\!\!^{s_i}s_j) F(x,\sbar)}.
\end{equation*}
Finally using $(\mathcal{E}b\exists L)_\infty$ gives
\begin{equation*}
\tag{17}\CH[\bbeta]\;\prov{\al_0+6}{\OO}{\sbar\in\Ebb,(\exists x\in\;\!\!^{s_i}s_j) F(x,\sbar)\RI(\exists x\in\;\!\!^{s_i}s_j) F(x,\sbar)}.
\end{equation*}
It remains to note that $\al_0+6<no_{\bbeta}(A(\sbar)\RI A(\sbar))$ to complete this case.\\

\noindent All other cases are either propositional, for which the proof is standard or may be regarded as dual to one of the three above.
\end{proof}
\begin{lemma}[Extensionality]\label{eextensionality}{\em
For any formula $A(\bar a)$ of $\IKPE$ (not necessarily with all free variables displayed) and any $\IRSOE$ terms $\sbar:=s_1,...,s_n,\bar t:=t_1,...,t_n$ we have
\begin{equation*}
\Vdash_\OO\sbar=\bar t,A(\sbar)\RI A(\bar t)
\end{equation*}
where $\sbar=\bar t$ is shorthand for $s_1=t_1,...,s_n=t_n$.
}\end{lemma}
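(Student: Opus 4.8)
The plan is to argue by induction on $rk(A(\sbar))$, paralleling the proof of Lemma~\ref{pextensionality} for $\IRSOP$, but with the extra care that, since $\IRSOE$-terms carry no intrinsic level, every quantifier rule applied must be supplied with the side premises recording the relevant $\mathbb{E}$-levels. Throughout I would treat the free variables of $A$ that are \emph{not} among the displayed $a_1,\dots,a_n$ as fixed parameters occurring identically on both sides, so they play no role; and, in the style of Lemma~\ref{elogic}, I would keep the level declarations $\sbar\in\Ebb$ and $\bar t\in\mathbb{E}_{\bar\gamma}$ explicitly in the antecedent, reading off the $\Vdash_\OO$ abbreviation only at the very end.

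First I would dispatch the base case: if $A$ is $\Deltaoe$, then $\sbar=\bar t,A(\sbar)\RI A(\bar t)$ is literally an instance of axiom~(E3), so nothing remains beyond checking $\bar\beta\in\CH$. In the inductive step $A$ contains an unbounded quantifier, whence by the observation that a formula with unbounded quantifiers has the same rank no matter which terms are substituted we get $rk(A(\sbar))=rk(A(\bar t))\geq\OO$, and every immediate constituent has strictly smaller rank, so the induction hypothesis is available for it. The propositional cases ($\wedge,\vee,\to,\neg$) follow from the induction hypothesis by the matching two-sided rule in the usual intuitionistic pattern. For $A\equiv\exists xB(x,\sbar)$ I would obtain $\sbar=\bar t,B(r,\sbar)\RI B(r,\bar t)$ for every term $r$ (by the induction hypothesis, or by axiom~(E3) when $B$ is $\Deltaoe$), then apply $(\exists R)$ followed by $(\exists L)_\infty$; the witness premise $\RI r\in\Eb$ demanded by $(\exists R)$ is read off the level hypothesis on $r$ carried in the antecedent. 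The ordinary bounded case $A\equiv(\exists x\in s_i)B$ is analogous with the bound changing from $s_i$ to $t_i$: extensionality of the $\Deltaoe$ part $r\in s_i\RI r\in t_i$ is axiom~(E3), the induction hypothesis handles $B$, and the level of the element $r$ is recovered from axiom~(E6) (from $r\in t_i$ and $t_i\in\Eb$) before applying $(b\exists R)$ and $(b\exists L)_\infty$. The universal cases are dual.

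The genuinely new case, and the one I expect to be the main obstacle, is the exponentiation-bounded quantifier $A\equiv(\exists x\in\;\!\!^{s_i}s_j)B(x,\sbar)$ (and its universal dual). Here the difficulty is that the witnessing function $p$ has no syntactically given level, so $(\mathcal{E}b\exists R)$ — which requires a premise $\RI p\in\Ed$ with $\delta\leq\text{max}(\beta,\gamma)+2$ — forces us to manufacture that level \emph{dynamically}. Mirroring Case~3 of Lemma~\ref{elogic}, the recipe is: from the hypothesis $\text{fun}(p,s_i,s_j)$ together with the antecedent declarations $t_i\in\Eb$, $t_j\in\Eg$, first pass to $\text{fun}(p,t_i,t_j)$ via axiom~(E3) (using $s_i=t_i$, $s_j=t_j$, legitimate since $\text{fun}$ is $\Deltaoe$), and then axiom~(E10) yields $p\in\mathbb{E}_{\text{max}(\beta,\gamma)+2}$; meanwhile the induction hypothesis applied to $B(p,\sbar)$ (whose rank is $\geq\OO$ and strictly below $rk(A)$) transports it to $B(p,\bar t)$, and $(\wedge R)$ assembles $\text{fun}(p,t_i,t_j)\wedge B(p,\bar t)$. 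Combining these by $(\mathcal{E}b\exists R)$ and closing off with two $(\wedge L)$ and $(\mathcal{E}b\exists L)_\infty$ over all $p$ gives the required sequent. The subtle point is that the level bound for $p$ on the \emph{right} must be computed from $t_i,t_j$, so the declarations $t_i\in\Eb$, $t_j\in\Eg$ (rather than those for $s_i,s_j$) are the ones that have to be fed into (E10).

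Finally, the ordinal and cut-rank bookkeeping is routine but must be tracked. Every auxiliary inference is either a $\wedge$-rule or a $\Cut$ against a $\Deltaoe$ formula (the outputs of (E3), (E6), (E10)), each of rank $<\OO$, so cut-rank $\OO$ suffices and no $\SRE$ is ever invoked; and since the quantified principal formula has $no$-value $\omega^{rk}\geq\omega^{\OO}$, the finitely many $+k$ increments incurred stay below $no_{\bbeta}(\sbar=\bar t,A(\sbar)\RI A(\bar t))$, so the derivation indeed fits the $\Vdash_\OO$ bound. I would write out only the exponentiation-bounded case in full and remark that the remaining cases are either propositional, treated as above, or dual to one already given.
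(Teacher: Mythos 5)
Your proposal is correct and follows essentially the same route as the paper's proof: induction on $rk(A(\sbar))$ with axiom (E3) as the $\Deltaoe$ base case, level declarations carried in the antecedent and discharged via axioms (E1), (E6) and (E10), and, in the exponentiation-bounded case, the key move of converting $\text{fun}(p,s_i,s_j)$ to $\text{fun}(p,t_i,t_j)$ by (E3) so that (E10) can manufacture the level of $p$ from the declarations for $\bar t$ before the right-hand rule is applied. The only cosmetic difference is that you work out the existential exponentiation case where the paper details the universal one, which are dual.
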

\begin{proof}
If $A(\sbar)$ is $\Deltaoe$ then this is an instance of axiom (E3). The remainder of the proof is by induction on $rk(A(\sbar))$, note that since $A$ is assumed to contain an unbounded quantifier
\begin{equation*}
rk(A)=\|A(\sbar)\|_{\bbeta}\geq\OO\quad\text{for any ordinals $\bbeta<\OO$.}
\end{equation*}
Case 1. Suppose $A(\bar s)\equiv\forall xF(x,\sbar)$. By the induction hypothesis we have
\begin{equation*}
\CH[\bbeta,\bar\gamma,\delta]\;\prov{no_{\bbeta,\bar\gamma,\delta}(\sbar=\bar t, F(r,\bar s)\RI F(r,\bar t))}{\OO}{\sbar\in\Ebb,\bar t\in\mathbb{E}_{\bar\gamma},r\in\Ed,\sbar=\bar t, F(r,\sbar)\RI F(r,\bar t)}
\end{equation*}
for all $r$ and all $\delta<\OO$. For ease of reading we suppress the other terms possibly occurring in $F(r,\sbar)$ and the assumptions about their locations in the $\mathbb{E}$ hierarchy since these do not affect the proof. By virtue of axiom (E1) we have
\begin{equation*}
\CH[\bbeta,\bar\gamma,\delta]\;\prov{0}{0}{ r\in\Ed\RI r\in\Ed}.
\end{equation*}
Hence we may apply $(\forall L)$ to obtain
\begin{equation*}
\CH[\bbeta,\bar\gamma,\delta]\;\prov{\al_\delta}{\OO}{\sbar\in\Ebb, t\in\mathbb{E}_{\bar\gamma},\sbar=\bar t, r\in\Ed,\forall xF(x,\sbar)\RI F(r,\bar t)}
\end{equation*}
where $\al_\delta:=\delta+no_{\bbeta,\bar\gamma,\delta}(\sbar=\bar t,F(r,\sbar)\RI F(r,\bar t))+1$. Note that
\begin{equation*}
\al_\delta+3<no_{\bbeta,\bar\gamma}(\bar s=\bar t,A(\sbar)\RI A(\bar t))=:\al.
\end{equation*}
Hence we may apply $(\forall R)_\infty$ to obtain
\begin{equation*}
\CH[\bbeta,\bar\gamma]\;\prov{\al}{\OO}{\sbar\in\Ebb,\bar t\in\mathbb{E}_{\bar\gamma},\sbar=\bar t, A(\bar s)\RI A(\bar t)}
\end{equation*}
as required.\\

\noindent Case 2. Now suppose $A(\bar s)\equiv (\forall x\in\;\!\!^{s_i}s_j)F(x,\sbar)$. Using the induction hypothesis we have
\begin{equation*}
\tag{1}\CH[\bbeta,\bar\gamma,\delta]\;\prov{\al_0}{\OO}{\sbar\in\Ebb,\bar t\in\mathbb{E}_{\bar\gamma},r\in\Ed,\bar s=\bar t,F(r,\bar s)\RI F(r,\bar t)}
\end{equation*}
for any term $r$ and any $\delta<\OO$, where $\al_0=no_{\bbeta,\bar\gamma,\delta}(\sbar=\bar t,F(r,\sbar)\RI F(r,\bar t))$. At this point we set $\delta=\text{max}(\beta_i,\beta_j)+2$, note that $\delta\in\CH[\bbeta,\bar\gamma]$. By virtue of axiom (E1) we have
\begin{equation*}
\tag{2}\CH[\bbeta,\bar\gamma]\;\prov{0}{0}{\text{fun}(r,s_i,s_j)\RI\text{fun}(r,s_i,s_j).}
\end{equation*}
Hence by $(\rightarrow L)$ we get
\begin{align*}
\tag{3}\CH[\bbeta,\bar\gamma]\;&\prov{\al_0+1}{\OO}{\sbar\in\Ebb,\bar t\in\mathbb{E}_{\bar\gamma}},r\in\Ed,\bar s=\bar t,\\
&\quad\quad\quad\text{fun}(r,s_i,s_j)\rightarrow F(r,\bar s),\text{fun}(r,s_i,s_j)\RI F(r,\bar t).
\end{align*}
As an instance of axiom (E10) we have
\begin{equation*}
\tag{4}\CH[\bbeta,\bar\gamma]\;\prov{0}{0}{\sbar\in\Ebb,\text{fun}(r,s_i,s_j)\RI r\in\Ed}.
\end{equation*}
An application of $\Cut$ to (3) and (4) yields
\begin{align*}
\tag{5}\CH[\bbeta,\bar\gamma]\;&\prov{\al_0+2}{\OO}{\sbar\in\Ebb,\bar t\in\mathbb{E}_{\bar\gamma}},\bar s=\bar t,\\
&\quad\quad\:\text{fun}(r,s_i,s_j)\rightarrow F(r,\bar s),\text{fun}(r,s_i,s_j)\RI F(r,\bar t).
\end{align*}
Now applying $(\mathcal{E}b\forall L)$ to (4) and (5) gives
\begin{equation*}
\tag{6}\CH[\bbeta,\bar\gamma]\;\prov{\al_0+3}{\OO}{\sbar\in\Ebb,\bar t\in\mathbb{E}_{\bar\gamma},\sbar=\bar t, (\forall x\in\;\!\!^{s_i}s_j)F(x,\sbar),\text{fun}(r,s_i,s_j)\RI F(r,\bar t).}
\end{equation*}
Note that $\al_0\geq\OO$ since $F$ is not $\Deltaoe$, so we don't have to worry about the condition $\delta<\al_0+3$. Now as an instance of axiom $(E3)$ we have
\begin{equation*}
\tag{7}\CH[\bbeta,\bar\gamma]\;\prov{0}{0}{\sbar=\bar t,\text{fun}(r,t_i,t_j)\RI\text{fun}(r,s_i,s_j)}.
\end{equation*}
Also axiom (E10) gives rise to
\begin{equation*}
\tag{8}\CH[\bbeta,\bar\gamma]\;\prov{0}{0}{\bar t\in\mathbb{E}_{\bar\gamma},\text{fun}(r,t_i,t_j)\RI r\in\mathbb{E}_\eta}\quad\text{where $\eta=\text{max}(\gamma_i,\gamma_j)+2$.}
\end{equation*}
Applying $\Cut$ to (6),(7) and (8) gives
\begin{equation*}
\tag{9}\CH[\bbeta,\bar\gamma]\;\prov{\al_0+4}{\OO}{\sbar\in\Ebb,\bar t\in\mathbb{E}_{\bar\gamma},\sbar=\bar t, (\forall x\in\;\!\!^{s_i}s_j)F(x,\sbar),\text{fun}(r,t_i,t_j)\RI F(r,\bar t).}
\end{equation*}
Now $(\rightarrow R)$ gives
\begin{equation*}
\tag{10}\CH[\bbeta,\bar\gamma]\;\prov{\al_0+5}{\OO}{\sbar\in\Ebb,\bar t\in\mathbb{E}_{\bar\gamma},\sbar=\bar t, (\forall x\in\;\!\!^{s_i}s_j)F(x,\sbar)\RI\text{fun}(r,t_i,t_j)\rightarrow F(r,\bar t).}
\end{equation*}
Finally we may apply $(\mathcal{E}b\forall R)_\infty$, noting that $\al_0+6<no_{\bbeta,\bar\gamma}(\sbar=\bar t,A(\sbar)\RI A(\bar t))$ to complete this case.\\

\noindent Note that it could also be the case that $A(\sbar)\equiv(\forall x\in\;\!\!^pq)F(x,\sbar)$ where $p$ and/or $q$ is not a member of $\sbar$. The following case is an example of this kind of thing.\\

\noindent Case 3. Suppose $A(\sbar)\equiv(\exists x\in p)F(x,\sbar,p)$, where $p$ is not present in $\sbar$. By the induction hypothesis we have
\begin{equation*}
\tag{11}\CH[\bbeta,\bar\gamma,\delta]\;\prov{\al_0}{\OO}{\sbar\in\Ebb,\bar t\in\mathbb{E}_{\bar\gamma},p\in\Ed, r\in\Ed,\sbar=\bar t, F(r,\sbar,p)\RI F(r,\bar t,p)}
\end{equation*}
where $\al_0:= no_{\bbeta,\bar\gamma,\delta,\delta}(\sbar=\bar t,F(r,\sbar,p)\RI F(r,\bar t, p))$.  As an instance of axiom (E1) we have
\begin{equation*}
\tag{12}\CH[\bbeta,\bar\gamma,\delta]\;\prov{0}{0}{r\in p\RI r\in p}.
\end{equation*}
Applying $(\wedge R)$ to (11) and (12) yields
\begin{equation*}
\tag{13}\CH[\bbeta,\bar\gamma,\delta]\;\prov{\al_0+1}{\OO}{\sbar\in\Ebb,\bar t\in\mathbb{E}_{\bar\gamma},p\in\Ed, r\in\Ed,\sbar=\bar t, F(r,\sbar,p),r\in p\RI r\in p\wedge F(r,\bar t,p)}.
\end{equation*}
As an instance of axiom (E6) we have
\begin{equation*}
\tag{14}\CH[\bbeta,\bar\gamma,\delta]\;\prov{0}{0}{p\in\Ed, r\in p\RI r\in\Ed}.
\end{equation*}
$\Cut$ applied to (12) and (13) gives
\begin{equation*}
\tag{15}\CH[\bbeta,\bar\gamma,\delta]\;\prov{\al_0+2}{\OO}{\sbar\in\Ebb,\bar t\in\mathbb{E}_{\bar\gamma},p\in\Ed,\sbar=\bar t, F(r,\sbar,p),r\in p\RI r\in p\wedge F(r,\bar t,p)}.
\end{equation*}
Now $(b\exists R)$ gives
\begin{equation*}
\tag{16}\CH[\bbeta,\bar\gamma,\delta]\;\prov{\al_0+3}{\OO}{\sbar\in\Ebb,\bar t\in\mathbb{E}_{\bar\gamma},p\in\Ed,\sbar=\bar t, F(r,\sbar,p),r\in p\RI A(\sbar)}.
\end{equation*}
Two applications of $(\wedge L)$ gives
\begin{equation*}
\tag{17}\CH[\bbeta,\bar\gamma,\delta]\;\prov{\al_0+5}{\OO}{\sbar\in\Ebb,\bar t\in\mathbb{E}_{\bar\gamma},p\in\Ed,\sbar=\bar t, r\in p\wedge F(r,\sbar,p)\RI A(\sbar)}.
\end{equation*}
To which we may apply $(b\exists L)$ to complete this case.\\

\noindent All other cases are similar to one of those above.
\end{proof}
\begin{lemma}[Set induction]\label{eSet Induction}{\em For any formula $F(a)$ of $\IKPE$ we have
\begin{equation*}
\Vdash_{\OO}\RI\forall x[(\forall y\in x)F(y)\rightarrow F(x)]\rightarrow\forall xF(x).
\end{equation*}
}\end{lemma}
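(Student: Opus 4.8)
Let $A:=\forall x[(\forall y\in x)F(y)\rightarrow F(x)]$, so the goal is $\Vdash_{\OO}\ \RI A\rightarrow\forall xF(x)$. The plan is to follow the pattern of Lemmas~\ref{Set Induction} and~\ref{pSet Induction} --- prove that $A$ entails $F(s)$ for every term $s$, then discharge the hypothesis by $(\forall R)_\infty$ and $(\rightarrow R)$. The essential novelty is that an $\IRSOE$ term $s$ has no intrinsic level, so the recursion that previously descended along $\lev s$ must instead descend along a \emph{declared} hierarchy level. Concretely I would establish the claim that for every $\be<\OO$ and every term $s$,
\[
\provx{\CH[\be]}{\omega^{rk(A)}\#\omega^{\be+1}}{\OO}{s\in\Eb,\,A\RI F(s)},
\]
by transfinite induction on $\be$. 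Observe that the cut-rank is held at $\OO$ rather than at $0$: the argument repeatedly cuts on the $\Deltaoe$ location formulae supplied by the transitivity axioms, and those cuts cannot be removed at this stage.

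For the induction I would split according to whether $\be$ is a limit or a successor. If $\be$ is a limit --- including $\be=0$, where the rule fires with an empty premise set --- the induction hypothesis provides $s\in\Ed,A\RI F(s)$ for all $\de<\be$, and a single $(\mathbb{E}\text{-Lim})_\infty$ with principal formula $s\in\Eb$ gives the claim at once, the premise ordinals being bounded by $\omega^{rk(A)}\#\omega^{\be}<\omega^{rk(A)}\#\omega^{\be+1}$. The real work is the successor case $\be=\de+1$. For each term $y$ I would combine the induction hypothesis at $\de$, namely $y\in\Ed,A\RI F(y)$, with axiom (E7), $s\in\mathbb{E}_{\de+1},\,y\in s\RI y\in\Ed$, by a $\Cut$ on the $\Deltaoe$ formula $y\in\Ed$ (legitimate since its $\bar\beta$-rank is below $\OO$, and the location side-premises of $\Cut$ are themselves instances of (E4) and (E7)); a $(\rightarrow R)$ then yields $s\in\mathbb{E}_{\de+1},A\RI y\in s\rightarrow F(y)$. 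An application of $(b\forall R)_\infty$, using the ambient hypothesis $s\in\mathbb{E}_{\de+1}$ as the location premise for the bounding term $s$, produces $s\in\mathbb{E}_{\de+1},A\RI(\forall y\in s)F(y)$. Finally a $(\rightarrow L)$ against $F(s)\RI F(s)$ (Lemma~\ref{elogic}) and a $(\forall L)$ instantiating $A$ at $s$ give $s\in\mathbb{E}_{\de+1},A\RI F(s)$; the finitely many rule applications keep the ordinal below $\omega^{rk(A)}\#\omega^{\de+2}=\omega^{rk(A)}\#\omega^{\be+1}$ because $\omega^{rk(A)}$ and $\omega^{\de+1}$ are additively principal.

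With the claim established for all $\be$ and all $s$, its instances are exactly the premises of $(\forall R)_\infty$ (the side condition $\be<\omega^{rk(A)}\#\omega^{\be+1}+3<\omega^{rk(A)}\#\OO$ holding for each $\be<\OO$ by regularity of $\OO$), so I obtain $A\RI\forall xF(x)$ at ordinal $\omega^{rk(A)}\#\OO$, and a closing $(\rightarrow R)$ gives $\RI A\rightarrow\forall xF(x)$. Since $\omega^{rk(A)}\#\OO+1<\omega^{rk(A)+1}=no(\RI A\rightarrow\forall xF(x))$, weakening raises this to the required $\Vdash_{\OO}$; as in the earlier lemmas one also notes $\omega^{rk(F(s))}\#\omega^{rk(F(s))}\le\omega^{rk(A)}$ to justify weakening the identity axiom.

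The main obstacle I anticipate is exactly the bookkeeping forced by the absence of term levels. The argument turns on the interplay between axiom (E7) in the successor case and $(\mathbb{E}\text{-Lim})_\infty$ in the limit case, and one must check that the single uniform bound $\omega^{rk(A)}\#\omega^{\be+1}$ survives both branches as well as the quantifier rules $(b\forall R)_\infty$ and $(\forall R)_\infty$. Making the location premises of the quantifier and cut rules line up with the ambient hypothesis $s\in\Eb$, rather than with a level read off from the syntax of $s$, is the conceptual point that separates this proof from its $\IKP$ and $\IKPP$ analogues.
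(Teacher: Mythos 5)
Your proposal is correct and follows essentially the same route as the paper: the identical claim $\provx{\CH[\gamma]}{\omega^{rk(A)}\#\omega^{\gamma+1}}{\OO}{A,s\in\Eg\RI F(s)}$ proved by transfinite induction on the declared level, with axiom (E7) and a cut on the $\Deltaoe$ location formula in the successor case, the ambient hypothesis $s\in\Eg$ supplying the location premises for $(b\forall R)_\infty$ and $(\forall L)$ via (E1), Lemma \ref{elogic} for $F(s)\RI F(s)$, and the same final $(\forall R)_\infty$, $(\rightarrow R)$ assembly with cut rank held at $\OO$. The only (harmless) divergence is the limit case: you apply $(\mathbb{E}\text{-Lim})_\infty$ directly to $s\in\Eb$ with the induction hypotheses as premises, whereas the paper applies it to the inner formula $t\in\Eg$ (for $t\in s$) and then reruns the successor-case steps using axiom (E6) and a cut --- your variant is marginally shorter and equally valid.
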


\begin{proof}
Let $\CH$ be an arbitrary operator and let
\begin{equation*}
A:=\forall x[(\forall y\in x)F(y)\rightarrow F(x)].
\end{equation*}
Let $\bar p$ be the terms other than $s$ that occur in $F(s)$, sub-terms not included. Let $\bar\CH:=\CH[\bbeta]$ where $\bbeta$ is an arbitrary choice of ordinals $<\OO$. In the remainder of the proof we shall just write $\bar\CH\;\prov{\al}{\rho}{\GA\RI\DE}$ instead of $\CH[\bbeta]\;\prov{\al}{\rho}{\bar p\in\Ebb,\GA\RI\DE}$, since $\bar p\in\Ebb$ will always remain a side formula in the derivation.\\

\noindent Claim:
\begin{equation*}
\tag{*}\bar\CH[\gamma]\;\prov{\omega^{rk(A)}\#\omega^{\gamma+1}}{\OO}{A,s\in\Eg\RI F(s)}\quad\text{for all $\gamma<\OO$ and all terms $s$.}
\end{equation*}
Note that since $A$ contains an unbounded quantifier $rk(A)=no_{\bbeta}(A)$. We prove the claim by induction on $\gamma$. Thus the induction hypothesis supplies us with
\begin{equation*}
\tag{1}\bar\CH[\delta]\;\prov{\omega^{rk(A)}\#\omega^{\delta+1}}{\OO}{A,t\in\Ed\RI F(t)}\quad\text{for all $\delta<\gamma$ and all terms $t$.}
\end{equation*}
So by weakening we have
\begin{equation*}
\tag{2}\bar\CH[\gamma,\delta]\;\prov{\omega^{rk(A)}\#\omega^{\delta+1}}{\OO}{A,s\in\Eg,t\in s,t\in\Ed\RI F(t)}.
\end{equation*}
Case 1. Suppose $\gamma=\gamma_0+1$, so a special case of (2) becomes
\begin{equation*}
\tag{3}\bar\CH[\gamma]\;\prov{\omega^{rk(A)}\#\omega^\gamma}{\OO}{A,s\in\Eg,t\in s, t\in\mathbb{E}_{\gamma_0}\RI F(t)}.
\end{equation*}
As an instance of axiom (E7) we have
\begin{equation*}
\tag{4}\bar\CH[\gamma]\;\prov{0}{0}{s\in\Eg,t\in s\RI t\in\mathbb{E}_{\gamma_0}}.
\end{equation*}
Applying $\Cut$ to (3) and (4) yields
\begin{equation*}
\tag{5}\bar\CH[\gamma]\;\prov{\omega^{rk(A)}\#\omega^{\gamma}+1}{\OO}{A,s\in\Eg,t\in s\RI F(t)}.
\end{equation*}
$(\rightarrow R)$ followed by $(b\forall R)_\infty$ provides
\begin{equation*}
\tag{6}\bar\CH[\gamma]\;\prov{\omega^{rk(A)}\#\omega^{\gamma}+3}{\OO}{A,s\in\Eg\RI(\forall x\in s)F(x)}.
\end{equation*}
Now from Lemma \ref{elogic} we have
\begin{equation*}
\tag{7}\bar\CH[\gamma]\;\prov{no_{\bbeta,\gamma}(F(s)\RI F(s))}{\OO}{s\in\Eg,F(s)\RI F(s)}.
\end{equation*}
Since $no_{\bbeta,\gamma}(F(s)\RI F(s))<\omega^{rk(A)}$, by $(\rightarrow L)$ we get
\begin{equation*}
\tag{8}\bar\CH[\gamma]\;\prov{\omega^{rk(A)}\#\omega^{\gamma}+4}{\OO}{A,s\in\Eg,(\forall x\in s)F(x)\rightarrow F(s)\RI F(s)}.
\end{equation*}
To which we may apply $(\forall L)$ giving
\begin{equation*}
\tag{9}\bar\CH[\gamma]\;\prov{\omega^{rk(A)}\#\omega^{\gamma+1}}{\OO}{A,s\in\Eg\RI F(s)}
\end{equation*}
as required.\\

\noindent Case 2. Now suppose $\gamma$ is a limit ordinal. Applying $(\mathbb{E}\text{-Lim})$ to (2) provides us with
\begin{equation*}
\tag{10}\bar\CH[\gamma]\;\prov{\omega^{rk(A)}\#\omega^{\gamma}}{\OO}{A,s\in\Eg,t\in s,t\in\Eg\RI F(t)}.
\end{equation*}
As an instance of axiom (E6) we have
\begin{equation*}
\tag{11}\bar\CH[\gamma]\;\prov{0}{0}{s\in\Eg,t\in s\RI t\in\Eg}.
\end{equation*}
An application of $\Cut$ to (10) and (11) yields
\begin{equation*}
\tag{12}\bar\CH[\gamma]\;\prov{\omega^{rk(A)}\#\omega^{\gamma}+1}{\OO}{A,s\in\Eg,t\in s\RI F(t)}.
\end{equation*}
The remainder of this case can proceed exactly as in Case 1 from (5) onwards. Thus the claim (*) is verified.\\

\noindent Finally applying $(\forall R)_\infty$ to (*) gives
\begin{equation*}
\bar\CH\;\prov{\omega^{rk(A)}\#\OO}{\OO}{A\RI\forall xF(x)}.
\end{equation*}
Finally noting that $\omega^{rk(A)}\#\OO<no_{\bbeta}(A\rightarrow\forall xF(x))$ we may apply $(\rightarrow R)$ to complete the proof.
\end{proof}
\begin{lemma}[Infinity]\label{einfinity}{\em For any operator $\CH$ we have
\begin{equation*}
\provx{\CH}{\omega+4}{\omega}{\RI\exists x[(\forall y\in x)(\exists z\in x)(y\in z)\wedge(\exists y\in x)(y\in x)]}.
\end{equation*}
}\end{lemma}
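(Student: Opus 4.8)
The plan is to follow the pattern of the $\IRSOP$ Infinity lemma (Lemma \ref{pinfinity}), taking $\mathbb{E}_\omega$ as the witness for the outermost existential quantifier and verifying the two conjuncts separately before combining them. The second conjunct $(\exists y\in\mathbb{E}_\omega)(y\in\mathbb{E}_\omega)$ is immediate: with witness $\mathbb{E}_0$ one has $\RI\mathbb{E}_0\in\mathbb{E}_\omega$ as an instance of axiom (E4) (since $0<\omega$), so a single $(\wedge R)$ followed by $(b\exists R)$ — whose location premises $\mathbb{E}_\omega\in\mathbb{E}_{\omega+1}$ and $\mathbb{E}_0\in\mathbb{E}_1$ are again instances of (E4), discharged by the Convention — yields this conjunct at small finite height. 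The real content is the first conjunct $(\forall y\in\mathbb{E}_\omega)(\exists z\in\mathbb{E}_\omega)(y\in z)$.

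For the first conjunct I would apply $(b\forall R)_\infty$, reducing the goal to deriving $\RI s\in\mathbb{E}_\omega\rightarrow(\exists z\in\mathbb{E}_\omega)(s\in z)$ for every term $s$, together with the location premise $\mathbb{E}_\omega\in\mathbb{E}_{\omega+1}$. After $(\rightarrow R)$ this becomes $s\in\mathbb{E}_\omega\RI(\exists z\in\mathbb{E}_\omega)(s\in z)$. Here lies the main obstacle and the essential departure from the $\IRSOP$ argument: in $\IRSOP$ one reads the level $n=|s|$ off the syntax and simply supplies $\mathbb{V}_{n+1}$ as the witness via axiom (A4), but $\IRSOE$ terms carry no level, so no witness can be named directly from $s$. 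The device that replaces this is the rule $(\mathbb{E}\text{-Lim})_\infty$: applied to the hypothesis $s\in\mathbb{E}_\omega$ it splits the derivation into the cases $s\in\Ed$ for each $\delta<\omega$, and in the $\delta$-th case the term $\mathbb{E}_\delta$ itself serves as the witness $z$, since $\Ed\in\mathbb{E}_\omega$ is an instance of (E4) and $s\in\Ed\RI s\in\Ed$ is (E1); a $(\wedge R)$ and then $(b\exists R)$ close that case.

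Finally I would combine the two conjuncts with $(\wedge R)$ and introduce the outer quantifier with $(\exists R)$ on the witness $\mathbb{E}_\omega$, its location $\mathbb{E}_\omega\in\mathbb{E}_{\omega+1}$ again supplied by (E4) and the Convention. The remaining work is purely ordinal bookkeeping, which I expect to be the fiddliest part: the infinitary $(\mathbb{E}\text{-Lim})_\infty$ step raises the height to $\omega$, and the successive $(\rightarrow R)$, $(b\forall R)_\infty$, $(\wedge R)$ and $(\exists R)$ inferences — each constrained by the side condition $\beta<\alpha$ attached to the location premise $\mathbb{E}_\omega\in\mathbb{E}_{\omega+1}$ and by the finite gaps built into the unbounded rules — contribute the remaining finite increments needed to reach the asserted height $\omega+4$. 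No cut of positive rank occurs, since every leaf is an instance of (E1) or (E4); hence the whole derivation can be run at cut-rank $\omega$, which is exactly why that parameter is harmless.
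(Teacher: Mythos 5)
Your proposal is correct and follows essentially the same route as the paper: split off the easy conjunct with witness $\mathbb{E}_0$, and handle $(\forall y\in\mathbb{E}_\omega)(\exists z\in\mathbb{E}_\omega)(y\in z)$ via $(b\forall R)_\infty$, $(\rightarrow R)$ and $(\mathbb{E}\text{-Lim})_\infty$, with all location premises discharged by (E4) and the Convention, exactly as in the paper's two derivation trees. The only (harmless, indeed slightly cleaner) divergence is inside the case split: where the paper first derives $s\in\mathbb{E}_{n+1}$ by cutting axiom (E6) against (E4) and then witnesses the bounded existential with $\mathbb{E}_{n+1}$ --- which is why its derivation carries cuts of rank $n+3$ and hence the cut-rank parameter $\omega$ --- you witness directly with $\mathbb{E}_\delta$ using the hypothesis $s\in\mathbb{E}_\delta$ itself via (E1), obtaining a cut-free derivation for which the stated cut rank is vacuous.
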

\begin{proof}
Firstly note that by Definiton \ref{operatorr} $1,\omega\in\CH$. We have the following derivation trees in $\IRSOE$.
\begin{prooftree}
\Axiom$\fCenter\quad\text{Axiom (E6)}$
\UnaryInf$\fCenter\provx{\CH}{0}{0}{s\in\mathbb{E}_n, \mathbb{E}_n\in\mathbb{E}_{n+1}\RI s\in\mathbb{E}_{n+1}}$
\Axiom$\fCenter\quad\text{Axiom (E4)}$
\UnaryInf$\fCenter\provx{\CH}{0}{0}{\RI \mathbb{E}_n\in\mathbb{E}_{n+1}}$
\LeftLabel{$\Cut$}
\BinaryInf$\fCenter\provx{\CH}{1}{n+3}{s\in\mathbb{E}_n\RI s\in\mathbb{E}_{n+1}}$
\Axiom$\fCenter\quad\text{Axiom (E4)}$
\UnaryInf$\fCenter\provx{\CH}{0}{0}{\RI \mathbb{E}_{n+1}\in\mathbb{E}_{\omega}}$
\LeftLabel{$(\wedge R)$}
\BinaryInf$\fCenter\provx{\CH}{2}{n+3}{s\in\mathbb{E}_n\RI  \mathbb{E}_{n+1}\in\mathbb{E}_{\omega}\wedge s\in\mathbb{E}_{n+1}}$
\LeftLabel{$(b\exists R)$}
\UnaryInf$\fCenter\provx{\CH}{n+3}{n+3}{s\in\mathbb{E}_n\RI  (\exists z\in\mathbb{E}_\omega)(s\in z)}$
\LeftLabel{$(\mathbb{E}\text{-Lim})$}
\UnaryInf$\fCenter\provx{\CH}{\omega}{\omega}{s\in\mathbb{E}_\omega\RI  (\exists z\in\mathbb{E}_\omega)(s\in z)}$
\LeftLabel{$(\rightarrow R)$}
\UnaryInf$\fCenter\provx{\CH}{\omega+1}{\omega}{\RI s\in\mathbb{E}_\omega\rightarrow  (\exists z\in\mathbb{E}_\omega)(s\in z)}$
\LeftLabel{$(b\forall R)_\infty$}
\UnaryInf$\fCenter\provx{\CH}{\omega+2}{\omega}{\RI (\forall y\in\mathbb{E}_\omega)(\exists z\in\mathbb{E}_\omega)(y\in z)}$
\end{prooftree}

\begin{prooftree}
\Axiom$\fCenter\quad\text{Axiom (E4)}$
\UnaryInf$\fCenter\provx{\CH}{0}{0}{\RI\mathbb{E}_0\in\mathbb{E}_\omega}$
\LeftLabel{$(\wedge R)$}
\UnaryInf$\fCenter\provx{\CH}{1}{0}{\RI\mathbb{E}_0\in\mathbb{E}_\omega\wedge \mathbb{E}_0\in\mathbb{E}_\omega}$
\LeftLabel{$(b\exists R)$}
\UnaryInf$\fCenter\provx{\CH}{2}{0}{\RI(\exists y\in\mathbb{E}_\omega)(y\in\mathbb{E}_\omega)}$
\end{prooftree}
Applying $(\wedge R)$ followed by $(b\exists R)$ to the conclusions of the two proof trees above yields the required result.
\end{proof}
\begin{lemma}[$\Deltaoe$-Separation]\label{esep}{\em
For any $\Deltaoe$ formula $A(a,\bar b)$  of $\IKPE$ containing exactly the free variables $a,\bar b=b_1,...,b_n$, any $\IRSOE$ terms $r,s_1,...,s_n$ and any operator $\CH$:
\begin{equation*}
\CH[\gamma,\bbeta]\;\prov{\al+7}{0}{\sbar\in\Ebb,r\in\Eg\RI\exists x[(\forall y\in x)(y\in r\wedge A(y,\sbar))\wedge(\forall y\in r)(A(y,\sbar)\rightarrow y\in x)]}
\end{equation*}
where $\al=\text{max}(\bbeta,\gamma)$.
}\end{lemma}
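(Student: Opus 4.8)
The plan is to follow the proof of $\Deltaop$-Separation for $\IRSOP$ (Lemma~\ref{pseparation}) almost verbatim, the single new feature being that the separating term has no intrinsic level and so must be located in the $\mathbb{E}$-hierarchy dynamically, by reading its location off those of its constituents $r$ and $\sbar$. First I would introduce the witnessing term
\[
p:=[x\in r\mid A(x,\sbar)],
\]
which is a bona fide $\IRSOE$ term by clause~3 of the term definition, since $A$ is $\Deltaoe$ and $r,\sbar$ are terms. The key point is that, as $\al=\text{max}(\bbeta,\gamma)$, axiom (E11) immediately gives the location of $p$: the sequent $\sbar\in\Ebb,\,r\in\Eg\RI p\in\mathbb{E}_\al$ is an instance of (E11). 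Writing $\Theta$ for the antecedent $\sbar\in\Ebb,\,r\in\Eg$, I would keep $\Theta$ as side formulae throughout the derivation; this is exactly where the ``moving target'' phenomenon flagged in the introduction is handled.

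Next I would build the two conjuncts of the matrix separately and uniformly in a term $w$. For the first conjunct $(\forall y\in p)(y\in r\wedge A(y,\sbar))$ I start from axiom (E9), namely $\Theta,\,w\in p\RI w\in r\wedge A(w,\sbar)$, apply $(\rightarrow R)$, and then $(b\forall R)_\infty$ bounded by $p$, discharging its location premise by the (E11)-instance $\Theta\RI p\in\mathbb{E}_\al$. For the second conjunct $(\forall y\in r)(A(y,\sbar)\rightarrow y\in p)$ I start from axiom (E8), namely $\Theta,\,w\in r,\,A(w,\sbar)\RI w\in p$, apply $(\rightarrow R)$ twice, and then $(b\forall R)_\infty$ bounded by $r$, whose location premise $\Theta\RI r\in\Eg$ is a logical axiom (E1) since $r\in\Eg$ lies in $\Theta$. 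Both appeals to $(b\forall R)_\infty$ are legitimate because (E8) and (E9) hold schematically for every term $w$. Observe that no cut is needed: in contrast to the $\IRSOP$ argument, axiom (E8) already presents $w\in r$ and $A(w,\sbar)$ as separate antecedents, so the conjunction-and-cut step used there is replaced by a direct appeal to (E8); this is why the cut-rank $\rho=0$ is attainable.

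Finally I would join the two conjuncts by $(\wedge R)$ and introduce the existential quantifier by $(\exists R)$, once more supplying $p$ as witness and $\Theta\RI p\in\mathbb{E}_\al$ as location premise. For the ordinal bookkeeping: each $(b\forall R)_\infty$ lands just above its bounding level (which is $\al$, resp.\ $\gamma\le\al$), so both conjuncts are derived at height at most $\al+1$; the $(\wedge R)$ costs one more, and the $(\exists R)$, whose side condition carries a $+3$, brings the total to at most $\al+7$, which is the asserted bound. The operator bookkeeping is routine: the only ordinals named in any inference are $\gamma$, the $\beta_i$ and $\al=\text{max}(\bbeta,\gamma)$, all of which belong to $\CH[\gamma,\bbeta]$, and every height used is of the form $\al+k$, hence also in $\CH[\gamma,\bbeta]$ by closure under~$+$.

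I do not expect a serious obstacle here; the proof is structurally identical to the $\IRSOP$ case. The one thing demanding care --- and the conceptual crux --- is the correct use of (E11) to bound $p$ together with the discipline of threading the location hypotheses $\Theta$ through every inference so that each $(b\forall R)_\infty$ and the concluding $(\exists R)$ have their $\mathbb{E}$-location premises available. The only arithmetic worth verifying is that the $+3$ attached to $(\exists R)$ still fits under $\al+7$, which the tight count (conjuncts at $\al+1$, $(\wedge R)$ to $\al+2$, $(\exists R)$ to $\al+6$, then a weakening) confirms.
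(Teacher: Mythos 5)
Your proposal is correct and follows essentially the same route as the paper's proof: the same witnessing term $p:=[x\in r\mid A(x,\sbar)]$, located via axiom (E11), with the two conjuncts obtained directly from axioms (E9) and (E8) followed by $(\rightarrow R)$ and $(b\forall R)_\infty$ (the location premises being the (E11)-instance for $p$ and the trivial one for $r$), then $(\wedge R)$ and $(\exists R)$, all cut-free at rank $0$. Your ordinal count differs from the paper's by inessential constants (the paper lands the conjuncts at $\al+2$ and $\gamma+3$ rather than $\al+1$) but both comfortably meet the stated bound $\al+7$.
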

\begin{proof}
First let
\begin{equation*}
p:=[x\in r\;|\;A(x,\sbar)].
\end{equation*}
As an instance of axiom (E11) we have
\begin{equation*}
\tag{1}\CH[\gamma,\bbeta]\;\prov{0}{0}{\sbar\in\Ebb,r\in\Eg\RI p\in\Ea}.
\end{equation*}
Moreover we have the following derivations in $\IRSOE$:
\begin{prooftree}
\Axiom$\fCenter\quad\text{Axiom (E9)}$
\UnaryInf$\fCenter\provx{\CH}{0}{0}{\sbar\in\Ebb,r\in\Eg, t\in p\RI t\in r\wedge A(t,\sbar)}$
\LeftLabel{$(\rightarrow R)$}
\UnaryInf$\fCenter\provx{\CH}{1}{0}{\sbar\in\Ebb,r\in\Eg \RI t\in p\rightarrow t\in r\wedge A(t,\sbar)}$
\Axiom$\fCenter\quad\text{(1)}$
\LeftLabel{$(b\forall R)_\infty$}
\BinaryInf$\fCenter\provx{\CH}{\al+2}{0}{\sbar\in\Ebb,r\in\Eg \RI(\forall y\in p)(y\in r\wedge A(y,\sbar))}$
\end{prooftree}
\begin{prooftree}
\Axiom$\fCenter\quad\text{Axiom (E8)}$
\UnaryInf$\fCenter\provx{\CH}{0}{0}{\sbar\in\Ebb,r\in\Eg, t\in r, A(t,\sbar)\RI t\in p}$
\LeftLabel{$(\rightarrow R)$}
\UnaryInf$\fCenter\provx{\CH}{1}{0}{\sbar\in\Ebb,r\in\Eg,t\in r \RI A(t,\sbar)\rightarrow t\in p}$
\LeftLabel{$(\rightarrow R)$}
\UnaryInf$\fCenter\provx{\CH}{2}{0}{\sbar\in\Ebb,r\in\Eg\RI t\in r \rightarrow(A(t,\sbar)\rightarrow t\in p)}$
\LeftLabel{$(b\forall R)_\infty$}
\UnaryInf$\fCenter\provx{\CH}{\gamma+3}{0}{\sbar\in\Ebb,r\in\Eg\RI (\forall y\in r)(A(y,\sbar)\rightarrow y\in p)}$
\end{prooftree}
Now applying $(\wedge R)$ to (1) and the conclusions of the two proof trees above, followed by an application of $(\exists R)$ yields the desired result.
\end{proof}
\begin{lemma}[Pair]\label{epair}{\em For any operator $\CH$, and $\IRSOE$ terms $s,t$ and any ordinals $\beta,\gamma<\OO$:
\begin{equation*}
\CH[\beta,\gamma]\;\prov{\al+6}{\al+2}{s\in\Eb,t\in\Eg\RI\exists z(s\in z\wedge t\in z)}
\end{equation*}
where $\al:=\text{max}(\beta,\gamma)$.
}\end{lemma}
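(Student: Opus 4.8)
The plan is to follow the template of Pair for $\IRS$ (Lemma~\ref{pair}) and $\IRSOP$ (Lemma~\ref{ppair}): fix a single term to serve as the witness $z$, show that both $s$ and $t$ belong to it, combine the two membership facts with $(\wedge R)$, and finish with one application of $(\exists R)$. The natural choice is $z:=\mathbb{E}_{\al+1}$ with $\al:=\text{max}(\beta,\gamma)$, since under the intended interpretation $s\in E_\beta$ and $t\in E_\gamma$ both lie below $E_{\al+1}$.

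First I would read off the hypotheses as instances of the logical axiom (E1), obtaining $\provx{\CH[\beta,\gamma]}{0}{0}{s\in\Eb,t\in\Eg\RI s\in\Eb}$ and the analogous sequent ending in $t\in\Eg$. Since $\beta\leq\al+1$ and $\gamma\leq\al+1$, with $\beta,\gamma,\al+1\in\CH[\beta,\gamma]$, two applications of the hierarchy-promotion Lemma~\ref{ehier} (once with lower index $\beta$, once with lower index $\gamma$, in each case with higher index $\al+1$) yield $\provx{\CH[\beta,\gamma]}{2}{\al+2}{s\in\Eb,t\in\Eg\RI s\in\mathbb{E}_{\al+1}}$ and $\provx{\CH[\beta,\gamma]}{2}{\al+2}{s\in\Eb,t\in\Eg\RI t\in\mathbb{E}_{\al+1}}$. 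It is precisely this step that introduces a cut, with cut formula $\Eb\in\mathbb{E}_{\al+1}$ (respectively $\Eg\in\mathbb{E}_{\al+1}$), and hence produces the cut-rank $\al+2$ recorded in the conclusion of the lemma.

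Next I would apply $(\wedge R)$ to the two promoted sequents, reaching $\provx{\CH[\beta,\gamma]}{3}{\al+2}{s\in\Eb,t\in\Eg\RI s\in\mathbb{E}_{\al+1}\wedge t\in\mathbb{E}_{\al+1}}$, and then $(\exists R)$ with witness $\mathbb{E}_{\al+1}$, instantiating $F(z):=s\in z\wedge t\in z$. By the Convention on $\mathbb{E}$-terms occurring directly as witnesses, the side premise recording the location of the witness is the axiom (E4) instance $\RI\mathbb{E}_{\al+1}\in\mathbb{E}_{\al+2}$, so it only remains to verify $\al+1\in\CH[\beta,\gamma]$, which holds since operators are closed under the additive components of Cantor normal forms. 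An accounting of the ordinal side conditions of $(\exists R)$, namely $\alpha_0+3<\alpha$ together with the location index $\al+2<\alpha$, then yields the stated bound $\al+6$.

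The numerical bookkeeping is entirely routine once the witness is chosen; the only conceptual point, and the sole departure from Lemmas~\ref{pair} and~\ref{ppair}, is that $\IRSOE$-terms carry no intrinsic level. In the earlier systems the level of a term was syntactically available and ``$s\in\mathbb{L}_\delta$'' (respectively ``$s\in\mathbb{V}_\alpha$'') was an immediate axiom, so the witness could be placed without any cut. Here the only information about where $s$ and $t$ sit is supplied by the hypotheses $s\in\Eb$ and $t\in\Eg$, and gathering them into a common $\mathbb{E}_{\al+1}$ must pass through Lemma~\ref{ehier}. This is exactly why the conclusion of Pair for $\IRSOE$ carries a nonzero cut-rank, in contrast to its $\IRS$ and $\IRSOP$ counterparts, and it is the feature to keep an eye on when the lemma is later fed into the embedding.
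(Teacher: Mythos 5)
Your overall strategy is the paper's own: gather $s$ and $t$ into one $\mathbb{E}$-level by cutting against (E4)/(E6) facts, then apply $(\wedge R)$ and $(\exists R)$. But your choice of witness $\mathbb{E}_{\al+1}$ instead of $\mathbb{E}_\al$ breaks the stated cut-rank, and this is a genuine defect in a lemma whose entire content is the quantitative bounds. The paper splits off the trivial case $\beta=\gamma$ (where no promotion is needed) and, assuming WLOG $\beta>\gamma$, promotes only $t$, with the single cut formula $\Eg\in\Eb$: its minimal (E4)-declarations are $\gamma+1$ and $\beta+1$, so $\|\Eg\in\Eb\|_{\gamma+1,\beta+1}=\beta+1=\al+1<\al+2$, and the witness $\Eb$ enters $(\exists R)$ via the axiom $\RI\Eb\in\mathbb{E}_{\beta+1}$. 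In your version, each promotion into $\mathbb{E}_{\al+1}$ must discharge a cut formula of the shape $\Eb\in\mathbb{E}_{\al+1}$ (resp.\ $\Eg\in\mathbb{E}_{\al+1}$), and since the least level declarable for the term $\mathbb{E}_{\al+1}$ is $\al+2$, that cut formula has $\bar\beta$-rank $\max(\beta+1,\al+2)=\al+2$; the $\Cut$ rule demands the strict inequality $\|A(\bar s)\|_{\bar\beta}<\rho$, so your derivation can only be certified at cut-rank $\al+3$, not $\al+2$. Your figure $\al+2$ leans on the literal statement of Lemma \ref{ehier} ($\rho^*=\max(\rho,\beta+1)$), but that statement is inconsistent with its own proof, whose first cut is explicitly recorded at rank $\beta+2$; the paper's later applications confirm $\beta+2$ is the operative bound (e.g.\ step (27) in Theorem \ref{ecollapsing}, where Boundedness applied with $\beta=\psio{\hat{\al_0}}$ yields cut-rank $\psio{\hat{\al_0}}+2$). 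With your target level $\al+1$ this gives $\al+3$. The paper's witness $\mathbb{E}_\al$ is precisely what keeps every cut formula at rank $\leq\al+1$ and hence the derivation inside cut-rank $\al+2$.

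There is also an edge-case failure in your length bound: after $(\wedge R)$ your premise of $(\exists R)$ sits at derivation bound $3$, and $(\exists R)$ requires $\al_0+3<\alpha$, i.e.\ $\alpha>6$; for $\beta=\gamma=0$ your claimed bound $\al+6=6$ does not satisfy this. The paper avoids it because its $(\wedge R)$ conclusion sits at bound $2$ (only one term is promoted, and the $\beta=\gamma$ case is handled separately from two (E1) axioms at bound $0$). Both defects disappear if you follow the paper's route: case-split on $\beta=\gamma$, otherwise promote the lower term into $\Eb=\mathbb{E}_\al$ with one cut on $\Eg\in\Eb$, and use $\Eb$ as the witness. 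For the downstream use in Theorem \ref{IKPEembed} your weaker bounds $\al+3$ and $\al+7$ would in fact be harmless, but they do not establish the lemma as stated.
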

\begin{proof}
If $\beta=\gamma$ the proof is straightforward, without loss of generality let us assume $\beta>\gamma$. As instances of axioms (E6) and (E4) we have
\begin{align*}
\tag{1}&\CH[\beta,\gamma]\;\prov{0}{0}{t\in\Eg,\Eg\in\Eb\RI t\in\Eb}\\
\tag{2}&\CH[\beta,\gamma]\;\prov{0}{0}{\RI\Eg\in\Eb}.
\end{align*}
Applying $\Cut$ gives
\begin{equation*}
\tag{3}\CH[\beta,\gamma]\;\prov{1}{\beta+2}{t\in\Eg\RI t\in\Eb}.
\end{equation*}
By axiom (E1) we have
\begin{equation*}
\tag{4}\CH[\beta,\gamma]\;\prov{0}{0}{s\in\Eb\RI s\in\Eb}.
\end{equation*}
Applying $(\wedge R)$ to (3) and (4) provides
\begin{equation*}
\tag{5}\CH[\beta,\gamma]\;\prov{2}{\beta+2}{s\in\Eb,t\in\Eb\RI s\in\Eb\wedge t\in\Eb},
\end{equation*}
to which we may apply $(\exists R)$ giving
\begin{equation*}
\CH[\beta,\gamma]\;\prov{\beta+6}{\beta+2}{s\in\Eb,t\in\Eg\RI\exists z(s\in z\wedge t\in z)},
\end{equation*}
as required.
\end{proof}
\begin{lemma}[Union]\label{eunion}{\em For any operator $\CH$, $\IRSOE$ term $s$ and any $\beta<\OO$ we have
\begin{equation*}
\provx{\CH[\beta]}{\beta+9}{\beta+2}{s\in\Eb\RI\exists z[(\forall y\in s)(\forall x\in y)(x\in z)]}.
\end{equation*}
}\end{lemma}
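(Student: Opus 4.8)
The plan is to take $\Eb$ itself as the witness for $\exists z$: granted $s\in\Eb$, the transitivity of the formal level $\Eb$ guarantees that every element of every element of $s$ again lies in $\Eb$. This follows the template of the $\IRSOP$ proof of Lemma~\ref{punion}, where $\mathbb{V}_{\lev s}$ was the witness. The essential difference is that an $\IRSOE$ term carries no intrinsic level, so rather than reading the bound off the syntax we must propagate the hypothesis $s\in\Eb$ downward through the hierarchy by means of the transitivity axiom (E6).

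First I would record the two pertinent instances of (E6), valid for arbitrary terms $t,r$ and each derivable with ordinal $0$:
\begin{equation*}
\provx{\CH[\beta]}{0}{0}{s\in\Eb, t\in s\RI t\in\Eb}\quad\text{and}\quad\provx{\CH[\beta]}{0}{0}{s\in\Eb, t\in s, t\in\Eb, r\in t\RI r\in\Eb}.
\end{equation*}
Since membership in $\Eb$ has to be pushed down two steps (from $s$ to $y$ and from $y$ to $x$), these two facts cannot be fused by a single axiom; I would combine them by one application of $\Cut$ on the cut formula $t\in\Eb$. Its $\bar\beta$-rank is $\|t\in\Eb\|_{\beta,\beta+1}=\text{max}(\beta,\beta+1)=\beta+1$, strictly below the prescribed cut-rank $\beta+2$, and the location premise for the witness $\Eb$ may be suppressed by the Convention (only $\beta\in\CH$ needs to be checked, which holds). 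This produces
\begin{equation*}
\provx{\CH[\beta]}{1}{\beta+2}{s\in\Eb, t\in s, r\in t\RI r\in\Eb}.
\end{equation*}

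From here the argument is the familiar nested build-up. An application of $(\rightarrow R)$ followed by the inner $(b\forall R)_\infty$ — whose side premise $s\in\Eb, t\in s\RI t\in\Eb$ is exactly the first instance of (E6) above, furnishing the bounding level $\beta<\al$ — yields $(\forall x\in t)(x\in\Eb)$ at ordinal $\beta+3$. A second $(\rightarrow R)$ and the outer $(b\forall R)_\infty$ (side premise $s\in\Eb\RI s\in\Eb$ from (E1)) give $(\forall y\in s)(\forall x\in y)(x\in\Eb)$ at ordinals $\beta+4$ and $\beta+5$. A final $(\exists R)$ with witness $\Eb$, once more invoking the Convention to drop the premise $\RI\Eb\in\mathbb{E}_{\beta+1}$, introduces $\exists z$; paying the $+3$ demanded by the $(\exists R)$ clause lands exactly at $\beta+9$, while the cut-rank never exceeds $\beta+2$.

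The only genuinely non-routine point is the opening $\Cut$, which has no counterpart in the $\IRS$ or $\IRSOP$ versions of Union: there the level-indexed $\mathbb{V}$ axioms delivered the required membership outright, whereas here the absence of syntactic levels forces a two-step descent of the $\mathbb{E}$ hierarchy. I would therefore take care that the cut formula $t\in\Eb$ has rank exactly $\beta+1$, so that cut-rank $\beta+2$ suffices, and that each infinitary rule is fed a correct $\mathbb{E}$-location side premise (all of them instances of (E6), (E4) or (E1)). Everything else is routine ordinal arithmetic.
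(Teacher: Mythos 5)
Your proposal is correct and follows essentially the same route as the paper's own proof: the paper likewise cuts two instances of axiom (E6) on the cut formula $t\in\Eb$ (rank $\beta+1<\beta+2$) to obtain $s\in\Eb,t\in s,r\in t\RI r\in\Eb$, and then applies exactly your sequence $(\rightarrow R)$, inner $(b\forall R)_\infty$ with side premise from (E6), $(\rightarrow R)$, outer $(b\forall R)_\infty$ with side premise from (E1), and a final $(\exists R)$ with witness $\Eb$, arriving at ordinal $\beta+9$ and cut-rank $\beta+2$. Your rank computation and use of the Convention for the location premises match the paper's bookkeeping.
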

\begin{proof}
We have the following template for derivations in $\IRSOE$.
\begin{prooftree}
\Axiom$\fCenter\quad\text{Axiom (E6)}$
\UnaryInf$\fCenter\provx{\CH[\beta]}{0}{0}{t\in\Eb, r\in t\RI r\in\Eb}$
\Axiom$\fCenter\quad\text{Axiom (E6)}$
\UnaryInf$\fCenter\provx{\CH[\beta]}{0}{0}{s\in\Eb, t\in s\RI t\in\Eb}$
\LeftLabel{$\Cut$}
\BinaryInf$\fCenter\provx{\CH[\beta]}{1}{\beta+2}{s\in\Eb,t\in s, r\in t\RI r\in\Eb}$
\LeftLabel{$(\rightarrow R)$}
\UnaryInf$\fCenter\provx{\CH[\beta]}{2}{\beta+2}{s\in\Eb,t\in s\RI r\in t\rightarrow r\in\Eb}$
\LeftLabel{$(b\forall R)_\infty$}
\UnaryInf$\fCenter\provx{\CH[\beta]}{\beta+3}{\beta+2}{s\in\Eb,t\in s\RI (\forall x\in t)(x\in\Eb)}$
\LeftLabel{$(\rightarrow R)$}
\UnaryInf$\fCenter\provx{\CH[\beta]}{\beta+4}{\beta+2}{s\in\Eb\RI t\in s\rightarrow (\forall x\in t)(x\in\Eb)}$
\LeftLabel{$(b\forall R)_\infty$}
\UnaryInf$\fCenter\provx{\CH[\beta]}{\beta+5}{\beta+2}{s\in\Eb\RI (\forall y\in s)(\forall x\in y)(x\in\Eb)}$
\LeftLabel{$(\exists R)$}
\UnaryInf$\fCenter\provx{\CH[\beta]}{\beta+9}{\beta+2}{s\in\Eb\RI \exists z(\forall y\in s)(\forall x\in y)(x\in z).}$
\end{prooftree}
\end{proof}
\begin{lemma}[$\Deltaoe$-Collection]\label{ecoll}{\em Let $F(a,b,\bar c)$ be any $\Deltaoe$ formula of $\IKPE$ containing exactly the free variables displayed then for any $\sbar=s_1,...,s_n$
\begin{equation*}
\Vdash_\OO\;\;\RI(\forall x\in s_i)\exists yF(x,y,\sbar)\rightarrow\exists z(\forall x\in s_i)(\exists y\in z)F(x,y,\sbar).
\end{equation*}
}\end{lemma}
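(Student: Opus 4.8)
The plan is to mirror the treatment of $\Deltaoe$-Collection for the earlier systems (Lemmas \ref{collection} and \ref{pdeltaocoll}); the only genuinely infinitary ingredient is a single application of the reflection rule $\SRE$. Write $A := (\forall x\in s_i)\exists yF(x,y,\sbar)$ and observe that $A$ is a $\Sigmae$-formula (since $F$ is $\Deltaoe$, $\exists yF$ is $\Sigmae$, and the bounded universal quantifier preserves this). Its restriction is $A^z\equiv(\forall x\in s_i)(\exists y\in z)F(x,y,\sbar)$, because the sole unbounded quantifier of $A$ is $\exists y$; hence $\exists zA^z$ is precisely the consequent of the formula $C:=A\rightarrow\exists z(\forall x\in s_i)(\exists y\in z)F(x,y,\sbar)$ to be derived, and the terms occurring in $C$ are exactly $\sbar$.

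First I would apply the Logic Lemma \ref{elogic} to $A$, which gives, for an arbitrary operator $\CH$ and arbitrary ordinals $\bbeta<\OO$,
\begin{equation*}
\CH[\bbeta]\;\prov{no_{\bbeta}(A\RI A)}{\OO}{\sbar\in\Ebb,A\RI A}.
\end{equation*}
Because $A$ contains an unbounded quantifier we have $\|A\|_{\bbeta}\geq\OO$, so $no_{\bbeta}(A\RI A)=\omega^{\|A\|_{\bbeta}}\#\omega^{\|A\|_{\bbeta}}>\OO$; consequently the side conditions $no_{\bbeta}(A\RI A)+1,\OO<no_{\bbeta}(A\RI A)+2$ of $\SRE$ are met (and the relevant ordinals lie in $\CH[\bbeta]$, since that operator contains $\OO$, $\bbeta$ and is closed under $+$ and $\xi\mapsto\omega^{\xi}$). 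Applying $\SRE$ therefore yields
\begin{equation*}
\CH[\bbeta]\;\prov{no_{\bbeta}(A\RI A)+2}{\OO}{\sbar\in\Ebb,A\RI\exists zA^z},
\end{equation*}
and one further application of $(\rightarrow R)$ gives $\CH[\bbeta]\;\prov{no_{\bbeta}(A\RI A)+3}{\OO}{\sbar\in\Ebb\RI C}$, the side formulae $\sbar\in\Ebb$ and the operator $\CH[\bbeta]$ being carried unchanged throughout.

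It then remains to match this against the abbreviation $\Vdash_\OO\;\RI C$, i.e. to settle the ordinal bookkeeping. Since $\|C\|_{\bbeta}=\max(\|A\|_{\bbeta},\|\exists zA^z\|_{\bbeta})+1>\|A\|_{\bbeta}$ and $\omega^{\|C\|_{\bbeta}}$ is additively principal,
\begin{equation*}
no_{\bbeta}(A\RI A)+3=\omega^{\|A\|_{\bbeta}}\#\omega^{\|A\|_{\bbeta}}+3<\omega^{\|A\|_{\bbeta}+1}\leq\omega^{\|C\|_{\bbeta}}=no_{\bbeta}(\RI C),
\end{equation*}
so weakening raises the derivation to the required length $no_{\bbeta}(\RI C)$ and establishes $\Vdash_\OO\;\RI C$.

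I do not expect any serious obstacle here: the argument is structurally identical to the power-set case, and the only points needing genuine care are the verification that $A^z$ has exactly the intended bounded shape and the (entirely routine) ordinal inequality above. In particular, unlike the Reduction and Collapsing theorems for $\IRSOE$, no dynamic tracking of term levels is required, because the witness $z$ is produced abstractly by $\SRE$ rather than exhibited by an explicit term; this is precisely why $\Deltaoe$-Collection is one of the easier axioms to discharge in the embedding.
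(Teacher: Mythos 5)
Your proposal is correct and follows essentially the same route as the paper's proof: Lemma \ref{elogic} applied to $A:=(\forall x\in s_i)\exists yF(x,y,\sbar)$, a single application of $\SRE$ (legitimate since $A$ is $\Sigmae$), then $(\rightarrow R)$, and finally the ordinal bookkeeping $no_{\bbeta}(A\RI A)+3<no_{\bbeta}(\RI C)$ to match the $\Vdash_\OO$ abbreviation. The paper merely computes the norm concretely, noting $\|A\|_{\bbeta}=\OO+2$ so that $no_{\bbeta}(A\RI A)=\omega^{\OO+2}\cdot 2$ and $\omega^{\OO+2}\cdot 2+3<\omega^{\OO+3}$, where you argue abstractly via $\|C\|_{\bbeta}>\|A\|_{\bbeta}$ and additive principality; the two calculations are equivalent.
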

\begin{proof}
Since $F$ is $\Deltaoe$ we have
\begin{equation*}
no_{\bbeta}((\forall x\in s_i)\exists yF(x,y,\sbar))=\omega^{\OO+2}.
\end{equation*}
Hence by Lemma \ref{elogic} we have
\begin{equation*}
\CH[\bbeta]\;\prov{\omega^{\OO+2}\cdot 2}{\OO}{\sbar\in\Ebb,(\forall x\in s_i)\exists yF(x,y,\sbar)\RI(\forall x\in s_i)\exists yF(x,y,\sbar)}.
\end{equation*}
Applying $\SRE$ gives
\begin{equation*}
\CH[\bbeta]\;\prov{\omega^{\OO+2}\cdot 2+2}{\OO}{\sbar\in\Ebb,(\forall x\in s_i)\exists yF(x,y,\sbar)\RI\exists z(\forall x\in s_i)(\exists y\in z)F(x,y,\sbar)}.
\end{equation*}
By $(\rightarrow R)$ we get
\begin{equation*}
\CH[\bbeta]\;\prov{\omega^{\OO+2}\cdot 2+3}{\OO}{\sbar\in\Ebb\RI(\forall x\in s_i)\exists yF(x,y,\sbar)\rightarrow\exists z(\forall x\in s_i)(\exists y\in z)F(x,y,\sbar)}.
\end{equation*}
Finally since $\omega^{\OO+2}\cdot 2+3<\omega^{\OO+3}$ we may conclude
\begin{equation*}
\Vdash_\OO\RI(\forall x\in s_i)\exists yF(x,y,\sbar)\rightarrow\exists z(\forall x\in s_i)(\exists y\in z)F(x,y,\sbar)
\end{equation*}
as required.
\end{proof}
\begin{lemma}[Exponentiation]\label{eexponentiation}{\em For any terms $s,t$ any $\beta,\gamma<\OO$ and any operator $\CH$
\begin{equation*}
\CH[\beta,\gamma]\;\prov{\delta+4}{\delta+3}{s\in\Eb, t\in\Eg\RI\exists z(\forall x\in\;\!\!^st)(x\in z)}
\end{equation*}
where $\delta:=\text{max}(\beta,\gamma)+2$.
}\end{lemma}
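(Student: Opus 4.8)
The plan is to take $\Ed$ as the witness for the existential quantifier $\exists z$, where $\delta=\text{max}(\beta,\gamma)+2$, and to read off membership of every function from $s$ into $t$ in $\Ed$ directly from axiom (E10). Axiom (E10) is precisely the syntactic counterpart of the clause in the construction of the $E$-hierarchy that adjoins, at stage $\text{max}(\al,\beta)+2$, all functions between sets occurring at stages $\al$ and $\beta$; it is exactly what makes the chosen level $\delta$ correct, and it is the heart of the argument.

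First I would record the relevant instance of (E10), namely
\[
\provx{\CH[\beta,\gamma]}{0}{0}{s\in\Eb,\,t\in\Eg,\,\text{fun}(p,s,t)\RI p\in\Ed}\qquad\text{for every term }p,
\]
which is legitimate since $\delta=\text{max}(\beta,\gamma)+2\geq\text{max}(\beta,\gamma)+2$ meets the side condition of (E10) and $0\in\CH[\beta,\gamma]$. An application of $(\rightarrow R)$ then gives
\[
\provx{\CH[\beta,\gamma]}{1}{0}{s\in\Eb,\,t\in\Eg\RI \text{fun}(p,s,t)\rightarrow p\in\Ed}\qquad\text{for every }p .
\]
Together with the two instances of axiom (E1), namely $\provx{\CH[\beta,\gamma]}{0}{0}{s\in\Eb,t\in\Eg\RI s\in\Eb}$ and $\provx{\CH[\beta,\gamma]}{0}{0}{s\in\Eb,t\in\Eg\RI t\in\Eg}$, these form the three premises of $(\mathcal{E}b\forall R)_\infty$. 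Since $\beta,\gamma\in\CH[\beta,\gamma]$ and $\text{max}(\beta,\gamma)+2=\delta$, the rule applies with conclusion ordinal $\delta$ (note $\delta\geq 2>1=\alpha_0$), yielding
\[
\provx{\CH[\beta,\gamma]}{\delta}{0}{s\in\Eb,\,t\in\Eg\RI(\forall x\in\;\!\!^st)(x\in\Ed)} .
\]

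Finally I would apply $(\exists R)$ with witness $\Ed$. By the stated convention the premise locating the witness in the $\mathbb{E}$-hierarchy may be suppressed, being the instance $\RI\Ed\in\mathbb{E}_{\delta+1}$ of axiom (E4); it remains only to check $\delta+1\in\CH[\beta,\gamma]$, which follows from $\beta,\gamma\in\CH[\beta,\gamma]$ together with the closure of operators under $+$ and Cantor-normal-form decomposition (conditions 4 and 5 of Definition \ref{operatorr}). The side conditions of $(\exists R)$ require the conclusion ordinal to exceed $\delta+3$ (from $\alpha_0+3$) and $\delta+1$ (the witness level), so taking it to be $\delta+4$ produces
\[
\provx{\CH[\beta,\gamma]}{\delta+4}{0}{s\in\Eb,\,t\in\Eg\RI\exists z(\forall x\in\;\!\!^st)(x\in z)} ,
\]
which is the claim with cut-rank $0$; the asserted bound $\delta+3$ then follows a fortiori by monotonicity in the cut-rank parameter.

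Since the derivation uses no cuts whatsoever, the only bookkeeping worth double-checking is the chain of ordinal inequalities and the membership requirements $\delta,\delta+1\in\CH[\beta,\gamma]$, which are routine. The one conceptual point — and the reason this lemma is short, in contrast to the delicate reasoning elsewhere in the section — is that the ``dynamic level'' phenomenon does not intervene here: once $\beta$ and $\gamma$ are fixed, the target level $\delta$ is a single definite ordinal, so (E10) may be invoked with one fixed $\delta$. I would therefore flag, as the step most worth making explicit, the simultaneous verification that $\delta=\text{max}(\beta,\gamma)+2$ satisfies the (E10) side condition, the requirement $\text{max}(\beta,\gamma)+2\leq\alpha$ of $(\mathcal{E}b\forall R)_\infty$, and the witness-location requirement of $(\exists R)$.
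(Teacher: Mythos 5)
Your proof is correct, but it takes a genuinely different route from the paper's. You take the whole level $\Ed$ as the existential witness, which makes the derivation cut-free: axiom (E10) gives $s\in\Eb,t\in\Eg,\text{fun}(p,s,t)\RI p\in\Ed$ for every $p$, then $(\rightarrow R)$, $(\mathcal{E}b\forall R)_\infty$ and $(\exists R)$ (with the location premise $\RI\Ed\in\mathbb{E}_{\delta+1}$ supplied by (E4) via the stated Convention) yield $\provx{\CH[\beta,\gamma]}{\delta+4}{0}{s\in\Eb,t\in\Eg\RI\exists z(\forall x\in\;\!\!^st)(x\in z)}$, and your side-condition checks ($1<\delta$, $\max(\beta,\gamma)+2\leq\delta$, $\delta+3<\delta+4$, $\delta+1<\delta+4$, $\delta,\delta+1\in\CH[\beta,\gamma]$) are all in order. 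The paper instead defines the separation term $p:=[x\in\Ed\;|\;\text{fun}(x,s,t)]$ and witnesses the existential with $p$; this requires axiom (E8) to get $\text{fun}(q,s,t)\RI q\in p$, axiom (E11) to locate $p$ in $\mathbb{E}_{\delta+1}$, and two cuts (on $q\in\Ed$ and on $\Ed\in\mathbb{E}_{\delta+1}$), which is precisely where the cut-rank $\delta+3$ in the statement comes from. Your derivation is in fact stronger (cut-rank $0$), with the stated bound following by monotonicity in $\rho$ — unstated for $\IRSOE$ but routine and used implicitly elsewhere in the section. What the paper's extra work buys is a witness whose extension is exactly the function space $^st$ rather than a level of the hierarchy merely containing it; since the Exponentiation axiom as formulated only demands a superset, your witness suffices for the lemma (and indeed parallels the paper's own treatment of Powerset in $\IRSOP$, where $\mathbb{V}_{\alpha+1}$ is used directly), but the sharper witness is the more natural choice given the paper's term-extraction agenda.
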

\begin{proof}
First let
\begin{equation*}
p:=[x\in\Ed\;|\;\text{fun}(x,s,t)].
\end{equation*}
As an instance of axiom (E10) we have
\begin{equation*}
\tag{1}\CH[\beta,\gamma]\;\prov{0}{0}{s\in\Eb,t\in\Eg,\text{fun}(q,s,t)\RI q\in\Ed}\quad\text{for all $q$.}
\end{equation*}
Also axiom (E8) provides
\begin{equation*}
\tag{2}\CH[\beta,\gamma]\;\prov{0}{0}{q\in\Ed,\text{fun}(q,s,t)\RI q\in p}\quad\text{for all $q$.}
\end{equation*}
Applying $\Cut$ to (1) and (2) provides
\begin{equation*}
\tag{3}\CH[\beta,\gamma]\;\prov{1}{\delta+2}{s\in\Eb,t\in\Eg,\text{fun}(q,s,t)\RI q\in p}\quad\text{for all $q$.}
\end{equation*}
Now by $(\rightarrow R)$ we have
\begin{equation*}
\tag{4}\CH[\beta,\gamma]\;\prov{2}{\delta+2}{s\in\Eb,t\in\Eg\RI\text{fun}(q,s,t)\rightarrow q\in p}\quad\text{for all $q$.}
\end{equation*}
Thus we may use $(\mathcal{E}b\forall R)_\infty$ giving
\begin{equation*}
\tag{5}\CH[\beta,\gamma]\;\prov{\delta+1}{\delta+2}{s\in\Eb,t\in\Eg\RI(\forall x\in\;\!\!^st)(x\in p)}\quad\text{for all $q$.}
\end{equation*}
As instances of axioms (E11) and (E4) we also have
\begin{align*}
\tag{6}&\CH[\beta,\gamma]\;\prov{0}{0}{s\in\Eb,t\in\Eg,\Ed\in\mathbb{E}_{\delta+1}\RI p\in\mathbb{E}_{\delta+1}}\\
\tag{7}&\CH[\beta,\gamma]\;\prov{0}{0}{\RI\Ed\in\mathbb{E}_{\delta+1}.}
\end{align*}
We may apply $\Cut$ to (6) and (7) to obtain
\begin{equation*}
\tag{8}\CH[\beta,\gamma]\;\prov{1}{\delta+3}{s\in\Eb,t\in\Eg\RI p\in\mathbb{E}_{\delta+1}.}
\end{equation*}
Finally by applying $(\exists R)$ to (5) and (8) we get
\begin{equation*}
\CH[\beta,\gamma]\;\prov{\delta+4}{\delta+3}{s\in\Eb,t\in\Eg\RI\exists z(\forall x\in\;\!\!^st)(x\in z)}
\end{equation*}
as required.
\end{proof}
\begin{theorem}\label{IKPEembed}{\em If $\IKPE\vdash\GA[\bar a]\RI\DE[\bar a]$ with $\bar a$ the only free variables occurring in the intuitionistic sequent $\GA[\bar a]\RI\DE[\bar a]$. Then there is a $k<\omega$ such that for any $\IRSOE$ terms $\sbar$, any $\bbeta<\OO$ and any operator $\CH$
\begin{equation*}
\CH[\bbeta]\;\prov{\OO\cdot\omega^k}{\OO+k}{\sbar\in\Ebb,\GA[\sbar]\RI\DE[\sbar]}.
\end{equation*}
}\end{theorem}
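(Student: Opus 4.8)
The plan is to argue by induction on the given $\IKPE$ derivation, following the template of Theorems \ref{IKPembed} and \ref{IKPPembed}, but carrying throughout the side hypotheses $\sbar\in\Ebb$ that record where each substituted term lives in the $\mathbb E$-hierarchy. First I would record the uniform norm bound: since the rank of every $\IRSOE$ formula is $<\OO+\omega$, the norm $no_{\bbeta}(\GA[\sbar]\RI\DE[\sbar])$ is always $<\OO\cdot\omega^\omega$, so there is a single finite $m$ with $no_{\bbeta}(\GA[\sbar]\RI\DE[\sbar])\leq\OO\cdot\omega^m$ uniformly in $\sbar$ and $\bbeta$. This is exactly what permits the conclusion to be stated with one fixed $k<\omega$.

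For the base case, each axiom of $\IKPE$ is discharged by the corresponding verification already carried out: \ref{elogic} for the logical axioms, \ref{eextensionality} for Extensionality, \ref{eSet Induction} for Set Induction, \ref{einfinity} for Infinity, \ref{esep} for $\Deltaoe$-Separation, \ref{epair} for Pair, \ref{eunion} for Union, \ref{ecoll} for $\Deltaoe$-Collection, and \ref{eexponentiation} for Exponentiation. Propositional inferences are routine: apply the induction hypothesis to the premises and reapply the matching $\IRSOE$ rule, absorbing the finitely many extra steps into the exponent $\OO\cdot\omega^{k}$ and cut-rank $\OO+k$. The infinitary right rules $(b\forall R)_\infty$, $(\mathcal{E}b\forall R)_\infty$ and the left rules $(b\exists L)_\infty$, $(\mathcal{E}b\exists L)_\infty$, $(\forall R)_\infty$, $(\exists L)_\infty$ are obtained by instantiating the eigenvariable of the finitary rule by an arbitrary term, exactly as in Theorem \ref{IKPembed}, using that the induction hypothesis holds uniformly for every choice of $\sbar$.

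The substance lies in supplying the location premises that every $\IRSOE$ quantifier rule carries but the finitary calculus does not (e.g.\ $\GA\Rightarrow t\in\Eb$ and $\GA\Rightarrow s\in\Eg$ with $\ga\leq\be$ for $(b\forall L)$, and $\GA\Rightarrow p\in\Ed$ with $\de\leq\text{max}(\be,\ga)+2$ for the exponentiation rules). The bound $t$, and the domain and codomain $s_i,s_j$ of an exponentiation quantifier, are among $\sbar$, so their location is read off from $\sbar\in\Ebb$ by axiom (E1). For a witness $p$ introduced on the right by $(\mathcal{E}b\exists R)$ or inspected on the left by $(\mathcal{E}b\forall L)$, the accompanying formula $\text{fun}(p,s_i,s_j)$ together with $s_i\in\Eb$, $s_j\in\Eg$ yields $p\in\mathbb E_{\text{max}(\beta_i,\beta_j)+2}$ by axiom (E10), which is precisely the level the rule demands; set-abstraction witnesses are placed analogously by (E11). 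For the ordinary element-bounded rules, when the declared level $\ga$ of the witness would exceed the level $\be$ of the bound $t$, I would first raise $t$ to level $\ga$ by Lemma \ref{ehier} and only then apply the rule, thereby meeting the constraint $\ga\leq\be$.

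The genuinely delicate sub-case, and the step I expect to be the main obstacle, is a left rule ($(b\forall L)$ or $(\mathcal{E}b\forall L)$) or a right existential rule whose eigen/witness variable $c$ is one of the free variables $\bar a$ and is therefore replaced by the term $s_l$, whose recorded level $\beta_l$ need not satisfy the rule's side-condition on levels. Here, exactly as in the $(pb\exists R)$ case of Theorem \ref{IKPPembed}, I would not feed $s_l$ directly into the rule; instead I would first establish, using Extensionality (\ref{eextensionality}) together with a copy of the witness placed at the correct level via (E10), (E11) or (E1), an auxiliary $\IRSOE$ derivation of the pertinent sequent, and then cut it against the instance produced by the induction hypothesis. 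The $\IRSOE$ cut rule itself requires its location premise $\GA\Rightarrow s_i\in\mathbb E_{\beta_i}$, again supplied by $\sbar\in\Ebb$, while its side-condition $\|A(\sbar)\|_{\bbeta}<\rho$ is met by taking the final cut-rank to be $\OO+k'$ for a suitably enlarged finite $k'$. Tracking these finitely many auxiliary inferences shows the exponent stays of the form $\OO\cdot\omega^{k}$ and the cut-rank of the form $\OO+k$, which completes the induction.
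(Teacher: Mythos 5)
Your proposal is correct and its global architecture is exactly the paper's: induction on the $\IKPE$ derivation, the same nine lemmas for the axioms, eigenvariables instantiated by arbitrary terms together with arbitrary level declarations $\gamma<\OO$, and, for the problematic left rules with a substituted witness $s_1$ of possibly too-high recorded level, an auxiliary $\IRSOE$ derivation cut against the induction hypothesis --- which is precisely the paper's claims (*) and (**) in Cases 3 and 4. Three local points of comparison. First, one sentence of yours is dangerous if read literally: for $(\mathcal{E}b\forall L)$ the formula $\mathrm{fun}(s_1,s_i,s_j)$ is \emph{not} derivable from $\GA[\sbar]$ (the paper flags exactly this, in contrast to the $(\mathcal{E}b\exists R)$ case where it is provable and (E10) applies directly); axiom (E10) only yields the location premise \emph{inside} the auxiliary derivation, where $\mathrm{fun}(s_1,s_i,s_j)$ sits as an antecedent side formula before being discharged by $(\rightarrow R)$ --- your later paragraph does implement this correctly, so the plan survives, but the mechanism is the antecedent trick, not a direct appeal to (E10). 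Second, your invocation of Extensionality (Lemma \ref{eextensionality}) is superfluous here: since the quantifier is instantiated by $s_1$ itself, Lemma \ref{elogic} already gives $F(s_1)\RI F(s_1)$, and that is all the paper uses; Extensionality was needed in the analogous Case 2.2 of Theorem \ref{IKPPembed} only because the infinitary rule $(\subseteq L)_\infty$ there ranges over all terms $r$ with $r=s_1$, whereas the $\IRSOE$ rules in question are finite-premise. Third, your treatment of the element-bounded left rules by raising the \emph{bound's} declared level via Lemma \ref{ehier} (so that the side condition $\gamma\leq\beta$ is met with $\beta:=\max(\beta_i,\beta_1)$) is a genuine, and legitimate, alternative to the paper's claim (**), which instead exploits axiom (E6) to bound the witness's level by $\beta_i$ under the hypothesis $s_1\in s_i$; your device costs only the $\rho^*=\max(\rho,\beta+1)$ bookkeeping of Lemma \ref{ehier}, harmless since all levels are $<\OO$ and the cut rank is $\OO+k$ anyway, and indeed the same raising trick would also dispose of the $(\mathcal{E}b\forall L)$ case (raise $s_i$ to level $\beta_1$, so $\delta=\beta_1\leq\max(\beta_1,\beta_j)+2$), making your route marginally more uniform than the paper's at the price of slightly worse ordinal and rank bounds along the way.
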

\begin{proof}
The proof is by induction on the $\IKPE$ derivation. If $\GA[\bar a]\RI\DE[\bar a]$ is an axiom of $\IKPE$ then the result follows by one of lemmas \ref{elogic}, \ref{eextensionality}, \ref{eSet Induction}, \ref{einfinity}, \ref{esep}, \ref{epair}, \ref{eunion}, \ref{ecoll} and \ref{eexponentiation}.\\

\noindent Case 1. Suppose the last inference was $(\mathcal{E}b\exists L)$, then $(\exists x\in\;\!\!^{a_i}a_j)F(x)\in\GA[\bar a]$ and the final inference looks like
\begin{prooftree}
\Axiom$\fCenter\GA[\bar a],\text{fun}(b,a_i,a_j)\wedge F(b)\RI\DE[\bar a]$
\LeftLabel{$(\mathcal{E}b\exists L)$}
\UnaryInf$\fCenter\GA[\bar a]\RI\DE[\bar a]$
\end{prooftree}
where $b$ does not occur in $\bar a$. By the induction hypothesis we have a $k_0$ such that
\begin{equation*}
\tag{1}\CH[\bbeta,\gamma]\;\prov{\Omega\cdot\omega^{k_0}}{\OO+{k_0}}{\sbar\in\Ebb,p\in\Eg,\GA[\sbar],\text{fun}(p,s_i,s_j)\wedge F(p)\RI\DE[\sbar]}
\end{equation*}
for all $p$ and all $\gamma<\OO$. Let us choose the special case of (1) where $\gamma:=\text{max}(\beta_i,\beta_j)+2$ and note that for this choice of $\gamma$, $\CH[\bbeta,\gamma]=\CH[\bbeta]$. Now $\text{fun}(p,s_i,s_j)\RI\text{fun}(p,s_i,s_j)$ is an axiom due to (E1) and by Lemma \ref{elogic} we have $\Vdash_\OO F(p)\RI F(p)$ so applying $(\wedge R)$ gives
\begin{equation*}
\tag{2}\Vdash_\OO\text{fun}(p,s_i,s_j),F(p)\RI\text{fun}(p,s_i,s_j)\wedge F(p).
\end{equation*}
Applying $\Cut$ to (1) and (2) provides
\begin{equation*}
\tag{3}\CH[\bbeta]\;\prov{\Omega\cdot\omega^{k_1}}{\OO+k_1}{\sbar\in\Ebb,p\in\Eg,\GA[\sbar],\text{fun}(p,s_i,s_j), F(p)\RI\DE[\sbar]}.
\end{equation*}
Now as an instance of axiom (E10) we have
\begin{equation*}
\tag{4}{\CH[\bbeta]}\;\prov{0}{0}{\sbar\in\Ebb,\text{fun}(p,s_i,s_j)\RI p\in\Eg}.
\end{equation*}
So $\Cut$ to (3) and (4) gives
\begin{equation*}
\tag{5}\CH[\bbeta]\;\prov{\Omega\cdot\omega^{k_1}+1}{\OO+k_1}{\sbar\in\Ebb,\GA[\sbar],\text{fun}(p,s_i,s_j),F(p)\RI\DE[\sbar]}.
\end{equation*}
To which we may apply $(\wedge L)$ twice followed by $(\mathcal{E}b\exists L)_\infty$ to complete the case.\\

\noindent Case 2. Suppose the last inference was $(\mathcal{E}b\exists R)$ then $\DE[\bar a]=\{(\exists x\in\;\!\!^{a_i}a_j)F(x)\}$ and the final inference looks like
\begin{prooftree}
\Axiom$\fCenter\GA[\bar a]\RI\text{fun}(b,a_i,a_j)\wedge F(b)$
\LeftLabel{$(\mathcal{E}b\exists R)$}
\UnaryInf$\fCenter\GA[\bar a]\RI(\exists x\in\;\!\!^{a_i}a_j)F(x)$
\end{prooftree}
Suppose $b$ is a member of $\bar a$, without loss of generality let us suppose that $b\equiv a_1$, so by the induction hypothesis we have a $k_0<\omega$ such that
\begin{equation*}
\tag{6}\CH[\bbeta]\;\prov{\OO\cdot\omega^{k_0}}{\OO+k_0}{\sbar\in\Ebb,\GA[\sbar]\RI\text{fun}(s_1,s_i,s_j)\wedge F(s_1).}
\end{equation*}
If $b$ is not a member of $\bar a$ we can also conclude (6) by the induction hypothesis. As an instance of axiom (E1) we have $\text{fun}(s_1,s_i,s_j)\RI\text{fun}(s_1,s_i,s_j)$ to which we may apply $(\wedge L)$ giving
\begin{equation*}
\tag{7}\CH[\bbeta]\;\prov{1}{0}{\text{fun}(s_1,s_i,s_j)\wedge F(s_1)\RI\text{fun}(s_1,s_i,s_j)}.
\end{equation*}
Now applying $\Cut$ to (6) and (7) yields
\begin{equation*}
\tag{8}\CH[\bbeta]\;\prov{\OO\cdot\omega^{k_0}+1}{\OO+k_0}{\sbar\in\Ebb,\GA[\sbar]\RI\text{fun}(s_1,s_i,s_j).}
\end{equation*}
Axiom (E10) gives us
\begin{equation*}
\tag{9}\CH[\bbeta]\;\prov{0}{0}{\sbar\in\Ebb,\text{fun}(s_1,s_i,s_j)\RI s_1\in\Ed}\quad\text{where $\delta:=\text{max}(\beta_i,\beta_j)+2$.}
\end{equation*}
So applying $\Cut$ to (8) and (9) gives
\begin{equation*}
\tag{10}\CH[\bbeta]\;\prov{\OO\cdot\omega^{k_0}+1}{\OO+k_0}{\sbar\in\Ebb,\GA[\sbar]\RI s_1\in\Ed.}
\end{equation*}
Finally we may apply $(\mathcal{E}b\exists R)$ to (6) and (10) to complete this case.\\

\noindent Case 3. Now suppose the last inference was $(\mathcal{E}b\forall L)$, so $(\forall x\in\;\!\!^{a_i}a_j)F(x)\in\GA[\bar a]$ and the final inference looks like
\begin{prooftree}
\Axiom$\fCenter\GA[\bar a],\text{fun}(b,a_i,a_j)\rightarrow F(b)\RI\DE[\bar a]$
\LeftLabel{$(\mathcal{E}b\forall L)$}
\UnaryInf$\fCenter\GA[\bar a]\RI\DE[\bar a].$
\end{prooftree}
If $b$ is present in $\bar a$, without loss of generality let us suppose $b\equiv a_1$, regardless of whether $b$ is present in $\bar a$, by the induction hypothesis we have a $k_0<\omega$ such that
\begin{equation*}
\tag{11}\CH[\bbeta]\;\prov{\OO\cdot\omega^{k_0}}{\OO+k_0}{\sbar\in\Ebb,p\in\Eg,\GA[\sbar],\text{fun}(p,s_i,s_j)\rightarrow F(p)\RI\DE[\sbar]}.
\end{equation*}
The problem here is that $\beta_1$ may be greater than $\text{max}(\beta_i,\beta_j)+2$ meaning we cannot immediately apply $(\mathcal{E}b\forall L)$, moreover unlike in case 2 it is not possible to derive $\sbar\in\Ebb,\GA[\sbar]\RI\text{fun}(s_1,s_i,s_j)$. Instead we verify the following claim:
\begin{equation*}
\tag{*}\Vdash_\OO\GA[\sbar],(\forall x\in\;\!\!^{s_i}s_j)F(x)\RI\text{fun}(s_1,s_i,s_j)\rightarrow F(s_1)
\end{equation*}
To prove the claim we first note that as an instance of axiom (E10) we have
\begin{equation*}
\tag{12}\CH[\bbeta]\;\prov{0}{0}{\sbar\in\Ebb,\text{fun}(s_1,s_i,s_j)\RI s_1\in\Ed\quad\text{where $\delta:=\text{max}(\beta_i,\beta_j)+2$.}}
\end{equation*}
Then we have the following template for derivations in $\IRSOE$.
\begin{prooftree}
\Axiom$\fCenter\quad\quad\quad\text{(E1)}$
\UnaryInf$\fCenter\Vdash\text{fun}(s_1,s_i,s_j)\RI\text{fun}(s_1,s_i,s_j)$
\Axiom$\fCenter\text{Lemma \ref{elogic}}$
\UnaryInf$\fCenter\Vdash_\OO F(s_1)\RI F(s_1)$
\LeftLabel{$(\rightarrow L)$}
\BinaryInf$\fCenter\Vdash_\OO\text{fun}(s_1,s_i,s_j)\rightarrow F(s_1),\text{fun}(s_1,s_i,s_j)\RI F(s_1)$
\Axiom$\fCenter\text{(12)}$
\LeftLabel{$(\mathcal{E}b\forall L)$}
\BinaryInf$\fCenter\Vdash_\OO(\forall x\in\;\!\!^{s_i}s_j)F(x),\text{fun}(s_1,s_i,s_j)\RI F(s_1)$
\LeftLabel{$(\rightarrow R)$}
\UnaryInf$\fCenter\Vdash_\OO(\forall x\in\;\!\!^{s_i}s_j)F(x)\RI\text{fun}(s_1,s_i,s_j)\rightarrow F(s_1)$
\end{prooftree}
Thus the claim is verified. Now we may complete the case by applying $\Cut$ to (11) and (*).\\

\noindent Case 4. Now suppose the last inference was $(b\forall L)$, so $(\forall x\in a_i)F(x)\in\GA[\bar a]$ and the final inference looks like
\begin{prooftree}
\Axiom$\fCenter\GA[\bar a],b\in a_i\rightarrow F(b)\RI\DE[\bar a]$
\LeftLabel{$(b\forall L)$}
\UnaryInf$\fCenter\GA[\bar a]\RI\DE[\bar a].$
\end{prooftree}
If $b$ does occur in $\bar a$, without loss of generality we may assume $b\equiv a_1$. Regardless of whether $b$ is present in $\bar a$, by the induction hypothesis we have a $k_0<\omega$ such that
\begin{equation*}
\tag{13}\CH[\bbeta]\;\prov{\OO\cdot\omega^{k_0}}{\OO+k_0}{\sbar\in\Ebb,\GA[\sbar],s_1\in s_i\rightarrow F(s_1)\RI\DE[\sbar].}
\end{equation*}
Claim:
\begin{equation*}
\tag{**}\Vdash_\OO(\forall x\in s_i)F(x)\RI s_1\in s_i\rightarrow F(s_1).
\end{equation*}
To prove the claim we first note that by axiom (E6) we have
\begin{equation*}
\tag{14}\CH[\bbeta]\;\prov{0}{0}{\sbar\in\Ebb,s_1\in s_i\RI s_1\in\mathbb{E}_{\beta_i}}
\end{equation*}
Then we have the following template for derivations in $\IRSOE$.
\begin{prooftree}
\Axiom$\fCenter\quad\quad\quad\text{(E1)}$
\UnaryInf$\fCenter\Vdash s_1\in s_j\RI s_1\in s_j$
\Axiom$\fCenter\text{Lemma \ref{elogic}}$
\UnaryInf$\fCenter\Vdash_\OO F(s_1)\RI F(s_1)$
\LeftLabel{$(\rightarrow L)$}
\BinaryInf$\fCenter\Vdash_\OO s_1\in s_j\rightarrow F(s_1),s_1\in s_j\RI F(s_1)$
\Axiom$\fCenter\text{(14)}$
\LeftLabel{$b\forall L)$}
\BinaryInf$\fCenter\Vdash_\OO(\forall x\in s_i)F(x),s_1\in s_j\RI F(s_1)$
\LeftLabel{$(\rightarrow R)$}
\UnaryInf$\fCenter\Vdash_\OO(\forall x\in s_i)F(x)\RI s_1\in s_i\rightarrow F(s_1)$
\end{prooftree}
Finally we may apply $\Cut$ to (13) and (**) to complete this case.\\

\noindent Case 5. Now suppose the last inference was $(\forall L)$, so $\forall xF(x)\in\GA[\bar a]$ and the final inference looks like
\begin{prooftree}
\Axiom$\fCenter\GA[\bar a], F(b)\RI\DE[\bar a]$
\LeftLabel{$(\forall L)$}
\UnaryInf$\fCenter\GA[\bar a]\RI\DE[\bar a].$
\end{prooftree}
If $b$ is a member of $\bar a$, without loss of generality let us assume $b\equiv a_1$. By the induction hypothesis we have a $k_0<\omega$ such that
\begin{equation*}
\tag{15}\CH[\bbeta]\;\prov{\OO\cdot\omega^{k_0}+1}{\Omega+ k_0}{\sbar\in\Ebb,\GA[\sbar],F(s_1)\RI\DE[\sbar].}
\end{equation*}
If $b$ is not a member of $\bar a$ we can in fact still conclude (15) from the induction hypothesis. Now as an instance of axiom (E1) we have
\begin{equation*}
\tag{16}\CH[\bbeta]\;\prov{0}{0}{\sbar\in\Ebb\RI s_1\in\mathbb{E}_{\beta_1}}.
\end{equation*}
So applying $(\forall L)$ gives the desired result.\\

\noindent Case 6. Now suppose the last inference was $(\forall R)$, then $\{\forall xF(x)\}\equiv\DE[\bar a]$ and the final inference looks like
\begin{prooftree}
\Axiom$\fCenter\GA[\bar a]\RI F(b)$
\LeftLabel{$(\forall L)$}
\UnaryInf$\fCenter\GA[\bar a]\RI\forall xF(x)$
\end{prooftree}
with $b$ not present in $\bar a$. By the induction hypothesis we have a $k_0<\omega$ such that
\begin{equation*}
\CH[\bbeta,\gamma]\;\prov{\OO\cdot\omega^{k_0}}{\Omega+ k_0}{\sbar\in\Ebb, p\in\Eg,\GA[\sbar]\RI F(p)}
\end{equation*}
for all $p$ and all $\gamma<\OO$. Applying $(\forall R)_\infty$ gives the desired result.\\

\noindent Case 7. Suppose the last inference was $\Cut$ then the derivation looks like
\begin{prooftree}
\Axiom$\fCenter\GA[\bar a],B(\bar a,\bar b)\RI\DE[\bar a]$
\Axiom$\fCenter\GA[\bar a]\RI B(\bar a,\bar b)$
\BinaryInf$\fCenter\GA[\bar a]\RI\DE[\bar a]$
\end{prooftree}
where each member of $\bar b$ is distinct from the members of $\bar a$. By the induction hypothesis we get $k_0,k_1\in\omega$ such that
\begin{align*}
\tag{17}\CH[\bbeta]\;\prov{\OO\cdot\omega^{k_0}}{\Omega+ k_0}{\sbar\in\Ebb,\mathbb{E}_0\in\mathbb{E}_1,\GA[\sbar],B(\sbar,\bar{\mathbb{E}_0})\RI\DE[\sbar]}\\
\tag{18}\CH[\bbeta]\;\prov{\OO\cdot\omega^{k_1}}{\Omega+ k_1}{\sbar\in\Ebb,\mathbb{E}_0\in\mathbb{E}_1,\GA[\sbar]\RI B(\sbar,\bar{\mathbb{E}_0})}.
\end{align*}
Now since $\RI\mathbb{E}_0\in\mathbb{E}_1$ is an instance of axiom (E4) and $\sbar\in\Ebb\RI s_i\in\mathbb{E}_{\beta_i}$ is an instance of axiom (E1) we may apply $\Cut$ to (17) and (18) giving
\begin{equation*}
\tag{19}\CH[\bbeta]\;\prov{\OO\cdot\omega^{k}}{\Omega+ k}{\sbar\in\Ebb,\mathbb{E}_0\in\mathbb{E}_1,\GA[\sbar]\RI\DE[\sbar]}.
\end{equation*}
Finally applying $\Cut$ to (19) and $\CH[\bbeta]\;\prov{0}{0}{\mathbb{E}_0\in\mathbb{E}_1}$ we can complete this case.\\

\noindent All other cases can be treated in a similar manner to one of those above.
\end{proof}
\subsection{A relativised ordinal analysis of $\IKPE$}
\noindent Analogously to with $\IRSOP$ we will prove a soundness theorem for certain $\IRSOE$ derivable sequents in $E_{\psio{\varepsilon_{\OO+1}}}$. Again we need the notion of an assignment. Let $VAR_\mathcal{E}$ be the set of free variables of $\IRSOE$, an assignment is a map
\begin{equation*}
v:VAR_\mathcal{E}\longrightarrow E_{\psio{\varepsilon_{\OO+1}}}
\end{equation*}
such that $v(a_i^\al)\in E_{\al+1}$ for all $i<\omega$ and ordinals $\al$. Again an assignment canonically lifts to all $\IRSOE$ terms by setting
\begin{align*}
v(\mathbb{E}_\al)\:&=\:E_\al\\
v([x\in t\;|\;F(x, s_1,...,s_n)])\:&=\:\{x\in v(t)\;|\;F(x,v(s_1),...,v(s_n))\}.
\end{align*}
The difference between here and the case of $\IRSOP$ is that for a given term $t$, it is no longer possible to ascertain the location of $v(t)$ within the $E$-hierarchy solely by looking at the syntactic structure of $t$. It is however possible to place an upper bound on that location using the following function
\begin{align*}
m(\Ea):&=\al\\
m(a^\al_i):&=\al\\
m([x\in t\;|\;F(x,s_1,...,s_n)]):&=\text{max}(m(t),m(s_1),...,m(s_n))+1.
\end{align*}
It can be observed that $v(s)\in E_{m(s)+1}$ for any $s$, however in general $m(s)$ is only an upper bound on a term's position in the $E$-hierarchy.
\begin{theorem}[Soundness for $\IRSOE$]\label{esoundness}{\em\;
   Suppose $\GA[s_1,...,s_n]$ is a finite set of $\Pie$ formulae with max$\{rk(A)\;|\;A\in\GA\}\leq\OO$, $\DE[s_1,...,s_n]$ a set containing at most one $\Sigmae$ formula and
\begin{equation*}
\provx{\CH}{\al}{\rho}{\GA[\sbar]\RI\DE[\sbar]}\quad\text{for some operator $\CH$ and some $\al,\rho<\OO$.}
\end{equation*}
Then for any assignment $v$,
\begin{equation*}
E_{\psio{\varepsilon_{\OO+1}}}\models\bigwedge\GA[v(s_1),...,v(s_n)]\rightarrow\bigvee\DE[v(s_1),...,v(s_n)]
\end{equation*}
where $\bigwedge\GA$ and $\bigvee\DE$ stand for the conjunction of formulae in $\GA$ and the disjunction of formulae in $\DE$ respectively, by convention $\bigwedge\emptyset:=\top$ and $\bigvee\emptyset:=\bot$.
}\end{theorem}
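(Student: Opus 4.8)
The plan is to argue by induction on $\al$, in close parallel with the soundness theorem for $\IRSOP$ (Theorem \ref{psoundness}); the genuinely new ingredients are the exponentiation-bounded rules, the limit rule $(\mathbb{E}\text{-Lim})$, and the fact that $\IRSOE$-terms carry no intrinsic level. First I would record the structural consequences of $\al,\rho<\OO$. The ordinal side-conditions of $(\forall R)_\infty$, $(\exists L)_\infty$ and $\SRE$ each force the height of the conclusion to be at least $\OO$, so none of these rules can occur in the given derivation; moreover every cut-formula has rank $<\rho<\OO$, hence is $\Deltaoe$ (using that a formula is $\Deltaoe$ iff its rank is $<\OO$). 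Thus the only cases left are the axioms, the propositional rules, the $\in$-bounded rules, $(\forall L)$, $(\exists R)$, the exponentiation-bounded rules, $(\mathbb{E}\text{-Lim})$, and $\Deltaoe$-cuts.

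Next I would check that each axiom (E1)--(E11) is sound in $E_{\psio{\varepsilon_{\OO+1}}}$ under an arbitrary assignment $v$. The logical and equality axioms (E1)--(E3) and the membership axioms (E4)--(E9) follow from cumulativity of the $E$-hierarchy (each $x\in E_\al$ is a definable subset of $E_\al$, hence $E_\al\subseteq E_{\al+1}$) together with transitivity (Lemma \ref{etran}). The hierarchy-sensitive axioms are (E10) and (E11): for (E10) I would use that if $v(s)\in E_\al$ and $v(t)\in E_\be$ then any $g$ with $\mathrm{fun}(g,v(s),v(t))$ lies in $E_{\mathrm{max}(\al,\be)+2}$ by the clause adjoining functions between members of earlier stages, and for (E11) that a set defined over $v(t)\in E_\be$ with parameters in $E_{\bar\al}$ appears by stage $\mathrm{max}(\be,\bar\al)$.

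For the inference rules, the hypothesis $\mathrm{max}\{rk(A):A\in\GA\}\leq\OO$ forces the matrix of every bounded quantifier occurring in $\GA$ to be $\Deltaoe$, so each minor formula introduced by a left rule is again $\Deltaoe$ and the premises still satisfy the hypotheses of the theorem. Consequently the propositional, $\in$-bounded, $(\forall L)$, $(\exists R)$ and $\Deltaoe$-cut cases go through by applying the induction hypothesis to the premises and then ordinary Tarskian semantics, exactly as in Theorem \ref{psoundness}. The $(\mathbb{E}\text{-Lim})$ case uses that $E_\gamma=\bigcup_{\de<\gamma}E_\de$ for limit $\gamma$: assuming the antecedent, a witness $v(s)\in E_\gamma$ already lies in some $E_\de$ with $\de<\gamma$, and the induction hypothesis applied to the corresponding premise yields the conclusion.

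The crux, which I expect to be the main obstacle, is the pair of exponentiation-bounded rules $(\mathcal{E}b\forall R)_\infty$ and $(\mathcal{E}b\exists L)_\infty$ (the remaining two being dual or easier). Take $(\mathcal{E}b\forall R)_\infty$ with $\DE=\{(\forall x\in {}^{s}t)F(x)\}$, premises $\GA\RI\mathrm{fun}(p,s,t)\rightarrow F(p)$ for all terms $p$, together with $\GA\RI s\in\Eb$ and $\GA\RI t\in\Eg$. Fixing $v$ and assuming $\bigwedge\GA[v(\sbar)]$, I would first read off from the side premises (via the induction hypothesis) that $v(s)\in E_\be$ and $v(t)\in E_\gamma$, whence any $g$ with $\mathrm{fun}(g,v(s),v(t))$ satisfies $g\in E_\de$ with $\de=\mathrm{max}(\be,\gamma)+2$, so $g\in E_{\de+1}$ by cumulativity. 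I would then pick a free variable $a_j^\de$ occurring neither in $\sbar$ nor in $\GA,\DE$; applying the induction hypothesis to the instance $\GA\RI\mathrm{fun}(a_j^\de,s,t)\rightarrow F(a_j^\de)$ (using inversion, Lemma \ref{einversion}, to absorb $\mathrm{fun}(p,s,t)$ into the antecedent when $F$ carries an unbounded quantifier) gives that $E_{\psio{\varepsilon_{\OO+1}}}\models\bigwedge\GA\rightarrow(\mathrm{fun}(v(a_j^\de),v(s),v(t))\rightarrow F(v(a_j^\de)))$ for every assignment. Since each such $g$ is an admissible value $v'(a_j^\de)$ of a variant assignment $v'$ (admissibility being precisely $g\in E_{\de+1}$) and $a_j^\de$ is fresh, letting $g$ range over all functions from $v(s)$ to $v(t)$ yields $E_{\psio{\varepsilon_{\OO+1}}}\models\bigwedge\GA\rightarrow(\forall x\in {}^{v(s)}v(t))F(x)$. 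The difficulty here is exactly the absence of fixed term levels: one must manufacture, for each semantic function $g$, a syntactic variable whose admissible assignment-values include $g$, and this can be guaranteed only after the side premises have pinned $v(s)$ and $v(t)$ to definite stages $E_\be$, $E_\gamma$ and the function-closure clause of the hierarchy has bounded the stage of $g$; the dual rule $(\mathcal{E}b\exists L)_\infty$ is handled symmetrically, and no $(\forall R)_\infty$-type obstruction arises because that rule is excluded by $\al<\OO$.
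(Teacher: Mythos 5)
Your proposal is correct and follows the same skeleton as the paper's proof: induction on $\al$, the observation that $\al,\rho<\OO$ excludes $(\forall R)_\infty$, $(\exists L)_\infty$ and $\SRE$ and forces all cut formulae to be $\Deltaoe$, soundness of the axioms via Lemma \ref{etran} and cumulativity, and a fresh-variable argument for the infinitary exponentiation-bounded rules, exactly parallel to Theorem \ref{psoundness}. The one genuine divergence is how you determine the level at which to pick the fresh variable in the $(\mathcal{E}b\forall R)_\infty$ and $(\mathcal{E}b\exists L)_\infty$ cases: the paper uses the syntactic bound $m(\cdot)$ introduced immediately before the theorem, setting $\beta:=\max(m(s_i),m(s_j))+2$, which bounds $v(s_i),v(s_j)$ (hence every relevant function $g$) \emph{unconditionally} and uniformly in the assignment, since $v(s)\in E_{m(s)+1}$ always holds; you instead read the stages off the rules' side premises $\GA\RI s\in\Eb$, $\GA\RI t\in\Eg$ via the induction hypothesis, which pins $v(s)\in E_\be$, $v(t)\in E_\gamma$ only \emph{under the assumption} $\bigwedge\GA[v(\sbar)]$ --- harmless here, because the target statement is precisely a conditional with that antecedent, and the side premises are indeed part of both rules in $\IRSOE$. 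Your route has the small advantage of making the $m$-function dispensable for this theorem (its role is absorbed by the ordinal declarations built into the inference rules), while the paper's $m$-based choice buys an assignment-independent level and a slightly cleaner bookkeeping (no need to first discharge the antecedent before the fresh variable's level is even defined). All remaining details you give --- freshness of $a^\delta_j$ among the finitely many terms $\sbar$, admissibility of each $g$ as a value since $g\in E_\delta\subseteq E_{\delta+1}$, the $(\mathbb{E}\text{-Lim})$ case via $E_\gamma=\bigcup_{\delta<\gamma}E_\delta$, and the observation that $\max\{rk(A):A\in\GA\}\leq\OO$ keeps the premises of the left rules within the theorem's hypotheses --- match the paper's reasoning, so the proposal stands as a correct variant of the same argument.
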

\begin{proof}
The proof is by induction on $\al$. Note that the derivation $\provx{\CH}{\al}{\rho}{\GA[\sbar]\RI\DE[\sbar]}$ contains no inferences of the form $(\forall R)_\infty$, $(\exists L)_\infty$ or $\SRE$ and all cuts have $\Deltaoe$ cut formulae.\\

\noindent All axioms apart from (E6) and (E7) are clearly sound under the interpretation, the soundness of (E6) and (E7) follows from Lemma \ref{etran}.\\

\noindent Now suppose the last inference was $(\mathcal{E}b\exists R)$, so amongst other premises we have
\begin{equation*}
\provx{\CH}{\al_0}{\rho}{\GA[\sbar]\RI\text{fun}(t,s_i,s_j)\wedge A(t,\sbar)}\quad\text{for some $\al_0<\al$.}
\end{equation*}
Applying the induction hypothesis yields
\begin{equation*}
E_{\psio{\varepsilon_{\OO+1}}}\models \bigwedge\GA[v(\sbar)]\rightarrow [\text{fun}(v(t),v(s_i),v(s_j))\wedge A(v(t),\sbar)]\quad\text{where $v(\sbar):=v(s_1),...,v(s_n)$.}
\end{equation*}
Suppose $\GA[v(\sbar)]$ holds in $E_{\psio{\varepsilon_{\OO+1}}}$, so we have
\begin{equation*}
E_{\psio{\varepsilon_{\OO+1}}}\models\text{fun}(v(t),v(s_i),v(s_j))\wedge A(v(t),v(\sbar)).
\end{equation*}
It remains to note that the function space $^{v(s_i)}v(s_j)$ is a member of $E_{\psio{\varepsilon_{\OO+1}}}$ and thus
\begin{equation*}
E_{\psio{\varepsilon_{\OO+1}}}\models(\exists x\in\;\!\!^{v(s_i)}v(s_j))A(x,v(\sbar))
\end{equation*}
as required.\\

\noindent Now suppose the last inference was $(\mathcal{E}b\exists L)_\infty$, thus amongst other premises we have
\begin{equation*}
\tag{20}\provx{\CH}{\al_0}{\rho}{\GA[\sbar],\text{fun}(p,s_i,s_j)\wedge A(p,\sbar)\RI\DE[\sbar]}\quad\text{for all terms $p$ and some $\al_0<\al$.}
\end{equation*}
For the remainder of this case fix an arbitrary valuation $v_0$. Let $\beta_0:=m(s_i)$, $\beta_1:=m(s_j)$ and $\beta:=\text{max}(\beta_0,\beta_1)+2$. Choose $k$ such that $a^\beta_k$ does not occur in any of the terms in $\sbar$. As a special case of (20) we have
\begin{equation*}
\provx{\CH}{\al_0}{\rho}{\GA[\sbar],\text{fun}(a_k^\beta,s_i,s_j)\wedge A(a_k^\beta,\sbar)\RI\DE[\sbar]}.
\end{equation*}
Applying the induction hypothesis we get
\begin{equation*}
\tag{21}E_{\psio{\varepsilon_{\OO+1}}}\models \bigwedge\GA[v(\sbar)]\wedge [\text{fun}(v(a_k^\beta),v(s_i),v(s_j))\wedge A(v(a_k^\beta),v(\sbar))]\rightarrow\bigvee\DE[v(\sbar)]
\end{equation*}
for all valuations $v$. In particular (21) holds true for all valuations $v$ which coincide with $v_0$ on $\sbar$. By the choice of $a^\beta_k$ it follows that
\begin{equation*}
E_{\psio{\varepsilon_{\OO+1}}}\models \bigwedge\GA[v_0(\sbar)]\rightarrow\bigvee\DE[v_0(\sbar)]
\end{equation*}
as required.\\

\noindent All other cases may be treated in a similar manner to those above, using similar reasoning to Theorem \ref{psoundness}.
\end{proof}

\begin{lemma}\label{econc}{\em
Suppose $\IKPE\vdash\;\RI A$ for some $\Sigmae$ sentence $A$, then there exists an $n<\omega$, which we may compute from the derivation, such that
\begin{equation*}
\provx{\CH_\sigma}{\psio\sigma}{\psio\sigma}{\RI\;A}\quad\text{where $\sigma:=\omega_m(\OO\cdot\omega^m)$.}
\end{equation*}
}\end{lemma}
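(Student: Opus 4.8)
The plan is to follow the same three-step template used for $\IKP$ in Lemma \ref{conc} and for $\IKPP$ in Lemma \ref{pconc}: first embed the finite $\IKPE$-derivation into $\IRSOE$, then apply the partial cut elimination Theorem \ref{epredce} to push the cut-rank down to $\OO+1$, and finally collapse with Theorem \ref{ecollapsing} to bring both the length and the cut-rank below $\OO$. Since $A$ is a $\Sigmae$-\emph{sentence} it contains no free variables, so the list $\sbar$ and the side hypotheses $\sbar\in\Ebb$ appearing in the conclusion of Theorem \ref{IKPEembed} are empty; thus the embedding directly supplies, for some $m<\omega$ computable from the given derivation, a derivation $\provx{\CH_0}{\OO\cdot\omega^m}{\OO+m}{\RI A}$ (and we may harmlessly assume $m\geq 1$).

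First I would read off from Theorem \ref{IKPEembed} the bound just described. Next, writing $\OO+m=\OO+(m-1)+1$ and applying Cut Elimination I (Theorem \ref{epredce}) with $n:=m-1$, I obtain
$$\provx{\CH_0}{\omega_{m-1}(\OO\cdot\omega^m)}{\OO+1}{\RI A}.$$
Finally I would invoke the Collapsing Theorem \ref{ecollapsing} with $\eta:=0$. Its hypotheses are met: $0\in\CH_0$, the antecedent $\GA$ is empty (hence vacuously a finite set of $\Pie$-formulae) and $\DE=\{A\}$ consists of a single $\Sigmae$-formula. Since here $\hat\al=\eta+\omega^\al=\omega^{\omega_{m-1}(\OO\cdot\omega^m)}=\omega_m(\OO\cdot\omega^m)=:\sigma$, the theorem yields exactly
$$\provx{\CH_\sigma}{\psio\sigma}{\psio\sigma}{\RI A},$$
which is the claim. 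Effectivity of $m$, and hence of $\sigma$ and of the resulting derivation, is inherited from the effectivity already built into Theorems \ref{IKPEembed}, \ref{epredce} and \ref{ecollapsing}.

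There is essentially no genuine obstacle; the argument is the $\IKPE$-analogue of Lemma \ref{pconc}, and all the real work has been discharged in the cut elimination and collapsing theorems for $\IRSOE$. The one point that needs care — and the place where the $\IKPE$-bookkeeping differs from the $\IKP$ and $\IKPP$ cases — is that the collapsing Theorem \ref{ecollapsing} uses $\hat\al=\eta+\omega^\al$ rather than $\eta+\omega^{\OO+\al}$; one must therefore check that, with $\eta=0$ and the length $\omega_{m-1}(\OO\cdot\omega^m)$ delivered by Theorem \ref{epredce}, the collapsed ordinal $\hat\al$ is precisely $\omega_m(\OO\cdot\omega^m)$, matching the $\sigma$ in the statement. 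Note also that, in contrast to the $\IKP$ case (Lemma \ref{conc}), no closing round of predicative cut elimination is performed, since $\IRSOE$ admits only partial cut elimination and the target cut-rank $\psio\sigma$ is exactly what collapsing produces.
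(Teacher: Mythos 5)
Your proposal is correct and follows exactly the paper's own proof: embed via Theorem \ref{IKPEembed}, reduce the cut-rank to $\OO+1$ by Theorem \ref{epredce} with $n=m-1$, and collapse with Theorem \ref{ecollapsing} at $\eta=0$, noting $\hat\al=\omega^{\omega_{m-1}(\OO\cdot\omega^m)}=\omega_m(\OO\cdot\omega^m)=\sigma$. Your observations that the side hypotheses $\sbar\in\Ebb$ vanish for a sentence and that no final round of predicative cut elimination is available (unlike Lemma \ref{conc} for $\IKP$) are both accurate and match the paper.
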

\begin{proof}
Suppose $\IKPE\vdash\;\RI A$, then by Theorem \ref{IKPEembed} we can explicitly calculate some $1\leq m<\omega$ such that
\begin{equation*}
\provx{\CH_0}{\OO\cdot\omega^m}{\OO+m}{\RI A}
\end{equation*}
Applying partial cut elimination for $\IRSOE$ \ref{epredce} we get
\begin{equation*}
\provx{\CH_0}{\omega_{m-1}(\OO\cdot\omega^m)}{\OO+1}{\RI A}.
\end{equation*}
Finally by applying collapsing for $\IRSOE$ \ref{ecollapsing} we get
\begin{equation*}
\provx{\CH_{\omega_m(\OO\cdot\omega^m)}}{\psio{\omega_m(\OO\cdot\omega^m)}}{\psio{\omega_m(\OO\cdot\omega^m)}}{\RI A}
\end{equation*}
as required.
\end{proof}

\begin{theorem}\label{econct}{\em If $A$ is a $\Sigmae$-sentence and $\IKPE\vdash\;\RI A$ then there is an ordinal term $\al<\psio{\varepsilon_{\OO+1}}$, which we may compute from the derivation, such that
\begin{equation*}
E_\al\models A.
\end{equation*}
}\end{theorem}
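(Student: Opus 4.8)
The plan is to follow the template of Theorem \ref{pconct}, now using the cut elimination, boundedness and soundness machinery developed for $\IRSOE$. First I would invoke Lemma \ref{econc} to extract from the given $\IKPE$-derivation an $m<\omega$ together with the collapsed derivation
\[
\provx{\CH_\sigma}{\psio\sigma}{\psio\sigma}{\RI A},\qquad \sigma:=\omega_m(\OO\cdot\omega^m).
\]
I would then set $\al:=\psio\sigma$ and record two facts about it. Since $\CH_\sigma=B^\OO(\sigma+1)$ is closed under $\psi_\Omega$ on arguments $\leq\sigma$, we have $\al=\psio\sigma\in\CH_\sigma$; and since $\sigma+1<\varepsilon_{\OO+1}$ (as $\varepsilon_{\OO+1}$ is an epsilon number above each $\omega_m(\OO\cdot\omega^m)$) we get $\al\in B^\OO(\sigma+1)\subseteq B^\OO(\varepsilon_{\OO+1})$, whence $\al\in B^\OO(\varepsilon_{\OO+1})\cap\OO=\psio{\varepsilon_{\OO+1}}$, i.e. $\al<\psio{\varepsilon_{\OO+1}}$. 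This is exactly the ordinal term the theorem demands, and it is computable from the derivation via the computable $m$.

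The second step is to bound the witness inside $\Ea$ by taking $\beta:=\al$. As $A$ is $\Sigmae$, the derivation height $\psio\sigma=\al\leq\beta<\OO$ and $\beta\in\CH_\sigma$, so Boundedness \ref{eboundedness}i) applies and yields
\[
\provx{\CH_\sigma}{\al}{\al+1}{\RI A^{\Ea}},
\]
the cut rank rising only to $\text{max}(\psio\sigma,\al+1)=\al+1<\OO$. Here $A^{\Ea}$ contains no unbounded quantifiers and is therefore $\Deltaoe$, in particular a $\Sigmae$-formula, so the hypotheses of the soundness theorem remain met.

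The third step is to read off a model statement. Applying Soundness \ref{esoundness} with $\GA=\emptyset$ and $\DE=\{A^{\Ea}\}$ — both height and cut rank being $<\OO$ — gives, for any assignment $v$,
\[
E_{\psio{\varepsilon_{\OO+1}}}\models A^{E_\al},
\]
using $v(\Ea)=E_\al$. Finally, because $E_\al$ is transitive (Lemma \ref{etran}) and lies in $E_{\psio{\varepsilon_{\OO+1}}}$, the relativisation $A^{E_\al}$ holding in the larger structure is equivalent to $A$ holding in $E_\al$ itself, whence $E_\al\models A$.

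I do not expect a serious obstacle, since the argument is structurally identical to Theorem \ref{pconct} and uses only results already established; the delicate points are side conditions rather than new ideas. The one step requiring genuine care is the final passage from the soundness conclusion back to $E_\al$: one must check that the exponentiation-bounded quantifiers of $A$ are interpreted absolutely between $E_\al$ and $E_{\psio{\varepsilon_{\OO+1}}}$. This is precisely the content already exploited in the $(\mathcal{E}b\exists R)$ case of the proof of Theorem \ref{esoundness} — namely that the relevant function spaces are available inside the hierarchy — together with the transitivity of the $E$-levels furnished by Lemma \ref{etran}.
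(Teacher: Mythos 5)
Your proposal is correct and follows essentially the same route as the paper's own proof: Lemma \ref{econc} to collapse, $\al:=\psio\sigma$, Boundedness \ref{eboundedness} to pass to $A^{\Ea}$, Soundness \ref{esoundness} to get $E_{\psio{\varepsilon_{\OO+1}}}\models A^{\mathbb{E}_\al}$, and then absoluteness to conclude $E_\al\models A$. You are in fact slightly more careful than the paper on two side conditions it leaves implicit: the cut rank after Boundedness is $\rho^*=\al+1$ (the paper writes $\al$), and the final descent to $E_\al$ does require the absoluteness of exponentiation-bounded quantifiers via transitivity (Lemma \ref{etran}) and closure of the $E$-hierarchy under function spaces, exactly as you note.
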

\begin{proof}
By Lemma \ref{econc} we can determine some $m<\omega$ such that
\begin{equation*}
\provx{\CH_\sigma}{\psio\sigma}{\psio\sigma}{\RI\;A}\quad\text{where $\sigma:=\omega_m(\OO\cdot\omega^m)$.}
\end{equation*}
Let $\al:=\psio\sigma$. Applying boundedness \ref{eboundedness} we get
\begin{equation*}
\provx{\CH}{\al}{\al}{\RI A^{\mathbb{E}_\al}}.
\end{equation*}
Now Theorem \ref{esoundness} yields
\begin{equation*}
E_{\psio{\varepsilon_{\OO+1}}}\models A^{\mathbb{E}_\al}.
\end{equation*}
It follows that
\begin{equation*}
E_\al\models A
\end{equation*}
as required.\\
\end{proof}
\begin{remark}{\em
Suppose $A\equiv\exists xC(x)$ is a $\Sigmae$ sentence and $\IKPE\vdash\;\RI A$. As in the case of $\IKPP$, as well as the ordinal term $\al$ given by Theorem \ref{econct}, it is possible to compute a specific $\IRSOE$ term $s$ such that $E_\al\models C(s)$. Moreover this process can be carried out inside $\IKPE$. These results will be verified in \cite{rathjen-EP}.
}\end{remark}
As in the foregoing cases we also have a conservativity result.

\begin{theorem}\label{cons3}{\em $\IKPE+\mbox{$\Sigma^{\mathcal E}$-Reflection}$ is conservative over
$\IKPE$ for $\Sigma^{\mathcal E}$-sentences. }\end{theorem}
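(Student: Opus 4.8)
The inclusion of $\IKPE$-theorems among those of $\IKPE+\SRE$ is trivial, so the plan is to establish the reverse direction for $\Sigmae$-sentences. The essential new ingredient is that the reflection rule $\SRE$ of $\IRSOE$ already validates the $\Sigmae$-Reflection schema at the infinitary level. Concretely, I would first prove an embedding lemma for reflection: for every $\Sigmae$-formula $A(\bar a)$ and all $\IRSOE$ terms $\sbar$,
$$\Vdash\;\; \RI A(\sbar)\to\exists z\,A(\sbar)^z.$$
The derivation mirrors Lemma \ref{ecoll}: Lemma \ref{elogic} gives $\Vdash_\OO A(\sbar)\RI A(\sbar)$, one application of $\SRE$ yields $A(\sbar)\RI\exists z\,A(\sbar)^z$ (legitimate since $\OO$ lies below the ambient ordinal), and $(\rightarrow R)$ finishes; the concluding norm estimate is exactly the one carried out in Lemma \ref{ecoll}, now relativised to the rank of the particular $A$.

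With this lemma in hand I would extend Theorem \ref{IKPEembed} to $\IKPE+\SRE$. The only axiom scheme not already treated there is $\Sigmae$-Reflection, whose embedding is supplied above and respects the same ordinal data $\OO\cdot\omega^m$ and $\OO+m$, so that every $\IKPE+\SRE$-derivable sequent $\GA[\bar a]\RI\DE[\bar a]$ admits, for a computable $k<\omega$, an $\IRSOE$-derivation
$$\CH[\bbeta]\;\prov{\OO\cdot\omega^k}{\OO+k}{\sbar\in\Ebb,\GA[\sbar]\RI\DE[\sbar]}.$$
Because $\IRSOE$, its cut-elimination (Theorems \ref{epredce} and \ref{ecollapsing}), boundedness (Lemma \ref{eboundedness}) and soundness (Theorem \ref{esoundness}) are all untouched, Lemma \ref{econc} and Theorem \ref{econct} go through verbatim for $\IKPE+\SRE$: from $\IKPE+\SRE\vdash\;\RI A$ with $A$ a $\Sigmae$-sentence one computes an ordinal term $\al<\psio{\varepsilon_{\OO+1}}$ with $E_\al\models A$.

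It remains to convert $E_\al\models A$ into $\IKPE\vdash A$. Since $A$ is $\Sigmae$ and the $E$-hierarchy is cumulative, the persistence properties of $\Sigmae$-formulae upgrade $A^{E_\al}$ to $A$ once $E_\al$ is known to be a transitive set; and the whole chain from embedding through soundness is finitary and formalizable inside $\IKPE$ after restricting the term structure to $B^\OO(\gamma)$ for a suitable $\gamma<\varepsilon_{\OO+1}$, exactly as in the remark following Theorem \ref{econct}. Running this formalised argument within $\IKPE$ for the given finite derivation yields $\IKPE\vdash A$, which together with the trivial inclusion establishes the conservativity. I expect this final step — carrying out the metatheoretic analysis within $\IKPE$ and securing the provable existence of the relevant segment $E_\al$ of the exponentiation hierarchy — to be the main obstacle; it is precisely the point whose details are deferred to \cite{rathjen-EP}.
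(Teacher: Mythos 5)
Your proposal is correct and follows essentially the same route the paper intends: just as indicated before Theorem \ref{cons1}, one observes that the infinitary rule $\SRE$ directly validates $\Sigmae$-Reflection (so the embedding Theorem \ref{IKPEembed} extends, in the style of Lemma \ref{ecoll}, with the same bounds $\OO\cdot\omega^m$ and $\OO+m$), after which cut elimination, collapsing, boundedness and soundness go through unchanged, and the conversion of $E_\al\models A$ into $\IKPE\vdash A$ rests on the formalization inside $\IKPE$ deferred to \cite{rathjen-EP}. You have also correctly identified that this last formalization step is where the real remaining work lies, exactly as in the paper's remarks.
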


\begin{remark}{\em An obvious question is whether the conservativity results of Theorems \ref{cons1}, \ref{cons2}, \ref{cons3} can be lifted to formulae with free variables? This would require ordinal analyses with set parameters. For classical Kripke-Platek set theory this has been carried out by the first author in \cite{diss}.
The second author thinks that this result can be lifted to the intuitionistic context. However it is likely
that this extension requires a fair amount of extra work since the linearity and decidability of the ordinal representation system would have to be sacrificed.}
\end{remark}

\end{document}